\renewcommand{\phi}{\varphi}
\newcommand{\eps}{\epsilon}
\renewcommand{\hat}{\widehat}
\renewcommand{\tilde}{\widetilde}
\renewcommand{\bar}{\overline}
\newcommand{\restr}{\upharpoonright}
\newcommand{\forces}{\mathrel{\Vdash}}
\newcommand{\N}{\mathbb{N}}
\newcommand{\Q}{\mathbb{Q}}
\newcommand{\R}{\mathbb{R}}
\newcommand{\C}{\mathbb{C}}
\newcommand{\A}{\mathbb{A}}
\renewcommand{\P}{\mathbb{P}}
\newcommand{\LL}{\mathcal{L}}
\newcommand{\LF}{\mathcal{F}}
\newcommand{\LP}{\mathcal{P}}
\newcommand{\LC}{\mathcal{C}}
\newcommand{\LA}{\mathcal{A}}
\newcommand{\LB}{\mathcal{B}}
\newcommand{\LU}{\mathcal{U}}
\newcommand{\LS}{\mathcal{S}}
\newcommand{\LD}{\mathcal{D}}
\newcommand{\LG}{\mathcal{G}}
\renewcommand{\t}{\mathfrak{t}}
\renewcommand{\d}{\mathfrak{d}}
\DeclareMathOperator{\ran}{ran}
\newtheorem{thm}{Theorem}[section]
\newtheorem{prop}[thm]{Proposition}
\newtheorem{lemma}[thm]{Lemma}
\newtheorem{cor}[thm]{Corollary}
\newtheorem*{nonumthm}{Theorem}
\newtheorem*{problem}{Problem}
\newtheorem*{conj}{Conjecture}
\theoremstyle{definition}
\newtheorem{defn}[thm]{Definition}
\newtheorem{example}[thm]{Example}
\theoremstyle{remark}
\newtheorem*{claim}{Claim}
\newtheorem*{ques}{Question}
\newenvironment{proofclaim}[1][Proof of claim.]{\begin{proof}[#1]}{\end{proof}}
\newcommand{\LH}{\mathcal{H}}
\newcommand{\LK}{\mathcal{K}}
\newcommand{\ess}{\mathrm{ess}}
\newcommand{\supp}{\mathrm{supp}}
\DeclareMathOperator{\linspan}{span}
\newcommand{\concat}{^\smallfrown}
\newcommand{\ZFC}{\mathsf{ZFC}}
\newcommand{\CH}{\mathsf{CH}}
\newcommand{\MA}{\mathsf{MA}}
\newcommand{\D}{\mathbb{D}}
\newcommand{\LE}{\mathcal{E}}
\newcommand{\bb}{\mathrm{bb}}
\renewcommand{\P}{\mathbb{P}}
\newcommand{\G}{\mathbb{G}}
\newcommand{\V}{\mathbf{V}}
\renewcommand{\L}{\mathbf{L}}
\newcommand{\FIN}{\mathrm{FIN}}
\newcommand{\osc}{\mathrm{osc}}
\title{A local Ramsey theory for block sequences}
\date{August 20, 2018}
\author{Iian B. Smythe}
\address{Department of Mathematics, Rutgers, The State University of New Jersey, Piscataway, NJ, USA 08854}
\email{i.smythe@rutgers.edu}
\subjclass[2010]{Primary 05D10, 03E05; Secondary 46B20}
\thanks{The author is partially supported by NSERC award PGSD2-453779-2014 and NSF grant DMS-1600635. He would also like to thank his PhD advisor, Justin Tatch Moore, for continued guidance and suggesting the problem of characterizing $\L(\R)$-generic filters for the projections in the Calkin algebra which motivated this work.}
\begin{document}

\begin{abstract}
	We develop local forms of Ramsey-theoretic dichotomies for block sequences in infinite-dimensional vector spaces, analogous to Mathias' selective coideal form of Silver's theorem for analytic partitions of $[\N]^\infty$. Under large cardinals, these results are extended to partitions in $\L(\R)$ and $\L(\R)$-generic filters of block sequences are characterized. Variants of these results are also established for block sequences in Banach spaces and for projections in the Calkin algebra.
\end{abstract}

\maketitle

\section{Introduction}\label{sec:1}

Ramsey-theoretic techniques have a long history of use in Banach space theory, see e.g., \cite{MR2145246}. Most relevant for the present work is Gowers' dichotomy for infinite block sequences in Banach spaces: 

\begin{nonumthm}[Gowers \cite{MR1421876} \cite{MR1954235}]
	Let $B$ be an infinite-dimensional Banach space with a Schauder basis. If $\A$ is an analytic set of normalized block sequences, then for any $\Delta>0$, there is a block sequence $Y$ such that either
	\begin{enumerate}[label=\rm{(\roman*)}]
		\item every normalized block subsequence of $Y$ is in $\A^c$, or
		\item II has a strategy in the Gowers game $G^*[Y]$ for playing into $\A_{\Delta}$.
	\end{enumerate}
\end{nonumthm}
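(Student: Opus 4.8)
The plan is to adapt the Galvin--Prikry--Mathias combinatorial forcing to block sequences, following Gowers, with a game-theoretic conclusion replacing ``pure Ramseyness'' (which genuinely fails here: in general no single $Y$ has all of its normalized block subsequences in $\A$, nor all in $\A^c$). Work in the Polish space of normalized block sequences, with the topology of coordinatewise approximation, using block vectors with rational coordinates over the Schauder basis as a countable dense set. It suffices to treat $\A$ \emph{open}: the general analytic case follows by the usual bootstrapping --- the conclusion is trivially closed under complementation and, by a fusion, under countable unions, giving all Borel sets, and then the Souslin operation (as in Silver's theorem) handles analytic sets; equivalently one can absorb the analytic complexity into an unfolded game in which player II also builds a branch through a tree representing $\A$. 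Fix the ambient space $B$, viewed as the block sequence given by its basis, and a summable sequence $\eps_0,\eps_1,\dots>0$ with $\sum_i\eps_i<\Delta$.

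For a finite normalized block sequence $s$ and an infinite normalized block sequence $Y$, say $Y$ \emph{accepts} $s$ if player II has a strategy in $G^*[Y]$, starting from the partial play $s$, forcing the resulting block sequence into $\A_\Delta$; say $Y$ \emph{rejects} $s$ if no block subspace $Z\le Y$ accepts $s$. Both properties pass to block subspaces, and there is a trivial dichotomy: for every $Y$ and every $s$ there is $Z\le Y$ accepting $s$ or rejecting $s$, since either some $Z\le Y$ accepts $s$, or else $Y$ itself rejects it.

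The core of the argument, and the step I expect to be the main obstacle, is the \textbf{propagation lemma}: if $Y$ rejects $s$ then there is $Z\le Y$ that rejects $s\concat v$ for \emph{every} normalized block vector $v$ of $Z$ with $\supp v>\supp s$. Here infinite-dimensionality bites hard: the one-step extensions $v$ range over a non-compact set, so the Galvin--Prikry diagonalization --- a single use of Ramsey's theorem over the finitely many one-step extensions available in $[\N]^\infty$ --- has no analogue. The remedy is to truncate. Normalized block vectors of $Z$ supported within a \emph{bounded} interval of the basis form a totally bounded set; fix a finite $\eps_{|s|}$-net of it, diagonalize the accept/reject decision over the net (only finitely many cases), and then observe that an arbitrary admissible $v$ lies within $\eps_{|s|}$ of some net vector $w$: any play witnessing acceptance of $s\concat v$ yields, after replacing $v$ by $w$, a block sequence differing from the original in a single coordinate and only by $\eps_{|s|}$, so acceptance of $s\concat w$ up to the enlarged tolerance follows --- which is why the target set must be the expansion $\A_\Delta$ and why the tolerances must sum to less than $\Delta$. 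An internal fusion over the support bound then removes the boundedness restriction. This is the only place $\Delta$ is used, and it is the source of the unavoidable asymmetry between alternatives (i) and (ii).

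Granting the propagation lemma, a \textbf{fusion} finishes the proof. If $B$ accepts the empty position $\emptyset$, then by definition player II has a strategy in $G^*[B]$ into $\A_\Delta$ and we are in alternative (ii). Otherwise $B$ rejects $\emptyset$; iterate the propagation lemma to build $B\ge Z_0\ge Z_1\ge\cdots$ and normalized block vectors $z_0,z_1,\dots$ drawn from successive coordinate blocks of the $Z_k$, arranged so that $Z_k$ rejects every extension of every position built from $z_0,\dots,z_{k-1}$; the diagonal block sequence $Z=(z_k)$ then rejects every finite block sequence of $Z$. Now suppose, for contradiction, that some normalized block subsequence $X=(x_j)$ of $Z$ belongs to $\A$. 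Since $\A$ is open, a basic neighborhood of $X$ lies in $\A$, so for some $j$ every normalized block sequence agreeing with $X$ on its first $j$ terms lies in $\A\subseteq\A_\Delta$; but then player II wins $G^*[Z]$ from position $X\restr j$ by playing arbitrary block vectors of I's subspaces, so $Z$ accepts $X\restr j$, contradicting that $Z$ rejects it, as $X\restr j$ is a finite block sequence of $Z$. Hence $Z$ satisfies alternative (i). In either case the dichotomy holds; the passage to analytic $\A$ is the routine bootstrapping indicated above, a witnessing branch through the representing tree replacing the open neighborhood in the final contradiction.
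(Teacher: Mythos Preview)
Your proposal is essentially Gowers' original argument and is correct in outline, though the tolerance bookkeeping is loose: with ``accepts'' defined relative to a \emph{fixed} $\A_\Delta$, perturbing $v$ to a nearby net point $w$ only yields acceptance into a further expansion, so one must either let the target expand along the fusion (as you hint with ``up to the enlarged tolerance'') or, equivalently, define accept/reject at level $|s|$ relative to a level-dependent tolerance $\Delta(|s|)$, with the $\eps_{|s|}$'s absorbed into the gaps between successive levels. The fusion step likewise hides work: the finite block positions built from $z_0,\dots,z_{k-1}$ form an infinite set (block vectors in their span), so one must net again at each stage. These are standard and you clearly know they are there.

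The paper does not prove Gowers' theorem directly --- it is cited as background --- but its proof of the local version (Theorem~\ref{thm:local_Gowers}), specialized to the trivial family $\LH=\bb^\infty_1(B)$, yields a genuinely different derivation. The paper first passes to a countable subfield and proves a Rosendal-type dichotomy (Theorem~\ref{thm:local_Rosendal*}): either II has a strategy in $G[Y]$ into $\A_\Delta$, or I has a strategy in the \emph{infinite asymptotic game} $F[Y]$ into $(\A_{\Delta/2})^c$. The combinatorial forcing here (good/bad/worse, Lemma~\ref{lem:good*_worse*}) is analogous to your accept/reject, but the ``reject'' conclusion is packaged as a strategy for I in $F[Y]$ rather than as alternative~(i) directly. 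Only afterwards is I's $F[Y]$-strategy converted into alternative~(i), via Rosendal's spread theorem (Theorem~\ref{thm:Rosendal_spread}), which shows that any such strategy is dominated by a sequence of finite intervals, so that every sufficiently spread block subsequence of $Y$ lands in the target. Your route weaves the $\Delta$-approximation through the propagation lemma and aims straight at alternative~(i); the paper's route isolates the approximation in a single final step, and the intermediate asymptotic-game dichotomy is precisely what makes the localization to families $\LH$ transparent.
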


Loosely speaking, this result says that for $\A$ as described, there is a block sequence $Y$ such that either all of $Y$'s normalized block subsequences are disjoint from $\A$, or there is a wealth of block subsequences of $Y$ which are within a small perturbation of $\A$. This was used, together with work of Komorowski and Tomczak-Jaegerman \cite{MR1324462}, to solve (affirmatively) the homogeneous space problem.

In the setting of a discrete countably infinite-dimensional vector space $E$ over a countable field, Rosendal isolated an ``exact'' version of Gowers' dichotomy which yields a much simplified proof of the original result:

\begin{nonumthm}[Rosendal \cite{MR2604856}]
	If $\A$ is an analytic set of block sequences in $E$, then there is a block sequence $Y$ such that either
	\begin{enumerate}[label=\rm{(\roman*)}]
		\item I has a strategy in the infinite asymptotic game $F[Y]$ for playing into $\A^c$, or
		\item II has a strategy in the Gowers game $G[Y]$ for playing into $\A$.
	\end{enumerate}
\end{nonumthm}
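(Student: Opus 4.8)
The plan is to prove this by combinatorial forcing, in the tradition of Galvin--Prikry, Mathias, and Ellentuck, following the way Gowers organized the proof of his dichotomy but extracting the ``exact'' conclusions; the analytic case is handled by reducing it to a closed one via an unfolding trick.

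\emph{Reduction to closed payoff.} Since $\A$ is analytic, write $\A = p[F]$ with $F$ closed in the space of pairs $(z,\sigma)$, $z$ a block sequence and $\sigma \in \N^\N$, and $p$ the first-coordinate projection. Play both games in \emph{unfolded} form: in the unfolded Gowers game II also builds $\sigma \in \N^\N$, one coordinate per move, and wins iff $(z,\sigma) \in F$; similarly for the asymptotic game. These unfolded games have closed payoff for II, hence by the Gale--Stewart theorem they are determined. A winning strategy for II in the unfolded Gowers game below $Y$ projects to one in $G[Y]$ for playing into $\A$; a winning strategy for I in the unfolded asymptotic game below $Y$ for \emph{avoiding} $F$ projects to one in $F[Y]$ for playing into $\A^c$, since it defeats every $\sigma$ and so forces the block sequence outside $p[F] = \A$. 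Thus it suffices to prove the dichotomy when $\A$ is closed, and I will suppress the auxiliary coordinate from now on.

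\emph{Combinatorial forcing.} For a finite block sequence $s$ and an infinite block sequence $X$ with $s$ below $X$, say $X$ \emph{accepts} $s$ if II has a winning strategy in the Gowers game below $X$ from the position $s$ for playing into $\A$; say the pair $(s,X)$ \emph{rejects} if no $Y \le X$ accepts $s$. Then ``$(s,X)$ rejects'' is inherited by block subspaces of $X$, and ``$X$ accepts $s$'' implies ``$(s,X)$ does not reject.'' The key lemma is an amalgamation: if $(s,X)$ rejects then there is $Y \le X$ such that $(s\concat y, Y)$ rejects for every block vector $y$ of $\langle Y\rangle$ legal after $s$ --- proved by contradiction, by reconstructing a winning Gowers strategy for II below a suitable diagonal subspace out of the hypothetically available local winning strategies. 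Iterating this along an enumeration of the finite block sequences and taking a diagonal fusion, one obtains, given $X$, either some $Y \le X$ that accepts the empty sequence, or some $Y \le X$ such that $(s, Y)$ rejects for \emph{every} finite block sequence $s$ of vectors from $\langle Y\rangle$, with the fusion arranged so that the rejection is robust under passing to tails of $Y$.

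\emph{Reading off the dichotomy.} In the first case, II's winning strategy in the Gowers game below $Y$ (after forgetting the unfolding coordinate) is exactly alternative (ii). In the second case, the tail-robust rejection furnishes, via Gale--Stewart determinacy of the unfolded closed Gowers games, a coherent family of winning strategies for I that the construction packages into a single strategy for I in the asymptotic game $F[Y]$: I's integer moves push II out far enough that the position stays rejected while being driven, after finitely many steps, off the tree of $\A$, so the resulting block sequence lies in $\A^c$ --- alternative (i). I expect this last coordination to be the main obstacle, and it is the heart of both the amalgamation lemma and the read-off: ``acceptance'' is a statement about the Gowers game, in which I may pass to arbitrary block subspaces, whereas alternative (i) affords I only the asymptotic game's integer moves, so the fusion must be engineered so that the reason a block sequence is rejected survives the far coarser control available in $F[Y]$.
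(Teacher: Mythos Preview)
Your combinatorial-forcing section is essentially correct and matches the paper's Lemma~3.3 and~3.4: the accept/reject notions are the paper's good/bad, the amalgamation lemma is the passage from bad to worse, and the fusion is the diagonalization step. The problem is that you have aimed this machinery at the wrong target. The forcing argument, as written, proves the dichotomy only when II's payoff set is \emph{open}: the final inference ``always rejected $\Rightarrow$ outcome $\notin\A$'' uses that membership in an open set is witnessed at a finite stage, so that if the outcome were in $\A$ some initial segment would already guarantee it and hence be trivially accepted. When $\A$ is closed this step simply fails. For a concrete obstruction, take $\A$ to be the set of subsequences of the basis $(e_n)$. Every position $(s,Y)$ rejects, since in the Gowers game I can always feed II a block-mixed subspace containing no basic vector; yet in the asymptotic game $F[(e_n)]$ II can respond to each of I's integers with a single basis vector and land in $\A$. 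So ``keep positions rejected'' is not a strategy for I in $F[Y]$ to avoid a closed $\A$, and invoking Gale--Stewart does not help: it gives I winning strategies in the \emph{Gowers} games at each position, but those strategies play subspaces, and there is no mechanism for packaging them into integer moves.

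The paper (following Rosendal) does not attempt to reduce to closed payoff. It first proves the open case exactly as you describe, where the read-off is valid, and then handles analytic $\A$ by a Souslin-scheme argument: one diagonalizes so that the open-set dichotomy holds for every $\A_s$ in the scheme, and when I fails to win for $\A^c$, II's strategy in $G[Y]$ builds both the block sequence and the witnessing branch $\sigma$ through the tree, switching among the open-case strategies as the branch is revealed. This is morally an unfolding, but done on II's side inside the proof rather than as a preprocessing step. Your unfolding idea could be made to work, but you would then have to run the combinatorial forcing on the \emph{open} complement $F^c$ with the roles carefully tracked---which amounts to the Souslin-scheme argument anyway. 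As it stands, the reduction in your first paragraph sends you to a case your second and third paragraphs cannot close.
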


These dichotomies are analogues, in the Banach space and vector space settings, respectively, of the following result for partitions of $[\N]^\infty$, the set of infinite subsets of the natural numbers.

\begin{nonumthm}[Silver \cite{MR0332480}]
	If $\A\subseteq[\N]^\infty$ is analytic, then there is a $y\in[\N]^\infty$ with either all of its further infinite subsets disjoint from, or contained in, $\A$.
\end{nonumthm}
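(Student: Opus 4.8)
The plan is to prove the formally stronger statement that every analytic $\A \subseteq [\N]^\infty$ is \emph{completely Ramsey}. For a finite set $s \subseteq \N$ and an infinite $A \subseteq \N$ with $\max s < \min A$, write $[s,A] = \{x \in [\N]^\infty : s \subseteq x \subseteq s \cup A\}$ for the associated Ellentuck neighborhood, and let $[A]^\infty$ denote the set of infinite subsets of $A$. "Completely Ramsey" means: for every such $s$ and $A$ there is $B \in [A]^\infty$ with $[s,B] \subseteq \A$ or $[s,B] \cap \A = \emptyset$. Silver's theorem is then exactly the instance $s = \emptyset$ and $A = \N$. The proof splits into (1) a direct combinatorial forcing argument establishing this for sets $\A$ that are open in the usual Polish topology on $[\N]^\infty$ — hence, by taking complements, also for closed $\A$ — and (2) a bootstrapping step showing that the family of completely Ramsey sets is closed under the Souslin operation, which upgrades (1) from closed sets to all analytic sets.

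For step (1), fix an open $\A$. Say an infinite $A$ \emph{accepts} a finite $s$ (below $A$) if $[s,A] \subseteq \A$, \emph{rejects} $s$ if no $B \in [A]^\infty$ accepts $s$, and \emph{decides} $s$ if it does one of the two. The ingredients are: (i) every $A$ has a subset deciding any prescribed $s$, so by a diagonal fusion there is a single $A^\ast$ deciding every finite $s \subseteq A^\ast$; (ii) acceptance passes to one-point extensions — if $A$ accepts $s$ then $A$ accepts $s \cup \{n\}$ for every $n \in A$ with $n > \max s$; and (iii) the one point needing a genuine diagonalization: if $A$ rejects $s$, then some $B \in [A]^\infty$ rejects $s \cup \{n\}$ for every $n \in B$ with $n > \max s$ (otherwise a decreasing sequence of witnesses to non-rejection can be diagonalized into an infinite set accepting $s$, a contradiction). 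Feeding (i)--(iii) through one further fusion yields, for the given $s$ and $A$, a set $B \in [A]^\infty$ which either accepts $s$ — giving $[s,B] \subseteq \A$ — or \emph{hereditarily} rejects, i.e.\ rejects every finite $t$ with $s \subseteq t \subseteq s \cup B$. In the latter case openness of $\A$ finishes the job: any $x \in [s,B] \cap \A$ lies in a basic clopen neighborhood contained in $\A$, and for $k$ large that neighborhood contains $[t, B \setminus [0,k)]$ with $t = x \cap [0,k)$, so $B \setminus [0,k) \in [B]^\infty$ would accept $t$, contradicting rejection; hence $[s,B] \cap \A = \emptyset$.

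For step (2), write $\A = \bigcup_{\sigma \in \N^\N} \bigcap_n F_{\sigma \restr n}$ as the result of the Souslin operation applied to a scheme of closed (indeed, basic clopen) sets $F_t$, $t \in \N^{<\N}$; such a representation exists since $\A$ is analytic. One then shows that the completely Ramsey sets, together with the companion $\sigma$-ideal of \emph{completely Ramsey null} sets, are preserved by countable unions and intersections and, crucially, by the full Souslin operation — so that $\A$, being built from the completely Ramsey sets $F_t$ by that operation, is itself completely Ramsey. I expect this Souslin-operation closure to be the technical heart of the argument and the main obstacle: everything in step (1) is a bounded, concrete fusion, but here one must run a \emph{simultaneous} fusion that diagonalizes against the tree-indexed union, handling all nodes of the scheme at once while keeping the approximations coherent. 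Concretely, I would follow the Galvin--Prikry/Silver combinatorial analysis for this step; alternatively, one can deduce it from Ellentuck's theorem (completely Ramsey $=$ having the Baire property in the Ellentuck topology, completely Ramsey null $=$ being meager there) together with the classical fact that the Souslin operation preserves the Baire property in any topological space, from which it follows that all analytic sets — indeed all sets with the Baire property in the Ellentuck topology — are completely Ramsey.
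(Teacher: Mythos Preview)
The paper does not prove Silver's theorem; it is stated in the introduction as a classical background result, attributed to Silver \cite{MR0332480}, and immediately followed by Mathias' local version and the paper's own contributions. There is therefore no proof in the paper against which to compare your proposal.

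That said, your outline is a correct and standard route to Silver's theorem: the accept/reject combinatorial forcing for open sets is exactly the Galvin--Prikry argument, and the upgrade to analytic sets via closure of the completely Ramsey sets under the Souslin operation (or, equivalently, via Ellentuck's topological characterization together with the fact that the Baire property is preserved under the Souslin operation) is the usual way to finish. One minor remark: your step (2) as written is slightly overstated in that the completely Ramsey sets are not closed under arbitrary countable unions, only under well-behaved ones --- the point is rather that the pair (completely Ramsey, completely Ramsey null) forms a Marczewski pair to which the Souslin-operation closure theorem applies, or else one invokes Ellentuck directly. This does not affect the correctness of the overall strategy.
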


While the theory of topological Ramsey spaces, in the sense of \cite{MR2603812}, encompasses many variations on this result, the dichotomies of Gowers and Rosendal highlighted above do not fall into this framework.

An important generalization of Silver's theorem is the following ``local'' Ramsey theorem, showing that the witness $y$ in the conclusion can always be found in a given selective coideal (or ``happy family''):

\begin{nonumthm}[Mathias \cite{MR0491197}]
	Let $\LH\subseteq[\N]^\infty$ be a selective coideal. If $\A\subseteq[\N]^\infty$ is analytic, then there is a $y\in\LH$ with either all of its further infinite subsets disjoint from, or contained in, $\A$.
\end{nonumthm}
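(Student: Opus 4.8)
We sketch a proof along the classical Galvin--Prikry--Silver lines, localized to $\LH$; selectivity of $\LH$ is used only to drive the fusion (diagonalization) steps. The plan is to reduce to a \emph{localized} dichotomy, prove it for open $\A$ by combinatorial forcing, bootstrap to Borel $\A$, and finally reach analytic $\A$ by unfolding. For the localization: given a finite $s\subseteq\N$ and $B\in\LH$, write $[s,B]=\{x\in[\N]^\infty : s\sqsubseteq x \text{ and } x\subseteq s\cup B\}$, where $s\sqsubseteq x$ means $s$ is an initial segment of the increasing enumeration of $x$. It suffices to prove that for every such pair $(s,B)$ there is $B^*\in\LH$ with $B^*\subseteq B$ and either $[s,B^*]\subseteq\A$ or $[s,B^*]\cap\A=\emptyset$, for then taking $s=\emptyset$ and $B=\N$ (noting $\N\in\LH$, as $\LH$ is nonempty and upward closed) gives the theorem, since $[\emptyset,y]$ is exactly the set of infinite subsets of $y$.

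For the open case I would set up the usual combinatorial forcing: say $(s,B)$ \emph{accepts} $\A$ if $[s,B]\subseteq\A$, and \emph{rejects} $\A$ if no $\LH$-subset $B'\subseteq B$ has $(s,B')$ accepting $\A$. Both notions are downward monotone in $B$, and if $(s,B)$ fails to reject $\A$ then by definition some refinement accepts it, so every pair has a decided $\LH$-refinement. The heart is the fusion lemma: if $(s,B)$ rejects $\A$, then there is $B^*\in\LH$, $B^*\subseteq B$, such that $(t,B^*)$ rejects $\A$ for every $t$ with $s\sqsubseteq t$ and $t\setminus s$ a finite subset of $B^*$. One builds $B^*$ level by level: given that all currently relevant stems $t$ are rejected by some $C\in\LH$, one uses the coideal property of $\LH$ --- if a member of $\LH$ is split into two pieces and one lies outside $\LH$, the other lies in $\LH$ --- together with a diagonalization to pass to $D\in\LH$, $D\subseteq C$, on which rejection propagates to $(t\cup\{n\},D)$ for every surviving $n$; iterating through all finite levels produces a $\subseteq$-decreasing sequence in $\LH$, which one diagonalizes, using the \emph{selectivity} of $\LH$, to the single $B^*$. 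Finally, if $\A$ is open and some $x\in[s,B^*]$ lay in $\A$, then a long enough initial segment $t$ of $x$ would have $[t,B^*]\subseteq\A$ (by openness), so $(t,B^*)$ would accept $\A$, contradicting the fusion lemma; hence $[s,B^*]\cap\A=\emptyset$, while if $(s,B)$ did not reject we land in the accepting case with $[s,B^*]\subseteq\A$.

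To bootstrap, the class of sets obeying the localized dichotomy at every admissible pair is closed under complements (immediately, the dichotomy being symmetric) and under countable unions; the latter is the crux of the Galvin--Prikry induction and is again a fusion argument consuming selectivity --- roughly, one decides each $\A_n$ on a common refinement obtained by diagonalizing, then reduces the resulting union of \emph{fronts} to the open case. Since this class contains the open sets, it contains every Borel set. For analytic $\A$, view $[\N]^\infty$ inside $2^\N$ and write $\A=p[T]=\{x:\exists\beta\in\N^\N\ \forall k\ (x\restr k,\beta\restr k)\in T\}$ for a tree $T$ on $2\times\N$; one reruns the combinatorial forcing on triples $(s,u,B)$, where $u\in\N^{<\N}$ is a finite side condition approximating a potential witness $\beta\supseteq u$, and the same fusion yields $B^*$ on which the unfolded forcing is hereditarily decided, with hereditary rejection now certifying that no $\beta$ witnesses $x\in\A$ for any $x\in[s,B^*]$, so that $[s,B^*]\cap\A=\emptyset$ (and acceptance gives $[s,B^*]\subseteq\A$).

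I expect the main obstacle to be matching the fusion recursions to the precise form in which selectivity is available. At each stage one must shrink the current member of $\LH$ so as to preserve rejection of finitely many stems, discard finitely many subsets of $\N$ lying outside $\LH$, and commit one further element of the set being built, and the $\subseteq$-decreasing sequence so produced must then be certified to admit a pseudo-diagonalization \emph{inside} $\LH$. This is exactly what selectivity --- equivalently, $\LH$ being both a $P$-coideal and a $Q$-coideal --- supplies, but the bookkeeping is delicate, and it is the only place the hypothesis on $\LH$ is used. A secondary technical point is to choose the unfolded forcing relation in the analytic case so that rejection is simultaneously monotone, refinable to a decision, and strong enough for a hereditary rejection to kill every branch of $T$, without falling back on an ordinal rank analysis of $T$.
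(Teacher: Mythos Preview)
The paper does not prove this statement. It is stated in the introduction as a classical result of Mathias, with a citation to \cite{MR0491197}, and serves purely as background and motivation for the paper's own local Ramsey theorems for block sequences (Theorems~\ref{thm:local_Rosendal} and~\ref{thm:local_Gowers}). There is therefore nothing in the paper to compare your proposal against.

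That said, your sketch is the standard Galvin--Prikry--Mathias combinatorial forcing argument and is sound as an outline. A few remarks. First, your bootstrap to Borel is unnecessary once you have the unfolded analytic case: the usual route is to prove the open case, then jump directly to analytic via unfolding (Souslin scheme), since the unfolded argument is itself an open-style fusion on triples $(s,u,B)$. The intermediate ``closed under countable unions'' step is correct but redundant. Second, in the fusion for the open case, the precise use of selectivity is slightly cleaner than you indicate: at each stage one shrinks within $\LH$ so that for the current finitely many stems $t$ and each candidate next element $n$, the pair $(t\cup\{n\},\cdot)$ is decided; this uses only the coideal property. Selectivity (equivalently, the conjunction of the $(p)$- and $(q)$-properties for coideals) is invoked exactly once, to diagonalize the resulting $\subseteq$-decreasing tower into a single $B^*\in\LH$ whose $k$th element lies in the $k$th set of the tower. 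Your description conflates these two steps somewhat, but the content is right. Third, your worry about the unfolded rejection relation is well placed but easily resolved: define $(s,u,B)$ to accept if $[s,B]\subseteq\{x:\exists\beta\sqsupseteq u\,(x,\beta)\in[T]\}$ and reject if no $\LH$-refinement accepts; monotonicity and refinability are immediate, and hereditary rejection along $B^*$ kills every branch because acceptance of $(t,u',B^*)$ for some $u'\sqsupseteq u$ would contradict rejection of $(t,u,B^*)$.
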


By passing to a forcing extension resulting from the L\'evy collapse of a Mahlo cardinal, Mathias extended these results to all partitions $\A$ which are ``reasonably definable", that is, in the definable closure of the reals $\L(\R)$. Later work of Farah and Todorcevic \cite{MR1644345} generalized this to semiselective coideals and showed that under stronger large cardinal hypotheses the passage to a forcing extension is not necessary. The extension of Silver's theorem to all partitions in $\L(\R)$ is due to Shelah and Woodin \cite{MR1074499}. Similar results have been developed recently for topological Ramsey spaces \cite{MR2330595} \cite{local_Ramsey}.

The upshot of obtaining these local results is two-fold: We clearly isolate the combinatorial properties which enable the original dichotomies, and we obtain greater control over the witnesses to said dichotomies.

This latter point was used by Todorcevic \cite{MR1644345} to characterize, under large cardinal hypotheses, selective ultrafilters as being exactly those which are generic for $([\N]^\infty,\subseteq^*)$ over $\L(\R)$. Such ultrafilters are said to possess ``complete combinatorics'', following Blass and Laflamme \cite{MR996504} who used this phrase to describe ultrafilters which are generic over $\L(\R)$ after collapsing a Mahlo cardinal. We instead ask for genericity over $\L(\R)$ of the ground model, at the expense of stronger large cardinal hypotheses.

Using \cite{MR2604856} as a starting point, we develop local versions of Gowers' and Rosendal's dichotomies. When $E$ is a countably infinite-dimensional space with basis $(e_n)$ over some countable field $F$, we isolate in \S\ref{sec:2} \emph{$(p^+)$-families} of block sequences, collections of block sequences closed under certain diagonalizations and witnessing a weak pigeonhole principle, and in \S\ref{sec:3} establish our local form of Rosendal's dichotomy:

\begin{thm}\label{thm:local_Rosendal}
	Let $\LH$ be a $(p^+)$-family of block sequences in $E$. If $\A$ is an analytic set of block sequences and $X\in\LH$, then there is a $Y\in\LH\restr X$ such that either
	\begin{enumerate}[label=\rm{(\roman*)}]
		\item I has a strategy in $F[Y]$ for playing into $\A^c$, or
		\item II has a strategy in $G[Y]$ for playing into $\A$.
	\end{enumerate}
\end{thm}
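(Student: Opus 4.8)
\emph{Plan.} The plan is to adapt Rosendal's proof of the non-local exact dichotomy \cite{MR2604856}, arranging that every diagonalization producing the witness $Y$ takes place \emph{inside} $\LH$; this is precisely what the $(p^+)$ axioms of \S\ref{sec:2} are for, in the same way that selectivity of a coideal drives Mathias' local version of Silver's theorem. The argument splits into a combinatorial core, handling closed $\A$ by a combinatorial forcing in the Galvin--Prikry style, and a routine unfolding reducing the general analytic case to that core.

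\emph{Combinatorial core.} Fix $X\in\LH$ and suppose first that $\A$ is closed. For a finite block sequence $s$ and a block subsequence $Y$ of $X$, say $s$ is \emph{accepted by $Y$} if II has a strategy in the Gowers game $G[Y]$, played below $s$, for producing a continuation $t$ with $s\concat t\in\A$, and \emph{rejected by $Y$} if $s$ is accepted by no member of $\LH\restr Y$. As in \cite{MR2604856}, acceptance passes to $\LH\restr Y$, and if $Y$ rejects $s$ then the block vectors $z$ of $Y$ for which $s\concat z$ is accepted by some member of $\LH\restr Y$ form an $\LH$-small set below $Y$ --- else the corresponding strategies amalgamate, via the pigeonhole property of a $(p^+)$-family, to show $Y$ accepts $s$. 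The key step is then to build, by a fusion enumerating the finite block sequences of $X$ and shrinking within $\LH$ one at a time, a $Y\in\LH\restr X$ such that for every finite block sequence $s$ of $Y$: if $Y$ rejects $s$, then I has a strategy in $F[Y]$ below $s$ for playing into $\A^c$. At each fusion step one either passes to a member of $\LH$ accepting the current $s$, or, while $s$ stays rejected by all further members of $\LH$, uses the $\LH$-smallness above to pass to a member of $\LH$ below which I can issue constraints steering II away from the ``dangerous'' one-step extensions of $s$ and so keep the play rejected --- which, because $\A$ is closed, eventually keeps it out of $\A$; closure of $\LH$ under diagonalization then yields the limit $Y$. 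Granting this, alternative (ii) holds if $\emptyset$ is accepted by $Y$, and alternative (i) holds otherwise. The case of $\A$ open is dual.

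\emph{Analytic case.} For analytic $\A$, fix a tree $T$ on (finite block sequences) $\times\,\N$ with $(z_i)\in\A$ iff there is $\beta\in\N^\N$ with $((z_0,\dots,z_{n-1}),\beta\restr n)\in T$ for all $n$. One re-runs the combinatorial forcing with $\A$ replaced by this representation: ``accepted'' is redefined relative to a node $u$ of the tree of second coordinates, using the unfolded Gowers game in which II also extends $u$ by one coordinate each round and aims to keep the resulting pair in $T$ forever. Since ``staying in $T$'' is a closed condition and the adjoined $\N$-coordinate carries no constraint --- so $\LH$ is untouched --- the core argument applies verbatim and produces $Y\in\LH\restr X$ as before. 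If the empty position is accepted, II's strategy in the unfolded game, with the second coordinate discarded, produces block sequences in $p[T]=\A$ and so witnesses alternative (ii); if it is rejected, the fusion produces a strategy for I in $F[Y]$ which, reading only II's block-vector moves, keeps every initial segment of the play rejected relative to every compatible node of the second-coordinate tree, and so, as in the closed case, forces the outcome out of $p[T]=\A$, witnessing alternative (i).

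\emph{Main difficulty.} Everything of substance is in the combinatorial core, and the crux is the fusion: one must verify that after a suitable shrinking within $\LH$, ``$s$ rejected by $Y$'' already suffices for I to win the \emph{weaker} game $F[Y]$ below $s$ (and not merely the Gowers game), which is exactly where the interplay of the two games and the pigeonhole of a $(p^+)$-family enter, and one must check that closure of $\LH$ under diagonalization is strong enough for the limit of the fusion to remain in $\LH$. The unfolding, by contrast, is essentially bookkeeping and introduces no further appeal to $\LH$.
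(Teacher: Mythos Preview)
Your overall architecture matches the paper's --- combinatorial forcing (good/bad/worse, your accept/reject) localized to $\LH$ via fullness and the $(p)$-property, then a lift to analytic sets --- but you have the open/closed direction reversed at the crucial step, and this is a genuine gap rather than a slip. The assertion ``because $\A$ is closed, eventually keeps it out of $\A$'' does not follow: from ``every initial segment $s$ of the outcome is rejected'' you need to conclude the outcome is in $\A^c$, and this uses that $\A$ is \emph{open}. If $\A$ is open and the outcome $(z_k)$ were in $\A$, some basic neighborhood $[(z_0,\ldots,z_m)]$ would lie entirely inside $\A$, so II would trivially win $G[(z_0,\ldots,z_m),Y]$ by playing anything --- contradicting rejection. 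For closed $\A$ there is no such finite witness: $(z_k)\in\A$ does \emph{not} give II a strategy in $G[(z_0,\ldots,z_m),Y]$, since I can play some $X_0\preceq Y$ with $z_{m+1}\notin\langle X_0\rangle$, so all initial segments can be rejected while the outcome still lands in $\A$. The paper accordingly proves the base case for open $\A$ (Lemma~\ref{lem:local_Rosendal_open}).

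This error then undermines your analytic step. Your tree-unfolding makes II's target the \emph{closed} set $[T]$, so it reduces to the closed base case you have not actually established; there is no symmetry letting you swap the roles, because the dichotomy is asymmetric in the two games $F$ and $G$. The paper avoids this by a different reduction: following Rosendal's Lemma~4/Theorem~5, it writes $\A=\bigcup_n\A_n$ along a Souslin scheme and proves a ``union lemma'' (cf.\ Lemma~\ref{lem:unions_Rosendal*} for the $(p^*)$ version) showing that if I cannot win $F[Y]$ into $\A^c$, then II can steer in $G[Y]$ so that I continues to lack a strategy into $(\A_{s})^c$ along some branch $s$ --- iterating this yields the analytic result from the \emph{open} case. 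Your combinatorial core is salvageable once you switch to open $\A$, but you will then need to replace the unfolding by this Souslin-scheme argument (or something equivalent) for the analytic extension.
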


Stronger properties of families are discussed in \S\ref{sec:4}, notably \emph{strategic} families. The existence of filters with these properties is considered in \S\ref{sec:5} and \S\ref{sec:6}, where their existence is proved to be independent of $\ZFC$.

In \S\ref{sec:7} we show that, under large cardinal hypothesis, strategic $(p^+)$-filters have complete combinatorics for infinite block sequences with the block subsequence ordering, and generalize Theorem \ref{thm:local_Rosendal} to partitions in $\L(\R)$ (the corresponding extension of Gowers' original result is due to L\'{o}pez-Abad \cite{MR2179777}, see also \cite{MR1873008}). This requires an analysis of a Mathias-like notion of forcing used to build generic block sequences.

\begin{thm}\label{thm:gen_over_L(R)}
	Assume that there is a supercompact cardinal. A filter $\LG$ of block sequences in $E$ is $\L(\R)$-generic for the partial ordering of block sequences if and only if it is a strategic $(p^+)$-filter.
\end{thm}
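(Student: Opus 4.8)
\smallskip
\noindent\emph{Proof strategy.} The plan is to prove the two implications separately. The forward implication (genericity $\Rightarrow$ strategic $(p^+)$-filter) is soft and uses no large cardinals; the reverse implication rests on the $\L(\R)$-version of Theorem~\ref{thm:local_Rosendal} and on the extension of strategicity from analytic partitions to partitions in $\L(\R)$ for strategic $(p^+)$-filters, both of which I take for granted here, being the content of the $\L(\R)$-Ramsey results of \S\ref{sec:7} obtained via the analysis of a Mathias-like forcing and the supercompact cardinal.

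\emph{Reverse implication.} Suppose $\LG$ is a strategic $(p^+)$-filter. Being a filter, $\LG$ is $\le^*$-upward closed, so for any dense $D\subseteq\bb E$ the downward closure $D^{\downarrow}=\{Z:(\exists W\in D)\ Z\le^* W\}$ is dense open, lies in $\L(\R)$ if $D$ does, and satisfies $\LG\cap D^{\downarrow}\ne\emptyset\Rightarrow\LG\cap D\ne\emptyset$ (pull the witness $W$ up into $\LG$); hence it suffices to meet every dense open $D\in\L(\R)$. Fix such a $D$ and any $X_0\in\LG$, and apply the $\L(\R)$-version of Theorem~\ref{thm:local_Rosendal} to $D$: there is $Y\in\LG\restr X_0$ such that either I has a winning strategy in $F[Y]$ for $D^c$, or II has a winning strategy in $G[Y]$ for $D$. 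The first alternative fails: every outcome of $F[Y]$ is a block subsequence of $Y$, so, by density choose $Z\le^* Y$ with $Z\in D$; then II, replying to I's moves with successive blocks of a tail of $Z$, forces the outcome to be a block subsequence of $Z$, which lies in the open set $D$, contradicting that I wins into $D^c$. So II has a winning strategy $\tau$ in $G[Y]$ for $D$; by strategicity of $\LG$ in the form extended to $\L(\R)$-partitions, $\LG$ contains an outcome of a $\tau$-play, and since $\tau$ plays into $D$ this outcome lies in $\LG\cap D$.

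\emph{Forward implication.} Suppose $\LG$ is $\L(\R)$-generic for $(\bb E,\le^*)$; in particular it is a filter of block sequences. Since $E$ is countably infinite-dimensional over a countable field, every block sequence is coded by a real and hence lies in $\L(\R)$, and likewise every finite colouring of finite block sequences and every strategy for II occurring in the definitions of \S\ref{sec:2}--\ref{sec:4} is coded by a real and so lies in $\L(\R)$. Thus each instance of the $(p^+)$-diagonalization property, of the weak pigeonhole principle, or of strategicity is determined by a block sequence $X\in\LG$ together with a real parameter $a$; the set consisting of those block sequences that witness the conclusion of that instance below $X$ together with those incompatible with $X$ is then dense in $(\bb E,\le^*)$ by the corresponding ZFC result of those sections, and lies in $\L(\R)$. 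The generic $\LG$ meets it, and the element found, being compatible with $X\in\LG$, is a witness below $X$; ranging over all instances, $\LG$ is a strategic $(p^+)$-filter.

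\emph{Main obstacle.} The density bookkeeping above is routine; the real work is in the two ingredients granted to the reverse implication, namely extending Theorem~\ref{thm:local_Rosendal} and strategicity to partitions in $\L(\R)$. The unfolding argument that proves Theorem~\ref{thm:local_Rosendal} relies on analytic absoluteness, and for an arbitrary $\L(\R)$-partition this has to be replaced by an analysis of the Mathias-like forcing $\M_\LG$ associated to $\LG$: one needs a Prikry-type decomposition of $\M_\LG$ under which the combinatorial-forcing construction of witnesses still runs, and one needs the tree of attempts to build a counterexample to the dichotomy to be suitably absolute between $\L(\R)$ and its $\M_\LG$-generic extensions, which is where the supercompact cardinal enters.
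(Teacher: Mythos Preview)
Your forward implication is correct and matches the paper's Lemma~\ref{lem:forcing_family}: the relevant witnessing sets are $\preceq$-dense (Lemmas~\ref{lem:dense_sets_diag_full} and~\ref{lem:outcomes_strat_dense}), lie in $\L(\R)$, and genericity does the rest.

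Your reverse implication, however, is circular with respect to the paper's development. You want to invoke Theorem~\ref{thm:local_Rosendal_L(R)} (the $\L(\R)$-local Rosendal dichotomy, applied with $\LH=\LG$) to obtain the witness $Y$, but in the paper that theorem is proved \emph{via} Lemma~\ref{lem:L(R)_filters_Rosendal}, whose proof reads: ``Since $\LF$ is $\L(\R)$-generic, $\LF$ must contain such a $Y$'' --- that is, it already uses the reverse direction of Theorem~\ref{thm:gen_over_L(R)}. So Theorem~\ref{thm:local_Rosendal_L(R)} cannot be taken as a black box input to the very theorem it depends on.

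The paper avoids this by proving Theorem~\ref{thm:gen_over_L(R)} directly from a weaker, \emph{symmetric} statement: Lemma~\ref{lem:uB_strat} shows that for any universally Baire $\A$ and any $(p^+)$-filter $\LF$, there is $Y\in\LF$ such that II has a strategy in $G[Y]$ for playing into one of $\A$ or $\A^c$ (no asymmetric claim about I in $F[Y]$ is needed). This symmetric dichotomy is precisely what the Mathias-like forcing $\P(\LF)$ and the analysis in Lemmas~\ref{lem:P_basic_props}--\ref{lem:gen_blocks_Borel_strat} deliver. Under a supercompact, every $\D\in\L(\R)$ is universally Baire (Theorem~\ref{thm:FMW_UBsets}); when $\D$ is $\preceq$-dense open, Lemma~\ref{lem:no_strat_into_dense_comp}(b) rules out the $\D^c$ alternative, and strategicity of $\LG$ finishes exactly as in your last line. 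Only \emph{after} Theorem~\ref{thm:gen_over_L(R)} is in hand does the paper establish Theorem~\ref{thm:local_Rosendal_L(R)}, by using the just-proved genericity to meet the $\L(\R)$-dense set of Rosendal witnesses supplied by Theorem~\ref{thm:L(R)_strat_Ramsey}.
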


\begin{thm}\label{thm:local_Rosendal_L(R)}
	Assume that there is a supercompact cardinal. Let $\LH$ be a strategic $(p^+)$-family of block sequences in $E$. If $\A$ is a set of block sequences in $\L(\R)$ and $X\in\LH$, then there is a $Y\in\LH\restr X$ such that either
	\begin{enumerate}[label=\rm{(\roman*)}]
		\item I has a strategy in $F[Y]$ for playing into $\A^c$, or
		\item II has a strategy in $G[Y]$ for playing into $\A$.
	\end{enumerate}	
\end{thm}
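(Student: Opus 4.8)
The plan is to go back to the proof of Theorem~\ref{thm:local_Rosendal} given in \S\ref{sec:3} and upgrade the single point at which the analyticity of $\A$ is used, following the template by which Mathias, and later Farah and Todorcevic for semiselective coideals (and L\'opez-Abad in the Gowers setting), lifted an analytic local dichotomy to all partitions in $\L(\R)$ by appealing directly to large cardinals rather than to a L\'evy collapse. The combinatorial skeleton of \S\ref{sec:3} --- the combinatorial forcing on finite approximations, the fusion carried out inside $\LH$ using the $(p^+)$ and strategic properties, and the reading-off of winning strategies in the asymptotic game $F[Y]$ and the Gowers game $G[Y]$ --- does not see the complexity of $\A$. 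Analyticity is used only to guarantee that a handful of statements about the generic block sequence (its membership in $\A$, and the existence of winning strategies in the games played into $\A$ or $\A^c$) are absolute between suitable countable elementary submodels and $V$; for analytic $\A$ this is Mostowski absoluteness, and the task is to recover it for $\A\in\L(\R)$.

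This is where the supercompact cardinal enters, through two of its consequences: the theory of $\L(\R)$ with parameters from $\R$ is not changed by set forcing, and every set of reals in $\L(\R)$ is $\infty$-Borel with a code $S\in\L(\R)$, so that ``$z\in\A$'' is computed absolutely within $L[S,z]$. Given a strategic $(p^+)$-family $\LH$ and $X\in\LH$, I would fix such an $S$ for $\A$ and a countable elementary submodel $N\prec H_\theta$, for large $\theta$, containing $\LH$, $\A$, $S$ and the ambient data. As in the analysis of the Mathias-like forcing $\P_\LH$ in \S\ref{sec:7} and in the proof of Theorem~\ref{thm:gen_over_L(R)} --- and this is exactly where the strategic $(p^+)$ hypothesis on $\LH$ does its work, playing the role that semiselectivity plays in Farah--Todorcevic --- one diagonalizes the countably many dense subsets of $\P_\LH$ lying in $N$ to obtain $Y\in\LH\restr X$ all of whose relevant block subsequences $z$ are $\P_{\LH\cap N}$-generic over $N$. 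For such $z$, the forcing theorem applied inside $N$, together with the generic absoluteness of $\L(\R)$ (so that $N$ computes the forcing relation for ``$\dot z\in\dot\A$'' correctly and the value transfers), shows that $N[z]$ decides membership of $z$ in $\A$; and since the code $S$ lies in $N$, that decision is absolute to $V$ by $\infty$-Borelness. Substituting this uniform-in-$z$ absoluteness for the analytic unfolding in the argument of \S\ref{sec:3}, while keeping all the diagonalizations inside $\LH$, produces $Y\in\LH\restr X$ on which the dichotomy is resolved.

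The step I expect to be the main obstacle is making this localization-via-countable-models actually cohere with the game-theoretic dichotomy in the infinite-dimensional setting, which is considerably more intricate than the $[\N]^\infty$ case. The genericity over $N$ supplied by the strategic $(p^+)$ property must be robust enough to control not a single diagonalization but the whole array of approximations generated by the combinatorial forcings associated with $F[Y]$ and $G[Y]$, and the absoluteness of ``$z\in\A$'' has to hold uniformly as $z$ ranges over block subsequences of $Y$, not merely for one real. Verifying that the strategic/fusion machinery of \S\ref{sec:4} and \S\ref{sec:7} interfaces cleanly with the $\infty$-Borel computation of $\A$ --- so that the witness $Y$ genuinely lies in $\LH\restr X$ as computed in the ground model --- is the crux; the remaining pieces (the consequences of the supercompact, the properness and pure decision of $\P_\LH$, and the combinatorial forcing of \S\ref{sec:3} itself) are already in hand.
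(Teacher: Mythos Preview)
Your route differs from the paper's, and the diagnosis of where analyticity enters \S\ref{sec:3} is off. The Souslin scheme $\{\A_s\}$ is not there merely to provide absoluteness: II's strategy in $G[Y]$ is constructed by tracking a branch through the scheme, playing so that at each stage I has no strategy in $F[(z_0,\ldots,z_k),Y]$ into $\A_s^c$ for the current node $s$; the scheme supplies the tree along which II navigates toward $\A$. Replacing the scheme by an $\infty$-Borel code and countable-model genericity does tell you that each generic $z\preceq Y$ has its membership in $\A$ computed correctly, but it does not hand you a substitute tree, and so does not by itself reproduce the asymmetric $F$-versus-$G$ conclusion. You have rightly flagged this interface as the crux, but the proposal does not resolve it; what one recovers along these lines is closer to the symmetric Lemma~\ref{lem:uB_strat} (II has a strategy into $\A$ or into $\A^c$), which the paper explicitly notes is not a genuine dichotomy.

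The paper sidesteps this by a different decomposition. First, Theorem~\ref{thm:L(R)_strat_Ramsey} establishes the \emph{global} (unlocalized) statement that every $\L(\R)$ set is strategically Ramsey, by combining Rosendal's Theorem~\ref{thm:aleph_1_strat_Ramsey} (under $\MA(\aleph_1)$, strategically Ramsey sets are closed under $\aleph_1$-unions) with the Solovay-model structure and generic absoluteness of $\L(\R)$ under a supercompact. Second, Theorem~\ref{thm:gen_over_L(R)} and Lemma~\ref{lem:L(R)_filters_Rosendal} handle strategic $(p^+)$-\emph{filters}: the witnesses to the global dichotomy form a $\preceq$-dense $\L(\R)$ set, and $\L(\R)$-genericity of the filter places one inside it. The proof of Theorem~\ref{thm:local_Rosendal_L(R)} itself is then three lines: force with $(\LH,\preceq^*)$ to obtain a $\V$-generic strategic $(p^+)$-filter $\LG\ni X$ (Lemma~\ref{lem:forcing_family}), apply Lemma~\ref{lem:L(R)_filters_Rosendal} in $\V[\LG]$, and pull the witness $Y\in\LG\restr X$ back to $\V$ since the forcing adds no reals. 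So the paper never re-runs \S\ref{sec:3} for $\L(\R)$ partitions; analyticity is used only once, as input to Theorem~\ref{thm:L(R)_strat_Ramsey}, and localization is achieved by genericity rather than by a direct fusion over a countable submodel.
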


In \S\ref{sec:8} we consider normed vector spaces and Banach spaces. For an infinite-dimensional separable Banach space $B$ with a Schauder basis, we develop the notion of \emph{spread $(p^*)$-families}, similar to the $(p^+)$-families in \S\ref{sec:2}, and establish the following local form of Gowers' dichotomy and its extension to $\L(\R)$:

\begin{thm}\label{thm:local_Gowers}
	Let $\LH$ be a spread $(p^*)$-family of normalized block sequences in $B$ which is invariant under small perturbations. If $\A$ is an analytic set of normalized block sequences and $X\in\LH$, then for any $\Delta>0$, there is a $Y\in\LH\restr X$ such that either
	\begin{enumerate}[label=\rm{(\roman*)}]
		\item every normalized block subsequence of $Y$ is in $\A^c$, or
		\item II has a strategy in $G^*[Y]$ for playing into $\A_{\Delta}$.
	\end{enumerate}
\end{thm}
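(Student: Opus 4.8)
The plan is to deduce Theorem~\ref{thm:local_Gowers} from the exact local dichotomy of Theorem~\ref{thm:local_Rosendal} by passing to a countable discrete vector space and transferring both alternatives back to $B$ by a perturbation argument, in the spirit of Rosendal's derivation of Gowers' dichotomy from his exact version. Let $E$ be the countable $\Q$-linear span of the Schauder basis of $B$ (use $\Q[i]$ if $B$ is complex), viewed as a discrete space with that basis as in \S\ref{sec:2}; then the block vectors of $E$ are dense among those of $B$, normalizing a block vector of $E$ gives a normalized block vector of $B$, and the block sequences of $E$ form a Polish space on which $Z=(z_i)\mapsto(z_i/\|z_i\|)_i$ is continuous. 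The first step is to verify that
\[
\LH^\flat=\{\,Z\text{ a block sequence in }E:(z_i/\|z_i\|)_i\in\LH\,\}
\]
is a $(p^+)$-family of block sequences in $E$ whose diagonalizations match those of $\LH$, and that $X$ induces an $X^\flat\in\LH^\flat$. This is exactly the bookkeeping that the hypotheses ``spread'' and ``invariant under small perturbations'' are designed to enable: perturbation-invariance keeps the normalizations of block sequences in $E$ inside $\LH$, and spreading absorbs the shifts of support and the rescaling of coefficients introduced by approximation and normalization.

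Given the analytic $\A$ and $\Delta>0$, put $\tilde\A=\{\,Z\text{ a block sequence in }E:(z_i/\|z_i\|)_i\in\A_{\Delta/2}\,\}$; this is analytic, since the $\Delta/2$-neighbourhood of an analytic set is analytic and the normalization map is continuous. Applying Theorem~\ref{thm:local_Rosendal} to $\LH^\flat$, $\tilde\A$ and $X^\flat$ produces $Y^\flat\in\LH^\flat\restr X^\flat$ such that either I has a strategy in $F[Y^\flat]$ for playing into $\tilde\A^c$, or II has a strategy in $G[Y^\flat]$ for playing into $\tilde\A$. I claim that, writing block sequences in $B$ as normalizations of block sequences in $E$, the first case yields alternative (i) and the second yields alternative (ii).

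In the second case, II transfers its strategy in the discrete Gowers game $G[Y^\flat]$ to one in the Banach Gowers game $G^*[Y]$, where $Y=(y^\flat_i/\|y^\flat_i\|)_i\in\LH\restr X$: at each round II approximates I's (normalized) block vector by one from $Y^\flat$ within error $\eps_i$, feeds it to the discrete strategy, normalizes the reply and plays it; taking $\sum_i\eps_i<\Delta/2$ and using $(\A_{\Delta/2})_{\Delta/2}\subseteq\A_\Delta$ shows the outcome lies in $\A_\Delta$, which is (ii). In the first case, a fusion inside $\LH^\flat\restr Y^\flat$, using its $(p^+)$-closure properties as in the proof of Theorem~\ref{thm:local_Rosendal}, produces $Z^\flat\in\LH^\flat$ with $Z^\flat\le Y^\flat$ — hence $Z^\flat\in\LH^\flat\restr X^\flat$ — all of whose block subsequences lie in $\tilde\A^c$. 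Then, given any normalized block subsequence $V$ of $Z:=(z^\flat_i/\|z^\flat_i\|)_i\in\LH\restr X$, rationally approximating the coefficients of each $v_i$ yields a block subsequence $V^\flat$ of $Z^\flat$ with $(v^\flat_i/\|v^\flat_i\|)_i$ arbitrarily close to $V$; since the former is at distance $\ge\Delta/2$ from $\A$, we get $V\notin\A$, so every normalized block subsequence of $Z$ lies in $\A^c$, which is (i) (renaming $Z$ as $Y$).

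The crux is obtaining that $Z^\flat$ in the first case: the statement of Theorem~\ref{thm:local_Rosendal} gives only a winning strategy for I in the asymptotic game, and such a strategy may force the play arbitrarily far out — and, over an infinite field, its moves need not depend only on supports — so one cannot simply thin $Y^\flat$ to out-run it. The standard remedy, due to Rosendal (and, for the $\L(\R)$ extension, L\'opez-Abad), is to unfold the analytic set $\tilde\A$ to a closed set and run the associated combinatorial ``accepting/rejecting'' forcing, the $\tilde\A^c$ alternative corresponding to $Y^\flat$ rejecting the empty block sequence; the fusion that converts this into a single $Z^\flat$ all of whose finite block extensions are rejected must be carried out within $\LH^\flat$, which is precisely where its $(p^+)$-property is needed. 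This local fusion, together with checking that $\LH^\flat$ is a genuine $(p^+)$-family, is the main work; the perturbation estimates are then routine.
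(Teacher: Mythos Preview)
Your plan has a genuine gap at its first and most load-bearing step: the assertion that $\LH^\flat$ (or, in the paper's setup, $\tilde{\LH}=\LH\cap\bb^\infty_1(E_F)$) is a \emph{$(p^+)$-family}, i.e., is \emph{full}. The hypotheses only give that $\LH$ is \emph{almost full}, and perturbation-invariance plus spread do not upgrade this to fullness: given $D\subseteq S(E_F)$ that is $\tilde{\LH}$-dense below $X$, almost fullness yields for each $\eps>0$ some $W_\eps\in\tilde{\LH}\restr X$ with $S(\langle W_\eps\rangle)\subseteq D_\eps$, but there is no mechanism to pass to $\eps=0$, and perturbation-invariance of $\LH$ says nothing about $D$. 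The paper confirms this is not a routine matter: Lemma~\ref{lem:p*_p+_Bspace} only shows $\tilde{\LH}$ is a $(p^*)$-family, and Proposition~\ref{prop:almost_full_full}---the one place the paper does upgrade almost full to full---requires the stronger hypothesis \emph{strategic} (not merely \emph{spread}) and a forcing argument via Lemma~\ref{lem:uB_strat*}; the accompanying footnote explicitly says an elementary proof, even under ``strategic'', is not known. Since Theorem~\ref{thm:local_Rosendal} needs a genuine $(p^+)$-family, your appeal to it is unjustified.

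There is a second, independent problem in your handling of alternative (i). You claim a fusion inside $\LH^\flat$ produces $Z^\flat$ \emph{all of whose block subsequences} lie in $\tilde{\A}^c$, but in the discrete setting this conclusion is simply unavailable: the failure of pigeonhole (Example~\ref{ex:asym_sets}) means ``I has a strategy in $F[Y^\flat]$ for $\tilde{\A}^c$'' cannot in general be strengthened to ``$[Z^\flat]\subseteq\tilde{\A}^c$'', and no accepting/rejecting fusion changes this. The paper's route is different: it proves a $(p^*)$-version of the dichotomy (Theorem~\ref{thm:local_Rosendal*}) with built-in $\Delta$-expansions, and then in case (i) applies Rosendal's interval lemma (Theorem~\ref{thm:Rosendal_spread}) together with \emph{spread} to get $[Z]\subseteq((\A_{\Delta/2})^c)_{\Delta/2}\subseteq\A^c$. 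The slack in the $\Delta$-expansion is exactly what absorbs the gap between ``I wins $F$'' and ``all block subsequences''; working purely in $\bb^\infty(E)$ with Theorem~\ref{thm:local_Rosendal} throws this slack away.
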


\begin{thm}\label{thm:local_Gowers_L(R)}
	Assume that there is a supercompact cardinal. Let $\LH$ be a strategic $(p^*)$-family of normalized block sequences in $B$ which is invariant under small perturbations. If $\A$ is a set of normalized block sequences in $\L(\R)$ and $X\in\LH$, then for any $\Delta>0$, there is a $Y\in\LH\restr X$ such that either
	\begin{enumerate}[label=\rm{(\roman*)}]
		\item every normalized block subsequence of $Y$ is in $\A^c$, or
		\item II has a strategy in $G^*[Y]$ for playing into $\A_{\Delta}$.
	\end{enumerate}
\end{thm}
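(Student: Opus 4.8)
The plan is to deduce Theorem~\ref{thm:local_Gowers_L(R)} from Theorem~\ref{thm:local_Rosendal_L(R)} by the same perturbation-and-discretization reduction that, in \S\ref{sec:8}, derives the analytic local Gowers dichotomy (Theorem~\ref{thm:local_Gowers}) from the analytic local Rosendal dichotomy (Theorem~\ref{thm:local_Rosendal}); equivalently, one reruns the proof of Theorem~\ref{thm:local_Gowers} with ``analytic'' replaced by ``in $\L(\R)$'' everywhere. The point is that this reduction uses the dichotomy only as a black box and otherwise performs only operations under which $\L(\R)$ is closed, so it transfers once Theorem~\ref{thm:local_Rosendal_L(R)} is available; the supercompact hypothesis is consumed entirely inside that theorem and is not invoked again.

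First I would set up the discretization. Fix the Schauder basis $(e_n)$ of $B$, a countable dense set $F_0$ of scalars, let $E$ be the resulting countable $F_0$-linear span of $(e_n)$, and fix $\Delta>0$. Using a fixed sequence of finite nets with accumulated error kept below $\Delta$, one associates to the given set $\A$ of normalized block sequences of $B$ a set $\tilde\A$ of block sequences of $E$ such that a strategy for II in $G[\tilde Y]$ into $\tilde\A$ converts to a strategy for II in $G^*[Y]$ into $\A_{\Delta}$, while a strategy for I in $F[\tilde Y]$ into $\tilde\A^c$ converts, after one further diagonalization, into a block sequence all of whose normalized block subsequences lie in $\A^c$. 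Since $\A\mapsto\tilde\A$ is explicitly definable from $\A$ together with a real coding $(e_n)$, $F_0$, and the chosen nets, we get $\tilde\A\in\L(\R)$ whenever $\A\in\L(\R)$. In parallel, the family hypotheses are chosen precisely so that a strategic $(p^*)$-family $\LH$ of normalized block sequences in $B$ that is invariant under small perturbations induces a strategic $(p^+)$-family $\tilde\LH$ of block sequences in $E$ (roughly: the $F_0$-rational block sequences that are, up to small perturbation, refinements of members of $\LH$), with perturbation-invariance giving closure under the required diagonalizations and the spread/strategic clauses matching the pigeonhole and strategic requirements of \S\ref{sec:4}; this is the correspondence already used in \S\ref{sec:8}, carrying the strategic strengthening across, and I would reuse it. Given $X\in\LH$, choose $\tilde X\in\tilde\LH$ associated to it.

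Now apply Theorem~\ref{thm:local_Rosendal_L(R)} to $\tilde\A\in\L(\R)$, the strategic $(p^+)$-family $\tilde\LH$, and $\tilde X$: there is $\tilde Y\in\tilde\LH\restr\tilde X$ such that either I has a strategy in $F[\tilde Y]$ into $\tilde\A^c$, or II has a strategy in $G[\tilde Y]$ into $\tilde\A$. In the second case the net correspondence turns this into a strategy for II in $G^*[Y]$ into $\A_{\Delta}$ for the $Y\in\LH\restr X$ that $\tilde Y$ perturbs, giving conclusion~(ii). In the first case one plays out I's asymptotic-game strategy against all refinements of $\tilde Y$ (Rosendal's argument) and uses the strategic $(p^*)$ closure of $\LH$ together with its perturbation-invariance to produce a $Y\in\LH\restr X$ every normalized block subsequence of which lies in $\A^c$, giving conclusion~(i).

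The main obstacle will be the bookkeeping in this first case: one must simultaneously run I's strategy in the infinite asymptotic game to extract a refinement avoiding $\tilde\A$, keep the resulting block sequence inside $\LH$ itself rather than merely inside a perturbation of it, and control the accumulated net and perturbation errors so that ``avoiding $\tilde\A$'' genuinely yields normalized block subsequences avoiding $\A$ and not just $\A_{\Delta}$. Reconciling these three demands is exactly what the \emph{strategic} strengthening of $(p^*)$ and invariance under small perturbations are designed for; the substantive check is that the \S\ref{sec:8} argument goes through verbatim with ``analytic'' replaced by ``in $\L(\R)$'', which it does because everything after the appeal to Theorem~\ref{thm:local_Rosendal_L(R)} is purely combinatorial, together with the secondary verification that $\tilde\LH$ is genuinely strategic $(p^+)$, inherited from the spread/strategic clauses as in the analytic development.
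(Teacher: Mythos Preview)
Your overall plan is correct and coincides with one of the two routes the paper itself indicates: the proof of Theorem~\ref{thm:local_Gowers_L(R)} is literally ``similar to that of Theorem~\ref{thm:local_Gowers}, using Theorem~\ref{thm:local_Rosendal_L(R)*}, or alternatively, Proposition~\ref{prop:almost_full_full} and Theorem~\ref{thm:local_Rosendal_L(R)}.'' You are taking the second alternative, and your description of the discretize--apply--perturb-back scheme is accurate.

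One point of imprecision is worth flagging. You assert that the restricted family $\tilde\LH=\LH\cap\bb^\infty_1(E_F)$ is a strategic $(p^+)$-family and say this is ``inherited from the spread/strategic clauses as in the analytic development.'' That is not quite how it goes in \S\ref{sec:8}: Lemma~\ref{lem:p*_p+_Bspace} only shows $\tilde\LH$ is a strategic $(p^*)$-family, i.e.\ \emph{almost} full, and the analytic proof of Theorem~\ref{thm:local_Gowers} never needs more than that (it invokes Theorem~\ref{thm:local_Gowers_normed}, which is stated for $(p^*)$-families). The upgrade from almost full to full is exactly Proposition~\ref{prop:almost_full_full}, which uses invariance under small perturbations together with the forcing/universally-Baire machinery of Lemma~\ref{lem:uB_strat*}; it is not an innocent bookkeeping check. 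So your route is: Lemma~\ref{lem:p*_p+_Bspace} gives $\tilde\LH$ strategic $(p^*)$ and perturbation-invariant, Proposition~\ref{prop:almost_full_full} upgrades to $(p^+)$, then Theorem~\ref{thm:local_Rosendal_L(R)} applies, and finally Theorem~\ref{thm:Rosendal_spread} plus Lemma~\ref{lem:strong_p_spread} handle your ``first case'' to land in~(i). The paper's primary route instead stays entirely in the $(p^*)$ world and appeals to Theorem~\ref{thm:local_Rosendal_L(R)*}, avoiding the detour through Proposition~\ref{prop:almost_full_full}; both arrive at the same place.
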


It is our hope that Theorem \ref{thm:local_Gowers} will afford new applications of the techniques introduced by Gowers in \cite{MR1954235} to obtain block sequences in Banach spaces with simultaneous properties, some captured by the target set $\A$, while others by the family $\LH$.

In \S\ref{sec:9} we apply these results to the study of the projections in the Calkin algebra, the quotient of the bounded operators $\LB(H)$ on a Hilbert space $H$ by the compact operators. The natural ordering on projections in the Calkin algebra induces an ordering $\leq_\ess$ on $\LP_\infty(H)$, the infinite-rank projections in $\LB(H)$. We give a version of Theorem \ref{thm:gen_over_L(R)} for filters in this ordering:

\begin{thm}\label{thm:Calkin_L(R)_gen}
	Assume that there is a supercompact cardinal. A filter $\LG$ in $(\LP_\infty(H),\leq_\ess)$ is $\L(\R)$-generic if and only if projections onto block subspaces are $\leq_\ess$-dense in $\LG$ and the associated family of block sequences in $H$ is a strategic $(p^*)$-family.
\end{thm}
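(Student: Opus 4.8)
The plan is to reduce the statement, in both directions, to Theorem \ref{thm:gen_over_L(R)} applied to the Hilbert space $H$ viewed as a countably infinite-dimensional vector space with a fixed orthonormal basis $(e_n)$, exploiting the correspondence between infinite-rank projections in $\LB(H)$ and closed infinite-dimensional subspaces of $H$, and, for projections onto block subspaces, the further correspondence with block sequences of $(e_n)$. The key technical point underlying this reduction is that $P \leq_\ess Q$ holds (i.e.\ $P - PQ$, equivalently $Q^\perp P$, is compact) precisely when the range of $P$ is, modulo a finite-dimensional/compact perturbation, contained in the range of $Q$; and when $P$, $Q$ are projections onto block subspaces $[X]$, $[Y]$ with $X$, $Y$ block sequences, this is equivalent (after absorbing the small perturbation, using that $\LH$ is required to be invariant under small perturbations in the spread $(p^*)$ setting and that being a block subsequence is cofinal) to $X$ being almost a block subsequence of $Y$. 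So the map $P \mapsto$ (the block sequence whose closed span is $\ran P$), defined on the $\leq_\ess$-dense set of block-subspace projections, is an order isomorphism onto the block sequences with the block-subsequence ordering, modulo finite perturbations on both sides.

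First I would set up this dictionary carefully: fix $(e_n)$, and to a filter $\LG$ in $(\LP_\infty(H), \leq_\ess)$ in which block-subspace projections are $\leq_\ess$-dense, associate the collection $\LG^* = \{ X : X$ a block sequence, $P_{[X]} \in \LG\}$, and conversely. One checks that $\LG^*$ is a filter in the block-subsequence ordering iff $\LG$ is a filter and block-subspace projections are dense in it, and that $\LG$ is recovered from $\LG^*$ as the upward $\leq_\ess$-closure of $\{P_{[X]} : X \in \LG^*\}$. The phrase ``the associated family of block sequences'' in the statement is exactly $\LG^*$.

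Next, for the forward direction, assume $\LG$ is $\L(\R)$-generic for $(\LP_\infty(H), \leq_\ess)$. I would argue that block-subspace projections are automatically $\leq_\ess$-dense: the set of such projections is in $\L(\R)$ (indeed definable), and below any $P \in \LP_\infty(H)$ one can find, by a standard Gram--Schmidt-type argument against the basis $(e_n)$, a block subspace whose projection is $\leq_\ess P$ — so this set is dense and hence meets $\LG$; moreover genericity through the dense set of projections $\leq_\ess$-below a given block-subspace projection that are themselves block-subspace projections shows the block-subspace projections in $\LG$ are $\leq_\ess$-dense in $\LG$. Then I would show $\LG^*$ is $\L(\R)$-generic for the block-subsequence ordering: given a dense $D \in \L(\R)$ of block sequences, the set $\{ P \in \LP_\infty(H) : \exists X \in D, P \leq_\ess P_{[X]}\}$ is in $\L(\R)$ and $\leq_\ess$-dense (using density of block-subspace projections and density of $D$), so it meets $\LG$, which forces $\LG^*$ to meet $D$. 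By Theorem \ref{thm:gen_over_L(R)}, $\LG^*$ is a strategic $(p^+)$-family; in the Hilbert space, normalizing is harmless and the relevant notion is the strategic $(p^*)$-family of \S\ref{sec:8}, so $\LG^*$ is as required. For the converse, assume block-subspace projections are $\leq_\ess$-dense in $\LG$ and $\LG^*$ is a strategic $(p^*)$-family; then $\LG^*$ is a filter, and by Theorem \ref{thm:gen_over_L(R)} (in its normed/Hilbert incarnation) it is $\L(\R)$-generic for the block-subsequence ordering. Reversing the density translation above, any $\leq_\ess$-dense $E \in \L(\R)$ yields a dense-in-$\L(\R)$ set of block sequences $X$ with $P_{[X]}$ refining an element of $E$, which $\LG^*$ meets, so $\LG$ meets $E$; hence $\LG$ is $\L(\R)$-generic.

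The main obstacle I anticipate is making the ``modulo compact perturbation'' bookkeeping in the order isomorphism fully rigorous: one must verify that $P_{[X]} \leq_\ess P_{[Y]}$ is genuinely equivalent to $X$ being a block subsequence of $Y$ up to a finite alteration (the subtlety being that $\leq_\ess$ only sees the ranges up to compacts, so two block sequences with the same closed span up to finite-dimensional perturbation must be identified, which is why the $(p^*)$-family must be invariant under the perturbations considered in \S\ref{sec:8}), and that the cofinal/dense sets constructed on the Calkin side are genuinely in $\L(\R)$ — this is where one uses that the maps $P \mapsto \ran P$ and the projection-onto-block-subspace construction are sufficiently definable (Borel, in fact) to be absolute for $\L(\R)$. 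Once this dictionary and its definability are in place, both directions are essentially a translation of Theorem \ref{thm:gen_over_L(R)}.
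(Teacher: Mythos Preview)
There is a genuine gap. Your dictionary rests on the claim that, for block sequences $X,Y$, one has $P_{[X]}\leq_\ess P_{[Y]}$ if and only if $X\preceq^* Y$ (up to finite alteration). This is false: take $Y=(e_{2n})$ and $X=(x_n)$ with $x_n$ the normalization of $e_{2n}+2^{-n}e_{2n+1}$. Then $\|x_n-e_{2n}\|$ is summable, so by Lemma~\ref{lem:sumbl_pert} we have $P_X\equiv_\ess P_Y$, yet no tail of $X$ is a block subsequence of $Y$. Thus $(\bb^\infty_1(E),\leq_\ess)$ and $(\bb^\infty_1(E),\preceq^*)$ are genuinely different preorders, and in particular the associated family $\LG^*$ of block sequences need not be a $\preceq^*$-filter, so you cannot feed it into Theorem~\ref{thm:gen_over_L(R)} (which concerns $\preceq^*$-filters over a countable field). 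The ``normed/Hilbert incarnation'' of Theorem~\ref{thm:gen_over_L(R)} you appeal to is not available as stated.

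What the paper actually uses is weaker than an order isomorphism: Lemmas~\ref{lem:diag_ess_dense} and~\ref{lem:ess_dense_block_dense} show only that the two orderings share the same dense open sets (and that below any $\leq_\ess$-bound of a $\preceq$-descending chain there is a $\preceq^*$-diagonalization). The $(\Rightarrow)$ direction then checks directly that the dense sets witnessing fullness, the $(p)$-property, and being strategic (those of Lemmas~\ref{lem:dense_sets_diag_full} and~\ref{lem:outcomes_strat_dense}) are $\leq_\ess$-dense and in $\L(\R)$, so met by $\LG$; this is Theorem~\ref{thm:ess_L(R)_gen}(a). The $(\Leftarrow)$ direction does not go through Theorem~\ref{thm:gen_over_L(R)} at all: given a $\leq_\ess$-dense open $\LD\in\L(\R)$, one observes $\LD$ is $\preceq$-dense open and $\equiv_\ess$-invariant (hence $\LD_\Delta=\LD$ for summable $\Delta$), and then invokes the local Gowers machinery for $\L(\R)$ sets (Theorem~\ref{thm:local_Rosendal_L(R)*} or Theorem~\ref{thm:local_Gowers_L(R)}) together with $\LG$ being strategic to land in $\LD$. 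Your translation-of-dense-sets argument in the converse direction is close in spirit to this, but it still needs the $\L(\R)$ local dichotomy rather than a reduction to Theorem~\ref{thm:gen_over_L(R)}.
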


Generic filters for $(\LP_\infty(H),\leq_\ess)$ induce \emph{pure states} on $\LB(H)$, via the theory of \emph{quantum filters} introduced by Farah and Weaver \cite{FarahAST}. It is known that these generic pure states are not pure on any atomic maximal abelian self-adjoint subalgebra (essentially due to Farah and Weaver \cite{FarahAST}), and are thus counterexamples to a conjecture of Anderson \cite{MR672813}. We show that any family satisfying the hypotheses of Theorem \ref{thm:local_Gowers} and generating a pure state on $\LB(H)$ produces such a counterexample. We caution that our counterexamples remain beyond $\ZFC$.

\begin{thm}\label{thm:pure_states}
	A spread $(p^*)$-family $\LH$ of block sequences in $H$ which is $\leq_\ess$-centered induces a singular pure state $\rho$ on $\LB(H)$ which is not pure on any atomic maximal abelian self-adjoint subalgebra.
\end{thm}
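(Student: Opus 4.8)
We will realize $\rho$ as the pure state associated to the quantum filter generated by $\LH$ in the sense of Farah and Weaver \cite{FarahAST}, and then follow their treatment of Anderson's conjecture, using the combinatorics of $\LH$ to supply the genericity their argument needs.

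\emph{Construction of $\rho$.} For $X=(x_n)\in\LH$ let $P_{[X]}$ be the orthogonal projection of $H$ onto the closed block subspace $[X]$. The hypothesis that $\LH$ is $\leq_\ess$-centered says exactly that any finitely many of the $P_{[X]}$ admit a common lower bound modulo the compacts; together with the observation that $[X']\subseteq[X]$ forces $P_{[X]}P_{[X']}P_{[X]}=P_{[X']}$, this shows that $\{P_{[X]}:X\in\LH\}$ generates a quantum filter $\mathcal F$. To see that $\mathcal F$ \emph{canonically} determines a pure state I would verify the stabilization property: for every self-adjoint $a\in\LB(H)$, every $X\in\LH$ and every $\eps>0$ there are $Y\in\LH\restr X$ and $\lambda\in\R$ with $\|P_{[Y]}aP_{[Y]}-\lambda P_{[Y]}\|<\eps$. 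This is where the spread $(p^*)$ hypothesis enters: colour the normalized block sequences according to which of finitely many $\eps$-intervals contains $\langle a z_0,z_0\rangle$, $z_0$ the first vector — these classes are Borel, hence analytic — and apply the weak pigeonhole built into the $(p^*)$-family (and, where needed, Theorem \ref{thm:local_Gowers}) to each colour, followed by the standard argument that passes from control of the leading vector to control of the whole restriction $P_{[Y]}aP_{[Y]}$: if $\langle a u,u\rangle\approx\lambda\|u\|^2$ for every finite block vector $u\in[Y]$ — which follows from homogeneity once $\LH$ is closed under prepending such a $u$ and passing to a tail — then polarization kills the off-diagonal entries, giving $P_{[Y]}aP_{[Y]}\approx\lambda P_{[Y]}$. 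With stabilization, $\rho(a):=\lim_{Y\in\mathcal F}\lambda(Y)$ (a limit along the filter) is a well-defined pure state with $\mathcal F=\{p:\rho(p)=1\}$, by the Farah--Weaver correspondence between maximal quantum filters and singular pure states.

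\emph{Singularity.} Every finite-rank projection $q$ is supported on $\{e_0,\dots,e_N\}$ for some $N$; since $\LH$ is a $(p^*)$-family it contains, below any prescribed element, some $X$ with $\min\supp(x_0)>N$, so $P_{[X]}\perp q$ and $\rho(q)=0$. As the finite-rank self-adjoint operators — real-linear combinations of finite-rank projections — are norm dense in the self-adjoint part of $\LK(H)$, we get $\rho\restr\LK(H)=0$, i.e. $\rho$ is singular.

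\emph{$\rho$ is not pure on any atomic MASA, and where the difficulty lies.} Suppose $\mathcal A\subseteq\LB(H)$ is an atomic MASA with minimal projections $q_n$ onto lines $\C f_n$, $(f_n)$ orthonormal, and suppose $\rho\restr\mathcal A$ were pure. Then $\mathcal U:=\{S\subseteq\N:\rho(\sum_{n\in S}q_n)=1\}$ is an ultrafilter, free because $\rho$ is singular (so $\rho(q_n)=0$). Working in the GNS representation $\pi$ of $\rho$ with cyclic vector $\xi$, we have $\pi(P_{[X]})\xi=\xi$ for every $X\in\LH$ and $\pi(\sum_{n\in S}q_n)\xi=\xi$ for every $S\in\mathcal U$; feeding the diagonal projections $\sum_{n\in S}q_n$ into the stabilization property then yields, for every $S\in\mathcal U$ and every $X_0\in\LH$, some $X\in\LH\restr X_0$ with $[X]$ contained, up to an arbitrarily small perturbation, in $\overline{\linspan}\{f_n:n\in S\}$. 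On the other hand, singularity forces the $(e_i)$-coordinates of $\mathcal U$-most of the $f_n$ to escape to infinity. The contradiction is then the Farah--Weaver fact that block subspaces of $H$ are ``nowhere diagonalizable'': no orthonormal basis $(f_n)$ and ultrafilter $\mathcal U$ can be coherent in this way with a filter of block projections, the role of the spread $(p^*)$ hypothesis being to supply the genericity (closure under the requisite fusions) to run their argument inside $\LH$. I expect this last step to be the main obstacle. The difficulty is that $\mathcal U$ is an arbitrary — hence not countably complete — ultrafilter, so one cannot simply diagonalize against it in $\omega$ steps: a decreasing $\omega$-sequence of block subspaces in $\LH$ need not have an intersection that $\mathcal U$ can see. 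Instead, as in \cite{FarahAST}, one builds the witnessing block sequence by a fusion, at each finite stage committing to a new block supported far enough out (closure under tails) and invoking the pigeonhole closure of the $(p^*)$-family, arranged so that a $\mathcal U$-large set of the $f_n$ retain a definite fraction of their norm off of the final block subspace, contradicting that its projection lies in $\mathcal F$. Checking that a spread $(p^*)$-family, being closed under exactly these diagonalizations, admits such a fusion is the technical core of the theorem.
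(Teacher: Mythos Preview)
Your proposal has the right architecture --- realize $\rho$ via the Farah--Weaver quantum-filter machinery and use the combinatorics of $\LH$ to supply genericity --- but you are doing substantially more work than the paper does, and the extra work contains a gap.

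For the construction of $\rho$, the paper does not verify stabilization. It simply shows that $\hat{\LH}$, the upwards closure of $\{\pi(P_{[X]}):X\in\LH\}$ in $\LP(\LC(H))^+$, is a \emph{maximal} centered set, and then invokes the Farah--Weaver bijection between maximal centered sets and singular pure states as a black box. Maximality needs only that $\LH$ meet, for each $P\in\LP_\infty(H)$, the co-analytic $\leq_\ess$-dense open set $\LD_P=\{X:P_X\leq_\ess P\text{ or }P_X\bot_\ess P\}$; since $\LD_P$ is then $\preceq$-dense open (Lemma~\ref{lem:ess_dense_block_dense}), Theorem~\ref{thm:local_Gowers_normed} applied to its analytic complement places some $Y\in\LH$ inside it. Your colouring route to stabilization does not go through as sketched: applying the local Gowers dichotomy to the clopen colour classes $\A_i$ yields either $[Y]\subseteq\A_i^c$ or a strategy for II into $(\A_i)_\Delta$, and neither conclusion gives a monochromatic $[Y]\subseteq\A_i$. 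Almost fullness would close the gap only once you know some $D_i$ is $\LH$-dense below $X$, and that in turn requires a separate spectral argument (pass to an infinite-rank spectral projection of $P_YaP_Y$) which you have not supplied.

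For the Anderson part, the paper again avoids a bespoke fusion. Theorem~6.46 of \cite{FarahAST} reduces non-diagonalizability to meeting, for each orthonormal basis $(f_k)$ and each interval partition $\vec{J}=(J_n)$ of $\N$, the Borel $\leq_\ess$-dense open set $\LD_{\vec{J}}=\{X:\forall n\,\|P^{(f_k)}_{J_n\cup J_{n+1}}P_X\|<1/2\}$. Lemma~\ref{lem:ess_dense_block_dense} and Theorem~\ref{thm:local_Gowers_normed} then place an element of $\LH$ in each $\LD_{\vec{J}}$, exactly as for $\LD_P$. The fusion you describe is essentially what goes into showing $\LD_{\vec{J}}$ is dense inside the Farah--Weaver proof; there is no need to reproduce it here, and the delicate interaction with the ultrafilter $\LU$ that you flag as the main obstacle simply does not arise once the problem is reduced to meeting these fixed Borel dense sets.
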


\S\ref{sec:last} concludes the paper with questions for future investigation.

An effort has been made to keep the set-theoretic prerequisites for understanding this work to a minimum with the hope that the material, particularly in \S\ref{sec:3} and \S\ref{sec:8}, may be used for further applications in Banach space and operator theory. We assume a familiarity with the basic properties of Polish spaces, Borel sets, and analytic sets (as covered in \cite{MR1321597}) throughout. We only make explicit use of the method of forcing and large cardinal hypotheses in \S\ref{sec:5} and \S\ref{sec:7}, with occasional reference back to that material in \S\ref{sec:8} and \S\ref{sec:9}. The Banach space prerequisites amount to little more than a familiarity with basic sequences (as covered in the first sections of \cite{MR2192298}).

\section{Families of block sequences}\label{sec:2}

Fix a countable field $F$, a countably infinite-dimensional $F$-vector space $E$, and an Hamel $F$-basis $(e_n)$ for $E$. Typically we will think of $F$ as a subfield of $\C$, but this is not necessary; $F$ may even be finite. Given $v\in E$, say with $v=\sum_{n=0}^Na_ne_n$, let $\supp(v)=\{n\in\N:a_n\neq 0\}$, the \emph{support} of $v$. We write $n<v$ if $n<\min(\supp(v))$ and $v<w$ if $\max(\supp(v))<\min(\supp(w))$. 

We say that a (finite or infinite) sequence $(x_n)$ of non-zero vectors in $E$ is a \emph{block sequence} (with respect to $(e_n)$) if for all $n$, $x_n<x_{n+1}$. If $\vec{x}=(x_0,\ldots,x_n)$ is a finite block sequence, let $\supp(\vec{x})=\bigcup_{i=0}^n\supp(x_i)$, and for $X$ any block sequence, let $\langle X\rangle=\linspan(X)\setminus\{0\}$. We will abuse notation and write $E$ for $E\setminus\{0\}$, and use ``vector'' to mean non-zero vector.

Let $\bb^\infty(E)$ be the collection of all infinite block sequences in $E$, which we consider as a subspace of $E^\N$, where $E$ has the discrete topology. It is easy to check that $\bb^\infty(E)$ is a $G_\delta$ subset of $E^\N$, and thus a Polish space. Let $\bb^{<\infty}(E)$ be the collection of all finite block sequences in $E$.

For $X=(x_n)$ and $Y=(y_n)$ in $\bb^\infty(E)$, we write $X\preceq Y$ if $(x_n)$ is a block sequence with respect to $(y_n)$, sometimes called a \emph{block subsequence} of $Y$, or equivalently (for \emph{block} sequences), $\langle X\rangle\subseteq\langle Y \rangle$. We write $X\preceq^* Y$ if for some $m$, $X/m\preceq Y$, where $X/m$ is the tail of $X$ with supports above $m$. For $\vec{x}\in\bb^{<\infty}(E)$, write $X/\vec{x}$ for $X/\max(\supp(\vec{x}))$. Note that the orderings $\preceq$ and $\preceq^*$ fail to be antisymmetric, but are reflexive and transitive.

We will make repeated use of the following order-theoretic notions: A subset $D$ of a pre-order $(P,\leq)$ (that is, $\leq$ is reflexive and transitive) is \emph{dense} if for all $p\in P$, there is a $q\in D$ with $q\leq p$. It is, moreover, \emph{dense open}, if whenever $q\leq p\in D$, then $q\in D$. Elements $p$ and $q$ in $P$ are \emph{compatible} if they have a common lower bound in $P$, and \emph{incompatible} otherwise.

Compatibility in $(\bb^\infty(E),\preceq)$ is equivalent to that in $(\bb^\infty(E),\preceq^*)$ and we write $X\bot Y$ when $X$ and $Y$ are incompatible. The following observation shows that $(\bb^\infty(E),\preceq)$ can be identified with a dense suborder of the lattice of all infinite-dimensional subspaces of $E$. In particular, $X$ and $Y$ are compatible if and only if $\langle X\rangle\cap\langle Y\rangle$ is infinite-dimensional.

\begin{lemma}\label{lem:subspaces_cont_blocks}
	If $X$ is an infinite-dimensional subspace of $E$, then $X$ contains an infinite block sequence.	
\end{lemma}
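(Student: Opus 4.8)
The plan is to construct the block sequence greedily, producing vectors $x_0 < x_1 < x_2 < \cdots$ one at a time, each lying in the given infinite-dimensional subspace $X$. The key observation enabling the inductive step is the following: if $V$ is any infinite-dimensional subspace of $E$ and $m \in \N$, then $V$ contains a nonzero vector $v$ with $m < v$, i.e.\ with $\min(\supp(v)) > m$. Indeed, the set of vectors in $E$ whose support is contained in $\{0, 1, \ldots, m\}$ forms a finite-dimensional subspace (of dimension at most $m+1$), spanned by $e_0, \ldots, e_m$; since $V$ is infinite-dimensional, it cannot be contained in this subspace, so there is some $v \in V$ with $\supp(v) \not\subseteq \{0,\ldots,m\}$. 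Subtracting off the appropriate linear combination of $e_0, \ldots, e_m$ — wait, that need not stay in $V$. So instead I argue directly: the linear map $V \to F^{m+1}$ sending a vector to its first $m+1$ coordinates has a kernel of finite codimension in $V$, hence the kernel is infinite-dimensional and in particular nonzero; any nonzero $v$ in this kernel satisfies $m < v$.

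With this in hand, the construction runs as follows. Set $x_{-1}$ to be a formal symbol with $\max(\supp(x_{-1})) = -1$ (or simply start the induction by hand). Suppose $x_0 < \cdots < x_{n-1}$ have been chosen in $X$; let $m = \max(\supp(x_{n-1}))$ (or $m = -1$ if $n = 0$). Applying the observation to $V = X$ and this $m$, choose a nonzero $x_n \in X$ with $m < x_n$, so that $x_{n-1} < x_n$. Continuing, we obtain an infinite sequence $(x_n)$ of nonzero vectors in $X$ with $x_n < x_{n+1}$ for all $n$, which is precisely an infinite block sequence, and it lies in $\langle X \rangle = X \setminus \{0\}$ since each $x_n \in X$.

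I do not expect any real obstacle here; the only point requiring a moment's care is the codimension/kernel argument showing that an infinite-dimensional subspace must contain vectors with arbitrarily large minimal support — this is where infinite-dimensionality of $X$ is genuinely used, and it is the crux, but it is elementary linear algebra. One should also double-check that the resulting block sequence is a block sequence \emph{with respect to $(e_n)$}, which is immediate from the definition since each $x_n \in E$ has a well-defined finite support in the fixed basis and we arranged $\max(\supp(x_n)) < \min(\supp(x_{n+1}))$.
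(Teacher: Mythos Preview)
Your proof is correct and follows essentially the same approach as the paper's: both argue that for any $m$, the subspace $X$ contains nonzero vectors supported above $m$, and then build the block sequence inductively. Your kernel-of-the-coordinate-projection argument is a clean way to justify what the paper phrases more briefly as ``taking appropriate linear combinations'' to push supports past any given $N$.
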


\begin{proof}
	By taking appropriate linear combinations, one can show that for any $N$, $X$ contains an infinite-dimensional subspace whose supports are above $N$. From this, it is easy to inductively construct a block sequence in $X$.
\end{proof}

Throughout, when we speak of a \emph{family} $\LH\subseteq\bb^\infty(E)$, we mean a non-empty subset which is closed upwards with respect to $\preceq^*$. For $X\in\LH$, we denote by $\LH\restr X=\{Y\in\LH:Y\preceq X\}$. A \emph{filter} $\LF\subseteq\bb^\infty(E)$ is a family such that for every $X,Y\in\LF$, there is a $Z\in\LF$ with $Z\preceq X$ and $Z\preceq Y$.

\begin{defn}
	\begin{enumerate}
		\item Given a descending sequence $X_0\succeq X_1\succeq \cdots$ in $\bb^\infty(E)$, we call $Y\in\bb^\infty(E)$ a \emph{diagonalization} of $(X_n)$ if for all $n$, $Y \preceq^* X_n$.
		\item Given a sequence $(\LD_n)$ of subsets of $\bb^\infty(E)$, we call $Y$ a \emph{diagonalization} of $(\LD_n)$ if for each $n$, there is an $X_n\in\LD_n$ such that $Y\preceq^* X_n$.
	\end{enumerate}
\end{defn}

For $\LH\subseteq\bb^\infty(E)$, a set $\LD$ is \emph{$\preceq$-dense (open) in $\LH$} if $\LD\cap\LH$ is.

\begin{defn}
	 A family $\LH\subseteq\bb^\infty(E)$ is a \emph{$(p)$-family}, or has the \emph{$(p)$-property}, if whenever $X_0\succeq X_1\succeq \cdots$ is a decreasing sequence with each $X_n\in\LH$, there is a diagonalization $Y\in\LH$ of $(X_n)$.
\end{defn}

It is easy to see that $\bb^\infty(E)$ itself is a $(p)$-family. We note that every $(p)$-family $\LH$ contains a diagonalization of any given sequence $(\LD_n)$ of $\preceq$-dense open subsets in $\LH$: build a decreasing sequence $(X_n)$ in $\LH$ with each $X_n\in\LD_n$, then any diagonalization $Y\in\LH$ of $(X_n)$ will be a diagonalization of $(\LD_n)$. This can be done below any given $X\in\LH$, so the set of such diagonalizations is $\preceq$-dense in $\LH$. This latter property, which could be called the \emph{weak $(p)$-property}, will be sufficient for all of the results in \S\ref{sec:3}, and in particular, for Theorem \ref{thm:local_Rosendal}.

Recall that $\LH\subseteq[\N]^\infty$ is a \emph{coideal} if it contains all co-finite sets, is closed upwards with respect to $\subseteq$, and whenever $Y_0\cup Y_1\in\LH$, then one of $Y_0$ or $Y_1$ is also in $\LH$. This last property asserts that $\LH$ witnesses the pigeonhole principle. In our setting, provided $|F|>2$,\footnote{When $|F|=2$, such a pigeonhole principle for block subspaces does hold; this is essentially Hindman's Theorem \cite{MR0349574}.} the ``obvious'' formulation of the pigeonhole principle is simply false, as the following example shows:

\begin{example}\label{ex:asym_sets}\hspace{-0.5em}\footnote{The author would like to thank Jordi L\'opez-Abad for pointing out this example which has the advantage of being well-defined at the level of the spanned subspaces.}
	Consider the case when $F\subseteq\R$. Similar examples can be constructed whenever $|F|>2$, cf.~Theorem 7 in \cite{MR2737185}. For a vector $x\in E$ define the \emph{oscillation} $\osc(x)$ as the number of times the sign of the non-zero coefficients of $x$ alternate in its expansion with respect to $(e_n)$. So, $\osc(e_0-e_1+e_2) = 2$, $\osc(e_2+e_4-e_5+e_7-e_{10})=3$, etc.
	
	Define $A_0\subseteq E$ (respectively, $A_1\subseteq E$) to be the set of all $x\in E$ such that $\osc(x)$ is even (respectively, odd), and let $\A_i=\{(x_n):x_0\in A_i\}$ for $i=0,1$. The $\A_i$ are clopen sets which partition $\bb^\infty(E)$. Moreover, the pair $\A_0$, $\A_1$ is \emph{asymptotic}, that is, for any $X\in\bb^\infty(E)$ and $i=0,1$, there is $Y_i\preceq X$ such that $Y_i\in\A_i$. To see this, suppose that $X=(x_n)$ is such that $X\in\A_0$, so $\osc(x_0)$ is even. If $\osc(x_1)$ is odd, then $(x_n)_{n\geq 1}\preceq X$ and in $\A_1$. If $\osc(x_1)$ is even, then let $x=x_0-x_1$ if the signs of the last non-zero coefficient in $x_0$ and the first in $x_1$ agree, and $x=x_0+x_1$ otherwise. In either case, $\osc(x)=\osc(x_0)+\osc(x_1)+1$, so $(x,x_2,x_3,\ldots)$ is in $\A_1$.
\end{example}

The following is a weak analogue of the pigeonhole property of coideals.

\begin{defn}
	Let $\LH\subseteq\bb^\infty(E)$ be a family.
	\begin{enumerate}
		\item A subset $\LD\subseteq\bb^\infty(E)$ is \emph{$\LH$-dense} below some $X\in\LH$ if for every $Y\in\LH\restr X$, there is a $Z\preceq Y$ with $Z\in\LD$. A set $D\subseteq E$ is $\LH$-dense below $X$ if $\{Z:\langle Z\rangle\subseteq D\}$ is.
		\item $\LH$ is \emph{full} if whenever $D\subseteq E$ (not necessarily a subspace) and $X\in\LH$ are such that $D$ is $\LH$-dense below $X$, there is a $Z\in\LH\restr X$ with $\langle Z\rangle\subseteq D$.	
	\end{enumerate}
\end{defn}

Fullness allows one to upgrade $\{Z:\langle Z\rangle\subseteq D\}$ being $\LH$-dense below $X$ to being $\preceq$-dense (open) below $X$ in $\LH$. Obviously $\bb^\infty(E)$ itself is a full family. If the family in question is a filter $\LF$, we may simplify the definition of fullness by replacing $X$ with $(e_n)$ (or any element of $\LF$). We note that any full filter is maximal; this can be seen by applying the definition of fullness when $D$ is a block subspace. It is shown in Proposition \ref{prop:full_necessary} that fullness is necessary for Theorem \ref{thm:local_Rosendal}.

\begin{defn}
	A family in $\bb^\infty(E)$ which is full and has the $(p)$-property will be called a \emph{$(p^+)$-family}. Likewise for \emph{$(p^+)$-filter}. 
\end{defn}

\begin{lemma}\label{lem:dense_sets_diag_full}
	\begin{enumerate}
		\item For $X_0\succeq X_1\succeq \cdots$ in $\bb^\infty(E)$, the set
		\[
			\LD_{(X_n)}=\{Y:Y\text{ is a diagonalization of } (X_n)\text{ or }\exists n(Y\bot X_n)\}
		\]
		is $\preceq$-dense open.
		
		\item For $D\subseteq E$ and $X\in\bb^\infty(E)$, the set
		\[
			\LD_{D,X}=\{Z:\langle Z\rangle \subseteq D \text{ or } \forall V\preceq X(\langle V\rangle\subseteq D \rightarrow V\bot Z)\}
		\]
		is $\preceq$-dense open below $X$.
	\end{enumerate}
\end{lemma}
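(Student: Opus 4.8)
The plan is to verify each clause by directly checking density and openness, using only the definitions of $\preceq$, $\preceq^*$, diagonalization, and incompatibility, together with Lemma \ref{lem:subspaces_cont_blocks}. For clause (a): openness is essentially immediate. If $Y \in \LD_{(X_n)}$ and $Y' \preceq Y$, then either $Y$ is a diagonalization of $(X_n)$, in which case $Y' \preceq Y \preceq^* X_n$ for all $n$ so $Y'$ is also a diagonalization (using transitivity of $\preceq^*$ and that $\preceq$ refines $\preceq^*$), or $Y \bot X_n$ for some $n$, in which case any common lower bound of $Y'$ and $X_n$ would be one for $Y$ and $X_n$, so $Y' \bot X_n$ as well; either way $Y' \in \LD_{(X_n)}$. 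For density, fix an arbitrary $Z \in \bb^\infty(E)$. If $Z \bot X_n$ for some $n$ then $Z$ itself lies in $\LD_{(X_n)}$ and we are done. Otherwise $Z$ is compatible with every $X_n$, so by the characterization of compatibility recorded before Lemma \ref{lem:subspaces_cont_blocks}, $\langle Z \rangle \cap \langle X_n \rangle$ is infinite-dimensional for each $n$. Using this I can build a descending sequence inside $\langle Z\rangle$ whose $n$-th term lies in $\langle Z\rangle \cap \langle X_n\rangle$ and diagonalize: concretely, recursively pick vectors $y_0 < y_1 < \cdots$ with $y_k \in \langle Z \rangle \cap \langle X_k \rangle$ and $\max(\supp(y_{k-1})) < \min(\supp(y_k))$, which is possible because $\langle Z\rangle \cap \langle X_k\rangle$ is infinite-dimensional and hence (by Lemma \ref{lem:subspaces_cont_blocks}, or the argument in its proof) contains vectors with support above any prescribed level. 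Then $Y = (y_k)$ satisfies $Y \preceq Z$ and $Y \preceq^* X_n$ for every $n$ (indeed $Y$'s tail from index $n$ lies in $\langle X_n \rangle$, using $X_0 \succeq X_1 \succeq \cdots$), so $Y \in \LD_{(X_n)}$ and $Y \preceq Z$.

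For clause (b): openness below $X$ is again routine. Suppose $Z \in \LD_{D,X}$ and $Z' \preceq Z$ with $Z' \preceq X$. If $\langle Z \rangle \subseteq D$ then $\langle Z' \rangle \subseteq \langle Z \rangle \subseteq D$. If instead every $V \preceq X$ with $\langle V\rangle \subseteq D$ satisfies $V \bot Z$, then the same $V$'s satisfy $V \bot Z'$ since any common lower bound of $V$ and $Z'$ is below $Z$; hence $Z' \in \LD_{D,X}$. For density below $X$, fix $Z_0 \preceq X$. If there is some $V \preceq X$ with $\langle V \rangle \subseteq D$ and $V$ compatible with $Z_0$, take a common lower bound $W$ of $V$ and $Z_0$; then $W \preceq V$, so $\langle W \rangle \subseteq \langle V \rangle \subseteq D$, meaning $W \in \LD_{D,X}$ and $W \preceq Z_0$. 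If no such $V$ exists, then $Z_0$ itself already satisfies the second disjunct — every $V \preceq X$ with $\langle V\rangle \subseteq D$ is incompatible with $Z_0$ — so $Z_0 \in \LD_{D,X}$ and trivially $Z_0 \preceq Z_0$. In all cases we have found an element of $\LD_{D,X}$ below $Z_0$.

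I expect the only nontrivial point to be the explicit block-sequence construction in the density argument for clause (a): one must be careful that a descending intersection of the $\langle X_n\rangle$ need not itself contain a single infinite block sequence diagonalizing all of them unless one exploits that the $X_n$ are nested, and that $\langle Z\rangle\cap\langle X_n\rangle$ being infinite-dimensional lets us always find the next block vector with sufficiently large support. This is exactly the kind of diagonalization already performed in the proof of Lemma \ref{lem:subspaces_cont_blocks}, so it presents no real obstacle; the rest is bookkeeping with the definitions of compatibility and the two orderings.
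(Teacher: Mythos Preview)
Your proof is correct and follows essentially the same approach as the paper: for (a) you build a diagonalization below any $Z$ compatible with all $X_n$ by choosing $y_k\in\langle Z\rangle\cap\langle X_k\rangle$ with increasing supports, and for (b) you split on whether some $V\preceq X$ with $\langle V\rangle\subseteq D$ is compatible with the given $Z_0$. The paper's proof is terser (it omits the openness verifications you include, and in (b) phrases the case split as ``is there $Z\preceq Y$ with $\langle Z\rangle\subseteq D$?''), but the arguments are the same.
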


\begin{proof}
	(a)	Take $Y\in\bb^\infty(E)$ which is compatible with all of the $X_n$. We can build a diagonalization $X=(x_n)\preceq Y$ by picking vectors $x_n\in\langle X_n\rangle\cap\langle Y\rangle$ with $x_n<x_{n+1}$.

	\noindent(b) Take $Y\preceq X$. If there is no $Z\preceq Y$ such that $\langle Z\rangle\subseteq D$, then for any $V\preceq X$ with $\langle V\rangle\subseteq D$, it must be that $V\bot Y$, as otherwise any $Z$ witnessing the compatibility of $V$ and $Y$ would satisfy $\langle Z\rangle\subseteq D$.
\end{proof}

Lemma \ref{lem:dense_sets_diag_full} will be used to construct $(p^+)$-filters in \S5. We will see in Corollary \ref{cor:full_filters_ind} that the existence of full filters is independent of $\ZFC$.

\section{Games with vectors and a local Rosendal dichotomy}\label{sec:3}

The \emph{Gowers game} played below $X\in\bb^\infty(E)$, denoted $G[X]$, is defined as follows: Two players, I and II, alternate with I going first and playing block sequences $X_k\preceq X$, and II responding with vectors $y_k\in \langle X_k\rangle$ subject to the constraint $y_k<y_{k+1}$. The block sequence $(y_k)$ is the \emph{outcome} of a play of the game. Given $\vec{x}\in\bb^{<\infty}(E)$ and $X\in\bb^\infty(E)$, the game $G[\vec{x},X]$ is defined exactly as $G[X]$ except that II is restricted to playing vectors above $\vec{x}$ and the outcome is $\vec{x}\concat(y_k)$. This is a discrete version of the game defined by Gowers in \cite{MR1421876} \cite{MR1954235}

A \emph{strategy} for II in $G[\vec{x},X]$ is a function $\alpha$ taking sequences $(X_0,\ldots,X_k)$ of possible prior moves by I to vectors $y\in \langle X_{k}\rangle$, with $\vec{x}<\alpha(X_0,\ldots,X_{k-1})<y$, for all $k$. Given a set $\A\subseteq\bb^\infty(E)$, we say that $\alpha$ is a strategy in $G[\vec{x},X]$ for \emph{playing into $\A$} if whenever II follows $\alpha$ (that is, at each turn, given as input I's prior moves, they play the output of $\alpha$), the resulting outcome lies in $\A$. These notions are defined likewise for I.

The \emph{infinite asymptotic game} \cite{MR2566964} \cite{MR2604856} played below $X$, denoted $F[X]$, is defined in a similar fashion: Two players, I and II, alternate with I going first and playing natural numbers $n_k$, and II responding with vectors $y_k\in\langle X/n_k\rangle$ subject to the constraint $y_k<y_{k+1}$. Again, $(y_k)$ is the \emph{outcome} of a play of the game. The game $F[\vec{x},X]$ is defined as above, as are \emph{strategies} for I and II, and the notion of having a strategy for \emph{playing into} a set.

It is important to note that plays of $F[\vec{x},X]$ can be considered as plays of $G[\vec{x},X]$ where I is restricted to playing tail block subsequences of $X$. Consequently, if II has a strategy in $G[\vec{x},X]$ for playing into a set $\A$, then II has such a strategy in $F[\vec{x},X]$ as well. Similarly, if I has a strategy in $F[\vec{x},X]$ for playing into $\A$, then they have such a strategy in $G[\vec{x},X]$. 

The following generalizes the notion of \emph{strategically Ramsey} given in \cite{MR2604856}, where $\LH$ was taken to be all of $\bb^\infty(E)$.

\begin{defn}	
	For $\LH\subseteq\bb^\infty(E)$ a family, we say that a subset $\A\subseteq \bb^\infty(E)$ is \emph{$\LH$-strategically Ramsey} if for all $\vec{y}\in\bb^{<\infty}(E)$ and $X\in\LH$, there is a $Y\in\LH\restr X$ such that either
	\begin{enumerate}[label=\rm{(\roman*)}]
		\item I has a strategy in $F[\vec{y},Y]$ for playing into $\A^c$, or
		\item II has a strategy in $G[\vec{y},Y]$ for playing into $\A$.
	\end{enumerate}
\end{defn}

Note that consequences (i) and (ii) are mutually exclusive by our comments above. The key fact about $\LH$-strategically Ramsey sets is that the witness, $Y$ in the above definition, can be found in $\LH$.

Our goal for the remainder of this section is to outline the proof that, for any $(p^+)$-family $\LH$, analytic sets are $\LH$-strategically Ramsey, thereby establishing Theorem \ref{thm:local_Rosendal}. Much of what follows closely hews to \cite{MR2604856}, and is a variation on the combinatorial forcing technique used in \cite{MR2603812}.

\begin{defn}\label{def:good_bad_worse}
	Let $\LH$ be a family and $\A\subseteq\bb^\infty(E)$ be given. For $\vec{y}\in\bb^{<\infty}(E)$ and $Y\in\LH$, we say that
		\begin{enumerate}[label=\rm{(\arabic*)}]
			\item $(\vec{y},Y)$ is \emph{good (for $\A$)} if II has a strategy in $G[\vec{y},Y]$ for playing into $\A$,
			\item $(\vec{y},Y)$ is \emph{bad (for $\A$)} if for all $Z\in\LH\restr Y$, $(\vec{y},Z)$ is not good.
			\item $(\vec{y},Y)$ is \emph{worse (for $\A$)} if it is bad and there is an $n$ such that for every $v\in \langle Y/n\rangle$, $(\vec{y}\concat v,Y)$ is bad.
		\end{enumerate}
\end{defn}

Reference to $\A$ and $\LH$ will be suppressed where understood.

\begin{lemma}\label{lem:good_worse}
	If $\LH$ is a $(p^+)$-family and $\A\subseteq\bb^\infty(E)$, then for every $\vec{x}\in\bb^{<\infty}(E)$ and $X\in\LH$, there is a $Y\in\LH\restr X$ such that either
	\begin{enumerate}[label=\rm{(\roman*)}]
		\item $(\vec{x},Y)$ is good, or
		\item I has a strategy in $F[\vec{x},Y]$ for playing into
		\[
			\{(z_n):\forall n(\vec{x}\concat(z_0,\ldots,z_n),Y)\text{ is worse}\}.
		\]
	\end{enumerate}
\end{lemma}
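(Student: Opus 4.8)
\textbf{Proof strategy for Lemma \ref{lem:good_worse}.}

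The plan is to run a combinatorial forcing argument, building the witness $Y$ by a fusion (diagonalization) of a sequence of $\preceq$-dense open sets in $\LH$, exploiting the weak $(p)$-property and fullness. First I would dispose of the easy case: if there exists $Z \in \LH \restr X$ with $(\vec x, Z)$ good, take $Y = Z$ and we are in case (i). So assume henceforth that $(\vec x, X)$ is \emph{bad}; we must produce $Y \in \LH \restr X$ witnessing case (ii), i.e. I has a strategy in $F[\vec x, Y]$ steering the outcome into the set of sequences $(z_n)$ all of whose initial segments $\vec x \concat (z_0, \dots, z_n)$ (appended to $Y$) are worse. The point of ``worse'' is exactly that it is the property one needs to keep the induction going: if $(\vec y, Y)$ is bad but \emph{not} worse, then for every $n$ there is a vector $v \in \langle Y/n \rangle$ with $(\vec y \concat v, Y)$ not bad — which, combined with fullness, would let a $\preceq$-dense set of $Z$ see $(\vec y \concat v, Z)$ good, and we would be trying to climb back toward case (i). So the heart of the matter is a local dichotomy at each finite stage.

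The key step I would isolate is a one-step lemma: if $\LH$ is full and $(\vec y, Y)$ is bad, then there is $Y' \in \LH \restr Y$ such that \emph{either} $(\vec y, Y')$ is worse, \emph{or} for every $n$ there is $v \in \langle Y'/n\rangle$ with $(\vec y \concat v, Y')$ good — but the second alternative contradicts badness of $(\vec y, Y)$ (since $Y' \preceq Y$ and goodness is upward-absorbed by passing to sub-block-sequences in the obvious way, as II can just ignore extra moves). Hence bad $\Rightarrow$ (after shrinking within $\LH$) worse. Actually the cleaner formulation: define, for each $n$, the set $D_n$ of vectors $v$ with $n < v$ such that $(\vec y \concat v, Y)$ is bad; if some $D_n$ is $\LH$-dense (as a subset of $E$) below $Y$, fullness gives $Z \in \LH \restr Y$ with $\langle Z \rangle \subseteq D_n$, whence every vector played by II in $F[\vec y, Z]$ after move $n$ keeps us bad — and iterating this over all finite stages, diagonalizing via the weak $(p)$-property, builds the strategy for I (I's move at stage $k$ is precisely the threshold $n$ guaranteeing the next vector keeps badness/worseness). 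If no $D_n$ is $\LH$-dense below $Y$, then for each $n$ there is $W_n \in \LH \restr Y$ with no $v \in \langle W_n \rangle$, $n < v$, making $(\vec y \concat v, W_n)$ bad — i.e. a $\preceq$-dense set of continuations is good; this is the configuration one must argue cannot persist under badness of $(\vec y, Y)$, using a further diagonalization to assemble a strategy for II in $G[\vec y, Y]$ playing into $\A$, contradiction.

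Assembling the global strategy: arrange the finite block sequences $\vec x \concat \vec s$ (extending $\vec x$) in an $\omega$-list, and recursively build a $\preceq^*$-decreasing sequence $X = X_0 \succeq X_1 \succeq \cdots$ in $\LH$ so that at the $i$-th stage the relevant one-step dichotomy has been resolved for the $i$-th finite sequence; use the weak $(p)$-property to find $Y \in \LH$ diagonalizing $(X_i)$ (this can be done below $X$). A standard absoluteness/coherence check shows that badness and worseness of $(\vec y, Y)$ are witnessed using only tails, so they are inherited appropriately along the diagonalization. The resulting $Y$ has the property that for every extension $\vec y$ of $\vec x$ with $(\vec y, Y)$ bad, it is in fact worse; since $(\vec x, Y)$ is bad (inherited from $(\vec x, X)$ bad), I can now play the asymptotic game $F[\vec x, Y]$ by issuing at each stage the threshold $n$ witnessing worseness of the current position, forcing every initial segment of the outcome to be worse — which is exactly conclusion (ii).

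\textbf{Main obstacle.} The delicate point is the one-step dichotomy and making sure the ``bad but not worse'' case genuinely collapses into case (i) rather than requiring an infinite regress: one must verify that a $\preceq$-dense (in the $\LH$-dense sense) family of \emph{good} one-vector extensions can be stitched, via fullness and the weak $(p)$-property, into a single strategy for II in $G[\vec y, Y]$ playing into $\A$ — the strategy amalgamates II's responses across the densely-many good extensions, and fullness is exactly what converts $\LH$-density of the set of good-extending vectors into a block subsequence all of whose vectors are good continuations. Getting the bookkeeping right so that this amalgamated strategy is well-defined (II must commit to a single vector at each turn, not a dense set of them) is where the real work lies, and it is the same subtlety that appears in Rosendal's original argument in \cite{MR2604856}; the local version requires threading fullness of $\LH$ through each application.
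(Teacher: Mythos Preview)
Your overall architecture matches the paper's: diagonalize so every pair $(\vec y,Y)$ is decided (good or bad), prove a one-step claim upgrading ``bad'' to ``worse'' via fullness, diagonalize again so every extension of $\vec x$ is good or worse, and then let I's strategy in $F[\vec x,Y]$ simply output the worseness witness $n$ at each stage. That is exactly how the paper proceeds.

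There is, however, a genuine slip in your first formulation of the one-step lemma. You write that if $(\vec y,Y')$ is bad but not worse, then ``for every $n$ there is $v\in\langle Y'/n\rangle$ with $(\vec y\concat v,Y')$ good'', and claim this contradicts badness because II can ``ignore extra moves''. It does not: in $G[\vec y,Y']$ player I opens with an \emph{arbitrary} $V\preceq Y'$, not a tail, and II must respond inside $\langle V\rangle$; a supply of good extensions living only in tails of $Y'$ is useless here. The correct hypothesis for the contradiction---and the one the paper isolates as its first Claim---is: there exists $Z\in\LH\restr Y$ such that for \emph{every} $V\preceq Z$ there is $x\in\langle V/\vec y\rangle$ with $(\vec y\concat x,Y)$ good. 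From this II's strategy is immediate (play such an $x$, then follow the good strategy from $(\vec y\concat x,Z)$), yielding $(\vec y,Z)$ good and contradicting badness. You essentially arrive at this in your ``Main obstacle'' paragraph, so the fix is just to state the one-step lemma with the right quantifier.

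One further bookkeeping point: in your ``cleaner formulation'' you define $D_n$ using \emph{bad} rather than \emph{not good}. The stitching argument only shows the set of \emph{not-good} extensions is $\LH$-dense; to conclude these extensions are \emph{bad} you must already have passed to a $Y$ below which every $(\vec z,Y)$ is good-or-bad. The paper handles this by first diagonalizing over the sets $\LD_{\vec z}=\{Y:(\vec z,Y)\text{ good or bad}\}$ (which are $\preceq$-dense open in $\LH$ trivially) before invoking fullness. Make that preliminary diagonalization explicit and your argument goes through as written.
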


\begin{proof}
	Observe that if $(\vec{y},Y)$ is good/bad/worse and $Z\preceq^*Y$ is in $\LH$, then $(\vec{y},Z)$ is also good/bad/worse. It is immediate that for each $\vec{y}$, the set
	\[
		\LD_{\vec{y}}=\{Y\in\LH:(\vec{y},Y)\text{ is either good or bad}\}
	\]
	is $\preceq$-dense open in $\LH$.	

	\begin{claim}
		If $(\vec{y},Y)$ is bad, then for all $Z\in\LH\restr Y$, there is a $V\preceq Z$ such that for all $x\in\langle V/\vec{y}\rangle$, $(\vec{y}\concat x, Y)$ is not good. 
	\end{claim}
	
	\begin{proofclaim}
		Let $(\vec{y}, Y)$ be bad. Towards a contradiction, suppose that there is some $Z\in\LH\restr Y$ such that for all $V\preceq Z$, there is an $x\in\langle V/\vec{y}\rangle$ such that $(\vec{y}\concat x,Y)$ is good. We claim that $(\vec{y},Z)$ is good. If I plays $V\preceq Z$, then by supposition there is some $x\in\langle V/\vec{y}\rangle$ such that $(\vec{y}\concat x,Z)$ is good. Let II play that $x$ and from then on follow the strategy given from $(\vec{y}\concat x,Z)$ being good. This is contrary to $(\vec{y},Y)$ being bad.
	\end{proofclaim}
	
	\begin{claim}
		For each $\vec{y}$, the set
		\[
			\LE_{\vec{y}}=\{Z\in\LH:(\vec{y},Z)\text{ is either good or worse}\}
		\]
		is $\preceq$-dense open in $\LH$.	
	\end{claim}
	
	\begin{proofclaim}
		Fix $\vec{y}$ and let $Y\in\LH$. Since the sets $\LD_{\vec{x}}$ are dense in $\LH$ and there are only countably many $\vec{x}$, the $(p)$-property allows us to diagonalize all of them within $\LH$ and assume that for all $\vec{x}$, $(\vec{x},Y)$ is either good or bad. Suppose that $(\vec{y},Y)$ is bad. Let $D=\{x:(\vec{y}\concat x,Y)\text{ is not good}\}$. By the previous claim, $D$ is $\LH$-dense below $Y$. Since $\LH$ is full, there is a $Z\in\LH\restr Y$ such that $\langle Z\rangle\subseteq D$. If $z\in\langle Z\rangle$, then $(\vec{y}\concat z,Z)$ is not good, hence bad, by our choice of $Y$. Thus, $(\vec{y},Z)$ is worse.
	\end{proofclaim}

	We can now prove the lemma. By the previous claim, we have a $Y\in\LH\restr X$ so that for all $\vec{y}$, $(\vec{x}\concat\vec{y},Y)$ is either good or worse. If $(\vec{x},Y)$ is good, we're done, so suppose that $(\vec{x}, Y)$ is worse. We will describe a strategy for I in $F[\vec{x},Y]$: Suppose that at some point in the game $(z_0,\ldots,z_k)$ has been played by II so that $(\vec{x}\concat(z_0,\ldots,z_k),Y)$ is worse. Then, there is some $n$ such that for all $z\in \langle Y\rangle$, if $n<z$, then $(\vec{x}\concat(z_0,\ldots,z_k)\concat z,Y)$ is bad, hence worse. Let I play $n$.
\end{proof}

\begin{lemma}[cf. Lemma 2 in \cite{MR2604856}]\label{lem:local_Rosendal_open}
	Let $\LH\subseteq\bb^\infty(E)$ a $(p^+)$-family. Then, open sets are $\LH$-strategically Ramsey.
\end{lemma}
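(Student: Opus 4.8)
The plan is to prove that open sets are $\LH$-strategically Ramsey by combining Lemma~\ref{lem:good_worse} with a standard combinatorial forcing argument, adapting Lemma~2 of \cite{MR2604856} to the local setting. Fix an open set $\A\subseteq\bb^\infty(E)$, a finite block sequence $\vec{x}$, and $X\in\LH$. Applying Lemma~\ref{lem:good_worse} to $\vec{x}$ and $X$, we obtain $Y\in\LH\restr X$ which is either good (in which case consequence~(ii) holds and we are done) or for which I has a strategy in $F[\vec{x},Y]$ for playing into the set $\LW=\{(z_n):\forall n\,(\vec{x}\concat(z_0,\ldots,z_n),Y)\text{ is worse}\}$. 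So the remaining task is to show: if $(\vec{x},Y)$ is worse (and, as in the proof of Lemma~\ref{lem:good_worse}, we may assume $Y$ has been diagonalized so that $(\vec{x}\concat\vec{y},Y)$ is good or worse for every $\vec{y}$), then I's strategy into $\LW$ is in fact a strategy in $F[\vec{x},Y]$ for playing into $\A^c$.

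The key point is that openness of $\A$ forces any outcome built along I's strategy out of $\A$. Suppose II plays against I's strategy and the outcome is $w=\vec{x}\concat(z_0,z_1,\ldots)$; by construction every initial segment $\vec{x}\concat(z_0,\ldots,z_n)$ is worse, in particular bad, so $(\vec{x}\concat(z_0,\ldots,z_n),Y)$ is not good, i.e. II has no strategy in $G[\vec{x}\concat(z_0,\ldots,z_n),Y]$ for playing into $\A$. If $w$ were in $\A$, then since $\A$ is open in $\bb^\infty(E)$ (with the product-of-discrete topology), $\A$ would contain a basic open neighborhood of $w$, i.e. there is some $n$ such that every block sequence extending $\vec{x}\concat(z_0,\ldots,z_n)$ lies in $\A$. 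But then, at the stage of the game where $(z_0,\ldots,z_n)$ has been played, II could adopt the trivial strategy in $G[\vec{x}\concat(z_0,\ldots,z_n),Y]$ of simply playing any legal vector from $\langle X_k\rangle$ at each move of I; the resulting outcome extends $\vec{x}\concat(z_0,\ldots,z_n)$ and hence lies in $\A$, witnessing that $(\vec{x}\concat(z_0,\ldots,z_n),Y)$ is good — a contradiction. Hence $w\in\A^c$, so I's strategy is a strategy in $F[\vec{x},Y]$ for playing into $\A^c$, and consequence~(i) holds with this $Y$.

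The main obstacle, and the step requiring the most care, is the bookkeeping around the diagonalization: to invoke the ``good or worse'' dichotomy for \emph{all} extensions $\vec{x}\concat\vec{y}$ simultaneously we need the $(p)$-property applied to the countably many dense open sets $\LE_{\vec{y}}$, and we must check that after this diagonalization the conclusion of Lemma~\ref{lem:good_worse} still applies — but this is exactly how Lemma~\ref{lem:good_worse} was set up (its proof already performs this diagonalization), so we may quote it directly. A secondary subtlety is that I's moves in $F[\vec{x},Y]$ are natural numbers $n_k$ rather than block sequences, so one must observe (as noted in the excerpt) that plays of $F[\vec{x},Y]$ embed into plays of $G[\vec{x},Y]$ with I restricted to tails $Y/n_k$, and that "badness/worseness" transfers correctly along this identification; this is immediate from the definitions and the first observation in the proof of Lemma~\ref{lem:good_worse} that good/bad/worse is preserved under passing to $\preceq^*$-smaller elements of $\LH$.
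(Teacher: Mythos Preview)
Your proposal is correct and follows essentially the same approach as the paper's proof: apply Lemma~\ref{lem:good_worse}, and in the non-good case observe that if an outcome $\vec{x}\concat(z_n)$ of I's strategy were in the open set $\A$, then some finite initial segment would already force membership in $\A$, giving II a trivial winning strategy in $G[\vec{x}\concat(z_0,\ldots,z_n),Y]$ and contradicting badness. Your final paragraph about diagonalization bookkeeping is unnecessary, since the conclusion of Lemma~\ref{lem:good_worse} already delivers exactly what you need without reopening its proof.
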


\begin{proof}
	Let $\A\subseteq\bb^\infty(E)$ be open. Given $\vec{x}\in\bb^{<\infty}(E)$ and $X\in\LH$, by Lemma \ref{lem:good_worse}, there is a $Y\in\LH\restr X$ such that either $(\vec{x},Y)$ is good, in which case we're done, or I has a strategy in $F[\vec{x},Y]$ to play $(z_n)$ such that for all $n$, $(\vec{x}\concat(z_0,\ldots,z_n),Y)$ is worse. In the latter case, if I follows this strategy, as II builds $(z_n)$, for no $m$ can II have a strategy in $G[\vec{x}\concat(z_0,\ldots,z_m),Y]$ to play in $\A$. Since $\A$ is open, this means that $\vec{x}\concat(z_0,z_1,\ldots)\notin\A$ and I has a strategy for playing into $\A^c$.
\end{proof}

\begin{proof}[Proof (sketch) of Theorem \ref{thm:local_Rosendal}.]
	The proof closely follows that of Theorem 5 in \cite{MR2604856}, where $\LH=\bb^\infty(E)$. The idea of the proof is, given a Souslin scheme $\{\A_s\}_{s\in\N^{<\infty}}$ for an analytic set $\A$, we can use Lemma \ref{lem:local_Rosendal_open} and diagonalization to find a $Y\in\bb^\infty(E)$ such that if I does not have a strategy in $F[Y]$ for playing into $\A^c$, then in $G[Y]$, II can build a sequence $(z_k)$ such that I continues to have no strategy in $F[(z_0,\ldots,z_k),Y]$ for playing into $\A_s^c$, where $s$ is an initial segment of some branch $y$ in $\N^\N$. $y$ will witness that II's strategy has produced an outcome in $\A$. We omit the details, except to say that the arguments in \cite{MR2604856} can be modified for our result simply by ensuring that the block sequences used are taken in $\LH$. This can be done, in each instance, as a block sequence is obtained either by applying the result for open sets or by diagonalization.
\end{proof}

Theorem \ref{thm:local_Rosendal} is consistently sharp and necessarily asymmetric, as there is a coanalytic counterexample (for $\LH=\bb^\infty(E)$) in $\L$ \cite{MR2179777}.\footnote{This counterexample is to Gowers' theorem, but the discussion in \S5 of \cite{MR2604856} shows that this also yields a counterexample to Rosendal's dichotomy.} In particular, the collection of $\LH$-strategically Ramsey sets may fail to be a $\sigma$-algebra. It is, however, closed under countable unions. Again, the proof is nearly identical to that of the corresponding result in \cite{MR2604856} and omitted.

\begin{thm}[cf. Theorem 9 in \cite{MR2604856}]
	Let $\LH\subseteq\bb^\infty(E)$ be a $(p^+)$-family. Then, the collection of $\LH$-strategically Ramsey sets is closed under countable unions.\qed
\end{thm}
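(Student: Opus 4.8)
The plan is to follow the proof of Theorem~9 in \cite{MR2604856} essentially verbatim, performing every diagonalization inside $\LH$. Write $\A=\bigcup_n\A_n$ with each $\A_n$ being $\LH$-strategically Ramsey, and fix $\vec x\in\bb^{<\infty}(E)$ and $X\in\LH$. For a finite block sequence $\vec z$ extending $\vec x$ and $Y\in\LH$, say that $(\vec z,Y)$ \emph{rejects} $\A_n$ if I has a strategy in $F[\vec z,Y]$ for playing into $\A_n^c$. Rejecting $\A_n$ and being good for $\A_n$ (in the sense of Definition~\ref{def:good_bad_worse}) are mutually exclusive; both persist when $Y$ is replaced by a $\preceq^*$-smaller member of $\LH$ (as in the opening observation of the proof of Lemma~\ref{lem:good_worse}); and, since $\A_n$ is $\LH$-strategically Ramsey, for each fixed pair $(n,\vec z)$ the set $\{Y\in\LH:(\vec z,Y)\text{ rejects or is good for }\A_n\}$ is $\preceq$-dense open in $\LH$. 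Finally, being good for $\A_n$ implies being good for $\A$, as $\A_n\subseteq\A$.

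First I would apply Lemma~\ref{lem:good_worse} to $\A$ to obtain $Y_0\in\LH\restr X$ such that either $(\vec x,Y_0)$ is good for $\A$ --- whence conclusion (ii) holds and we are done --- or I has a strategy $\sigma$ in $F[\vec x,Y_0]$ forcing every initial segment $(\vec x\concat(z_0,\dots,z_k),Y_0)$ of the outcome to be worse, hence bad, for $\A$. In the second case, using the weak $(p)$-property I would find $Y\in\LH\restr Y_0$ lying in all the countably many dense open sets above (one per pair $(n,\vec z)$), so that $(\vec z,Y)$ rejects or is good for $\A_n$ for every $n$ and every $\vec z$. By persistence $\sigma$ still forces every reached position to be bad for $\A$ relative to $Y_0$; since $Y\preceq Y_0$, such a position $(\vec w,Y)$ is then not good for $\A$, hence not good for any $\A_n$, hence --- being decided by $Y$ --- rejects that $\A_n$.

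The heart of the matter is to amalgamate, in this second case, the resulting I-strategies into a single strategy $\sigma^*$ for I in $F[\vec x,Y]$ playing into $\A^c=\bigcap_n\A_n^c$. The key observation is that, since I's moves in an asymptotic game are merely integers, if at every stage I plays an integer at least as large as the one prescribed by a fixed strategy $\tau$ for I (evaluated on II's moves so far), then II's legal responses only shrink and the resulting outcome is a genuine $\tau$-outcome, inheriting any guarantee of $\tau$. So I would have $\sigma^*$ run $\sigma$ and, for each $n$, activate $\A_n$ at stage $n+1$: the position $\vec w_n:=\vec x\concat(z_0,\dots,z_n)$ reached then is bad for $\A$, hence rejects $\A_n$, so fix a strategy $\sigma_n$ for I in $F[\vec w_n,Y]$ into $\A_n^c$; at each stage $k$, $\sigma^*$ plays the maximum of the integers prescribed by $\sigma$ and by the finitely many $\sigma_n$ with $n<k$. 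By the observation the outcome is simultaneously consistent with $\sigma$ --- so each $\vec w_n$ really is bad, as used --- and with every $\sigma_n$ from stage $n+1$ onward, hence lies in every $\A_n^c$, i.e.\ in $\A^c$; this gives conclusion (i) for $Y$. This amalgamation is the one delicate point: it works precisely because the ``worse'' clause of Lemma~\ref{lem:good_worse} keeps every reached position bad for $\A$, so that each $\A_n$ can be rejected from wherever it is activated, and because coordinating I-strategies by taking maxima of integers costs nothing in the asymptotic game. Since every block sequence in the construction comes either from Lemma~\ref{lem:good_worse} or from a diagonalization inside $\LH$, it automatically stays in $\LH$; the remaining details are as in \cite{MR2604856}.
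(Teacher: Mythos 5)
Your proof is correct, but it is organized differently from the argument the paper defers to (the omitted proof is Rosendal's Theorem 9, adapted only by keeping all block sequences inside $\LH$). That route first uses the dichotomies for the individual $\A_n$, together with a diagonalization by the weak $(p)$-property, to fix a single $Y\in\LH\restr X$ deciding every pair $(\vec{z},n)$, and then runs a countable-union lemma for $\A=\bigcup_n\A_n$ (the discrete counterpart of Lemma \ref{lem:unions_Rosendal*}): its first horn is alternative (i) outright, while in its second horn II first steers, within $G[Y]$, to a position at which some $\A_n$ cannot be rejected below $Y$, and then switches to II's strategy into $\A_n\subseteq\A$, yielding alternative (ii) via a two-phase strategy for II. You instead apply Lemma \ref{lem:good_worse} directly to the union and, in the non-good case, note that every position reached under the ``worse''-forcing strategy is bad for $\A$, hence not good for any $\A_n$, hence (since your diagonalized $Y$ decides each pair) rejects every $\A_n$; you then amalgamate the rejection strategies by taking maxima of I's integer moves, which is sound because enlarging I's moves only shrinks II's options, so the outcome remains an outcome of each slower strategy. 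This is essentially the amalgamation device that powers the proof of the union lemma, here inlined, and it lands you in alternative (i) rather than (ii); both decompositions are valid, and yours isolates the clean fact that if $\A$ is good below no $Z\in\LH\restr Y_0$ then a single I-strategy plays into $\A^c=\bigcap_n\A_n^c$, whereas the paper's organization is the one that transfers to the normed setting of \S\ref{sec:8}, where exact complements must be replaced by $\Delta$-expansions and the switch to a II-strategy is what gets iterated. One small step worth making explicit: Lemma \ref{lem:good_worse} only says the infinite outcome lies in $\{(z_n):\forall n\,(\vec{x}\concat(z_0,\ldots,z_n),Y_0)\text{ is worse}\}$, so to conclude that each finite position reached is already bad you should observe that any finite legal play against $\sigma$ extends to an infinite one, whose membership in that set decides all of its initial segments.
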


We note that fullness is a necessary assumption for our results:

\begin{prop}\label{prop:full_necessary}
	If $\LH\subseteq\bb^\infty(E)$ is a family for which clopen sets are $\LH$-strategically Ramsey, then $\LH$ is full.	
\end{prop}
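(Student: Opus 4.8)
The plan is to prove the contrapositive: assuming $\LH$ is not full, I will exhibit a clopen set that is not $\LH$-strategically Ramsey. So suppose there is a set $D \subseteq E$ and an $X \in \LH$ such that $D$ is $\LH$-dense below $X$, yet no $Z \in \LH \restr X$ satisfies $\langle Z \rangle \subseteq D$. The natural candidate for the bad partition is the one built from $D$: let $\A = \{(z_n) : z_0 \in D\}$, or perhaps more robustly $\A = \{(z_n) : \langle (z_n) \rangle \subseteq D\}$ — I would first check which of these is genuinely clopen in $\bb^\infty(E)$ (the first certainly is, since membership depends only on the zeroth coordinate and $E$ is discrete) and which better interacts with the games. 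I expect the first, $\A = \{(z_n) : z_0 \in D\}$, to be the right choice since it is manifestly clopen.

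The key steps are then: (1) show that alternative (ii) fails at $(\vec{y}, Y) = (\emptyset, Y)$ for every $Y \in \LH \restr X$ — that is, II has no strategy in $G[Y]$ for playing into $\A$ — and (2) show that alternative (i) also fails — I has no strategy in $F[Y]$ for playing into $\A^c$. For (2), the point is that $\LH$-density of $D$ below $X$ means: given any $Y \in \LH \restr X$ and any move $n$ by I in $F[Y]$, the tail $Y/n$ is still in $\LH \restr X$ (here one uses upward closure and $Y/n \preceq Y \preceq X$), so $D$ is $\LH$-dense below $Y/n$, hence there is $Z \preceq Y/n$ with $Z \in \LH$ and $\langle Z \rangle \subseteq D$; II can then play any vector of $Z$ as $y_0 \in D$, and the outcome is already in $\A$ regardless of the rest of the play. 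So no strategy of I in $F[Y]$ forces the outcome into $\A^c$. For (1), suppose toward a contradiction that II has a strategy $\sigma$ in $G[Y]$ playing into $\A$. I want to use $\sigma$ to manufacture a block sequence $Z \in \LH \restr X$ with $\langle Z \rangle \subseteq D$, contradicting the choice of $D$ and $X$. The idea: by always having I play the full space $Y$ (or clever subspaces), the strategy $\sigma$ produces a vector $y_0 \in \langle Y \rangle \cap D$; iterating this by feeding I's moves appropriately should yield an infinite block sequence all of whose entries lie in $D$. The subtlety is that "$\langle Z \rangle \subseteq D$" requires all linear combinations, not just the generators, to be in $D$; I would try playing, at stage $k$, a subspace spanned by previously-obtained vectors to force $\sigma$ to hand back something already inside the span — here I would lean on the detailed argument in \cite{MR2604856} for the analogous fact.

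The main obstacle I anticipate is precisely this last point: extracting from a winning strategy for II a single block sequence $Z \in \LH$ with $\langle Z \rangle \subseteq D$, rather than merely a block sequence whose \emph{terms} lie in $D$. Getting $Z$ into $\LH$ is automatic once $Z \preceq X$ by upward closure only if $Z \preceq^* X$, which holds; the real work is the span condition. One clean route: replay $\sigma$ against I playing $\langle Z' \rangle$ where $Z'$ is the finite block sequence built so far — since $\sigma$ plays into $\A$, whatever II returns has zeroth coordinate in $D$, and by having I restrict to the span of earlier outputs one forces the new vector to be a combination of them, eventually trapping an entire infinite-dimensional span inside $D$. I would formalize this by a bookkeeping argument running $\sigma$ countably many times, each time with I playing a suitable subspace of $\langle X \rangle$, and collecting the outputs; the resulting $Z$ is a block subsequence of $X$, lies in $\LH$ by upward closure, and has $\langle Z \rangle \subseteq D$, the desired contradiction. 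This shows neither (i) nor (ii) can hold for any $Y \in \LH \restr X$, so $\A$ witnesses the failure of $\LH$-strategic Ramseyness, completing the contrapositive.
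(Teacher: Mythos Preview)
Your step (2) is fine, but step (1) has a genuine gap, and the fix is exactly the trick the paper uses: apply the hypothesis to $\A^c$ rather than to $\A$.

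In your step (1) you assume II has a strategy $\sigma$ in $G[Y]$ playing into $\A=\{(z_n):z_0\in D\}$ and try to extract $Z\in\LH\restr X$ with $\langle Z\rangle\subseteq D$. Two things go wrong. First, $\sigma$ only guarantees that its \emph{first} output lies in $D$; the suggestion of having I ``play $\langle Z'\rangle$ for the finite sequence built so far'' is illegal, since I's moves in $G[Y]$ must be infinite block subsequences of $Y$, not finite-dimensional spans. There is no evident way to leverage $\sigma$ into a full span inside $D$. Second, even if you produced some $Z\preceq Y$ with $\langle Z\rangle\subseteq D$, upward closure of $\LH$ goes the other direction: you would need $Y\preceq^* Z$, not $Z\preceq^* Y$, to conclude $Z\in\LH$. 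Your sentence ``getting $Z$ into $\LH$ is automatic once $Z\preceq X$ by upward closure'' is backwards.

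The paper avoids both issues by taking the complement. Applying $\LH$-strategic Ramseyness to $\D^c$, one rules out alternative (ii) trivially (I opens with some $Z\preceq Y$ satisfying $\langle Z\rangle\subseteq D$, which exists by $\LH$-density, forcing II's $y_0\in D$). Hence alternative (i) must hold: I has a strategy $\sigma$ in $F[Y]$ playing into $\D$. Now set $Z=Y/\sigma(\emptyset)$. This is a \emph{tail} of $Y$, hence $\preceq^*$-equivalent to $Y$ and so in $\LH$; and every $v\in\langle Z\rangle$ is a legal first move for II against $\sigma$, so $v\in D$. This gives $\langle Z\rangle\subseteq D$ in one line, with $Z\in\LH\restr X$ for free.
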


\begin{proof}
	Given $D\subseteq E$, $\LH$-dense below some $X\in\LH$, let $\D = \{(z_n):z_0\in D\}$, a clopen subset of $\bb^\infty(E)$. For no $Y\in\LH\restr X$ can II have a strategy into $\D^c$: Consider the round of $G[Y]$ where I starts by playing some $Z\preceq Y$ with $\langle Z\rangle\subseteq D$. Since $\D^c$ is $\LH$-strategically Ramsey, there is a $Y\in\LH\restr X$ such that I has a strategy $\sigma$ in $F[Y]$ for playing into $\D$. Let $Z=Y/\sigma(\emptyset)\in\LH$. Since $\sigma$ is a strategy for playing into $\D$, $\langle Z\rangle\subseteq D$.
\end{proof}

\section{Stronger properties of families}\label{sec:4}

If an element $Y$ in a family $\LH$ witnesses Theorem \ref{thm:local_Rosendal}, then either $\A^c$ or $\A$ is $\LH$-dense below $Y$, depending on which half of the dichotomy holds. However, it would be desirable to ensure that $\LH$ itself meets whichever one of $\A^c$ or $\A$ the conclusion of the dichotomy provides. To this end, we consider stronger properties of families, the first of which is based on the original definition of selectivity (or being ``happy'') given in \cite{MR0491197}.

\begin{defn}
	\begin{enumerate}
		\item For $(X_{\vec{x}})_{\vec{x}\in\bb^{<\infty}(E)}$ generating a filter in $\bb^\infty(E)$, we say that $X\in\bb^\infty(E)$ \emph{strongly diagonalizes $(X_{\vec{x}})$} if $X/\vec{x}\preceq X_{\vec{x}}$ whenever $\vec{x}\sqsubseteq X$.
		\item A family $\LH\subseteq\bb^\infty(E)$ is a \emph{strong $(p)$-family}, or has the \emph{strong $(p)$-property}, if whenever $(X_{\vec{x}})_{\vec{x}\in\bb^{<\infty}(E)}$ generates a filter in $\LH$, there is an $Y\in\LH$ which strongly diagonalizes $(X_{\vec{x}})$.
	\end{enumerate}
\end{defn}

The strong $(p)$-property implies the $(p)$-property: Take $X_0\succeq X_1\succeq\cdots$ in $\LH$ and define $X_{\vec{x}}=X_{|\vec{x}|}$ for $\vec{x}\in\bb^{<\infty}(E)$. Any $X$ strongly diagonalizing $(X_{\vec{x}})$ will diagonalize $(X_n)$.

As in Lemma \ref{lem:dense_sets_diag_full}, it is useful for constructing families with the strong $(p)$-property to know that it corresponds to certain $\preceq$-dense sets.

\begin{lemma}\label{lem:dense_sets_strong_diag}
	For $(X_{\vec{x}})_{\vec{x}\in\bb^{<\infty}(E)}$ generating a filter in $\bb^\infty(E)$, the set
	\begin{align*}
		&\{Y:Y\text{ is a strong diagonalization of $(X_{\vec{x}})_{\vec{x}\in\bb^{<\infty}(E)}$, or}\\
		&\hspace{4cm}\{Y\}\cup(X_{\vec{x}})_{\vec{x}\in\bb^{<\infty}(E)} \text{ does not generate a filter}\}
	\end{align*}
	is $\preceq$-dense.
\end{lemma}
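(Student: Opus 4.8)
The plan is to prove $\preceq$-density by a dichotomy on whether the given family remains a filter after adjoining the element below which we are working. Fix an arbitrary $W\in\bb^\infty(E)$; we must produce a member $Y$ of the displayed set with $Y\preceq W$. If $\{W\}\cup(X_{\vec{x}})_{\vec{x}\in\bb^{<\infty}(E)}$ does not generate a filter, then $Y:=W$ already belongs to the set via its second disjunct, and $W\preceq W$, so we are done in this case. Hence assume $\{W\}\cup(X_{\vec{x}})_{\vec{x}}$ generates a filter $\LF$; then $W\in\LF$ and $X_{\vec{x}}\in\LF$ for every $\vec{x}\in\bb^{<\infty}(E)$. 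The content of this hypothesis, which is all we use, is that $\LF$ is downward $\preceq$-directed, so every \emph{finite} subcollection of $\LF$ has a common $\preceq$-lower bound lying in $\LF$ (iterate the defining property of a filter).

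Next I build $Y=(y_j)_{j\in\N}$ recursively. Write $\vec{y}_k:=(y_0,\ldots,y_{k-1})$ for $k\le j$ (so $\vec{y}_0$ is the empty sequence). The invariant maintained at stage $j$ is: $y_{j-1}<y_j$ and $y_j\in\langle W\rangle\cap\bigcap_{k\le j}\langle X_{\vec{y}_k}\rangle$. Given $y_0<\cdots<y_{j-1}$, the block sequences $W,X_{\vec{y}_0},X_{\vec{y}_1},\ldots,X_{\vec{y}_j}$ form a finite subcollection of $\LF$, hence have a common $\preceq$-lower bound $W_j\in\LF$. Then $\langle W_j\rangle$ is infinite-dimensional and $\langle W_j\rangle\subseteq\langle W\rangle\cap\bigcap_{k\le j}\langle X_{\vec{y}_k}\rangle$, so we may choose a (non-zero) vector $y_j\in\langle W_j\rangle$ with $y_j>y_{j-1}$, preserving the invariant. (If one only has $\preceq^*$-lower bounds available, replace $W_j$ by a sufficiently high tail; nothing else changes.)

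Finally we verify the conclusion. Since the $y_j$ are strictly increasing in the support order, $Y/\vec{y}_k=(y_k,y_{k+1},\ldots)$ for every $k$, and by construction $y_j\in\langle X_{\vec{y}_k}\rangle$ for all $j\ge k$; thus $\langle Y/\vec{y}_k\rangle\subseteq\langle X_{\vec{y}_k}\rangle$, i.e.\ $Y/\vec{y}_k\preceq X_{\vec{y}_k}$. As the initial segments of $Y$ are exactly the $\vec{y}_k$, this is precisely the assertion that $Y$ strongly diagonalizes $(X_{\vec{x}})_{\vec{x}\in\bb^{<\infty}(E)}$. Moreover $y_j\in\langle W\rangle$ for all $j$ gives $Y\preceq W$, and we are done. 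The only point demanding care---and it is a purely organizational one, not a genuine obstacle---is to set up the recursion so that stage $j$ appeals only to the finitely many $X_{\vec{y}_k}$, $k\le j$, which lets the finite-directedness of a filter do all the work; in this sense the lemma is the block-sequence counterpart of Lemma \ref{lem:dense_sets_diag_full}(a).
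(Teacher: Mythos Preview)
Your proof is correct and follows essentially the same approach as the paper's own argument: split on whether adjoining the given element preserves the filter, and in the nontrivial case recursively choose $y_j$ in the intersection $\langle W\rangle\cap\bigcap_{k\le j}\langle X_{\vec{y}_k}\rangle$ using finite directedness. Your write-up is somewhat more explicit about the invariant and the final verification, but the underlying construction is identical.
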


\begin{proof}
	Fix $X\in\bb^\infty(E)$, and suppose that $\{X\}\cup(X_{\vec{x}})$ generates a filter. We build a $Y\preceq X$ which strongly diagonalizes $(X_{\vec{x}})$: Pick any $y_0\in\langle X\rangle\cap \langle X_{\emptyset}\rangle$. Since $X$, $X_\emptyset$ and $X_{(y_0)}$ generate a filter, there is a $y_1\in\langle X\rangle\cap\langle X_\emptyset\rangle\cap\langle X_{(y_0)}\rangle$ with $y_0<y_1$. Continue in this fashion.
\end{proof}

The following result connects the strong $(p)$-property to the infinite asymptotic game and is based on a characterization of selective ultrafilters (Theorem 4.5.3 in \cite{MR1350295}).

\begin{thm}\label{thm:strong_p_strat_I}	
	If $\LH\subseteq\bb^\infty(E)$ is a strong $(p)$-family, then for no $X\in\LH$ does I have a strategy in $F[X]$ for playing into $\LH^c$.\footnote{When $\LF$ is a strong $(p)$-filter, one can improve the conclusion to: for no $X\in\LF$ does I have a strategy in $G_\LF[X]$ for playing into $\LF^c$. Here, $G_{\LF}[X]$ is the variant of the Gowers game below $X$ where I is restricted to playing elements of $\LF$. See \S3.11 of \cite{Smythe_thesis}.}
\end{thm}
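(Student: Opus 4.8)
The plan is to argue by contradiction: suppose $X\in\LH$ and $\sigma$ is a strategy for I in $F[X]$ for playing into $\LH^c$. Since a play of $F[X]$ has I producing natural numbers $n_0,n_1,\dots$ and II responding with vectors $y_k\in\langle X/n_k\rangle$, a strategy $\sigma$ for I is determined by a function assigning to each finite block sequence $\vec y=(y_0,\dots,y_{k-1})$ (a legal partial play for II) the next integer $n_k=\sigma(\vec y)$; we may index the relevant data by $\vec y\in\bb^{<\infty}(E)$. The idea, following the characterization of selective ultrafilters (Theorem 4.5.3 in \cite{MR1350295}), is to absorb the strategy $\sigma$ into a single block sequence $Y\in\LH$ obtained by strong diagonalization, so that \emph{every} block subsequence of $Y$ (with supports suitably arranged) is a legal outcome of a $\sigma$-play, forcing $Y$ and all its further block subsequences to lie in $\LH^c$ — but $Y\in\LH$, a contradiction.

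Concretely, for each $\vec x\in\bb^{<\infty}(E)$ I would define $X_{\vec x}\preceq X$ inside $\LH$ to be a tail of $X$ whose supports lie above $\sigma(\vec x)$ (the integer I's strategy would play after II has played $\vec x$); since each such $X_{\vec x}$ is a cofinite-support restriction of $X$, it lies in $\LH$, and the family $\{X\}\cup(X_{\vec x})_{\vec x}$ generates a filter in $\LH$ (any finite subcollection has a common tail of $X$ below it). By the strong $(p)$-property there is $Y\in\LH$ strongly diagonalizing $(X_{\vec x})$, i.e. $Y/\vec x\preceq X_{\vec x}$ whenever $\vec x\sqsubseteq Y$. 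Now I claim that $Y$, viewed as a potential outcome, arises from a play of $F[X]$ in which I follows $\sigma$: enumerate $Y=(y_k)$; at stage $k$, I has played integers responding to $(y_0,\dots,y_{k-1})$ and $\sigma$ prescribes $n_k=\sigma((y_0,\dots,y_{k-1}))$; since $(y_0,\dots,y_{k-1})\sqsubseteq Y$, strong diagonalization gives $Y/(y_0,\dots,y_{k-1})\preceq X_{(y_0,\dots,y_{k-1})}$, and the supports of $X_{(y_0,\dots,y_{k-1})}$ are above $n_k$, so $y_k\in\langle Y/(y_0,\dots,y_{k-1})\rangle$ has support above $n_k$, i.e. $y_k\in\langle X/n_k\rangle$, a legal reply. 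Hence $Y$ is the outcome of a $\sigma$-play, so $Y\in\LH^c$, contradicting $Y\in\LH$.

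I would then note that, in fact, the same argument applied below an arbitrary $\vec x$ (using the games $F[\vec x,X]$) and to arbitrary tails shows the stronger-looking statement without extra work, since the strong $(p)$-family is closed upward under $\preceq^*$ and the diagonalization can be carried out below any given element. The main obstacle, and the point requiring care, is the bookkeeping that ensures the single diagonalization $Y$ simultaneously dominates \emph{all} the finitely-generated requirements coming from $\sigma$ along \emph{every} initial segment of $Y$ itself — this is precisely the self-referential feature that the strong $(p)$-property (as opposed to the plain $(p)$-property, which only diagonalizes a fixed $\omega$-chain) is designed to handle, via the tree-indexed families $(X_{\vec x})_{\vec x\in\bb^{<\infty}(E)}$. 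Once the indexing is set up correctly, verifying that the resulting play is legal is the routine support computation sketched above. (For the footnoted filter version one replaces $F$ by the restricted Gowers game $G_\LF$ and uses that a $(p)$-filter is in particular closed under the relevant diagonalizations; I would only remark on this and refer to \S3.11 of \cite{Smythe_thesis}.)
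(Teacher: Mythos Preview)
Your proposal is correct and follows essentially the same route as the paper: define tails $X_{\vec x}=X/\sigma(\vec x)$ of $X$ indexed by finite block sequences, note these generate a filter in $\LH$, apply the strong $(p)$-property to get a strong diagonalization $Y\in\LH$, and verify inductively that $Y=(y_k)$ is a legal play for II against $\sigma$. The only point the paper treats more explicitly is the case where $\vec x$ is not a legal partial play for II against $\sigma$ (so that $\sigma(\vec x)$ is a priori undefined); the paper handles this by replacing $\vec x$ with its maximal legal initial segment, and you should say a word to the same effect since the strong $(p)$-property requires the family $(X_{\vec x})$ to be indexed over all of $\bb^{<\infty}(E)$.
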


\begin{proof}
	Let $\sigma$ be a strategy for I in $F[X]$ for playing into $\LH^c$, where $X\in\LH$. Towards a contradiction, suppose that $\LH$ has the strong $(p)$-property. Define sets $\LA_{\vec{x}}\subseteq\LH$ as follows: $\LA_\emptyset=\{X/\sigma(\emptyset)\}$ and inductively, for $\vec{x}=(x_0,\ldots,x_{n-1})$, $\LA_{\vec{x}}$ is the set of all $X/m$ where $m$ is played by I following $\sigma$ in the first $n$ rounds of $F[X]$, as II plays $x_0,x_1,\ldots,x_{n-1}$. In the case that elements of a given $\vec{x}$ fail to be valid moves for II against $\sigma$, let $\LA_{\vec{x}}=\LA_{\vec{x}'}$ where $\vec{x}'$ is the maximal initial segment of $\vec{x}$ consisting of valid moves. Then, for all $\vec{x}$, $\LA_{\vec{x}}$ is finite and $\LA_{\vec{x}}\subseteq\LA_{\vec{y}}$ whenever $\vec{x}\sqsubseteq\vec{y}$.
	
	For each $\vec{x}$, let $M_{\vec{x}}=\max\{m:X/m\in\LA_{\vec{x}}\}$ and $Y_{\vec{x}}=X/M_{\vec{x}}$. Clearly $(Y_{\vec{x}})$ generates a filter in $\LH$. By the strong $(p)$-property, there is a $Y=(y_n)\in\LH\restr X$ such that $Y/\vec{y}\preceq Y_{\vec{y}}$ for all $\vec{y}\sqsubseteq Y$.
	
	Consider the play of $F[X]$ wherein I follows $\sigma$ and II plays $y_0$, $y_1$, and so on. We claim that this is a valid sequence of moves for II. Note that $y_0\in\langle Y/\emptyset\rangle\subseteq\langle Y_\emptyset\rangle\subseteq\langle X/\sigma(\emptyset)\rangle$, so $y_0$ is a valid move. Inductively, suppose that $(y_0,\ldots,y_k)$ is a valid sequence of moves. We have $y_{k+1}\in\langle Y/(y_0,\ldots,y_k)\rangle\subseteq \langle Y_{(y_0,\ldots,y_k)}\rangle\subseteq\langle X/\sigma(y_0,\ldots,y_k)\rangle$, where the last containment uses our induction hypothesis. Thus, $y_{k+1}$ is a valid move. Since the resulting outcome in this play is in $\LH$, we have a contradiction.
\end{proof}

Equivalently, Theorem \ref{thm:strong_p_strat_I} says that if $\LH$ is a strong $(p)$-family and $\sigma$ is a strategy for I in $F[X]$, for $X\in\LH$, then there is an outcome of $\sigma$ in $\LH$.

\begin{lemma}\label{lem:no_strat_into_dense_comp}
	If $\D\subseteq\bb^\infty(E)$ is $\preceq$-dense open below $X\in\bb^\infty(E)$, then
	\begin{enumerate}
		\item II has a strategy in $F[X]$ for playing into $\D$, and
		\item I has a strategy in $G[X]$ for playing into $\D$.	
	\end{enumerate}
\end{lemma}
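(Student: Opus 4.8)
The plan is to settle both parts by a single device. Since $\D$ is $\preceq$-dense below $X$, fix once and for all a witness $Z\preceq X$ with $Z\in\D$, say $Z=(z_j)$. In each of the two games the relevant player will play so as to force the outcome to be a block subsequence of $Z$; because $\D$ is $\preceq$-open below $X$ and $Z\preceq X$, any outcome that is $\preceq Z$ then lies in $\D$ automatically.

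For (a), I would have II follow the strategy in $F[X]$ which, after I has played $n_0,\dots,n_k$ and II has already produced $y_0,\dots,y_{k-1}$, plays the first vector $z_j$ of $Z$ satisfying $n_k<z_j$ and $y_{k-1}<z_j$ (on the opening move, just $n_0<z_j$). Such a $z_j$ exists since $Z$ is infinite; it is a legal move since $z_j\in\langle Z\rangle\subseteq\langle X\rangle$ together with $n_k<z_j$ forces $z_j\in\langle X/n_k\rangle$; and it respects the constraint $y_{k-1}<y_k$ by construction. The resulting outcome $(y_k)$ is a subsequence of $(z_j)$ listed in increasing order of support, hence a block subsequence of $Z$, so $(y_k)\preceq Z\preceq X$ and therefore $(y_k)\in\D$.

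For (b), I would have I follow the strategy in $G[X]$ which opens with the move $Z$ and, once II has produced $y_0,\dots,y_{k-1}$, responds with the tail $Z/\max(\supp(y_{k-1}))$. Every such move is $\preceq Z\preceq X$ and hence legal for I. Any vector $y_k$ that II then plays lies in $\langle Z/\max(\supp(y_{k-1}))\rangle\subseteq\langle Z\rangle$ and satisfies $y_{k-1}<y_k$, so the outcome $(y_k)$ is a block subsequence of $Z$, i.e.\ $(y_k)\preceq Z\preceq X$, and openness again yields $(y_k)\in\D$.

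I do not expect any real obstacle here; the only points requiring care are two elementary facts about the block structure used silently above, namely that a vector of $\langle X\rangle$ whose support lies entirely above $n$ already belongs to $\langle X/n\rangle$ (because the supports of the $x_i$ are pairwise disjoint and increasing along $X$) and that an increasing subsequence of a block sequence is again a block subsequence. It is also worth observing that density is invoked only to produce $Z$, while openness is invoked only to pass from ``outcome $\preceq Z$'' to ``outcome $\in\D$'', so both halves of the hypothesis are genuinely used in each part; the same argument goes through verbatim for the games $F[\vec{x},X]$ and $G[\vec{x},X]$.
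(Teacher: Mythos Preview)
Your proof is correct and follows essentially the same approach as the paper: fix a single $Z\preceq X$ in $\D$ by density, have II (in $F[X]$) play vectors from $Z$ and have I (in $G[X]$) play $Z$ (or tails of $Z$), so that every outcome is $\preceq Z$ and hence in $\D$ by openness. The only cosmetic difference is that the paper lets I play $Z$ repeatedly rather than tails of $Z$, which suffices since the constraint $y_{k-1}<y_k$ is already built into the rules of $G[X]$.
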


\begin{proof}
	For $F[X]$, take $Y\preceq X$ in $\D$ and let II always play vectors in $Y$. For $G[X]$, take $Y\preceq X$ in $\D$, and let I simply play $Y$ repeatedly.
\end{proof}

It follows from Lemma \ref{lem:no_strat_into_dense_comp}, and Theorems \ref{thm:local_Rosendal} and \ref{thm:strong_p_strat_I}, that whenever $\LH\subseteq\bb^\infty(E)$ is a strong $(p^+)$-family and $\D$ is a coanalytic $\preceq$-dense open set, then $\LH\cap\D\neq\emptyset$. In particular, strong $(p^+)$-families meet all $\preceq$-dense open Borel sets. This is a special case of Theorem \ref{thm:gen_over_L(R)}. The following definition is a counterpart to Theorem \ref{thm:strong_p_strat_I} for II in $G[X]$.

\begin{defn}
	A family $\LH\subseteq\bb^\infty(E)$ is \emph{strategic} if whenever $X\in\LH$ and $\alpha$ is a strategy for II in $G[X]$, there is an outcome of $\alpha$ which is in $\LH$.
\end{defn}

As as above, if $\LH\subseteq\bb^\infty(E)$ is a strategic $(p^+)$-family and $\D\subseteq\bb^\infty(E)$ is an analytic $\preceq$-dense open set, then $\D\cap\LH\neq\emptyset$. As a consequence for $(p^+)$-filters, being strategic subsumes the strong $(p)$-property.

\begin{prop}
	If $\LF\subseteq\bb^\infty(E)$ is a strategic $(p^+)$-filter, then $\LF$ is also a strong $(p)$-filter.
\end{prop}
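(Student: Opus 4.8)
The plan is to eliminate the auxiliary family $(X_{\vec x})_{\vec x\in\bb^{<\infty}(E)}$ in favour of a single block sequence and then play a suitable Gowers game. First I would use the $(p)$-property, together with the fact that $\bb^{<\infty}(E)$ is countable, to produce a $W\in\LF$ with $W\preceq^* X_{\vec x}$ for every $\vec x$: enumerate $\bb^{<\infty}(E)$ as $(\vec x_n)$, and, since each $X_{\vec x}\in\LF$ and $\LF$ is a filter, recursively choose a $\preceq$-decreasing sequence $W_0\succeq W_1\succeq\cdots$ in $\LF$ with $W_n\preceq X_{\vec x_i}$ for all $i\le n$; then let $W\in\LF$ be a diagonalization of $(W_n)$, so that $W\preceq^* X_{\vec x}$ for every $\vec x\in\bb^{<\infty}(E)$.

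Next I would define a strategy $\alpha$ for II in $G[W]$ whose \emph{every} outcome strongly diagonalizes $(X_{\vec x})$. Suppose that in a play of $G[W]$, I has played $Z_0,\dots,Z_m\preceq W$ and II, following $\alpha$, has played $y_0<\dots<y_{m-1}$. Since $W\preceq^* X_{(y_0,\dots,y_{k-1})}$ for each of the finitely many $k\le m$, one can choose $N_m$ with $N_m\ge\max\supp(y_{m-1})$ and $\langle W/N_m\rangle\subseteq\langle X_{(y_0,\dots,y_{k-1})}\rangle$ for all $k\le m$; let $\alpha$ instruct II to play as $y_m$ any vector of $\langle Z_m\rangle$ whose support lies above $N_m$ (such a vector exists, as $Z_m$ is infinite). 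Then $y_{m-1}<y_m$, and since $\langle Z_m\rangle\subseteq\langle W\rangle$ and any vector of $\langle W\rangle$ with support above $N_m$ lies in $\langle W/N_m\rangle$, we get $y_m\in\langle X_{(y_0,\dots,y_{k-1})}\rangle$ for all $k\le m$. Thus $\alpha$ is a legal strategy for II in $G[W]$, and for any outcome $Y=(y_m)$ of $\alpha$ and any $\vec y=(y_0,\dots,y_{k-1})\sqsubseteq Y$, every $y_m$ with $m\ge k$ was played into $\langle X_{\vec y}\rangle$, i.e.\ $Y/\vec y\preceq X_{\vec y}$; so $Y$ strongly diagonalizes $(X_{\vec x})$.

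Finally, since $W\in\LF$ and $\LF$ is strategic, there is an outcome $Y$ of $\alpha$ with $Y\in\LF$, and by the previous step this $Y$ strongly diagonalizes $(X_{\vec x})$. This verifies the strong $(p)$-property, and since $\LF$ is a filter it is a strong $(p)$-filter. I expect the one delicate point to be the construction of $\alpha$: one must notice that an adversarial move $Z_m$ by I cannot obstruct II, because any sufficiently deep vector of $Z_m$ — being a vector of a deep tail of $W$ — automatically lands in $\langle X_{(y_0,\dots,y_{k-1})}\rangle$. Everything else only uses that $\LF$ is a filter with the $(p)$-property and is strategic; fullness plays no role here.
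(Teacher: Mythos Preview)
Your argument is correct and in fact more elementary than the paper's. The paper proceeds by invoking Lemma~\ref{lem:dense_sets_strong_diag} to obtain a Borel $\preceq$-dense set $\LD$ of strong diagonalizations (or incompatibles), takes its analytic $\preceq$-downwards closure, and then appeals to the remark preceding the proposition that a strategic $(p^+)$-family meets every analytic $\preceq$-dense open set; that remark in turn rests on Theorem~\ref{thm:local_Rosendal} and hence on fullness. Your route bypasses this machinery entirely: you first use only the filter and $(p)$-properties to collapse the family $(X_{\vec x})$ below a single $W\in\LF$, and then write down an explicit strategy for II in $G[W]$ all of whose outcomes strongly diagonalize, so that strategicness alone finishes. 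The key observation you isolate --- that any vector of $\langle Z_m\rangle\subseteq\langle W\rangle$ with support above $N_m$ automatically lies in $\langle W/N_m\rangle$, because block sequences have disjoint supports --- is exactly what makes this direct construction work. As you note, fullness is never used, so your argument actually yields the stronger statement that every strategic $(p)$-filter is a strong $(p)$-filter. The paper's approach has the virtue of fitting into its broader theme of meeting definable dense sets, but yours is self-contained and sharper.
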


\begin{proof}
	Suppose that $\LF$ is as described and $(X_{\vec{x}})_{\vec{x}\in\bb^{<\infty}(E)}$ is contained $\LF$. Let $\LD$ be the set given in Lemma \ref{lem:dense_sets_strong_diag}, so that the $\preceq$-downwards closure of $\LD$ is a $\preceq$-dense open set. Moreover, $\LD$ is easily seen to be Borel and its $\preceq$-downwards closure analytic. By the comments above, it follows that $\LF\cap\LD\neq\emptyset$, and any $Y\in\LF\cap\LD$ must be a strong diagonalization of $(X_{\vec{x}})$.
\end{proof}

In \S\ref{sec:5} we will construct (under set-theoretic hypotheses) strategic $(p^+)$-filters. To this end, we again need to know that certain sets are $\preceq$-dense, but also that there are not ``too many'' of them. If $\alpha$ is a strategy for II in $G[X]$, then the set of outcomes which result from $\alpha$, denoted by $[\alpha,X]$, is $\preceq$-dense below $X$. However, as strategies are functions from finite sequences in $\bb^\infty(E)$ to vectors, there are $2^{2^{\aleph_0}}$ many of them.

One way to resolve this is to ``finitize'' the Gowers game as in \cite{MR1839387}: given $X\in\bb^\infty(E)$, the \emph{finite-dimensional Gowers game below $X$}, denoted by $G_f[X]$, is defined as follows: Two players, I and II, alternate with I going first and playing a non-zero vector $x_0^{(0)}\in\langle X\rangle$. II responds with either a non-zero $y_0\in\langle x_0^{(0)}\rangle$ or $0$. If II plays $y_0$, then the game ``restarts'' with I playing a non-zero vector $x_0^{(1)}\in\langle X\rangle$. If II plays $0$, then I must play a non-zero vector $x_1^{(0)}\in\langle X/x^{(0)}_0\rangle$, to which II again responds with either a non-zero vector $y_0\in\langle x_0^{(0)},x_1^{(0)}\rangle$ or $0$, and so on. The non-zero plays of II are required to satisfy $y_n<y_{n+1}$ and the outcome is the sequence $(y_n)$. The notion of strategy for II in $G_f[X]$ is defined in the obvious way (with the added requirement that the outcome must be infinite) and we denote by $[\alpha,X]_f$ the corresponding set of outcomes.

\begin{lemma}\label{lem:outcomes_strat_dense}
	If $\alpha$ is a strategy for II in $G[X]$, then there is a strategy $\alpha'$ for II in $G_f[X]$ such that $[\alpha',X]_f\subseteq[\alpha,X]$. Moreover, $[\alpha',X]_f$ is still $\preceq$-dense below $X$.
\end{lemma}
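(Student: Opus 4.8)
The plan is to simulate a play of the ordinary Gowers game $G[X]$ against the given strategy $\alpha$ inside a play of the finite-dimensional game $G_f[X]$, extracting from each ``block'' of moves in $G_f[X]$ a single block subsequence $X_k \preceq X$ to feed to $\alpha$ as player I's $k$-th move. Concretely, I would define $\alpha'$ as follows. Suppose in $G_f[X]$ that II has so far produced vectors $y_0 < \cdots < y_{k-1}$ (via some completed ``rounds''), and that player I is now feeding in a fresh sequence of vectors $x_0^{(j)}, x_1^{(j)}, \ldots$ from $\langle X \rangle$ with increasing supports (here $j$ indexes the current round). The natural move is to let $X_k$ be the block sequence $(x_0^{(j)}, x_1^{(j)}, \ldots)$ being built; but since $\alpha$ must be queried on a genuine infinite block sequence while only finitely many $x_i^{(j)}$ have been revealed at any finite stage, $\alpha'$ should instead wait: after I plays $x_i^{(j)}$, consider the (finite) block sequence $(x_0^{(j)}, \ldots, x_i^{(j)})$, and check whether the vector $\alpha$ would output on \emph{some} block sequence $X_k \preceq X$ extending $(x_0^{(j)}, \ldots, x_i^{(j)})$ can already be determined to lie in $\langle x_0^{(j)}, \ldots, x_i^{(j)}\rangle$. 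This is exactly the point of the finite-dimensional game: II in $G_f[X]$ is allowed to ``pass'' (play $0$) until enough of I's vectors have accumulated, and the definition of strategy in $G[\vec x, X]$ only requires $\alpha$'s output on $(X_0,\ldots,X_k)$ to lie in $\langle X_k\rangle$, so once $\alpha$'s value on the partial data is forced into the span of finitely many of I's $G_f$-moves, II can play it.

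More carefully, the key observation is that a strategy $\alpha$ for II in $G[X]$, when we only care about determining II's next vector, depends on I's moves only through finitely much information, and crucially II's response $y_k = \alpha(X_0,\ldots,X_k)$ satisfies $y_k \in \langle X_k \rangle$; so we may run $\alpha$ against the ``constant'' block sequences where I is forced to reveal vectors one at a time. The cleanest implementation: in $G_f[X]$, when I has played $x_0^{(k)}, \ldots, x_m^{(k)}$ in the current round, have $\alpha'$ pretend that in $G[X]$ player I has played the block sequences $X_0, \ldots, X_{k-1}$ (already fixed from prior rounds) and is now playing $X_k$; but we do not yet know all of $X_k$. Instead, note that $\alpha(X_0, \ldots, X_{k-1}, X_k)$ is an element of $\langle X_k \rangle$, hence of the span of finitely many vectors of $X_k$; so there is some finite initial segment $(v_0, \ldots, v_\ell)$ of $X_k$ with $\alpha(X_0,\ldots,X_k) \in \langle v_0, \ldots, v_\ell \rangle$. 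The strategy $\alpha'$ plays $0$ until I has played vectors spanning a block sequence of which such an initial segment is available, then plays the corresponding vector $y_k$, thereby restarting. Since $y_k \in \langle x_0^{(k)},\ldots,x_\ell^{(k)}\rangle$, this is a legal $G_f$-move for II, and the resulting outcome $(y_k)$ is precisely an outcome of $\alpha$ against the I-play $(X_0, X_1, \ldots)$ where $X_k$ is whatever block sequence I completed in round $k$. Hence $[\alpha', X]_f \subseteq [\alpha, X]$.

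For density: given any $Y \preceq X$, I would show there is an outcome of $\alpha'$ which is a block subsequence of $Y$, which suffices since it is then $\preceq Y$ and $Y$ was arbitrary below $X$. Since $[\alpha, X]$ is $\preceq$-dense below $X$ (as noted in the paragraph preceding the lemma, because II can be forced to play inside any fixed $Y$), there is an outcome of $\alpha$ below $Y$; tracing through the construction, in $G_f[X]$ let I always play vectors from $\langle Y \rangle$ (legally, with increasing supports), so that every $X_k$ assembled lies below $Y$ and hence the resulting outcome of $\alpha'$ is $\preceq Y$. The only subtlety is ensuring $\alpha'$ eventually produces an \emph{infinite} outcome rather than stalling forever in some round --- but if $\alpha'$ stalled forever in round $k$, that would mean $\alpha(X_0,\ldots,X_{k-1},X_k)$ is not in the span of \emph{any} finite initial segment of the block sequence $X_k$ being revealed, contradicting $\alpha(X_0,\ldots,X_k) \in \langle X_k\rangle$, which is a finite linear combination. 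I expect this finiteness/legality bookkeeping --- making precise that "$\alpha$'s output is forced into the span of finitely many of I's $G_f$-moves" and that I can keep playing legally inside $\langle Y\rangle$ --- to be the main (if routine) obstacle; the conceptual content is just the equivalence between the ordinary and finite-dimensional Gowers games already exploited in \cite{MR1839387}.
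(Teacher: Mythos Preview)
Your proposal is correct and matches the construction the paper cites from Bagaria--L\'opez-Abad (the paper's own proof is a one-line reference to that argument). The only inaccuracy is in your density paragraph: the claim that ``every $X_k$ assembled lies below $Y$'' need not hold, since II is free to choose the infinite extension $X_k$ anywhere in $\langle X\rangle$, not just in $\langle Y\rangle$. What is true, and what you actually need, is that each $y_k$ lies in $\langle x_0^{(k)},\ldots,x_m^{(k)}\rangle\subseteq\langle Y\rangle$, so the outcome $(y_k)$ is $\preceq Y$; your conclusion is therefore unaffected.
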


\begin{proof}
	The proof is identical to the $(\Leftarrow)$ direction of Theorem 1.2 in \cite{MR1839387}.
\end{proof}

It is easy to see that strategies $\alpha$ for II in $G_f[X]$ are coded by reals and $[\alpha,X]_f$ is an analytic set. This will suffice for our constructions in \S\ref{sec:5}.

\section{Constructions of filters in $\bb^\infty(E)$}\label{sec:5}

In this section we show how to construct filters $\LF\subseteq\bb^{\infty}(E)$ having all of the properties discussed in \S\ref{sec:2} and \S\ref{sec:4}. These constructions use either assumptions about certain ``cardinal invariants'' (cf.~\cite{MR2768685}) which hold consistently with $\ZFC$, or the method of forcing. We will see in Corollary \ref{cor:full_filters_ind} that we cannot hope for a construction in $\ZFC$ alone.

\begin{defn}
	\begin{enumerate}
		\item A \emph{tower} (of \emph{length} $\kappa$) in $\bb^\infty(E)$ is a sequence $(X_\alpha)_{\alpha<\kappa}$ such that $\alpha<\beta<\kappa$ implies $X_\beta\preceq^*X_\alpha$ and there is no $X\in\bb^\infty(E)$ with $X\preceq^* X_\alpha$ for all $\alpha<\kappa$.
		\item $\t^*$ is the minimum length of a tower in $\bb^\infty(E)$.
	\end{enumerate}
\end{defn}

$\t^*$ is a regular cardinal and, moreover, uncountable as $\bb^\infty(E)$ has the $(p)$-property. Thus, the Continuum Hypothesis ($\CH$) implies that $\t^*=2^{\aleph_0}$.

We use the following notational conventions for versions of Martin's Axiom: for $\kappa<2^{\aleph_0}$, $\MA(\kappa)$ is the forcing axiom for meeting $\kappa$-many dense subsets of a ccc poset, $\MA$ is $\forall\kappa<2^{\aleph_0}(\MA(\kappa))$, and $\MA(\sigma\text{-centered})$ is $\MA$ restricted to $\sigma$-centered posets.

\begin{lemma}[Lemma 5 in \cite{MR2145797}]
	\textup{($\MA(\sigma\text{-centered})$)} If $\LL\subseteq\bb^\infty(E)$ is linearly ordered with respect to $\preceq^*$ and $|\LL|<2^{\aleph_0}$, then there is a $Y$ such that $Y\preceq^* X$ for all $X\in\LL$. In particular, $\t^*=2^{\aleph_0}$.
\end{lemma}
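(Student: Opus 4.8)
The plan is to realize $Y$ as the union of the first coordinates of a generic filter for a Mathias-style forcing, arranging that this forcing be $\sigma$-centered so that $\MA(\sigma\text{-centered})$ applies. Assume $\LL\neq\emptyset$. The key preliminary move is to replace $\LL$ by its \emph{tail closure} $\LL^*=\{X/m:X\in\LL,\ m\in\N\}$. Because $\LL$ is $\preceq^*$-linearly ordered, $\LL^*$ is \emph{directed under the literal order $\preceq$}: given $X/m,X'/m'$ with, say, $X'\preceq^*X$, pick $k$ with $X'/k\preceq X$ and set $N=\max\{k,m,m'\}$; then $X'/N\in\LL^*$ with $X'/N\preceq X/m$ and $X'/N\preceq X'/m'$. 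Note that $\LL^*$ is itself closed under taking tails, that $|\LL^*|=|\LL|<2^{\aleph_0}$, and that a $\preceq^*$-lower bound for $\LL^*$ is exactly a $\preceq^*$-lower bound for $\LL$.

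Next, let $\P$ consist of all pairs $(s,A)$ with $s\in\bb^{<\infty}(E)$, $A\in\LL^*$ and $\max\supp(s)<\min\supp(A_0)$, ordered by $(t,B)\le(s,A)$ iff $s\sqsubseteq t$, $\langle B\rangle\subseteq\langle A\rangle$, and $t_i\in\langle A\rangle$ for every $i$ with $|s|\le i<|t|$. Transitivity goes through exactly as for classical Mathias forcing, the crucial point being that the second-coordinate refinement $\langle B\rangle\subseteq\langle A\rangle$ is genuine containment. Moreover $\P$ is $\sigma$-centered: $\bb^{<\infty}(E)$ is countable (since $E$ is), and for fixed $s$ any finitely many $(s,A_1),\dots,(s,A_k)$ have the common extension $(s,C/j)$, where $C\in\LL^*$ is a $\preceq$-lower bound of the $A_i$ by directedness and $j$ is chosen so that $\max\supp(s)<\min\supp((C/j)_0)$.

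Then I would isolate the dense sets to be met. For $n\in\N$, the set $D_n=\{(s,A):|s|\ge n\}$ is dense: from $(s,A)$ pass to $(s\concat A_0,A')$ where $A'$ is the tail of $A$ past $A_0$. For each $X\in\LL$, the set $E_X=\{(s,A):\langle A\rangle\subseteq\langle X\rangle\}$ is dense: given $(s,A')$, use directedness of $\LL^*$ (noting $X=X/0\in\LL^*$) to get $C\in\LL^*$ with $C\preceq A'$ and $C\preceq X$, and pass to $(s,C/j)$ for suitable $j$. This is a family of $\aleph_0+|\LL|<2^{\aleph_0}$ dense subsets of the $\sigma$-centered poset $\P$, so $\MA(\sigma\text{-centered})$ yields a filter $G$ meeting all of them. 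Put $Y=\bigcup\{s:(s,A)\in G\text{ for some }A\}$: the first coordinates occurring in $G$ form a $\sqsubseteq$-chain, and meeting the $D_n$ makes $Y$ an infinite block sequence. Fixing $X\in\LL$, take $(s^*,A^*)\in G\cap E_X$; for each $i\ge|s^*|$, a common extension in $G$ of $(s^*,A^*)$ and a condition whose first coordinate reaches coordinate $i$ forces $Y_i\in\langle A^*\rangle\subseteq\langle X\rangle$, so the tail $Y/\max\supp(s^*)$ is a block subsequence of $X$, i.e. $Y\preceq^*X$. For the final clause, every $\preceq^*$-decreasing tower is in particular $\preceq^*$-linearly ordered, so by what was just shown it cannot have length $<2^{\aleph_0}$; since $\t^*\le 2^{\aleph_0}$ holds outright (towers of length at most continuum exist in $\ZFC$), this gives $\t^*=2^{\aleph_0}$.

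I expect the only genuinely non-formal point to be the interplay between $\preceq$ and $\preceq^*$: a Mathias order whose second coordinate is allowed to shrink only modulo finitely many vectors fails to be transitive, because the ``new'' vectors of a deep condition lie in the second coordinate of an intermediate condition, which need not sit inside that of an earlier one. Passing to the tail closure $\LL^*$ repairs exactly this, turning the refinement into literal containment while preserving directedness, cardinality, and the set of lower bounds; after that, all the verifications above are routine bookkeeping.
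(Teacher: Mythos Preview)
The paper does not supply a proof of this lemma; it is quoted verbatim as Lemma~5 of Ferenczi--Rosendal \cite{MR2145797}. Your Mathias-style argument is correct and is exactly the standard route to results of this type: the move to the tail closure $\LL^*$ to obtain genuine $\preceq$-directedness (and hence transitivity of the side-condition ordering) is the one non-obvious step, and you handle it cleanly. The verification that $D_n$ and $E_X$ are dense, that $\P$ is $\sigma$-centered, and that the generic $Y$ is a $\preceq^*$-lower bound all go through as you describe. One cosmetic point: writing ``$X=X/0\in\LL^*$'' is not literally correct under the paper's convention that $X/m$ drops entries with support meeting $\{0,\dots,m\}$, but any tail $X/m$ does lie in $\LL^*$ and satisfies $\langle X/m\rangle\subseteq\langle X\rangle$, so the density of $E_X$ is unaffected.
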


Consequently, the following theorem holds under $\CH$ or $\MA(\sigma\text{-centered})$.

\begin{thm}\label{thm:exist_filters}
	$(\t^*=2^{\aleph_0})$ There exists a strategic $(p^+)$-filter in $\bb^\infty(E)$.
\end{thm}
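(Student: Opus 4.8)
The plan is to build the filter $\LF$ by a transfinite recursion of length $2^{\aleph_0}$, at each stage handling one ``requirement'' of one of the three types we need to meet: (i) for every descending $\omega$-sequence already committed to $\LF$, a diagonalization (to secure the $(p)$-property, and in fact its strong form); (ii) for every pair $(D,X)$ with $D\subseteq E$ that is $\LF$-dense below $X$ once $X\in\LF$, a block sequence $Z$ with $\langle Z\rangle\subseteq D$ (fullness); (iii) for every $X\in\LF$ and every strategy $\alpha$ for II in the finite-dimensional Gowers game $G_f[X]$, an outcome of $\alpha$ lying in $\LF$ (strategic). Enumerate all potential such objects (there are exactly $2^{\aleph_0}$ of them, crucially because by Lemma \ref{lem:outcomes_strat_dense} and the remark following it we only need the $G_f$-strategies, which are coded by reals) in a single list of length $2^{\aleph_0}$, and maintain a $\preceq^*$-decreasing chain $(X_\alpha)_{\alpha<2^{\aleph_0}}$ together with the filter it generates; at the end put $\LF$ equal to the upward $\preceq^*$-closure of $\{X_\alpha:\alpha<2^{\aleph_0}\}$.

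The key steps, in order, are as follows. First, at each stage $\alpha$ we have built $(X_\beta)_{\beta<\alpha}$, a $\preceq^*$-decreasing (hence directed, hence filter-generating) family of size $<2^{\aleph_0}$; applying the hypothesis $\t^* = 2^{\aleph_0}$ (this is exactly where the hypothesis enters: the family is linearly pre-ordered by $\preceq^*$ and of size below $2^{\aleph_0}$, so it is not a tower and has a $\preceq^*$-lower bound) we fix some $X'_\alpha$ with $X'_\alpha \preceq^* X_\beta$ for all $\beta<\alpha$. Second, we diagonalize so as to handle the $\alpha$-th requirement below $X'_\alpha$: if it is a descending sequence, use the construction in the proof of Lemma \ref{lem:dense_sets_diag_full}(a) (or rather Lemma \ref{lem:dense_sets_strong_diag} for strong diagonalization, noting that since the $X_\beta$'s already form a chain the relevant ``generates a filter'' hypothesis is automatic) to get a diagonalization $\preceq^* X'_\alpha$; if it is a density requirement $(D,X)$, first check whether $D$ is genuinely $\LF\restr X$-dense below $X'_\alpha$ — if yes, apply the argument behind fullness (really: a direct construction of a block sequence inside a dense $D$, as in the proof of Lemma \ref{lem:dense_sets_diag_full}(b), picking $z_n\in\langle \text{(current tail)}\rangle$ with $\langle z_0,\dots,z_n\rangle\subseteq D$ — this is possible precisely because density lets us keep extending) to obtain $Z$ with $\langle Z\rangle\subseteq D$ and $Z\preceq^* X'_\alpha$; if it is a strategy $\alpha$ for II in $G_f[X]$ with $X$ already in our chain, use that $[\alpha,X]_f$ is $\preceq$-dense below $X$ (Lemma \ref{lem:outcomes_strat_dense}) to find an outcome $Z \preceq^* X'_\alpha$ of $\alpha$, and set $X_\alpha = Z$. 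Third, at limit stages nothing extra is needed beyond step one. Finally, verify that $\LF$ so constructed is a filter (directed by construction), is full (any $D$ that ends up $\LF$-dense below some $X\in\LF$ was listed, and at its stage $D$ was still dense below $X'_\alpha$ — one must check density below a further block sequence is inherited, which follows since $\LF\restr X'_\alpha \subseteq \LF\restr X$), has the strong $(p)$-property (every descending sequence from $\LF$ is, up to $\preceq^*$-equivalence and cofinally, one from the chain, which was handled; here one uses regularity of $\t^*=2^{\aleph_0}$ to see any countable subset of $\LF$ is bounded by some $X_\alpha$), and is strategic (every $X\in\LF$ equals some $X_\alpha$ up to $\preceq^*$, and every $G_f$-strategy below it was handled, with Lemma \ref{lem:outcomes_strat_dense} translating back to $G[X]$).

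I expect the main obstacle to be bookkeeping rather than a single hard idea: one must arrange the enumeration so that when a requirement ``$D$ is $\LF$-dense below $X$'' comes up, both $X$ has already entered the chain and the notion of $\LF$-density is evaluated correctly against the \emph{final} $\LF$, not the partial one — the standard fix is to enumerate \emph{all} pairs $(D,X)$ with $D\subseteq E$ and $X\in\bb^\infty(E)$, and at stage $\alpha$ simply test whether $D$ is dense below the current $X'_\alpha$ within the part of $\LF$ built so far together with the tail $\preceq^* X'_\alpha$; since density is a ``$\forall Y\exists Z$'' statement and every eventual member of $\LF\restr X$ is $\preceq^*$-below some $X_\beta$, genuine $\LF$-density below $X$ forces density below every $X'_\alpha$ with $X_{\alpha'}\preceq^* X$, so the requirement does get met. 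A secondary subtlety is ensuring that meeting strategic and fullness requirements does not destroy the strong $(p)$-property: this is automatic because each step only ever \emph{refines} the chain, and the $(p)$-type requirements are also on the list and met cofinally often, so any descending $\omega$-chain drawn from $\LF$ is dominated by our chain and thereby diagonalized.
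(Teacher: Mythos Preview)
Your approach is correct and essentially the same as the paper's: build a $\preceq^*$-decreasing chain of length $2^{\aleph_0}$, using $\t^*=2^{\aleph_0}$ at each stage to find a lower bound, and at each step drop into an appropriate $\preceq$-dense set so that the generated filter meets the requirements for $(p)$, fullness, and strategic.

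The one noteworthy difference is in how the fullness requirement is handled. You propose testing, at the stage where a pair $(D,X)$ comes up, whether $D$ is ``genuinely'' dense below the current $X'_\alpha$, and you correctly flag this as the main bookkeeping headache. The paper sidesteps this entirely by using the dichotomous dense set from Lemma~\ref{lem:dense_sets_diag_full}(b),
\[
\LF_{\langle\xi,\gamma\rangle}=\{Y:\langle Y\rangle\subseteq D_\xi \text{ or } \forall V\preceq X_\gamma(\langle V\rangle\subseteq D_\xi \Rightarrow V\perp Y)\},
\]
and simply drops $Y_\eta$ into this set regardless of whether $D_\xi$ will end up being $\LF$-dense below $X_\gamma$. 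The verification that $\LF$ is full is then deferred to the end: if $D_\xi$ turns out to be $\LF$-dense below $X_\gamma\in\LF$, the second alternative is impossible for any $Y\in\LF$, so the $Y\in\LF\cap\LF_{\langle\xi,\gamma\rangle}$ must satisfy $\langle Y\rangle\subseteq D_\xi$. This buys a cleaner argument with no need to reason about partial versus final $\LF$-density; your approach works too but carries exactly the bookkeeping burden you anticipated. (The same trick is used for the $(p)$ and strategic requirements via the sets $\LD_\xi$ and $\LS_\xi$, each with an ``or incompatible'' escape clause that is ruled out post hoc because $\LF$ is a filter.)
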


\begin{proof}
	Fix enumerations:
	\begin{enumerate}[label=\rm{(\roman*)}]
		\item $\{X_\xi:\xi<2^{\aleph_0}\}=\bb^\infty(E)$,
		\item $\{(X_n^\xi):\xi<2^{\aleph_0}\}$ of all $\preceq^*$-decreasing sequences $(X^\xi_n)$ in $\bb^\infty(E)$,
		\item $\{D_\xi:\xi<2^{\aleph_0}\}$ of all subsets $D_\xi$ of $E$,
		\item $\{[\alpha_\xi,X_\xi]_f:\xi<2^{\aleph_0}\}$ of all sets $[\alpha,X]_f$ of outcomes of $\alpha$, where $\alpha$ is a strategy for II in $G_f[X]$.
	\end{enumerate}
	This can be done in (i) and (ii) since $|\bb^\infty(E)|=2^{\aleph_0}$, in (iii) since $E$ is countable, and in (iv) since the strategies $\alpha$ are coded by reals.
		
	Define sets, for $\xi,\gamma<2^{\aleph_0}$, with $\langle\cdot,\cdot\rangle$ a bijection $2^{\aleph_0}\times 2^{\aleph_0}\to 2^{\aleph_0}$,
	\begin{align*}
		\LD_\xi &=\{Y:Y\text{ is a diagonalization of } (X^\xi_n)\text{ or }\exists n(Y\bot X^\xi_n)\}\\
		\LF_{\langle\xi,\gamma\rangle}&=\{Y:\langle Y\rangle\subseteq D_\xi \text{ or } \forall V\preceq X_\gamma(\langle V\rangle\subseteq D_\xi \Rightarrow V\perp Y)\}\\
		\LS_{\xi}&=\{Y:Y\in[\alpha_\xi,X_\xi]_f \text{ or } Y\perp X_\xi\}.
	\end{align*}
	Note that the first two sets above are $\preceq$-dense in $\bb^\infty(E)$ by Lemma \ref{lem:dense_sets_diag_full}, and the third is $\preceq$-dense by Lemma \ref{lem:outcomes_strat_dense}.
	
	We construct a $\preceq^*$-descending chain $(Y_\eta)$ of length $2^{\aleph_0}$ in $\bb^\infty(E)$ by transfinite induction on $\eta$. For $\eta=0$, pick $Y_0$ below conditions in each of $\LD_0$, $\LF_0$, and $\LS_0$. If we have already defined $Y_\beta$ for all $\beta<\eta$, pick $Y_\eta$ below each $Y_\beta$ for $\beta<\eta$ and below conditions in each of $\LD_\eta$, $\LF_\eta$, and $\LS_\eta$. This is possible since $t^*>\eta$. 
	
	Let $\LF$ be the filter generated by $\{Y_\eta:\eta<2^{\aleph_0}\}$ in $\bb^\infty(E)$. To see that $\LF$ is a $(p)$-filter, suppose that $(X_n^\xi)$ is a $\preceq^*$-decreasing sequence in $\LF$. Let $Y\in\LF\cap\LD_\xi$. It cannot be the case that $Y\bot X_n^\xi$ for any $n$, as $\LF$ is a filter, so $Y$ must be a diagonalization of $(X^\xi_n)$. Similarly, using the sets $\LS_\xi$, $\LF$ is strategic.
	
	To see that $\LF$ is full, suppose $D_\xi\subseteq E$ and $X_\gamma\in\LF$ are such that $D_\xi$ is $\LF$-dense below $X_\gamma$. Take $Z\in\LF\cap\LF_{\langle\xi,\gamma\rangle}\neq\emptyset$. By assumption, there is a $Y'$ below both $Y$ and $X_\gamma$ such that $\langle Y'\rangle\subseteq D_\xi$, but obviously it cannot be that $Y'\perp Y$. Thus, it must be that $\langle Y\rangle\subseteq D_\xi$.
\end{proof}

The next result allows us to obtain $(p^+)$-filters generically by forcing with $(\bb^\infty(E),\preceq^*)$. Since the dense sets involved are all definable in a simple way from real parameters, they are contained in $\L(\R)$. In particular, this establishes (without any large cardinals) the $(\Rightarrow)$ direction of Theorem \ref{thm:gen_over_L(R)}.

\begin{lemma}\label{lem:forcing_family}
	For $\LH\subseteq\bb^\infty(E)$ a $(p^+)$-family, forcing with $(\LH,\preceq^*)$ adds no new reals and if $\LG\subseteq\LH$ is $\L(\R)$-generic for $(\LH,\preceq^*)$, then $\LG$ will be a $(p^+)$-filter. If $\LH$ is strategic (has the strong $(p)$-property, respectively), then $\LG$ will also be strategic (have the strong $(p)$-property, respectively).
\end{lemma}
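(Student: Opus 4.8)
The plan is to establish that the forcing $(\LH,\preceq^*)$ is countably closed, and then to read off each remaining clause from genericity by a uniform argument. For countable closure: given a $\preceq^*$-descending $\omega$-sequence in $\LH$, replace each term by a suitable tail of itself --- permissible because $\LH$ is upward closed under $\preceq^*$ and a block sequence is $\preceq^*$-equivalent to each of its tails --- to obtain a genuinely $\preceq$-descending cofinal subsequence in $\LH$, and apply the $(p)$-property of $\LH$ to it to obtain a diagonalization in $\LH$, which is a lower bound of the original sequence. Countably closed forcings add no new $\omega$-sequences of ground-model sets, hence no new reals; since $\bb^\infty(E)$ is a set of reals and $\bb^{<\infty}(E)$ is countable, every block sequence, every $D\subseteq E$, and every $\bb^{<\infty}(E)$-indexed family of block sequences occurring in $V[\LG]$ already lies in $V$. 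Finally, $\LG$ is a filter in the sense of \S\ref{sec:2}: it is nonempty and downward directed by genericity, and upward closed under $\preceq^*$ in $\bb^\infty(E)$ because it is upward closed in $\LH$ and $\LH$ is upward closed in $\bb^\infty(E)$.

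The uniform argument runs as follows. Given the relevant ground-model data --- a $\preceq$-descending sequence $(X_n)$ for the $(p)$-property, a set $D\subseteq E$ for fullness, a strategy for II in a Gowers game for strategicity, a $\bb^{<\infty}(E)$-indexed family for the strong $(p)$-property --- one produces, in $V$, a subset of $\LH$ that is $\preceq^*$-dense in $\LH$ (or $\preceq^*$-dense below a fixed element of $\LG$), each of whose members either witnesses the relevant property for that data or is $(\LH,\preceq^*)$-incompatible with it (or, for fullness, has no refinement witnessing it). Such a set is defined in a simple way from $\LH$ and a real, hence belongs to $\L(\R)$ (literally so in the case $\LH=\bb^\infty(E)$ relevant to Theorem \ref{thm:gen_over_L(R)}), so an $\L(\R)$-generic $\LG$ meets it; and since the data always lies inside the filter $\LG$, any member of $\LG$ is compatible with it, so the ``incompatible'' alternative is excluded and the witness is in $\LG$. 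For the $(p)$-property the dense set is $\{Y\in\LH:Y\text{ is a diagonalization of }(X_n)\}\cup\{Y\in\LH:Y\text{ is }(\LH,\preceq^*)\text{-incompatible with some }X_n\}$, its density being the relativization of Lemma \ref{lem:dense_sets_diag_full}(a) to $\LH$: iterate common lower bounds inside $\LH$, landing in the second set if the iteration stalls, and otherwise diagonalizing the resulting $\preceq^*$-descending sequence by the $(p)$-property of $\LH$. For fullness, with $D$ that is $\LG$-dense below $(e_n)$, the dense set is $\{Y\in\LH:\langle Y\rangle\subseteq D\}\cup\{Y\in\LH:\text{no }Z\preceq Y\text{ satisfies }\langle Z\rangle\subseteq D\}$; density uses fullness of $\LH$ when $\{Z:\langle Z\rangle\subseteq D\}$ is $\LH$-dense below a given $p\in\LH$ (yielding a member of $\LH\restr p$ inside $D$) and is immediate otherwise, and for the witness $Y\in\LG$ the $\LG$-density of $D$ below $(e_n)$ rules out the second alternative, forcing $\langle Y\rangle\subseteq D$.

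For strategicity, let $X\in\LG$ and let $\alpha$ be a strategy for II in $G[X]$ (a priori only in $V[\LG]$). By Lemma \ref{lem:outcomes_strat_dense} there is a strategy $\alpha'$ for II in $G_f[X]$ with $[\alpha',X]_f\subseteq[\alpha,X]$; being coded by a real, $\alpha'\in V$. Lift $\alpha'$ to a strategy $\beta$ for II in $G[X]$ with $[\beta,X]\subseteq[\alpha',X]_f$ by the standard simulation, and then, for any $p\preceq^* X$ in $\LH$, restrict I in $\beta$ to block subsequences of a tail $p'=p/k\preceq X$ (which is again in $\LH$) to obtain a $G[p']$-strategy for II whose outcomes lie below $p'$ and in $[\alpha',X]_f$; strategicity of $\LH$ applied to $p'$ provides such an outcome inside $\LH$, so $\{Y\in\LH:Y\preceq^* X,\ Y\in[\alpha',X]_f\}$ is $\preceq^*$-dense below $X$ in $\LH$, and genericity below $X\in\LG$ yields an outcome of $\alpha'$, hence of $\alpha$, in $\LG$. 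For the strong $(p)$-property, given $(X_{\vec{x}})_{\vec{x}\in\bb^{<\infty}(E)}$ generating a filter inside $\LG$, first refine it within $\LG$, recursively along $\bb^{<\infty}(E)$, to a $\preceq$-coherent family $(X'_{\vec{x}})$ in $\LG$ with $X'_{\vec{x}}\preceq X_{\vec{x}}$ (possible since $\LG$ is a filter; a strong diagonalization of $(X'_{\vec{x}})$ is one of $(X_{\vec{x}})$ by transitivity of $\preceq$), and then run the relativization of Lemma \ref{lem:dense_sets_strong_diag} to $\LH$: the dense set consists of the $Y\in\LH$ strongly diagonalizing $(X'_{\vec{x}})$ together with those $(\LH,\preceq^*)$-incompatible with some $X'_{\vec{x}}$, its density proved by coherently selecting common lower bounds along the tree inside $\LH$ and invoking the strong $(p)$-property of $\LH$; genericity and the filter property of $\LG$ conclude as before.

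The point requiring genuine care is that the ``dense set'' lemmas quoted from \S\ref{sec:2} and \S\ref{sec:4} assert density in $(\bb^\infty(E),\preceq)$, whereas here density is needed in the subposet $(\LH,\preceq^*)$; passing to $\LH$ can only create new incompatibilities, so every diagonalization, fullness choice, and common-lower-bound selection must be carried out inside $\LH$ --- using precisely the defining closure property of $\LH$ --- and incompatibility must be interpreted in $\LH$ rather than in $\bb^\infty(E)$. The strong $(p)$-property carries the additional subtlety of organizing these common lower bounds coherently along the countable tree $\bb^{<\infty}(E)$, handled by the preliminary $\preceq$-coherent refinement performed inside the filter $\LG$.
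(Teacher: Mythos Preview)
Your proof is correct and follows essentially the same approach as the paper's: establish $\sigma$-closedness from the $(p)$-property, then verify each remaining property by exhibiting an appropriate $\preceq^*$-dense subset of $\LH$ (in $\L(\R)$) and invoking genericity. You are considerably more explicit than the paper, which dispatches everything after fullness with the single remark that ``the relevant $\preceq$-dense sets in Lemmas \ref{lem:dense_sets_diag_full}, \ref{lem:dense_sets_strong_diag}, and \ref{lem:outcomes_strat_dense} are $\preceq$-dense in $\LH$ under these hypotheses.''

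One organizational difference worth noting: for fullness, the paper uses the forcing relation directly, choosing $X'\in\LG$ that forces the $\LG$-density statement and then arguing that $\{Z:\langle Z\rangle\subseteq D\}$ is $\preceq$-dense below $X'$ in $\LH$ (else a witness $Y\preceq X'$ to non-density would fail to force what $X'$ forces). Your argument instead packages both alternatives into a single set that is dense everywhere in $\LH$, and uses the $\LG$-density hypothesis only at the end to exclude the bad alternative for the generic witness. Both are fine; yours is more uniform with the other clauses, while the paper's is slightly more direct for this particular case.
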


\begin{proof}
	$\LH$ being a $(p)$-family implies that $(\LH,\preceq^*)$ is $\sigma$-closed, and thus adds no new reals. We use this fact implicitly in what follows. Let $\LG$ be as described. To see that $\LG$ is full, let $D\subseteq E$ be $\LG$-dense below some $X\in\LG$. Translating this into the forcing language, there must be an $X'\in\LG$, which we may assume is below $X$, with
	\[
		X'\forces_{\LH}\forall Y\in\dot{\LG}\restr X\exists Z\preceq Y(\langle Z\rangle\subseteq\check{D}).
	\]
	We claim that the set $\LD=\{Z:\langle Z\rangle\subseteq D\}$ is $\preceq$-dense below $X'$ in $\LH$. If not, then by fullness of $\LH$, $D$ must fail to be $\LH$-dense below $X'$. That is, there is some $Y\in\LH\restr X'$ with no $Z\preceq Y$ such that $\langle Z\rangle \subseteq D$. Then, $Y$ fails to force the statement in the displayed line above, contrary to $Y\preceq X'$. Since $X'\in\LG$ and $\LD$ is $\preceq$-dense below $X'$ in $\LH$, $\LG\cap\LD\neq\emptyset$, showing that $\LG$ is full. The remainder of the proof consists of observing that the relevant $\preceq$-dense sets in Lemmas \ref{lem:dense_sets_diag_full}, \ref{lem:dense_sets_strong_diag}, and \ref{lem:outcomes_strat_dense} are $\preceq$-dense in $\LH$ under these hypotheses.
\end{proof}

\section{Connections to filters on a countable set}\label{sec:6}

We would like to relate the filters discussed thus far to filters of subsets of a countable set. In our case, the countable set will be $E\setminus\{0\}$, but we will call these filters on $E$.

\begin{defn}
	A filter $\LF$ on $E$ is a \emph{block filter} if it has a base consisting of sets of the form $\langle X\rangle$, for $X\in\bb^\infty(E)$.
\end{defn}

It is tempting to define a \emph{block ultrafilter} on $E$ to be a block filter on $E$ which is also an ultrafilter. However, unless $|F|=2$, such objects do not exist: Let $\LF$ be a block filter on $E$. For $A_0,A_1\subseteq E$ given in Example \ref{ex:asym_sets}, note that $E=A_0\cup A_0$. But, for every $X\in\bb^\infty(E)$, $\langle X\rangle\cap A_0\neq\emptyset$ and $\langle X\rangle\cap A_1\neq\emptyset$, so neither set can be in $\LF$.

Let $\FIN$ be the set of nonempty finite subsets of $\N$. An ultrafilter $\LU$ on $\FIN$ is said to be an \emph{ordered union ultrafilter} \cite{MR906807} if it has a base consisting of sets of the form $\langle X\rangle=\{x_{n_0}\cup\cdots\cup x_{n_k}:n_0<\cdots<n_k\}$, where $X=(x_n)$ is a block sequence in $\FIN$ (that is, for all $n$, $\max(x_n)<\min(x_{n+1})$). The set of infinite block sequences in $\FIN$ is denoted by $\FIN^{[\infty]}$. We have, perhaps, overloaded the notation $\langle X\rangle$, but its intended interpretation should be clear from context. If $X=(x_n)\in\bb^\infty(E)$, denote by $\tilde{X}=(\supp(x_n))\in\FIN^{[\infty]}$.

If $|F|=2$, then $E\setminus\{0\}$ can be identified with $\FIN$ via each vector's support. Sums of vectors in block position corresponds to unions of their supports. As a consequence of Hindman's Theorem (Corollary 3.3 in \cite{MR0349574}), one can construct (under hypotheses such as $\CH$ or $\MA$) ordered union ultrafilters on $\FIN$; these will correspond to block ultrafilters on $E$. 

For the remainder of this section we will consider a general countable field $F$. The map which takes a vector to its support will provide the connection between this general setting and $\FIN$.

\begin{defn}
	Let $\LF$ be a block filter on $E$.
	\begin{enumerate}
		\item A subset $D\subseteq E$ is \emph{$\LF$-dense} if for every $\langle X\rangle\in\LF$, there is a $Z\preceq X$ with $\langle Z\rangle\subseteq D$.
		\item $\LF$ is \emph{full} if whenever $D\subseteq E$ is $\LF$-dense, we have that $D\in\LF$.
	\end{enumerate}
\end{defn}

As in the case for filters in $\bb^\infty(E)$, every full block filter on $E$ is maximal with respect to containment amongst block filters.

The map $s:X\mapsto\langle X\rangle$ takes block sequences to subsets of $E$. It is straightforward to show that the image of a (full) filter in $\bb^\infty(E)$ under $s$ generates a (full) block filter on $E$ and that the inverse image of a (full) block filter on $E$ is a (full) filter in $\bb^\infty(E)$. By Theorem \ref{thm:exist_filters} (or Lemma \ref{lem:forcing_family}), it is consistent that such filters exist.

\begin{thm}\label{thm:full_block_ordered_union}
	Suppose that $\LF$ is a full block filter on $E$, and let
	\[
		\supp(\LF)=\{A\subseteq\FIN:\exists F\in\LF(A\supseteq\{\supp(v):v\in F\})\}.
	\]
	Then, $\supp(\LF)$ is an ordered union ultrafilter on $\FIN$.
\end{thm}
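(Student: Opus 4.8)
The plan is to verify the two defining properties of an ordered union ultrafilter for $\supp(\LF)$: that it is an ultrafilter on $\FIN$, and that it has a base of sets of the form $\langle X\rangle$ for $X\in\FIN^{[\infty]}$. I would first dispense with the base property, which is essentially bookkeeping: if $\langle X\rangle\in\LF$ for $X=(x_n)\in\bb^\infty(E)$, then $\{\supp(v):v\in\langle X\rangle\}$ is exactly the set of all finite unions $\supp(x_{n_0})\cup\cdots\cup\supp(x_{n_k})$ with $n_0<\cdots<n_k$, i.e., it equals $\langle\tilde X\rangle$ in the $\FIN$ sense, where $\tilde X=(\supp(x_n))$. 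Since a full block filter is in particular a block filter, it has a base of sets $\langle X\rangle$, and these push forward to a base of sets $\langle\tilde X\rangle$ for $\supp(\LF)$. One should also note $\supp(\LF)$ is a genuine (proper) filter: it is closed upward by definition, closed under intersection because $\LF$ is (using that $\supp(F_1\cap F_2)\subseteq\supp(F_1)\cap\supp(F_2)$ together with the base property to replace $F_1\cap F_2$ by a block subspace $\langle Z\rangle\subseteq F_1\cap F_2$, so that $\langle\tilde Z\rangle\subseteq\supp(F_1)\cap\supp(F_2)$), and does not contain $\emptyset$.

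The main point — and the step I expect to be the real content — is showing $\supp(\LF)$ is an \emph{ultrafilter}: given any $\LA\subseteq\FIN$, either $\LA\in\supp(\LF)$ or $\FIN\setminus\LA\in\supp(\LF)$. Here I would pull the partition back to $E$: let $D=\{v\in E:\supp(v)\in\LA\}$, so that $D$ and $E\setminus D$ partition $E$ and $E\setminus D=\{v:\supp(v)\in\FIN\setminus\LA\}$. I will apply fullness. The key observation is a pigeonhole at the level of supports: for any $X\in\bb^\infty(E)$ with $\langle X\rangle\in\LF$, there is a block subsequence $Z\preceq X$ with $\langle Z\rangle\subseteq D$ or a block subsequence with $\langle Z\rangle\subseteq E\setminus D$. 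This is because membership of $v$ in $D$ depends only on $\supp(v)$, and by the Galvin–Glazer/Hindman-style pigeonhole for $\FIN$ (or simply by a direct diagonal construction, using that for any finite block sequence and any $A\in\FIN$ one can find a further block vector whose support, unioned appropriately, falls on the desired side — alternatively invoke that any ordered-union-type coloring of $\FIN^{[\infty]}$ has a block-homogeneous subsequence), one side is hit cofinally. Thus at least one of $D$, $E\setminus D$ is $\LF$-dense; say $D$ is. By fullness, $D\in\LF$, and then $\supp(D)=\{\supp(v):v\in D\}\subseteq\LA$ witnesses $\LA\in\supp(\LF)$. Symmetrically in the other case.

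The subtle point to get right is the pigeonhole claim for supports, since (unlike the $|F|=2$ case) $E$ itself fails the naive pigeonhole — Example~\ref{ex:asym_sets} shows partitions of $\bb^\infty(E)$ with no homogeneous block subsequence. The resolution is that the bad examples there genuinely depend on the \emph{coefficients} of the vectors, not just their supports; a coloring factoring through $\supp$ cannot exhibit such behavior, because given a finite block sequence $(x_0,\ldots,x_n)$ in $X$ and a target $A\in\FIN$ for the next support, any choice of nonzero coefficients on that support produces a vector on the same side of the partition. Concretely, I would argue: to build a block subsequence of $X$ landing in $D$, it suffices to find, inside $\langle X\rangle$, infinitely many vectors in block position whose supports lie in $\LA$; and to decide which of $\LA$, $\FIN\setminus\LA$ admits such an infinite block sequence of supports drawn from $\langle\tilde X\rangle$, invoke that $\langle \tilde X\rangle$ with the finite-unions operation is a copy of $\FIN$ and apply Hindman's theorem (Corollary~3.3 of \cite{MR0349574}) to the $2$-coloring $A\mapsto[A\in\LA]$ restricted to $\langle\tilde X\rangle$, obtaining a block subsequence $\tilde Z$ of $\tilde X$ all of whose finite unions lie on one side — then lift $\tilde Z$ back to a block subsequence $Z\preceq X$ (picking any nonzero coefficients on each support). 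This gives the desired dichotomy and completes the verification.
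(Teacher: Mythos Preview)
Your proposal is correct and follows essentially the same route as the paper: pull the partition of $\FIN$ back to $E$ via supports, apply Hindman's theorem to $\langle\tilde X\rangle$ to find a block subsequence whose supports land entirely on one side, lift back to a block subsequence $Z\preceq X$, and invoke fullness. The one step you elide is the passage from ``for each $\langle X\rangle\in\LF$, some $Z\preceq X$ lands in $D$ or some $Z\preceq X$ lands in $E\setminus D$'' to ``one of $D$, $E\setminus D$ is $\LF$-dense''; this needs the filter property (if $D$ fails below some fixed $\langle X_0\rangle\in\LF$, intersect any $\langle Y\rangle\in\LF$ with $\langle X_0\rangle$ before applying Hindman), which is exactly how the paper organizes it by first localizing below such an $X_0$.
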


\begin{proof}
	Let $A,B\in\supp(\LF)$, say with $A\supseteq\{\supp(v):v\in F\}$ and $B\supseteq\{\supp(v):v\in G\}$, for $F,G\in\LF$. Then,
	\begin{align*}
		A\cap B&\supseteq\{s:\exists v\in F\exists w\in G(s=\supp(v)=\supp(w))\}\\
		&\supseteq\{\supp(v):v\in F\cap G\},
	\end{align*}
	which is in $\supp(\LF)$, as $F\cap G\in\LF$. Since $\supp(\LF)$ is upwards closed by definition, we have that $\supp(\LF)$ is a filter on $\FIN$. As $\LF$ is a block filter, it follows that $\supp(\LF)$ has a base consisting of sets $\langle\tilde{X}\rangle$ for $X\in\bb^\infty(E)$.

	It remains to show that $\supp(\LF)$ is an ultrafilter. Take $A\subseteq\FIN$ such that for all $B\in\supp(\LF)$, $A\cap B\neq\emptyset$. Let
	\begin{align*}
		D_0 &= \{v\in E:\supp(v)\in A\}\\
		D_1 &= \{v\in E:\supp(v)\notin A\}.	
	\end{align*}
	Towards a contradiction, suppose that for all $\langle X\rangle\in\LF$, there is a $\langle Z\rangle\subseteq \langle X\rangle$ with $\langle Z\rangle\subseteq D_1$. Since $\LF$ is full, there is a $\langle Z\rangle\in\LF$ with $\langle Z\rangle\subseteq D_1$. Then $\langle\tilde{Z}\rangle\in\supp(\LF)$, but $A\cap\langle\tilde{Z}\rangle=\emptyset$, a contradiction.
	
	Thus, there is some $\langle X\rangle\in\LF$ such that for no $\langle Z\rangle\subseteq\langle X\rangle$ is $\langle Z\rangle\subseteq D_1$. Take $\langle Y\rangle\in\LF\restr\langle X\rangle$. By Hindman's Theorem applied to $\langle\tilde{Y}\rangle$, there is a $\tilde{Z}\in\FIN^{[\infty]}$ such that $\langle \tilde{Z}\rangle\subseteq\langle \tilde{Y}\rangle$ and either (i) $\langle \tilde{Z}\rangle\subseteq A$, or (ii) $\langle \tilde{Z}\rangle\subseteq \langle\tilde{Y}\rangle\setminus A$.
	
	Take any $Z\preceq Y$ in $\bb^\infty(E)$ whose supports agree with $\tilde{Z}$, then if (ii) holds, $\langle Z\rangle\subseteq D_1$, contrary to what we know about $\langle X\rangle$. Thus, $\langle\tilde{Z}\rangle\subseteq A$ and $\langle Z\rangle\subseteq D_0$. Since $\langle Y\rangle\in\LF\restr\langle X\rangle$ was arbitrary, we have that $D_0$ is $\LF$-dense. As $\LF$ is full, we can find a $\langle Z\rangle\in\LF$ with $\langle Z\rangle\subseteq D_0$. Then, $\langle\tilde{Z}\rangle\in\supp(\LF)$ and $\langle\tilde{Z}\rangle\subseteq A$, so $A\in\supp(\LF)$.
\end{proof}

As a consequence of Theorem \ref{thm:full_block_ordered_union} and the Corollary on p.~87 of \cite{MR906807} we have:

\begin{cor}
	If $\LF$ is a full filter on $E$, then
	\begin{align*}
		\min(\LF)&=\{\{n=\min(\supp(v)):v\in F\}:F\in\LF\},\\
		\max(\LF)&=\{\{n=\max(\supp(v)):v\in F\}:F\in\LF\}
	\end{align*}
	are selective ultrafilters on $\N$.
\end{cor}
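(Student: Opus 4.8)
The plan is to deduce this directly from Theorem~\ref{thm:full_block_ordered_union} together with the standard fact (the Corollary on p.~87 of \cite{MR906807}) that if $\LU$ is an ordered union ultrafilter on $\FIN$, then its images under $\min$ and $\max$ are selective ultrafilters on $\N$. So the only real work is to identify $\min(\LF)$ and $\max(\LF)$, as defined in the statement, with the $\min$- and $\max$-images of the ordered union ultrafilter $\supp(\LF)$ produced by Theorem~\ref{thm:full_block_ordered_union}.

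First I would record the reduction: since $\LF$ is a full block filter on $E$ (a full filter on $E$ is in particular a block filter, being maximal among block filters, hence determined by a base of sets $\langle X\rangle$), Theorem~\ref{thm:full_block_ordered_union} applies and $\supp(\LF)$ is an ordered union ultrafilter on $\FIN$. Then the cited result of Blass gives that $\{\{\min(s):s\in B\}:B\in\supp(\LF)\}$ and $\{\{\max(s):s\in B\}:B\in\supp(\LF)\}$ are selective ultrafilters on $\N$. It therefore suffices to check the set equalities
\[
	\min(\LF)=\{\{\min(s):s\in B\}:B\in\supp(\LF)\},\qquad \max(\LF)=\{\{\max(s):s\in B\}:B\in\supp(\LF)\},
\]
and then the Corollary follows.

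Next I would verify these equalities by unwinding definitions. For the $\min$ case: if $F\in\LF$, then $B:=\{\supp(v):v\in F\}\in\supp(\LF)$ and $\{\min(s):s\in B\}=\{\min(\supp(v)):v\in F\}$, giving the forward inclusion. Conversely, given $B\in\supp(\LF)$, by definition there is $F\in\LF$ with $B\supseteq\{\supp(v):v\in F\}$; since $\supp(\LF)$ has a base of sets $\langle\tilde X\rangle$ with $\langle X\rangle\in\LF$, one may in fact take $B=\langle\tilde X\rangle$ for some $\langle X\rangle\in\LF$. For $X=(x_n)$ a block sequence, every element $s\in\langle\tilde X\rangle$ has $\min(s)=\min(\supp(x_{n_0}))$ for some $n_0$, and conversely each $\min(\supp(x_n))$ is realized; thus $\{\min(s):s\in\langle\tilde X\rangle\}=\{\min(\supp(x_n)):n\in\N\}=\{\min(\supp(v)):v\in\langle X\rangle\}$ (the last equality because any $v\in\langle X\rangle$ is a block combination starting at some $x_n$, so $\min(\supp(v))=\min(\supp(x_n))$). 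Hence this set lies in $\min(\LF)$. The $\max$ case is symmetric, using that $\max(\supp(v))=\max(\supp(x_m))$ where $x_m$ is the top block appearing in $v$.

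I do not expect a genuine obstacle here: everything is bookkeeping with supports, and the substantive content has been offloaded to Theorem~\ref{thm:full_block_ordered_union} and Blass's result. The one point requiring a little care — and the closest thing to a ``main step'' — is the passage from an arbitrary $B\in\supp(\LF)$ to a basic set $\langle\tilde X\rangle$, i.e.\ observing that $\supp(\LF)$ really does have a base of the form $\{\langle\tilde X\rangle:\langle X\rangle\in\LF\}$, which was already noted in the proof of Theorem~\ref{thm:full_block_ordered_union}, and then checking that $\min$ (resp.\ $\max$) sends $\langle\tilde X\rangle$ onto exactly the set of minima (resp.\ maxima) of supports of vectors in $\langle X\rangle$.
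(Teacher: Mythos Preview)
Your proposal is correct and takes essentially the same approach as the paper: apply Theorem~\ref{thm:full_block_ordered_union} to get that $\supp(\LF)$ is an ordered union ultrafilter, then invoke the Blass--Hindman result that the $\min$- and $\max$-images of such an ultrafilter are selective. The paper is in fact even terser---it treats the identification of $\min(\LF)$ and $\max(\LF)$ with these images as immediate and gives no argument beyond the two citations, so the bookkeeping you spell out is more than what the paper records.
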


As it is consistent that there are no selective ultrafilters \cite{MR0427070}, we have:

\begin{cor}\label{cor:full_filters_ind}
	The existence of full block filters on $E$, and thus full filters in $\bb^\infty(E)$, is independent of $\ZFC$.
\end{cor}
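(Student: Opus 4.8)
The plan is to assemble the two preceding corollaries, which between them cover the two halves of the independence statement. First I would observe that Corollary \ref{cor:full_filters_ind} has two directions: the consistency of the existence of full block filters, and the consistency of their non-existence.

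For the existence direction, I would invoke Theorem \ref{thm:exist_filters}: under $\CH$ (or $\MA(\sigma\text{-centered})$, via the cardinal-invariant hypothesis $\t^* = 2^{\aleph_0}$) there is a strategic $(p^+)$-filter $\LF$ in $\bb^\infty(E)$, which in particular is a full filter in $\bb^\infty(E)$. Applying the map $s\colon X \mapsto \langle X\rangle$ and the remark (in \S\ref{sec:6}) that the image of a full filter in $\bb^\infty(E)$ under $s$ generates a full block filter on $E$, we obtain a full block filter on $E$ in any model of $\CH$. Since $\CH$ is consistent with $\ZFC$, this establishes one side.

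For the non-existence direction, I would appeal to the preceding Corollary (the one immediately before the statement, deriving selective ultrafilters from full filters): if a full block filter $\LF$ on $E$ exists, then $\min(\LF)$ and $\max(\LF)$ are selective ultrafilters on $\N$. Hence the existence of a full block filter implies the existence of a selective ultrafilter. But by Kunen's result \cite{MR0427070} it is consistent with $\ZFC$ that there are no selective ultrafilters (e.g., in the random real model), so in such a model there can be no full block filter on $E$, and by the correspondence under $s$ (the inverse image of a full block filter being a full filter in $\bb^\infty(E)$), no full filter in $\bb^\infty(E)$ either. Combining the two directions gives that the existence of full block filters on $E$, and of full filters in $\bb^\infty(E)$, is neither provable nor refutable from $\ZFC$.

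There is essentially no obstacle here: the statement is a bookkeeping corollary that packages Theorem \ref{thm:exist_filters}, the selective-ultrafilter corollary, and the cited independence of selective ultrafilters. The only point requiring any care is making sure the translation between full filters in $\bb^\infty(E)$ and full block filters on $E$ runs in both directions — existence transfers via $s$, non-existence via $s^{-1}$ — but this is exactly the content of the straightforward remarks already recorded in \S\ref{sec:6}, so no new argument is needed.
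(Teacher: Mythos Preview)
Your proposal is correct and follows essentially the same approach as the paper: existence via Theorem~\ref{thm:exist_filters} (transferred through the map $s$), and non-existence via the preceding corollary producing selective ultrafilters together with Kunen's consistency result~\cite{MR0427070}. The paper presents this as an immediate consequence with no additional argument, just as you do.
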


An ordered union ultrafilter $\LU$ on $\FIN$ is \emph{stable} \cite{MR891244} if whenever $(\langle X_n\rangle)_{n\in\N}$ is contained in $\LU$, for $X_n\in\FIN^{[\infty]}$, there is an $\langle X\rangle\in\LU$ with $\langle X\rangle\subseteq^*\langle X_n\rangle$ for all $n$. Much as selective ultrafilters on $\N$ provide local witnesses to Silver's theorem, selective ultrafilters on $\FIN$ witness a theorem of Milliken \cite{MR0373906} on analytic partitions of $\FIN^{[\infty]}$. It is easy to see, given Theorem \ref{thm:full_block_ordered_union}, that $(p^+)$-filters in $\bb^\infty(E)$ induce stable ordered union ultrafilters on $\FIN$. See \cite{local_Ramsey}, \cite{MR2330595}, and \cite{yyz_sel_fin} for (equivalent) alternate definitions of ``selective ultrafilter'' on $\FIN$.

\section{Extending to universally Baire sets and $\L(\R)$}\label{sec:7}

In this section, we show that under additional set-theoretic hypotheses, Theorem \ref{thm:local_Rosendal} can be extended beyond the analytic sets to obtain Theorems \ref{thm:gen_over_L(R)} and \ref{thm:local_Rosendal_L(R)}, provided the families involved are strategic. We begin by noting the following result:

\begin{thm}[Rosendal \cite{MR2604856}]\label{thm:aleph_1_strat_Ramsey}
	\textup{($\MA(\aleph_1)$)} A union of $\aleph_1$-many strategically Ramsey sets is strategically Ramsey.
\end{thm}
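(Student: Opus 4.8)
The plan is to reduce this statement to the corresponding local result for a \emph{single} strategically Ramsey set, combined with a diagonalization argument driven by $\MA(\aleph_1)$, in close analogy with the proof of Theorem \ref{thm:aleph_1_strat_Ramsey} in \cite{MR2604856}. So suppose $\A = \bigcup_{\xi<\omega_1}\A_\xi$, where each $\A_\xi$ is $\LH$-strategically Ramsey (for a fixed $(p^+)$-family $\LH$), and fix $\vec{y}\in\bb^{<\infty}(E)$ and $X\in\LH$. First I would set up the combinatorial framework of \S\ref{sec:3}: for each $\xi$ and each $\vec{z}\in\bb^{<\infty}(E)$, say $(\vec{z},Z)$ is \emph{good for $\A_\xi$} if II has a strategy in $G[\vec{z},Z]$ for playing into $\A_\xi$, and use Lemma \ref{lem:good_worse} to see that for every $\xi$ and $\vec{z}$ the set $\LE_{\xi,\vec{z}}$ of $Z\in\LH$ for which $(\vec{z},Z)$ is either good for $\A_\xi$ or worse for $\A_\xi$ is $\preceq$-dense open in $\LH$.

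The key step is to find a single $Y\in\LH\restr X$ that simultaneously ``decides'' all $\aleph_1$-many sets $\A_\xi$, in the sense that for every $\xi$ and every $\vec{z}$, $(\vec{z},Y)$ is good for $\A_\xi$ or worse for $\A_\xi$. This is where $\MA(\aleph_1)$ enters: the natural poset is $(\LH,\preceq^*)$ itself, or a $\sigma$-closed subposet of it built by the $(p)$-property, which is $\sigma$-closed and hence ccc-trivially not useful directly — so instead, as in \cite{MR2604856}, one should work with a ccc poset (or more precisely use $\MA(\aleph_1)$ for a ccc poset whose generic produces the required diagonalizing block sequence as a ``fusion'' of approximations), meeting the $\aleph_1$-many dense sets coming from the $\LE_{\xi,\vec z}$ together with the countably many dense sets enforcing that the outcome is a genuine infinite block sequence in $\LH$. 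Alternatively, and I think more faithfully to the source, one builds $Y$ by a transfinite recursion of length $\omega_1$: enumerate the $\A_\xi$ and at stage $\xi$ use the fact that $\A_\xi$ is $\LH$-strategically Ramsey to pass to $Y_\xi\in\LH$ below all previous $Y_\eta$ deciding $\A_\xi$; $\MA(\aleph_1)$ (via $\t^* > \aleph_1$, as in the discussion preceding Theorem \ref{thm:exist_filters}, together with the $(p)$-property) then provides a lower bound $Y\in\LH$ for the $\preceq^*$-decreasing $\omega_1$-chain $(Y_\xi)$, and the fact that goodness/badness is preserved under $\preceq^*$-extension (the first observation in the proof of Lemma \ref{lem:good_worse}) ensures $Y$ still decides every $\A_\xi$.

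Having such a $Y$, I would argue the dichotomy as follows. If for some $\xi$ the pair $(\vec{y},Y)$ is good for $\A_\xi$, then II has a strategy in $G[\vec{y},Y]$ for playing into $\A_\xi\subseteq\A$, and we are in case (ii). Otherwise $(\vec{y},Y)$ is worse for every $\A_\xi$, and by the argument in the proof of Lemma \ref{lem:local_Rosendal_open} applied to each $\xi$, I has a strategy $\sigma_\xi$ in $F[\vec{y},Y]$ forcing that for all $n$, $(\vec{y}\concat(z_0,\dots,z_n),Y)$ is worse for $\A_\xi$; since $\A_\xi$ need not be open one cannot immediately conclude the outcome avoids $\A_\xi$, so the real content is to amalgamate the $\sigma_\xi$ into one strategy for I. This is where I'd again invoke $\MA(\aleph_1)$: interleaving the $\aleph_1$-many strategies is exactly the kind of task that $\MA(\aleph_1)$ handles in \cite{MR2604856}, producing a single strategy for I in $F[\vec y, Y]$ whose every outcome lies outside $\bigcup_\xi \A_\xi = \A$, i.e. case (i).

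The main obstacle, and the place where I expect the proof to require the most care, is this last amalgamation: unlike the open case, for a general $\LH$-strategically Ramsey $\A_\xi$ the ``worse'' alternative of Lemma \ref{lem:good_worse} does not by itself force the outcome into $\A_\xi^c$, so one genuinely needs the interleaving-of-strategies machinery of \cite{MR2604856} (its proof of Theorem 9 / the $\sigma$-algebra-type closure results), carried out under $\MA(\aleph_1)$ rather than just for countable unions, and adapted so that every block sequence produced stays in $\LH$ — which, as in the proof sketch of Theorem \ref{thm:local_Rosendal}, is possible because each such block sequence arises either from applying $\LH$-strategic Ramseyness of some $\A_\xi$ or from a diagonalization licensed by the $(p^+)$ (here in fact $\t^*>\aleph_1$) hypotheses. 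I would therefore present the proof as: (1) the deciding-$Y$ construction via $\MA(\aleph_1)$ and the $(p)$-property, and (2) a remark that the amalgamation of I's strategies is identical to the corresponding argument in \cite{MR2604856}, modified only to keep everything inside $\LH$, and hence omit its routine details.
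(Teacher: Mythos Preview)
The paper does not prove this theorem; it is simply quoted from Rosendal \cite{MR2604856} and then used as a black box in the proof of Theorem \ref{thm:L(R)_strat_Ramsey}. So there is no ``paper's own proof'' to compare your attempt against.

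That said, two remarks on your attempt are in order. First, you are proving more than what is stated: the theorem concerns plain strategically Ramsey sets (i.e.\ the case $\LH=\bb^\infty(E)$), not $\LH$-strategically Ramsey sets for a general $(p^+)$-family $\LH$. Second, and relatedly, your proposed transfinite recursion has a genuine gap in the $\LH$-local setting you chose: you build a $\preceq^*$-descending $\omega_1$-chain $(Y_\xi)$ inside $\LH$ and then invoke $\MA(\aleph_1)$ (via $\t^*>\aleph_1$) to get a lower bound $Y$. But $\t^*>\aleph_1$ only guarantees $Y\in\bb^\infty(E)$, not $Y\in\LH$; the $(p)$-property of $\LH$ handles only countable chains. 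For the theorem as actually stated this issue evaporates, since $\LH=\bb^\infty(E)$, and your outline then matches the strategy in \cite{MR2604856}.
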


The above theorem, plus existing results in the literature, yields:

\begin{thm}\label{thm:L(R)_strat_Ramsey}
	Assume that there is a supercompact cardinal.\footnote{Throughout this section, the assumption of supercompactness can be weakened to the existence of a proper class of Woodin cardinals, see \cite{MR2069032}. We use supercompactness due to its central role in the literature and verbal brevity.} Every subset of $\bb^\infty(E)$ in $\L(\R)$ is strategically Ramsey.\footnote{No\'{e} de Rancourt has announced a different proof of this result using methods inspired by determinacy considerations.}
\end{thm}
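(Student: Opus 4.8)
The plan is to push Theorem~\ref{thm:local_Rosendal} (analytic sets are strategically Ramsey, for $\LH=\bb^\infty(E)$) up the projective hierarchy and into $\L(\R)$ by the standard trick of passing to a forcing extension in which the relevant sets become analytic (indeed Suslin), combined with the absoluteness machinery supplied by a supercompact cardinal. The key observation is that $(\bb^\infty(E),\preceq^*)$ is $\sigma$-closed (this is the $(p)$-property for $\bb^\infty(E)$ itself, noted after Lemma~\ref{lem:forcing_family}), so forcing with it adds no reals, and more importantly that the conclusion ``there is a $Y$ with (i) or (ii)'' is a statement about reals, games, and strategies that is sufficiently simple to be decided by forcing of modest size.

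First I would invoke the structure theory of $\L(\R)$ under large cardinals: if there is a supercompact (or merely a proper class of Woodin cardinals, as the footnote says), then every set of reals in $\L(\R)$ is universally Baire, equivalently $\infty$-homogeneously Suslin, so it has canonical $\infty$-Suslin representations $(T,\check{T})$ that cohere across all set-forcing extensions. Thus, given $\A\subseteq\bb^\infty(E)$ in $\L(\R)$, fix such a tree $T$ with $\A=p[T]$, and fix $\vec{y}\in\bb^{<\infty}(E)$ and $X\in\bb^\infty(E)$. Now collapse a large enough cardinal $\kappa$ (say $\kappa$ Woodin, or just $\kappa=\beth_\omega$ in a pinch, with the homogeneity ensuring correctness) to make $2^{\aleph_0}$ small relative to the completeness of the tree; in such an extension $V[G]$, $p[T]$ is an analytic (in fact Suslin via a tree on $\omega\times\lambda$) set, so Theorem~\ref{thm:local_Rosendal} applies there and yields a $Y\preceq X$, an outcome function, and a winning strategy $\sigma$ for one of the two players in $F[\vec{y},Y]$ or $G[\vec{y},Y]$, targeting $p[T]^c$ or $p[T]$ respectively. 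The point is that ``$\sigma$ is a strategy for I in $F[\vec{y},Y]$ for playing into $\A^c$'' and ``$\sigma$ is a strategy for II in $G[\vec{y},Y]$ for playing into $\A$'' are, using the Suslin representations $T$ and $\check T$ of $\A$ and $\A^c$, statements that are absolute between $V[G]$ and $V$: the universally Baire-ness gives $p[T]\cap p[\check T]=\emptyset$ in every extension and $p[T]\cup p[\check T]\supseteq\R^{V[G]}$, so a strategy that provably lands in $p[T]$ (witnessed by the tree $T$) cannot land in $p[\check T]$, and this persists downward. Since forcing with $(\bb^\infty(E),\preceq^*)$-style posets — or here the collapse — interacts well with these trees, the witness $Y$, which is a real, already lives in $V$, and the strategy can be taken in $V$ by absoluteness of well-foundedness of the relevant derived trees. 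Hence $\A$ is strategically Ramsey in $V$.

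An alternative, perhaps cleaner, route that I would also consider: deduce this directly from Theorem~\ref{thm:aleph_1_strat_Ramsey} by a reflection/tree-representation argument. Under a supercompact, every $\A\in\L(\R)$ is $\aleph_1$-Suslin, hence a union of $\aleph_1$-many Borel (even closed) sets, namely $\A=\bigcup_{x\in\omega_1}p[T_x]$ where $T_x$ is the tree $T$ restricted to nodes with second coordinate below $x$; each $p[T_x]$ is Suslin via a tree on $\omega\times\omega_1$ but more usefully is $\aleph_1$-Borel, and crucially it is a $\mathbf{\Sigma}^1_1$ set if we work below a fixed countable ordinal — so a Shoenfield-style argument shows each ``slice'' is strategically Ramsey by Theorem~\ref{thm:local_Rosendal}, and then Theorem~\ref{thm:aleph_1_strat_Ramsey}, which holds under $\MA(\aleph_1)$, forces the union to be strategically Ramsey. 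To make this go through one first forces $\MA(\aleph_1)$ over $V$ (or works in an intermediate extension), checks that strategic Ramsey-ness of a set in $\L(\R)$ is absolute back to $V$ via the universally Baire tree representations, and concludes. The footnote's remark that de Rancourt has a determinacy-flavored proof suggests yet a third route — directly running an unfolding/Borel-determinacy argument on the asymptotic and Gowers games using $\AD^{\L(\R)}$ — but I would not pursue that here since the paper's stated strategy is clearly the Suslin-representation-plus-absoluteness one.

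\textbf{Main obstacle.} The delicate point is the downward absoluteness of the \emph{strategies}, not of the witness $Y$. That $Y$ (a real) can be pulled back to $V$ is immediate since the collapse or the $\MA$-forcing can be arranged to add no new such reals, or by a Löwenheim–Skolem reflection; but a winning strategy $\sigma$ for, say, player I in $F[\vec{y},Y]$ for playing into $\A^c$ is a function on finitely many moves, and ``every outcome lands in $\A^c = p[\check T]$'' is a $\Pi^1_1$-in-$\check T$ statement about $\sigma$ whose truth one must transfer from $V[G]$ to $V$. The resolution is exactly the homogeneity/universally-Baire coherence of $T$ and $\check T$: the auxiliary game where I also plays into $\check T$ has a definable derived tree of partial plays, and winning-ness amounts to well-foundedness of a tree that is computed the same way in $V$ and $V[G]$; so the strategy, or a recomputed one from the tree, exists in $V$. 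Getting the bookkeeping of these derived trees right — in particular handling the move from Gowers games $G[\vec y,Y]$ with their move-set $\bb^\infty(E)$ (which is only $\sigma$-closed-sized, not real-coded) versus the finitized $G_f[\vec y,Y]$ of Lemma~\ref{lem:outcomes_strat_dense} — is where the real work lies, and it is presumably why the paper cites ``existing results in the literature'' rather than reproving it.
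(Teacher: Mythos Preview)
Your second, ``alternative'' route is essentially the paper's argument, and is the one you should lead with. The paper's proof is short: under a supercompact, $\L(\R)$ is a Solovay model in the sense of Di~Prisco--Todor\v{c}evi\'{c} \cite{MR1978344}, and their Lemma~4.4 says every set of reals in such a model is a union of $\aleph_1$-many analytic sets; Theorem~\ref{thm:aleph_1_strat_Ramsey} then gives the result under $\MA(\aleph_1)$; finally, since supercompactness makes $\L(\R)^{\V}\equiv\L(\R)^{\V[G]}$ for any set forcing (Shelah--Woodin \cite{MR1074499}), one forces $\MA(\aleph_1)$ preserving $\aleph_1$ and reflects the conclusion back to $\L(\R)$. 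Your sketch gets the architecture right, though the paper's decomposition comes from the Solovay-model structure of $\L(\R)$ rather than from slicing a Suslin tree $T$ at countable ordinals as you suggest; the latter does not obviously give \emph{analytic} pieces, whereas the Di~Prisco--Todor\v{c}evi\'{c} lemma does, and that is what feeds directly into Theorem~\ref{thm:local_Rosendal}.

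Your primary approach has a genuine gap. You collapse a cardinal $\kappa$ so that $p[T]$ becomes analytic in $\V[G]$, find $Y$ and a strategy there, and then assert that ``the witness $Y$, which is a real, already lives in $\V$''. But any collapse violent enough to make $p[T]$ analytic adds reals, so $Y\in\V$ is simply not justified. The tree-absoluteness of $T$ and $\check T$ controls membership in $\A$ and $\A^c$ across models; it says nothing about whether the particular $Y$ produced by Rosendal's theorem in $\V[G]$ descends to $\V$. Repairing this would require either a reals-preserving forcing that still renders $\A$ analytic (not available here) or precisely the kind of $\L(\R)$-elementarity transfer that your second approach---and the paper---uses. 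So the obstacle you flag at the end is real, and the resolution is to abandon the first route rather than to push the bookkeeping through.
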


\begin{proof}
	We follow the proof of Theorem 4 in \cite{MR2179777}. The existence of a supercompact cardinal implies that $\L(\R)$ is a \emph{Solovay model} in the sense of \cite{MR1978344}, and Lemma 4.4 of the same reference shows that every set of reals in such a model is a union of $\aleph_1$-many analytic sets. By Theorem \ref{thm:aleph_1_strat_Ramsey}, under $\MA(\aleph_1)$ a union of $\aleph_1$-many strategically Ramsey sets is again strategically Ramsey. Since supercompactness implies \cite{MR1074499} that $\L(\R)^{\V[G]}$ is elementarily equivalent to $\L(\R)$ for any set-forcing extension $\V[G]$, and one can force $\MA(\aleph_1)$ in a way which preserves $\aleph_1$, the same is true in $\L(\R)$. As analytic sets are strategically Ramsey by Theorem \ref{thm:local_Rosendal}, every set in $\L(\R)$ is as well.
\end{proof}

Following \cite{MoForcing}, given a notion of forcing $Q$ and a complete metric space $(X,d)$, we say that a $Q$-name $\dot{x}$ is a \emph{nice $Q$-name for an element of $\dot{X}$} if there is a countable collection $\LD$ of dense subsets of $Q$ such that $\dot{x}(G)$ (the interpretation of $\dot{x}$ by $G$) is an element of $X$ whenever $G$ is a $\LD$-generic filter for $Q$. One can show that if $\dot{y}$ is a $Q$-name and $p\forces_Q\dot{y}\in\dot{X}$, then there is a nice $Q$-name $\dot{x}$ for an element of $\dot{X}$ such that $p\forces_Q\dot{y}=\dot{x}$.

A subset $A\subseteq X$ is \emph{universally Baire} if whenever $Q$ is a notion of forcing, there is a $Q$-name $\dot{A}$ such that for every nice $Q$-name $\dot{x}$ for an element of $\dot{X}$, there is a countable collection $\LD$ of dense subsets of $Q$ such that
\begin{enumerate}[label=\rm{(\arabic*)}]
	\item $\{q\in Q:q \text{ decides } \dot{x}\in\dot{A}\}$ is in $\LD$,
	\item whenever $G$ is $\LD$-generic for $Q$, $\dot{x}(G)$ is in $X$ and $\dot{x}(G)$ is in $A$ if and only if there is a $q\in G$ such that $q\forces\dot{x}\in\dot{A}$.	
\end{enumerate}

The following result will be the main tool for going beyond the analytic sets.

\begin{thm}[Feng--Magidor--Woodin \cite{MR1233821}]\label{thm:FMW_UBsets}
	Assume that there is a supercompact cardinal. Every set of reals in $\L(\R)$ is universally Baire.
\end{thm}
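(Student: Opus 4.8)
The plan is to quote Theorem \ref{thm:FMW_UBsets} from \cite{MR1233821}; since the argument belongs to descriptive set theory and inner model theory rather than to the combinatorics of block sequences, I will only indicate its shape. First I would recall the characterization of universal Baireness in terms of tree representations: a set $A\subseteq\R$ is universally Baire if and only if for every cardinal $\kappa$ there are trees $T,S$ on $\omega\times\kappa$ with $A=p[T]$ and $\R\setminus A=p[S]$, and such that $p[T]\cup p[S]=\R$ is forced by every notion of forcing of size less than $\kappa$ (an ``absolutely complemented'' pair of $\kappa$-Suslin representations). Thus it suffices to produce, for each $A\subseteq\R$ with $A\in\L(\R)$ and each $\kappa$, such a pair of trees.

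Second I would bring in the large cardinal hypothesis. Under a supercompact cardinal, the analysis of $\L(\R)$ already invoked in the proof of Theorem \ref{thm:L(R)_strat_Ramsey} gives that $\L(\R)$ satisfies the axiom of determinacy and that its theory with real parameters is invariant under set forcing; moreover, the Martin--Steel--Woodin machinery of homogeneity systems --- built from the $\kappa$-complete measures supplied by the supercompact, via iteration trees and the analysis of scales in $\L(\R)$ --- shows that every set of reals in $\L(\R)$ is $<\lambda$-homogeneously Suslin for a proper class of $\lambda$. A homogeneously Suslin set is in particular Suslin with an absolute tree representation; since the complement of an $\L(\R)$-set is again an $\L(\R)$-set, one obtains for it the matching co-Suslin representation, and the homogeneity, together with the set-forcing absoluteness of $\L(\R)$, guarantees that the two projections remain complementary in all set-forcing extensions. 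Letting $\lambda$ range over a proper class yields that $A$ is universally Baire.

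The genuine difficulty is concentrated entirely in the second step: constructing homogeneously Suslin representations for arbitrary $\L(\R)$-sets from large cardinals is the substantial content of the Martin--Steel--Woodin program, and reproducing it here would take us far afield. Accordingly, in the sequel we use Theorem \ref{thm:FMW_UBsets} only as a black box, exactly in the form stated by Feng, Magidor, and Woodin.
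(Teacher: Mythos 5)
The paper offers no proof of this theorem at all---it is quoted verbatim from Feng--Magidor--Woodin \cite{MR1233821}---and your proposal does essentially the same thing, using the result as a black box while sketching the external argument (absolutely complemented tree representations plus the Martin--Steel--Woodin homogeneously Suslin analysis of $\L(\R)$ under a supercompact), which is an accurate description of where the real work lies. The only point worth noting is that this paper's definition of universally Baire is the forcing-name formulation rather than the tree characterization you invoke, but these are equivalent, so the citation applies exactly as you use it.
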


Consider the following variant of the infinite asymptotic game: If $A\subseteq E$ is an infinite dimensional subspace of $E$, we define $F[A]$ to be the game in which I plays natural numbers $n_k$, which we assume are increasing, and II plays vectors $y_k\in A$ subject to the constraint $n_k<y_k<y_{k+1}$. By Lemma \ref{lem:subspaces_cont_blocks}, this is well-defined. One can define \emph{outcome}, \emph{strategies}, and the game $F[\vec{x},A]$ exactly as in \S\ref{sec:3}. Note that the game $F[\vec{x},\langle X\rangle]$ in this sense, where $X\in\bb^\infty(E)$, coincides with $F[\vec{x},X]$ from \S\ref{sec:3}, and we will denote it as such.

Suppose that $\sigma$ is a strategy for I in $F[A]$ and $\tau$ a strategy for I in $F[B]$, where $B\subseteq A$ are infinite-dimensional subspaces. We write $\tau\geq\sigma$ if for all $\vec{y}$ in the domain of $\tau$, $\tau(\vec{y})\geq\sigma(\vec{y})$ ($\sigma(\vec{y})$ is well-defined by induction). Observe that if $\tau\geq\sigma$, then whenever $(y_n)$ is an outcome of $F[B]$ where I follows $\tau$, then it is also an outcome of $F[A]$ where I follows $\sigma$. In particular, if $\sigma$ is a strategy for playing into a set $\A$, then so is $\tau$.

If $\sigma$ is a strategy for I in $F[A]$ and $B\subseteq A$ as above, then denote by $\sigma\restr B$ the restriction of $\sigma$ to the part of its domain contained in $B$, a strategy for I in $F[B]$. Clearly, $\sigma\restr B\geq \sigma$. Let $\varepsilon$ be the strategy in $F[E]$ where I plays $n$ on the $n$th move. Then, for all $A$ and strategies $\sigma$ for $I$ in $F[A]$, we have that $\sigma\geq\varepsilon\restr A$.

\begin{defn}
	Let $\P$ be the set of all triples $(\vec{x},A,\sigma)$, where $\vec{x}\in\bb^{<\infty}(E)$, $A$ is an infinite-dimensional subspace of $E$ and $\sigma$ is a strategy for I in $F[\vec{x},A]$. We say that $(\vec{y},B,\tau)\leq(\vec{x},A,\sigma)$ if
	\begin{enumerate}[label=\rm{(\roman*)}]
		\item $\vec{y}=\vec{x}\concat(y_0,\ldots,y_{k-1})$ where $y_0,\ldots,y_{k-1}$ are the first $k$ moves by II in a round of $F[\vec{x},A]$ where I follows $\sigma$,
		\item $B\subseteq A$,
		\item $\tau(\cdot)\geq\sigma((y_0,\ldots,y_k)\concat\,\cdot\,)$.
	\end{enumerate}		
\end{defn}

The ordering $\leq$ on $\P$ is reflexive and transitive, though fails to be antisymmetric. We treat $\P$ as a notion of forcing. Note that $\P$ has a maximal element, namely $(\emptyset,E,\varepsilon)$. If $X\in\bb^\infty(E)$, we write $(\vec{x},X,\sigma)$ for $(\vec{x},\langle X\rangle,\sigma)$. If $\LH\subseteq\bb^\infty(E)$ is a family, let 
\[
	\P(\LH)=\{(\vec{x},A,\sigma)\in\P:\exists X\in\LH(\langle X\rangle\subseteq A)\},
\]
a suborder of $\P$. Note that if $\LH\subseteq\bb^\infty(E)$ is a family, then the set of conditions $(\vec{x},X,\sigma)$ where $X\in\LH$ is dense in $\P(\LH)$.

For $(\vec{x},A,\sigma)\in\P$, let
\[
[\vec{x},A,\sigma]=\{Y\in\bb^\infty(E):Y\text{ is an outcome of $F[\vec{x},A]$ where I follows $\sigma$}\}.
\]

We collect some basic properties of $\P$ in the following lemma:

\begin{lemma}\label{lem:P_basic_props}
	\begin{enumerate}
		\item If $(\vec{y},B,\tau)\leq(\vec{x},A,\sigma)$ in $\P$, then $[\vec{y},B,\tau]\subseteq[\vec{x},A,\sigma]$. Conversely, if $[\vec{y},B,\tau]\subseteq[\vec{x},A,\sigma]$, then $(\vec{y},B,\tau)$ is below $(\vec{x},A,\sigma)$ in the separative quotient of $\P$.
		\item If $(\vec{x},A,\sigma)\in\P$, then the set $[\vec{x},A,\sigma]$ is (topologically) closed.
		\item If $\LF\subseteq\bb^\infty(E)$ is a filter, then $\P(\LF)$ is $\sigma$-centered.	
	\end{enumerate}
\end{lemma}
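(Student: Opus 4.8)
\textbf{Proof plan for Lemma \ref{lem:P_basic_props}.}

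The plan is to treat the three items largely independently, since they rest on different elements of the setup. For (a), the forward direction is essentially unwinding the definition of $\leq$: if $(\vec y,B,\tau)\leq(\vec x,A,\sigma)$ and $Z$ is an outcome of $F[\vec y,B]$ where I follows $\tau$, then using condition (i) to write $\vec y=\vec x\concat(y_0,\ldots,y_{k-1})$ as an initial segment of a $\sigma$-play, and condition (iii) together with the monotonicity remark ($\tau\geq\sigma(\cdots)$ implies outcomes against $\tau$ are outcomes against $\sigma$) preceding the definition of $\P$, one sees that $\vec x\concat(y_0,\ldots,y_{k-1})\concat(\text{II's moves against }\tau)$ is an outcome of $F[\vec x,A]$ where I follows $\sigma$; condition (ii), $B\subseteq A$, guarantees all of II's vectors are legal in $F[\vec x,A]$. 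For the converse, I would argue contrapositively in the separative quotient: if $(\vec y,B,\tau)$ is \emph{not} below $(\vec x,A,\sigma)$ in the separative quotient, there is a condition $(\vec z,C,\rho)\leq(\vec y,B,\tau)$ incompatible with $(\vec x,A,\sigma)$; I then want to extract from $(\vec z,C,\rho)$ an outcome lying in $[\vec z,C,\rho]\subseteq[\vec y,B,\tau]$ but not in $[\vec x,A,\sigma]$. The cleanest route is probably to observe that $[\vec x,A,\sigma]=\bigcup\{[\vec w,D,\mu]:(\vec w,D,\mu)\leq(\vec x,A,\sigma)\}$ up to the relevant closure, so that $[\vec y,B,\tau]\subseteq[\vec x,A,\sigma]$ forces compatibility of the two conditions below any extension — i.e., it forces $(\vec y,B,\tau)$ below $(\vec x,A,\sigma)$ separatively.

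For (b), closedness of $[\vec x,A,\sigma]$ in $\bb^\infty(E)\subseteq E^\N$ (product of discrete spaces): a sequence $Y=(y_n)$ fails to be in $[\vec x,A,\sigma]$ exactly when some finite initial segment already violates legality against $\sigma$ — either $y_0,\ldots,y_{m-1}$ do not extend $\vec x$, or some $y_j$ fails the constraint $n_j<y_j<y_{j+1}$ where $n_j$ is the integer $\sigma$ plays in response to $(y_0,\ldots,y_{j-1})$, or $y_j\notin A$. Each of these is a condition on finitely many coordinates of $Y$, hence the complement is open, so $[\vec x,A,\sigma]$ is closed. (One should also note outcomes are required to be genuine infinite block sequences, but that is automatic from the game rules forcing $y_j<y_{j+1}$.)

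For (c), $\sigma$-centeredness of $\P(\LF)$ when $\LF$ is a filter: I would partition $\P(\LF)$ according to the first coordinate $\vec x\in\bb^{<\infty}(E)$, which is a \emph{countable} set, so it suffices to show each piece $\{(\vec x,A,\sigma)\in\P(\LF)\}$ is centered. Given finitely many $(\vec x,A_1,\sigma_1),\ldots,(\vec x,A_m,\sigma_m)$ in $\P(\LF)$ with the \emph{same} $\vec x$, pick $X_i\in\LF$ with $\langle X_i\rangle\subseteq A_i$; since $\LF$ is a filter there is $X\in\LF$ with $X\preceq X_i$ for all $i$, so $\langle X\rangle\subseteq\bigcap_i A_i$. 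Now take $\tau$ on $F[\vec x,X]$ dominating all the $\sigma_i$ pointwise, e.g. $\tau(\vec w)=\max_i\sigma_i(\vec w)$ (each $\sigma_i$ is defined on the part of its domain inside $\langle X\rangle$, and one extends as needed using $\varepsilon\restr\langle X\rangle$ as a floor). Then $(\vec x,X,\tau)\in\P(\LF)$ and $(\vec x,X,\tau)\leq(\vec x,A_i,\sigma_i)$ for each $i$: condition (i) is trivial because the first coordinates agree (the empty sequence of new II-moves), condition (ii) is $\langle X\rangle\subseteq A_i$, and condition (iii) is $\tau\geq\sigma_i$ by construction. Hence the piece is centered, and $\P(\LF)$ is $\sigma$-centered.

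I expect the main obstacle to be the converse half of (a): phrasing the "below in the separative quotient" claim precisely and verifying it requires care about what $[\vec x,A,\sigma]$ "remembers" of the condition — in particular, whether $[\vec y,B,\tau]\subseteq[\vec x,A,\sigma]$ really recovers enough of $\vec x$, $A$, and $\sigma$ to conclude compatibility below every extension. The fix is to note that any extension $(\vec z,C,\rho)$ of $(\vec y,B,\tau)$ still has $[\vec z,C,\rho]$ nonempty and contained in $[\vec y,B,\tau]\subseteq[\vec x,A,\sigma]$, and an element of $[\vec z,C,\rho]\cap[\vec x,A,\sigma]$ witnesses, after a short argument, a common extension of $(\vec z,C,\rho)$ and $(\vec x,A,\sigma)$; I would lean on the first-direction inclusion and the density remark about conditions of the form $(\vec w,X,\mu)$ to make this rigorous without a long computation.
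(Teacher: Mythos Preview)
Your treatment of (b), (c), and the forward direction of (a) matches the paper's proof essentially line for line: complement open since failure is detected at a finite stage, partition $\P(\LF)$ by the countable first coordinate and take the pointwise maximum of strategies over a common $X\in\LF$.

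For the converse in (a), your approach is correct but takes a genuinely different route from the paper. You argue via the ``compatible with every extension'' characterization of the separative quotient: given $(\vec z,C,\rho)\leq(\vec y,B,\tau)$, pick an outcome $Z\in[\vec z,C,\rho]\subseteq[\vec x,A,\sigma]$ and manufacture a common extension from $Z$. This works, but the ``short argument'' you defer is doing real work --- you must check that a long enough initial segment of $Z$ together with $\langle Z/\vec w\rangle$ and the max of the shifted strategies satisfies (i)--(iii) below \emph{both} conditions. The paper instead argues directly, once, at the level of $(\vec y,B,\tau)$: from $[\vec y,B,\tau]\subseteq[\vec x,A,\sigma]$ it reads off that $\vec y$ extends $\vec x$ as a valid play against $\sigma$, that $B/m\subseteq A$ for $m=\max\{\supp(\vec y),\tau(\emptyset)\}$ (since any $y\in B/m$ occurs in some outcome, hence in $A$), and that $\tau$ dominates the shifted $\sigma$; thus $(\vec y,B/m,\tau)\leq(\vec x,A,\sigma)$ literally in $\P$, and $(\vec y,B/m,\tau)$ has the same extensions as $(\vec y,B,\tau)$. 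The paper's approach is more economical --- it avoids quantifying over all extensions --- while yours has the virtue of making explicit that the map $(\vec x,A,\sigma)\mapsto[\vec x,A,\sigma]$ is the separative-quotient embedding, which is morally what is being proved.
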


\begin{proof}
	(a) The first part follows from our observations about the ordering on strategies for I. For the converse, suppose that $[\vec{y},B,\tau]\subseteq[\vec{x},A,\sigma]$. Then, every outcome of $F[\vec{y},B]$ where I follows $\tau$ is an outcome of $F[\vec{x},A]$ where I follows $\sigma$. In particular, $\vec{y}=\vec{x}\concat(y_0,\ldots,y_{k-1})$ where $y_0,\ldots,y_{k-1}$ are the first $k$ moves by II in a round of $F[\vec{x},A]$ where I follows $\sigma$. 
	
	We claim that $B/m\subseteq A$, where $m=\max\{\supp(\vec{y}),\tau(\emptyset)\}$ and $B/m=\{y\in B:y>m\}$. To see this, note that for any $y\in B/m$, there is an outcome $\vec{y}\concat y\concat Z\in[\vec{y},B,\tau]$ and thus in $[\vec{x},A,\sigma]$. In particular, $y\in A$. 
	
	By our choice of $m$, $\tau\restr B/m=\tau$. So, $(\vec{y},B/m,\tau)\leq(\vec{x},A,\sigma)$ and the sets of extensions of $(\vec{y},B/m,\tau)$ and $(\vec{y},B,\tau)$ coincide. Thus, their images in the separative quotient of $\P$ coincide.
	
	\noindent(b) If $Y=(y_n)\notin[\vec{x},A,\sigma]$, then either $\vec{x}\not\sqsubseteq Y$, or there is some least $n$ such that $y_n$ is not a valid response to $\sigma(y_0,\ldots,y_{n-1})$, i.e., $y_n\notin A$ or $y_n\not>\sigma(y_0,\ldots,y_{n-1})$. As $E$ is discrete, these are open conditions.
		
	\noindent(c) Suppose that $(\vec{x},A,\sigma)$ and $(\vec{x},B,\tau)$ are both in $\P(\LF)$. There are $X,Y\in\LF$ with $\langle X\rangle\subseteq A$ and $\langle Y\rangle\subseteq B$. Since $\LF$ is a filter, there is a $Z\in\LF$ below both. Let $\rho$ be the strategy for I in $F[Z]$ given by $\rho(\vec{z})=\max\{\sigma(\vec{z}),\tau(\vec{z})\}$. Then, $(\vec{x},Z,\rho)\in\P(\LF)$ and extends both $(\vec{x},A,\sigma)$ and $(\vec{x},B,\tau)$. Since there are only countably many such $\vec{x}$, this shows that $\P(\LF)$ is $\sigma$-centered.
\end{proof}

Given a family $\LH\subseteq\bb^\infty(E)$ and a sufficiently generic filter $G$ for $\P(\LH)$, we denote by $X_\text{gen}(G)$ the \emph{generic block sequence} determined by $G$,
\begin{align*}
	X_{\text{gen}}(G)&=\bigcup\{\vec{x}:\exists (\vec{x},A,\sigma)\in G\}.
\end{align*}
In what follows, $G$ will be $\LD$-generic for some countable collection of dense sets $\LD$ coming from the definition of universally Baire, and so $G$ can be taken to be in $\V$. Any such $\LD$ will ensure that $X_{\text{gen}}(G)$ is infinite. We write $\dot{X}_{\text{gen}}$ to be a nice (as defined above) $\P(\LH)$-name for this block sequence.

\begin{lemma}\label{lem:gen_block_P}
	Let $\LF\subseteq\bb^\infty(E)$ be a filter, $\LD$ a collection of dense subsets of $\P(\LF)$, and $G$ a $\LD$-generic filter for $\P(\LF)$. For $X=X_\text{gen}(G)$, the set 
	\[
		G(X)=\{(\vec{x},A,\sigma)\in\P(\LF):X\in[\vec{x},A,\sigma]\}
	\]
	is a $\LD$-generic filter for $\P(\LF)$ which contains $G$ and $X_{\text{gen}}(G(X))=X$.
\end{lemma}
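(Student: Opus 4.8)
The plan is to reduce everything to the single assertion $G\subseteq G(X)$; once this is in hand, the remaining claims are short. So I would first prove $G\subseteq G(X)$. Fix $(\vec{x},A,\sigma)\in G$. Since $G$ is directed and clause (i) of the ordering on $\P$ forces the first coordinates of comparable conditions to be $\sqsubseteq$-comparable, the first coordinates of conditions in $G$ form a $\sqsubseteq$-chain, so $X=X_{\text{gen}}(G)$ is a genuine block sequence, infinite by the discussion above, and $\vec{x}\sqsubseteq X$; write $X=\vec{x}\concat(y_0,y_1,\dots)$. I would then check, by induction on $k$, that $(y_0,\dots,y_k)$ is a legal partial play for II against $\sigma$ in $F[\vec{x},A]$: the initial segment $\vec{x}\concat(y_0,\dots,y_k)$ of $X$ is an initial segment of the first coordinate $\vec{z}$ of some $(\vec{z},C,\rho)\in G$, and picking (by directedness) a condition of $G$ below both $(\vec{z},C,\rho)$ and $(\vec{x},A,\sigma)$, clause (i) applied to its refining $(\vec{x},A,\sigma)$ says its first coordinate is $\vec{x}\concat(z_0,\dots,z_{l-1})$ with $(z_0,\dots,z_{l-1})$ a legal play of II against $\sigma$; since it also extends $\vec{x}\concat(y_0,\dots,y_k)$ we get $z_i=y_i$ for $i\le k$, hence $y_k\in A$ and $y_k>\sigma(y_0,\dots,y_{k-1})$. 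Letting $k$ run and using that $X$ is infinite, $X$ is a full outcome of $F[\vec{x},A]$ with I following $\sigma$, i.e. $X\in[\vec{x},A,\sigma]$; since also $(\vec{x},A,\sigma)\in\P(\LF)$, we get $(\vec{x},A,\sigma)\in G(X)$.

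Next I would verify that $G(X)$ is a filter on $\P(\LF)$. Upward closure is immediate from Lemma~\ref{lem:P_basic_props}(a): if $(\vec{x},A,\sigma)\in G(X)$ and $(\vec{x},A,\sigma)\le(\vec{y},B,\tau)$, then $[\vec{x},A,\sigma]\subseteq[\vec{y},B,\tau]$, so $X\in[\vec{y},B,\tau]$ and $(\vec{y},B,\tau)\in G(X)$ (note $(\vec{y},B,\tau)\in\P(\LF)$ since the subspace coordinate only enlarges). For downward directedness, given $(\vec{x},A,\sigma),(\vec{y},B,\tau)\in G(X)$, both first coordinates are initial segments of $X$, so without loss of generality $\vec{x}\sqsubseteq\vec{y}\sqsubseteq X$, say $\vec{y}=\vec{x}\concat\vec{u}$ and $X=\vec{y}\concat(y_k,y_{k+1},\dots)$. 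Taking $C=A\cap B$ — which is infinite-dimensional and witnesses membership in $\P(\LF)$, because $\LF$ is a filter so some $W\in\LF$ has $\langle W\rangle\subseteq A\cap B$ — and $\rho(\vec{w})=\max\{\sigma(\vec{u}\concat\vec{w}),\tau(\vec{w})\}$, exactly as in the proof of Lemma~\ref{lem:P_basic_props}(c), one checks $(\vec{y},C,\rho)\le(\vec{x},A,\sigma),(\vec{y},B,\tau)$; and since the tail $(y_k,y_{k+1},\dots)$ of $X$ past $\vec{y}$ lies in $A\cap B=C$ and each $y_i$ exceeds both $\sigma(y_0,\dots,y_{i-1})$ and $\tau(y_k,\dots,y_{i-1})$, we get $X\in[\vec{y},C,\rho]$, so $(\vec{y},C,\rho)\in G(X)$ is the desired common refinement.

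The last two points are then quick. For $\LD$-genericity: given $D\in\LD$, $G$-genericity gives $\emptyset\ne G\cap D$, and $G\subseteq G(X)$ gives $G\cap D\subseteq G(X)\cap D$, so $G(X)$ meets $D$. For $X_{\text{gen}}(G(X))=X$: every first coordinate of a condition of $G(X)$ is an initial segment of $X$, so $X_{\text{gen}}(G(X))$ is an initial segment of $X$, while $G\subseteq G(X)$ forces $X=X_{\text{gen}}(G)\sqsubseteq X_{\text{gen}}(G(X))$; hence the two coincide. I expect the only real obstacle to be the first step — certifying that $X$ is a legal outcome against each strategy $\sigma$ occurring in $G$ — which requires carefully peeling apart clause (i) of the ordering on $\P$ and exploiting the directedness of $G$ one coordinate at a time; everything after that is routine manipulation of the definitions of $\P$, $[\vec{x},A,\sigma]$, and $X_{\text{gen}}$.
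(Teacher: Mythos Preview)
Your proof is correct and follows essentially the same approach as the paper: first establish $G\subseteq G(X)$ by arguing that each condition's strategy is respected along $X$, then use Lemma~\ref{lem:P_basic_props}(a) for upward closure and a $\max$-of-strategies common refinement for directedness, with $\LD$-genericity and $X_{\text{gen}}(G(X))=X$ following immediately. If anything, your definition $\rho(\vec{w})=\max\{\sigma(\vec{u}\concat\vec{w}),\tau(\vec{w})\}$ is more careful about the shift than the paper's shorthand, and your inductive verification of $G\subseteq G(X)$ unpacks what the paper does in one sentence via a descending chain.
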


\begin{proof}
	By Lemma \ref{lem:P_basic_props}(a), $G(X)$ is closed upwards. If $(\vec{x},A,\sigma)\in G$, then one can build a decreasing sequence $(\vec{x}_n,A_n,\sigma_n)$ in $G$ with $(\vec{x}_0,A_0,\sigma_0)=(\vec{x},A,\sigma)$, $|\vec{x}_n|\to\infty$ as $n\to\infty$, and $X$ the union of the $\vec{x}_n$. By construction, $X$ must be in $[\vec{x},A,\sigma]$. This shows that $G\subseteq G(X)$, and consequently the latter is $\LD$-generic. It remains to show that $G(X)$ is a filter. Take $(\vec{x},A,\sigma),(\vec{y},B,\tau)\in G(X)$. As $X$ has both $\vec{x}$ and $\vec{y}$ as an initial segment, one must be an initial segment of the other, say $\vec{x}\sqsubseteq\vec{y}$, and the part of $\vec{y}$ above $\vec{x}$ is a sequence of moves by II against $\sigma$. As $\LF$ is a filter, $A\cap B$ is infinite-dimensional. Let $\rho$ be the strategy for I in $F[A\cap B]$ given by $\rho(\vec{v})=\max\{\sigma(\vec{v}),\tau(\vec{v})\}$, for $\vec{v}$ in its domain. Then, $(\vec{y},A\cap B,\rho)$ is below both $(\vec{x},A,\sigma)$ and $(\vec{y},B,\tau)$. Moreover, $X\in[\vec{y},A\cap B,\rho]$, and so $(\vec{y},A\cap B,\rho)\in G(X)$. That $X_{\text{gen}}(G(X))=X$ is clear.
\end{proof}

A consequence of Lemma \ref{lem:gen_block_P} is that if $G$ is generic for $\P(\LF)$ over a model of a sufficient fragment of $\ZFC$, then $G(X)=G$, though we will not make use of this here.

\begin{lemma}\label{lem:gen_blocks_Borel_strat}
	Let $\LF\subseteq\bb^\infty(E)$ be a filter and $\LD$ a countable collection of dense open subsets of $\P(\LF)$.
	\begin{enumerate}
		\item For any $(\vec{x},A,\sigma)\in\P(\LF)$, the set
		\[
			\G_{\LD,(\vec{x},A,\sigma)}=\{X_{\text{gen}}(G):G\text{ a $\LD$-generic filter for $\P(\LF)$ with $(\vec{x},A,\sigma)\in G$}\}
		\]
		is an $F_{\sigma\delta}$ subset of $\bb^\infty(E)$.
		\item If $X\in\LF$, then for no $Y\in\LF\restr X$ does I have a strategy in $F[\vec{x},Y]$ for playing into $(\G_{\LD,(\vec{x},X,\sigma)})^c$.
		\item If $\LF$ is a $(p^+)$-filter and $X\in\LF$, then there is a $Y\in\LF\restr X$ for which II has a strategy in $G[\vec{x},Y]$ for playing into $\G_{\LD,(\vec{x},X,\sigma)}$.
	\end{enumerate}
\end{lemma}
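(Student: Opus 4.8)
\textbf{Proof plan for Lemma \ref{lem:gen_blocks_Borel_strat}.}

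\smallskip

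The plan is to prove the three parts essentially in sequence, with (a) being a direct descriptive-set-theoretic computation, (b) a short game-theoretic argument by contradiction, and (c) a reduction back to Theorem \ref{thm:local_Rosendal} (that analytic sets are $\LH$-strategically Ramsey for a $(p^+)$-family). For part (a): a block sequence $X$ lies in $\G_{\LD,(\vec{x},A,\sigma)}$ iff there is a $\LD$-generic filter $G\ni(\vec{x},A,\sigma)$ with $X_{\text{gen}}(G)=X$. By Lemma \ref{lem:gen_block_P}, such a $G$ can be replaced by $G(X)$, so $X\in\G_{\LD,(\vec{x},A,\sigma)}$ iff $(\vec{x},A,\sigma)\in G(X)$ and $G(X)$ meets every $D\in\LD$. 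The condition ``$(\vec{x},A,\sigma)\in G(X)$'' is the closed condition $X\in[\vec{x},A,\sigma]$ by Lemma \ref{lem:P_basic_props}(b). For each dense open $D\in\LD$, the set $\{X:G(X)\cap D\neq\emptyset\}=\bigcup_{(\vec{y},B,\tau)\in D}[\vec{y},B,\tau]$ is a countable (since $\P(\LF)$ has size continuum, but we only need countably many witnesses — in fact we should be slightly careful and note $\P(\LF)$ need not be countable, so instead observe $G(X)\cap D\neq\emptyset$ iff $X\in[\vec{y},B,\tau]$ for \emph{some} $(\vec{y},B,\tau)\in D$ with $\vec{y}\sqsubseteq X$, and for each initial segment $\vec{y}$ of $X$ the relevant clause is an $F_\sigma$ condition in $X$) hence an $F_\sigma$ set; intersecting over the countably many $D\in\LD$ and with the closed set $[\vec{x},A,\sigma]$ gives an $F_{\sigma\delta}$ set. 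I should take care to phrase this so that only countably much information about $X$ is ever consulted.

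\smallskip

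For part (b): suppose toward a contradiction that $Y\in\LF\restr X$ and I has a strategy $\sigma'$ in $F[\vec{x},Y]$ for playing into $(\G_{\LD,(\vec{x},X,\sigma)})^c$. Using $\sigma'$ together with $\sigma$ (by taking pointwise maxima as in the proofs of Lemma \ref{lem:P_basic_props}(c) and Lemma \ref{lem:gen_block_P}), we may assume $\sigma'\geq\sigma$, so that every outcome of $F[\vec{x},Y]$ under $\sigma'$ is also an outcome of $F[\vec{x},X]$ under $\sigma$. Now $(\vec{x},Y,\sigma')\in\P(\LF)$ and is below $(\vec{x},X,\sigma)$; build a $\LD$-generic filter $G$ containing $(\vec{x},Y,\sigma')$ (this is where the genericity over $\V$ is available since the $\LD$ are countable, as noted before the lemma). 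Then $X_{\text{gen}}(G)$ is a legal outcome of $F[\vec{x},Y]$ under $\sigma'$ — because every condition in $G$ below $(\vec{x},Y,\sigma')$ forces initial segments of the generic to be II-responses against (a strategy extending) $\sigma'$ — and hence lies in $(\G_{\LD,(\vec{x},X,\sigma)})^c$; but $X_{\text{gen}}(G)\in\G_{\LD,(\vec{x},X,\sigma)}$ by definition since $G$ is $\LD$-generic and contains $(\vec{x},X,\sigma)$, a contradiction.

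\smallskip

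For part (c): apply Theorem \ref{thm:local_Rosendal} (analytic sets are $\LH$-strategically Ramsey) to the family $\LF$, the analytic — indeed $F_{\sigma\delta}$, by part (a) — set $\A=\G_{\LD,(\vec{x},X,\sigma)}$, the finite block sequence $\vec{x}$, and the condition $X\in\LF$. This yields $Y\in\LF\restr X$ such that either I has a strategy in $F[\vec{x},Y]$ for playing into $\A^c$, or II has a strategy in $G[\vec{x},Y]$ for playing into $\A$. The first alternative is ruled out by part (b) (applied with $X$ there replaced by $Y$, after noting $\G_{\LD,(\vec{x},Y,\sigma\restr Y)}\subseteq\G_{\LD,(\vec{x},X,\sigma)}$, or simply re-running the part (b) argument directly for $Y$). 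Hence the second alternative holds, which is exactly the conclusion. The main obstacle is getting part (a) genuinely right: one must be sure that membership in $\G_{\LD,(\vec{x},A,\sigma)}$ depends on $X$ only through countably many ``local'' clauses despite $\P(\LF)$ possibly having size continuum, and one must correctly identify the Borel complexity — the closed sets $[\vec{y},B,\tau]$ combine as a countable union (over initial segments and witnesses) inside each ``meets $D$'' requirement, then a countable intersection over $\LD$, landing exactly at $F_{\sigma\delta}$; everything downstream is then a routine combination of the genericity machinery already set up in Lemmas \ref{lem:P_basic_props} and \ref{lem:gen_block_P} with Theorem \ref{thm:local_Rosendal}.
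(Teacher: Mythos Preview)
Your approaches to parts (b) and (c) are correct and essentially match the paper's; in fact your version of (b) --- simply build a $\LD$-generic filter below $(\vec{x},Y,\sigma')$ and invoke Lemma \ref{lem:gen_block_P} to see that $X_{\text{gen}}(G)\in[\vec{x},Y,\sigma']$ --- is slightly cleaner than the paper's explicit interleaving construction.

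Part (a), however, has a genuine gap. You correctly reduce to showing that, for each $D\in\LD$, the set
\[
\{X:G(X)\cap D\neq\emptyset\}=\bigcup_{(\vec{y},B,\tau)\in D}[\vec{y},B,\tau]
\]
is $F_\sigma$. But $D$ is an \emph{arbitrary} dense open subset of $\P(\LF)$, and $\P(\LF)$ has size continuum, so this is a priori an uncountable union of closed sets. Your attempted repair --- grouping by the initial segment $\vec{y}\sqsubseteq X$ --- does not help: for each fixed $\vec{y}$ there are still continuum-many pairs $(B,\tau)$, and the clause ``$X\in[\vec{y},B,\tau]$ for some $(B,\tau)$ with $(\vec{y},B,\tau)\in D$'' is not a Borel (let alone $F_\sigma$) condition on $X$ without further information about $D$.

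The missing idea is Lemma \ref{lem:P_basic_props}(c): since $\LF$ is a filter, $\P(\LF)$ is $\sigma$-centered, hence ccc. Thus each $D_n\in\LD$ contains a \emph{countable} maximal antichain $A_n$ below $(\vec{x},A,\sigma)$, and one shows
\[
\G_{\LD,(\vec{x},A,\sigma)}=\bigcap_{n\in\N}\,\bigcup_{(\vec{y},B,\tau)\in A_n}[\vec{y},B,\tau],
\]
which is visibly $F_{\sigma\delta}$. The verification of this equality uses Lemma \ref{lem:gen_block_P} in both directions, much as you anticipated. Without the ccc step you cannot even conclude that $\G_{\LD,(\vec{x},A,\sigma)}$ is analytic, so your argument for part (c) would be blocked as well.
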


\begin{proof}
	(a) Enumerate $\LD=\{D_n:n\in\N\}$. Since $\P(\LF)$ is ccc by Lemma \ref{lem:P_basic_props}(c), each $D_n$ contains a countable maximal antichain $A_n$ below $(\vec{x},A,\sigma)$. We claim that
	\[
		\G_{\LD,(\vec{x},A,\sigma)}= \bigcap_{n\in\N}\bigcup\{[\vec{y},B,\tau]:(\vec{y},B,\tau)\in A_n\},
	\]
	which is $F_{\sigma\delta}$, as each set $[\vec{y},Y,\tau]$ is closed by Lemma \ref{lem:P_basic_props}(b). 
	
	If $X=X_{\text{gen}}(G)$ where $G$ is a $\LD$-generic filter with $(\vec{x},A,\sigma)\in G$, then for each $n$, $G\cap A_n\neq\emptyset$, say with $(\vec{y}_n,B_n,\tau_n)\in G\cap A_n$. By Lemma \ref{lem:gen_block_P}, for each $n$, $X\in[\vec{y}_n,B_n,\tau_n]$, and so $X$ is in the set on right hand side of the above displayed line. For the reverse inclusion, suppose that $X$ is in set on the right hand side. Then, by Lemma \ref{lem:gen_block_P}, $G(X)$ is a $\LD$-generic filter containing $(\vec{x},A,\sigma)$ for which $X_{\text{gen}}(G(X))=X$, and so $X\in\G_{\LD,(\vec{x},A,\sigma)}$.
	
	(b) Let $X\in\LF$ and $Y\in\LF\restr X$ be given. Towards a contradiction, suppose that $\rho$ is a strategy for I in $F[\vec{x},Y]$ for playing into $(\G_{\LD,(\vec{x},X,\sigma)})^c$. We may assume $\rho\geq\sigma\restr Y$. Consider the following play of $F[\vec{x},Y]$: I plays $\rho(\emptyset)=n_0$. Pick
	\[
		p_0=(\vec{x}\concat(y_0^0,\ldots,y_{k_0}^0),B^0,\rho^0)\leq(\vec{x},Y,\rho)\leq(\vec{x},X,\sigma)
	\]
	in $D_0$ and let II play $y_0=y^0_0$. Note that this is a valid move by definition of $\leq$ in $\P(\LF)$. Next, I plays $\rho(y_0)=n_1$. Pick
	\[
		p_1=(\vec{x}\concat(y_0^0,\ldots,y_{k_0}^0)\concat(y_0^1,\ldots,y_{k_1}^1),B^1,\rho^1)\leq(\vec{x}\concat(y_0^0,\ldots,y_{k_0}^0),B^0,\rho^0)
	\]
	in $D_1$ and let II play $y_1=y^0_1$ if $k_0\geq 1$, and $y_1=y^1_0$ otherwise. Continuing in this fashion, we build an outcome $(y_n)$. Observe that $(y_n)$ must be in $\G_{\LD,(\vec{x},X,\sigma)}$: the conditions $p_n$ picked in $D_n$ above form a $\LD$-generic chain in $\P(\LF)$ below $(\vec{x},X,\sigma)$, thus generate a $\LD$-generic filter $G$ with $X_{\text{gen}}(G)=(y_n)$ and $(\vec{x},X,\sigma)\in G$. This contradicts our choice of $\rho$.
	
	\noindent(c) follows from (a) and (b) by an application of Theorem \ref{thm:local_Rosendal}.
\end{proof}

\begin{lemma}\label{lem:uB_strat}
	Let $\LF\subseteq\bb^\infty(E)$ be a $(p^+)$-filter. If $\A\subseteq\bb^\infty(E)$ is universally Baire, then for any $\vec{x}\in\bb^{<\infty}(E)$ and $X\in\LF$, there is a $Y\in\LF\restr X$ such that II has a strategy in $G[\vec{x},Y]$ for playing into one of $\A$ or $\A^c$.
\end{lemma}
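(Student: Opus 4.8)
The plan is to transport the universally Baire set $\A$ across the forcing $\P(\LF)$. Fix any strategy $\sigma$ for I in $F[\vec x, X]$, so that $(\vec x, X, \sigma)\in\P(\LF)$ since $X\in\LF$, and let $\dot{X}_{\text{gen}}$ be a nice $\P(\LF)$-name for the generic block sequence, witnessed by a countable collection $\LD_0$ of dense subsets of $\P(\LF)$ (which also forces $X_{\text{gen}}$ to be infinite). Applying the definition of ``universally Baire'' to the notion of forcing $\P(\LF)$ and the name $\dot{X}_{\text{gen}}$ produces a $\P(\LF)$-name $\dot\A$ and a countable collection $\LD'$ of dense subsets with properties (1) and (2) of that definition; in particular $D^{\ast}=\{q:q\text{ decides }\dot{X}_{\text{gen}}\in\dot\A\}$ lies in $\LD'$ and is automatically dense open. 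Since $\P(\LF)$ is $\sigma$-centered, hence ccc (Lemma~\ref{lem:P_basic_props}(c)), fix a countable maximal antichain $A^{\ast}\subseteq D^{\ast}$ below $(\vec x, X, \sigma)$ and write $A^{\ast}=A^{\ast}_{+}\cup A^{\ast}_{-}$ according to whether a condition forces $\dot{X}_{\text{gen}}\in\dot\A$ or forces its negation. Finally let $\LD$ be a countable family of dense \emph{open} subsets of $\P(\LF)$ containing $\LD_0$, the downward closures of the members of $\LD'$, and the dense open set $\{q:q\leq q'\text{ for some }q'\in A^{\ast}\}\cup\{q:q\perp(\vec x, X, \sigma)\}$; enlarging the family only strengthens ``$\LD$-generic'', so (1) and (2) persist and $\dot{X}_{\text{gen}}$ is still interpreted as the true generic block sequence by every $\LD$-generic filter.

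Next I would split the generic outcomes. Put $\G=\G_{\LD,(\vec x, X, \sigma)}$, which is $F_{\sigma\delta}$, hence Borel, by Lemma~\ref{lem:gen_blocks_Borel_strat}(a), and set
\[
	\G^{+}=\G\cap\bigcup\{[\vec y, B, \tau]:(\vec y, B, \tau)\in A^{\ast}_{+}\},\qquad \G^{-}=\G\cap\bigcup\{[\vec y, B, \tau]:(\vec y, B, \tau)\in A^{\ast}_{-}\},
\]
each Borel (a Borel set intersected with a countable union of closed sets, by Lemma~\ref{lem:P_basic_props}(b)). For $W\in\G$, Lemma~\ref{lem:gen_block_P} supplies an $\LD$-generic filter $G(W)=\{(\vec y,B,\tau)\in\P(\LF):W\in[\vec y,B,\tau]\}$ with $(\vec x, X, \sigma)\in G(W)$ and $X_{\text{gen}}(G(W))=W$. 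Since $G(W)$ meets $A^{\ast}$, $W\in[\vec y, B, \tau]$ for some $(\vec y, B, \tau)\in A^{\ast}$, so $\G=\G^{+}\cup\G^{-}$, and this union is disjoint because no two members of the antichain $A^{\ast}$ can both lie in the filter $G(W)$. The key point is that $\G^{+}\subseteq\A$ and $\G^{-}\subseteq\A^{c}$: if $W\in\G^{+}$ then some $q\in A^{\ast}_{+}\cap G(W)$ forces $\dot{X}_{\text{gen}}\in\dot\A$, so property (2) applied to $G(W)$ gives $W=X_{\text{gen}}(G(W))\in\A$; symmetrically, if $W\in\G^{-}$ then some $q\in G(W)$ forces $\dot{X}_{\text{gen}}\notin\dot\A$, which precludes any condition of the filter $G(W)$ from forcing $\dot{X}_{\text{gen}}\in\dot\A$, so $W\notin\A$ by property (2).

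Finally I would run the local Rosendal dichotomy twice. Since Borel sets are $\LF$-strategically Ramsey (Theorem~\ref{thm:local_Rosendal}), applying this to $\G^{+}$ with stem $\vec x$ below $X$ gives $Y_1\in\LF\restr X$ such that either II has a strategy in $G[\vec x, Y_1]$ for playing into $\G^{+}\subseteq\A$ — in which case $Y=Y_1$ works — or I has a strategy in $F[\vec x, Y_1]$ for playing into $(\G^{+})^{c}$. In the latter case, applying the dichotomy again to $\G^{-}$ below $Y_1$ gives $Y_2\in\LF\restr Y_1\subseteq\LF\restr X$ such that either II has a strategy in $G[\vec x, Y_2]$ for playing into $\G^{-}\subseteq\A^{c}$ — in which case $Y=Y_2$ works — or I has a strategy in $F[\vec x, Y_2]$ for playing into $(\G^{-})^{c}$. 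But a strategy for I in $F[\vec x, Y_1]$ into $(\G^{+})^{c}$ is still such a strategy in $F[\vec x, Y_2]$, since passing to the block subsequence $Y_2$ only shrinks the set of outcomes; so in this last subcase I would possess strategies in $F[\vec x, Y_2]$ for playing into $(\G^{+})^{c}$ and into $(\G^{-})^{c}$ simultaneously, and taking the pointwise maximum of their move functions (as in the proof of Lemma~\ref{lem:gen_block_P}) yields one strategy for I in $F[\vec x, Y_2]$ for playing into $(\G^{+})^{c}\cap(\G^{-})^{c}=\G^{c}$, contradicting Lemma~\ref{lem:gen_blocks_Borel_strat}(b) since $Y_2\in\LF\restr X$. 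Hence one of the two favorable outcomes occurs, which is the claim.

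The step I expect to be the main obstacle is the middle paragraph: getting $\dot\A$, the nice name $\dot{X}_{\text{gen}}$, Lemma~\ref{lem:gen_block_P}, and the collection $\LD$ to mesh so that $\G=\G^{+}\sqcup\G^{-}$ and, crucially, $\G^{+}\subseteq\A$ and $\G^{-}\subseteq\A^{c}$ genuinely hold — in particular arranging $\LD$ rich enough that every relevant generic filter meets $A^{\ast}$ and interprets $\dot{X}_{\text{gen}}$ as the actual generic block sequence. Given that, the two-fold dichotomy and the combining of I's strategies are routine.
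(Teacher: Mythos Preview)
Your argument is correct and takes a genuinely different route from the paper's. The paper first invokes the combinatorial forcing of Lemma~\ref{lem:good_worse}: it passes to a $Y\in\LF\restr X$ where either $(\vec x,Y)$ is good (done) or I has a strategy $\sigma$ into the ``everywhere worse'' set, and then argues by contradiction that the single condition $(\vec x,Y,\sigma)$ must force $\dot X_{\text{gen}}\notin\dot\A$; one application of Lemma~\ref{lem:gen_blocks_Borel_strat}(c) then gives II a strategy into $\G_{\LD,(\vec x,Y,\sigma)}\subseteq\A^c$. You avoid the good/bad/worse machinery entirely: instead you split the $F_{\sigma\delta}$ set $\G$ by a maximal antichain deciding $\dot X_{\text{gen}}\in\dot\A$ into two Borel pieces $\G^{+}\subseteq\A$ and $\G^{-}\subseteq\A^c$, run Theorem~\ref{thm:local_Rosendal} on each, and use Lemma~\ref{lem:gen_blocks_Borel_strat}(b) (rather than (c)) for the final contradiction via the pointwise maximum of I's two strategies. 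Your route is arguably more transparent---it needs only the ccc of $\P(\LF)$, the Borel partition, and two calls to the local Rosendal dichotomy---while the paper's route isolates a single forcing condition that settles the matter and makes one appeal to Lemma~\ref{lem:gen_blocks_Borel_strat}(c). The bookkeeping you flag in your middle paragraph (enlarging $\LD$ to dense open sets via downward closures, so that Lemma~\ref{lem:gen_block_P} and property~(2) of universal Baireness both apply to the same $\LD$-generic filters) is exactly the point that needs care, and you handle it correctly.
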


\begin{proof}
	Let $X\in\LF$ be given. We may assume that $\vec{x}$ is $\emptyset$. Recall, for $\vec{y}\in\bb^{<\infty}(E)$ and $Y\in\LF$, Definition \ref{def:good_bad_worse} of $(\vec{y},Y)$ being good/bad/worse (for the set $\A$). By Lemma \ref{lem:good_worse}, there is a $Y\in\LF\restr X$ such that either $(\emptyset,Y)$ is good or I has a strategy $\sigma$ in $F[Y]$ to play into the set
	\[
		\{(z_n):\forall n(z_0,\ldots,z_n,Y)\text{ is worse}\}.
	\]
	In the former case we're done, so we assume the latter.
	
	Since $\A$ is universally Baire, we may let $\dot{\A}$ be a $\P(\LF)$-name for $\A$ and $\LD$ countable collection of dense open subsets of $\P(\LF)$ such that 
	\begin{enumerate}[label=\rm{(\roman*)}]
		\item $\{q\in \P(\LF):q \text{ decides } \dot{X}_{\text{gen}}\in\dot{\A}\}$ is in $\LD$, and
		\item whenever $G$ is $\LD$-generic in $\P(\LF)$, $X_{\text{gen}}(G)$ is in $\bb^\infty(E)$ and $X_{\text{gen}}(G)$ is in $\A$ if and only if there is a $q\in G$ such that $q\forces_{\P(\LF)}\dot{X}_{\text{gen}}\in\dot{\A}$.	
	\end{enumerate}
	Thus, if $G$ is $\LD$-generic for $\P(\LF)$, contains $(\emptyset,Y,\sigma)$, and $(\emptyset,Y,\sigma)\forces_{\P(\LF)}\dot{X}_{\text{gen}}\notin\dot{\A}$, then $X_{\text{gen}}(G)\notin\A$. We claim that $(\emptyset,Y,\sigma)\forces_{\P(\LF)}\dot{X}_{\text{gen}}\notin\dot{\A}$. 
	
	Suppose not, then there is a $(\vec{y},Z,\tau)\leq(\emptyset,Y,\sigma)$, with $Z\in\LF$, such that $(\vec{y},Z,\tau)\forces_{\P(\LF)}\dot{X}_{\text{gen}}\in\dot{\A}$. Applying Lemma \ref{lem:gen_blocks_Borel_strat}(c), take $W\in\LF\restr Z$ such that II has a strategy $\alpha$ in $G[\vec{y},W]$ for playing into $\G_{\LD,(\vec{y},Z,\tau)}$. We claim that $\G_{\LD,(\vec{y},Z,\tau)}\subseteq\A$. Let $(z_n)$ be in $\G_{\LD,(\vec{y},Z,\tau)}$. Take $G$ a $\LD$-generic filter for which $(z_n)=X_{\text{gen}}(G)$ and $(\vec{y},Z,\tau)\in G$. Since $(\vec{y},Z,\tau)\forces_{\P(\LF)}\dot{X}_{\text{gen}}\in\dot{\A}$, we have that $(z_n)\in\A$. Thus, $\alpha$ is a strategy for II in $G[\vec{y},W]$ for playing into $\A$. This, however, contradicts the fact that $\sigma$ ensures that $(\vec{y},Z)$ is bad.
	
	Thus, $(\emptyset,Y,\sigma)\forces_{\P(\LF)}\dot{X}_{\text{gen}}\notin\dot{\A}$. Then, exactly as in the preceding paragraph, we may find $W\in\LF\restr Y$ such that II has a strategy in $G[W]$ for playing into $\G_{\LD,(\emptyset,Y,\sigma)}$, and thus into $\A^c$.
\end{proof}

While the symmetric result in Lemma \ref{lem:uB_strat} is appealing on its own, and applies to all analytic sets (being universally Baire \cite{MR1233821}) in $\ZFC$, it is not a true ``dichotomy'' as II can easily have strategies for playing into both $\A$ and $\A^c$.

One consequence of Lemma \ref{lem:gen_blocks_Borel_strat} and the proof of Lemma \ref{lem:uB_strat} is that, given $(p^+)$-filter $\LF$ and a universally Baire set $\A\subseteq\bb^\infty(E)$, there is always an $X\in\LF$ such that one of $\A$ or $\A^c$ contains an $F_{\sigma\delta}$ set $\preceq$-dense below $X$.

We can now complete the proofs of Theorems \ref{thm:gen_over_L(R)} and \ref{thm:local_Rosendal_L(R)}.

\begin{proof}[Proof of Theorem \ref{thm:gen_over_L(R)}.]
	We have already proven the $(\Rightarrow)$ direction in Lemma \ref{lem:forcing_family}. For the remaining direction, let $\D\subseteq\bb^\infty(E)$ be a $\preceq$-dense open set which is in $\L(\R)$, and thus universally Baire by Theorem \ref{thm:FMW_UBsets}. By Lemma \ref{lem:uB_strat}, there is an $X\in\LF$ such that II has a strategy in $G[X]$ for playing into either $\D$ or $\D^c$. By Lemma \ref{lem:no_strat_into_dense_comp}, the latter can never occur. Thus, II has a strategy in $G[X]$ for playing into $\D$. Since $\LF$ is strategic, there is a play by this strategy, say $Z$, with $Z\in\D\cap\LF\neq\emptyset$.
\end{proof}

\begin{lemma}\label{lem:L(R)_filters_Rosendal}
	Assume that there is a supercompact cardinal. Let $\LF\subseteq\bb^\infty(E)$ be a strategic $(p^+)$-filter. Every subset of $\bb^\infty(E)$ in $\L(\R)$ is $\LF$-strategically Ramsey.	
\end{lemma}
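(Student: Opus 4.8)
The plan is to deduce this from the non-local Theorem~\ref{thm:L(R)_strat_Ramsey} together with Lemmas~\ref{lem:uB_strat}, \ref{lem:no_strat_into_dense_comp} and the strategic property, essentially recycling the proof of the $(\Leftarrow)$ direction of Theorem~\ref{thm:gen_over_L(R)}. Fix $\A\subseteq\bb^\infty(E)$ in $\L(\R)$, $\vec{y}\in\bb^{<\infty}(E)$, and $X\in\LF$, and let $\D$ be the set of all $Y\in\bb^\infty(E)$ such that either I has a strategy in $F[\vec{y},Y]$ for playing into $\A^c$, or II has a strategy in $G[\vec{y},Y]$ for playing into $\A$. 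By the very definition of $\LF$-strategic Ramsey-ness, it is enough to find a $Y\in\LF\restr X$ with $Y\in\D$.

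First I would record that $\D$ is $\preceq$-dense open. It is open: if $Y\in\D$ and $Z\preceq Y$, then I's strategy in $F[\vec{y},Y]$ (respectively, II's strategy in $G[\vec{y},Y]$) is still a legal strategy in $F[\vec{y},Z]$ (respectively, $G[\vec{y},Z]$), and the set of its outcomes in the smaller game is contained in the set of its outcomes in the larger one, so $Z\in\D$. It is dense by Theorem~\ref{thm:L(R)_strat_Ramsey}: $\A$ is strategically Ramsey, so below any $Y_0\in\bb^\infty(E)$ there is a $Y\preceq Y_0$ satisfying one of the two alternatives, i.e.\ $Y\in\D$. Next I would argue that $\D\in\L(\R)$. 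The clause ``I has a strategy in $F[\vec{y},Y]$ for playing into $\A^c$'' only quantifies over reals: a strategy for I in the infinite asymptotic game is a function $\bb^{<\infty}(E)\to\N$, hence coded by a real, and the payoff refers only to $\A$ and to outcomes in $\bb^\infty(E)$. For the Gowers-game clause one passes to the finite-dimensional Gowers game via Lemma~\ref{lem:outcomes_strat_dense} and \cite{MR1839387}: II has a strategy in $G[\vec{y},Y]$ for playing into $\A$ if and only if II has such a strategy in $G_f[\vec{y},Y]$, and strategies in $G_f$ are coded by reals. Thus $\D$ is definable over $\L(\R)$ from $\vec{y}$ and $\A$, so by Theorem~\ref{thm:FMW_UBsets} it is universally Baire.

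It then remains to meet $\D$ inside $\LF$ below $X$, and here I would copy the argument in the proof of Theorem~\ref{thm:gen_over_L(R)}. Applying Lemma~\ref{lem:uB_strat} to the universally Baire set $\D$ with $\vec{x}=\emptyset$, there is a $Y_0\in\LF\restr X$ for which II has a strategy in $G[Y_0]$ for playing into $\D$ or into $\D^c$. Since $\D$ is $\preceq$-dense open below $Y_0$, Lemma~\ref{lem:no_strat_into_dense_comp} provides I with a strategy in $G[Y_0]$ for playing into $\D$; running it against a hypothetical strategy for II into $\D^c$ would yield an outcome in $\D\cap\D^c$, which is impossible, so II actually has a strategy in $G[Y_0]$ for playing into $\D$. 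Because $\LF$ is strategic, some outcome $Y$ of this strategy lies in $\LF$; then $Y\preceq Y_0\preceq X$, so $Y\in\LF\restr X$, and $Y\in\D$, as desired.

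The step I expect to be the real obstacle is the second one: checking that $\D$ is genuinely a set in $\L(\R)$. Taken at face value, ``II has a winning strategy in $G[\vec{y},Y]$ for $\A$'' quantifies over strategies, which are functions on finite sequences of block sequences rather than reals; the point of the finite-dimensional Gowers game and of the equivalence of winning strategies there (Lemma~\ref{lem:outcomes_strat_dense} and \cite{MR1839387}) is precisely to bring this clause within reach of Theorem~\ref{thm:FMW_UBsets}. Granting the definability of $\D$, the remainder is a straightforward combination of Theorems~\ref{thm:L(R)_strat_Ramsey} and~\ref{thm:gen_over_L(R)}.
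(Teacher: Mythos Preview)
Your proof is correct and follows essentially the same approach as the paper: define $\D$ as the set of witnesses to $\A$ being strategically Ramsey (for the given $\vec{y}$), show it is $\preceq$-dense open and in $\L(\R)$, and then use genericity of $\LF$ over $\L(\R)$ to meet $\D$ below $X$. The only difference is that the paper invokes Theorem~\ref{thm:gen_over_L(R)} directly (its proof appears just before this lemma) rather than re-running the Lemma~\ref{lem:uB_strat}/\ref{lem:no_strat_into_dense_comp}/strategic argument, so the paper's proof is three sentences long.

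Your careful discussion of why $\D\in\L(\R)$---reducing the Gowers-game clause to the finite-dimensional game $G_f$ so that the strategy quantifier ranges over reals---is a genuine point that the paper sweeps under ``clearly in $\L(\R)$''. Your resolution via Lemma~\ref{lem:outcomes_strat_dense} and \cite{MR1839387} is correct and is exactly what is needed to justify that step; since $\L(\R)$ contains all reals of $\V$, a definition with real quantifiers and parameter $\A\in\L(\R)$ yields the same set whether computed in $\V$ or in $\L(\R)$.
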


\begin{proof}
	Let $\A\subseteq\bb^\infty(E)$ be in $\L(\R)$, and fix $\vec{x}\in\bb^{<\infty}(E)$ and $X\in\LF$. By Theorem \ref{thm:L(R)_strat_Ramsey}, the set of all $Y\preceq X$ witnessing that $\A$ is strategically Ramsey is $\preceq$-dense below $X$, and is clearly in $\L(\R)$. Since $\LF$ is $\L(\R)$-generic, $\LF$ must contain such a $Y$.	
\end{proof}

\begin{proof}[Proof of Theorem \ref{thm:local_Rosendal_L(R)}.]
	Let $\A\subseteq\bb^\infty(E)$ be in $\L(\R)$, and fix $\vec{x}\in\bb^{<\infty}(E)$ and $X\in\LH$. Let $\LG$ be $\V$-generic for $(\LH,\preceq^*)$ and contain $X$. By Lemma \ref{lem:forcing_family}, $\LG$ is a strategic $(p^+)$-filter in $\V[\LG]$. By Lemma \ref{lem:L(R)_filters_Rosendal}, there is a $Y\in\LG\restr X$ witnessing that $\A$ is strategically Ramsey in $\V[\LG]$. Since forcing with $(\LH,\preceq^*)$ adds no new reals, $Y$ witnesses that $\A$ is $\LH$-strategically Ramsey in $\V$.
\end{proof}

\section{Normed spaces and a local Gowers dichotomy}\label{sec:8}

We now consider the case when $E$ is a countably infinite-dimensional normed vector space, with \emph{normalized} basis $(e_n)$ (that is, $\|e_n\|=1$ for all $n$), over a countable subfield $F$ of $\C$ so that the norm takes values in $F$. If $V$ is a subspace of $E$, let $S(V)=\{x\in V:\|x\|=1\}$.

Let $\bb^{\infty}_1(E)=\{(x_n)\in\bb^\infty(E):\forall n(\|x_n\|= 1)\}$ and $\bb^{<\infty}_1(E)=\{\vec{x}\in\bb^{<\infty}(E):\forall n<|\vec{x}|(\|x_n\|=1)\}$. For $X\in\bb^\infty(E)$, let $[X]=\{Y\in\bb^\infty_1(E):Y\preceq X\}$. Taking $E$ discrete, $\bb^\infty_1(E)$ is a closed subset of the Polish space $\bb^\infty(E)$, thus itself Polish.

For $X=(x_n),Y=(y_n)\in\bb^\infty_1(E)$ and $\Delta=(\delta_n)$ a sequence of positive real numbers, written $\Delta>0$, we write $d(X,Y)\leq\Delta$ if for all $n$, $\|x_n-y_n\|\leq\delta_n$. Given $\A\subseteq\bb^\infty_1(E)$ and $\Delta>0$, let
\[
	\A_\Delta = \{Y\in\bb^\infty_1(E):\exists X\in\A (d(X,Y)\leq\Delta)\},
\]
the \emph{$\Delta$-expansion} of $\A$. We collect a few useful properties of $\Delta$-expansions in a lemma which will be used tacitly in what follows. The proof is left to the reader.

\begin{lemma}\label{lem:Delta_exp}
	Let $\A\subseteq\bb^\infty_1(E)$ and $\Delta>0$.
	\begin{enumerate}
		\item If $\A=\bigcup_{i\in I}\A_i$, then $\A_\Delta=\bigcup_{i\in I}(\A_i)_\Delta$.
		\item If $\A$ is analytic, then so is $\A_\Delta$.
		\item $(\A_\Delta)^c\subseteq((\A_\Delta)^c)_\Delta\subseteq\A^c$.
		\item If $0<\Gamma\leq\Delta/2$, then $((\A_\Delta)^c)_\Gamma\subseteq(\A_\Gamma)^c$.
	\end{enumerate}\qed
\end{lemma}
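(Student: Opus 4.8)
The plan is to read off each of the four items directly from the definition of $\A_\Delta$, using only three elementary features of the (coordinatewise) proximity relation $d(\cdot,\cdot)\leq\Delta$: it is symmetric, it satisfies $d(X,X)=0<\Delta$ for every $X$ (as $\Delta>0$), and it obeys the coordinatewise triangle inequality $\|z_n-x_n\|\leq\|z_n-y_n\|+\|y_n-x_n\|$.

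For (a), unwinding definitions gives $Y\in\A_\Delta$ iff $d(X,Y)\leq\Delta$ for some $X\in\A$, and since $X\in\A=\bigcup_{i\in I}\A_i$ iff $X\in\A_i$ for some $i$, this is equivalent to $Y\in\bigcup_{i\in I}(\A_i)_\Delta$. For (c), the first inclusion is the general fact that $\LS\subseteq\LS_\Delta$ for any $\LS$ (witnessed by $d(Y,Y)=0\leq\Delta$ when $Y\in\LS$), applied to $\LS=(\A_\Delta)^c$; the second inclusion is that if $Y\in((\A_\Delta)^c)_\Delta$, say $d(X,Y)\leq\Delta$ with $X\notin\A_\Delta$, then $Y\in\A$ would force $d(Y,X)\leq\Delta$ and hence $X\in\A_\Delta$, a contradiction, so $Y\in\A^c$. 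Part (d) is the same argument with the triangle inequality inserted: if $Y\in((\A_\Delta)^c)_\Gamma$ via $X\notin\A_\Delta$ with $d(X,Y)\leq\Gamma$, and also $Y\in\A_\Gamma$ via $Z\in\A$ with $d(Z,Y)\leq\Gamma$, then $\|z_n-x_n\|\leq\gamma_n+\gamma_n\leq\delta_n$ for all $n$, so $d(Z,X)\leq\Delta$ and $X\in\A_\Delta$, a contradiction; hence $Y\in(\A_\Gamma)^c$.

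The only item requiring a genuine observation is (b). Here the point is that the relation $R=\{(X,Y)\in\bb^\infty_1(E)^2:d(X,Y)\leq\Delta\}$ is closed: since $E$ is discrete, for each fixed $n$ the map $(X,Y)\mapsto\|x_n-y_n\|$ is locally constant, so $\{(X,Y):\|x_n-y_n\|\leq\delta_n\}$ is clopen, and $R$ is their intersection over $n$. Then $\A_\Delta$ is the image of the analytic set $(\A\times\bb^\infty_1(E))\cap R$ under the (continuous) projection to the second coordinate, hence analytic. I expect this last step---recognizing the proximity relation as closed by virtue of the discrete topology on $E$---to be the only non-bookkeeping ingredient; the rest is routine manipulation of the definitions, which is presumably why the authors leave it to the reader.
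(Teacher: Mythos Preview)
Your proof is correct and is exactly the routine verification the paper intends: the lemma is stated with a \qed and the phrase ``The proof is left to the reader,'' so there is no proof in the paper to compare against beyond the implicit expectation that one unwinds the definition of $\A_\Delta$ as you do. Your treatment of (b)---observing that the relation $d(X,Y)\leq\Delta$ is closed since $E$ carries the discrete topology, and then projecting---is the standard way to see this, and your arguments for (a), (c), (d) are the intended one-line manipulations.
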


The notions of \emph{family}, \emph{filter}, \emph{fullness}, \emph{$(p)$-property}, etc, in $\bb^\infty_1(E)$, are defined exactly as for $\bb^\infty(E)$ in \S\ref{sec:2}. Moreover, all of the results established in the previous sections could have been carried out in $\bb^\infty_1(E)$ in the event that $E$ is normed. The only necessary modification is that in the games $G[\vec{x},X]$ and $F[\vec{x},X]$, the two players must play normalized block sequences and vectors, respectively. This will be assumed in what follows.

For $D\subseteq S(E)$, let
\[
	D_\eps=\{x\in S(E):\exists y\in D(\|x-y\|\leq\eps)\}.
\]
We weaken the notion of fullness to the following approximate version.\footnote{While this hampers our ability to reuse results of \S\ref{sec:3} and \S\ref{sec:7}, we hope that it will enable further applications. An elementary proof of Proposition \ref{prop:almost_full_full}, without the hypothesis of being ``strategic'', would greatly simplify the situation in the cases of interest.}
\begin{defn}
	A family $\LH\subseteq\bb_1^\infty(E)$ is \emph{almost full} if whenever $D\subseteq S(E)$ and $X\in\LH$ are such that $D$ is $\LH$-dense below $X$ (that is, for all $Y\in\LH\restr X$, there is a $Z\preceq Y$ with $S(\langle Z\rangle)\subseteq D$), then for any $\eps>0$, there is a $Z\in\LH\restr X$ with $S(\langle Z\rangle)\subseteq D_\eps$.
\end{defn}

\begin{defn}
	If a family has the $(p)$-property and is almost full we call it a \emph{$(p^*)$-family}. Likewise for \emph{$(p^*)$-filter}, \emph{strategic $(p^*)$-family}, etc. 
\end{defn}

The following is a discrete version of Gowers weakly Ramsey property \cite{MR1954235}, relativized to a family $\LH$.

\begin{defn}
	Given a family $\LH\subseteq\bb^\infty_1(E)$, a set $\A\subseteq\bb^\infty_1(E)$ is \emph{$\LH$-weakly Ramsey} if for every $\Delta>0$ and $X\in\LH$, there is a $Y\in\LH\restr X$ such that either
	\begin{enumerate}[label=\rm{(\roman*)}]
		\item $[Y]\subseteq\A^c$, or
		\item II has a strategy in $G[Y]$ for playing into $\A_\Delta$.
	\end{enumerate}
\end{defn}

The first goal of this section is to show that for certain $(p^*)$-families $\LH$, analytic sets in $\bb_1^\infty(E)$ are $\LH$-weakly Ramsey. We begin with variants of Lemmas \ref{lem:good_worse} and \ref{lem:local_Rosendal_open}, and Theorem \ref{thm:local_Rosendal}, for $(p^*)$-families. Since dealing with both families and $\Delta$-expansions requires some care, we include proofs of these results. As in \S\ref{sec:3}, they are very similar to those in \cite{MR2604856}.

\begin{defn}
	Given a family $\LH\subseteq\bb^\infty_1(E)$, $\A\subseteq\bb^\infty_1(H)$ and $\Delta>0$, for $\vec{y}\in\bb_1^{<\infty}(E)$ and $Y\in\LH$, we say the pair $(\vec{y},Y)$ is \emph{$\Delta$-good}/\emph{$\Delta$-bad}/\emph{$\Delta$-worse} if it is good/bad/worse for the set $\A_\Delta$ (in the sense of Definition \ref{def:good_bad_worse}). Further:
	\begin{enumerate}[label=\rm{(\arabic*)}]
		\item $(\vec{y},Y)$ is \emph{$\Delta^*$-good} if it is $\Delta(|\vec{y}|)$-good,
		\item $(\vec{y},Y)$ is \emph{$\Delta^*$-bad} if it is $\Delta(|\vec{y}|)$-bad,
		\item $(\vec{y},Y)$ is \emph{$\Delta^*$-worse} if it is $\Delta^*$-bad and there is a $n$ such that for all $v\in S(\langle Y/n\rangle)$, $(\vec{y}\concat v,Y)$ is $\Delta^*$-bad.
	\end{enumerate}
	Here, $\Delta(m)=(\delta_0/2,\delta_1/2,\ldots,\delta_{m-1}/2,\delta_m,\delta_{m+1},\ldots)$.
\end{defn}
 
Note that $\Delta^*$-good implies $\Delta$-good and $\Delta^*$-bad implies $\Delta/2$-bad.

\begin{lemma}\label{lem:good*_worse*}
	If $\LH$ is a $(p^*)$-family and $\A\subseteq\bb^\infty(E)$, then for every $\vec{x}\in\bb^{<\infty}(E)$, $X\in\LH$ and $\Delta>0$, there is a $Y\in\LH\restr X$ such that either
	\begin{enumerate}[label=\rm{(\roman*)}]
		\item $(\vec{x},Y)$ is $\Delta$-good, or
		\item I has a strategy in $F[\vec{x},Y]$ for playing into
		\[
			\{(z_n):\forall n(\vec{x}\concat(z_0,\ldots,z_n),Y) \text{ is $\Delta/2$-bad}\}.
		\]	
	\end{enumerate}
\end{lemma}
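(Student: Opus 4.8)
**Proof plan for Lemma \ref{lem:good*_worse*}.** The plan is to mimic the proof of Lemma \ref{lem:good_worse}, but with the bookkeeping on the $\Delta$-parameters carried along carefully, using almost-fullness in place of fullness. The key definitional maneuver — already built into the statement — is that the parameter shrinks by a factor of $2$ on the coordinates below the current length $|\vec y|$; this slack of $\delta_{|\vec y|}/2$ is exactly what almost-fullness will consume at the step where we pass from a dense set of ``not-good'' vectors to a block sequence spanning vectors in a small neighborhood of them.

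First I would record the monotonicity observations: if $(\vec y,Y)$ is $\Delta$-good (resp.\ $\Delta$-bad) and $Z\preceq^*Y$ is in $\LH$, then $(\vec y,Z)$ is $\Delta$-good (resp.\ $\Delta$-bad), since $\A_\Delta$ depends only on the target set and passing to a subsequence only restricts II; likewise $\Delta$-good is preserved under enlarging $\Delta$ coordinatewise. Then, as in Lemma \ref{lem:good_worse}, for each $\vec y$ and each rational sequence $\Delta$ the set $\LD_{\vec y,\Delta}=\{Y\in\LH:(\vec y,Y)\text{ is }\Delta\text{-good or }\Delta\text{-bad}\}$ is $\preceq$-dense open in $\LH$; since there are only countably many pairs $(\vec y,\Delta)$ relevant to a fixed $\Delta$ (namely the finitely-many ``tail shifts'' $\Delta(m)$ intersected with all $\vec y$, or more simply we only ever need the two values $\Delta$ and $\Delta/2$ on relevant initial segments), the $(p)$-property lets us diagonalize below any given $X$ and assume all the relevant pairs are decided. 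Next comes the analogue of the first claim: if $(\vec y,Y)$ is $\Delta^*$-bad, then for every $Z\in\LH\restr Y$ there is $V\preceq Z$ such that for all $v\in S(\langle V/\vec y\rangle)$, $(\vec y\concat v,Y)$ is not $\Delta^*$-good — the proof is verbatim that of the claim in Lemma \ref{lem:good_worse} (if not, II can wait for I to play $V$, pick the witnessing $v$, and follow the winning strategy for $(\vec y\concat v,Z)$, contradicting $\Delta^*$-badness of $(\vec y,Y)$).

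The crucial step — and the main obstacle — is the analogue of the second claim, that $\LE_{\vec y,\Delta}=\{Z\in\LH:(\vec y,Z)\text{ is }\Delta^*\text{-good or }\Delta^*\text{-worse}\}$ is $\preceq$-dense in $\LH$. Fix $\vec y$ with $|\vec y|=m$ and a diagonalized $Y$ with all relevant pairs decided; suppose $(\vec y,Y)$ is $\Delta^*$-bad, i.e.\ $\Delta(m)$-bad. Put $D=\{v\in S(E):(\vec y\concat v,Y)\text{ is not }\Delta(m)\text{-good}\}$; by the previous claim $D$ is $\LH$-dense below $Y$. Here is where almost-fullness enters: for $\eps=\delta_m/2$ there is $Z\in\LH\restr Y$ with $S(\langle Z\rangle)\subseteq D_\eps$. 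Now for $z\in S(\langle Z\rangle)$ pick $v\in D$ with $\|z-v\|\le\delta_m/2$; then $(\A_{\Delta(m)})_{\delta_m/2\text{ on coordinate }m}\subseteq\A_\Delta$ (since $\delta_m/2+\delta_m/2=\delta_m$, and on the coordinates below $m$ the expansion $\Delta(m)$ already uses $\delta_i/2$, so no further room is needed there because the perturbation $z-v$ only affects coordinate $m$ of $\vec y\concat z$ versus $\vec y\concat v$). Concretely: a strategy for II in $G[\vec y\concat z,Z]$ playing into $\A_{\Delta(m)}$ would yield, by shifting the first played vector from $z$ to $v$ (a $\delta_m/2$-perturbation) and composing with the $\delta_m/2$-expansion tolerance, a strategy for II in $G[\vec y\concat v,Y]$ playing into $(\A_{\Delta(m)})$ expanded on coordinate $m$, hence into $\A_\Delta\supseteq\A_{\Delta(m+1)}=\A_{\Delta(m)}$ — wait, one must be slightly careful: the right statement is that $(\vec y\concat z,Z)$ being $\Delta(m)$-good would force $(\vec y\concat v,Y)$ to be $\Delta$-good, and since $\Delta(m)$ agrees with $\Delta$ from coordinate $m$ onward except it halves coordinate $m$... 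I would instead argue directly that $(\vec y\concat z,Z)$ is $\Delta(m)$-bad: if it were good, a $\delta_m/2$-shift of the first coordinate witnesses $(\vec y\concat v,Y)$ is $\Delta(m)$-good (the first played vector moves by $\le\delta_m/2$ and $\delta_m/2$ was already halved, absorbing into the full $\delta_m$), contradicting $v\in D$. Since $\Delta(m)$-bad at length $m+1$ is the same as $\Delta^*$-bad there, every $z\in S(\langle Z\rangle)$ makes $(\vec y\concat z,Z)$ $\Delta^*$-bad, i.e.\ $(\vec y,Z)$ is $\Delta^*$-worse. This is the step that will require the most careful accounting of which coordinate absorbs which half of which $\delta$, and I expect the write-up to spend most of its length making that absorption precise.

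Finally, the lemma is assembled exactly as before. Using the $(p)$-property again, diagonalize the dense sets $\LE_{\vec x\concat\vec y,\Delta}$ over all $\vec y$ to obtain $Y\in\LH\restr X$ such that $(\vec x\concat\vec y,Y)$ is $\Delta^*$-good or $\Delta^*$-worse for every $\vec y$. If $(\vec x,Y)$ is $\Delta^*$-good we are in case (i) (recall $\Delta^*$-good implies $\Delta$-good). Otherwise $(\vec x,Y)$ is $\Delta^*$-worse, hence $\Delta/2$-bad, and we describe I's strategy in $F[\vec x,Y]$: maintaining the invariant that the position $(\vec x\concat(z_0,\dots,z_k),Y)$ is $\Delta^*$-worse, hence $\Delta/2$-bad (using $\Delta^*$-bad $\Rightarrow$ $\Delta/2$-bad), I plays the natural number $n$ witnessing $\Delta^*$-worseness so that every legal response $z_{k+1}\in S(\langle Y/n\rangle)$ keeps $(\vec x\concat(z_0,\dots,z_{k+1}),Y)$ $\Delta^*$-bad, which by the diagonalization is then $\Delta^*$-worse. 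Thus along every outcome, every initial segment is $\Delta/2$-bad, giving case (ii). $\square$
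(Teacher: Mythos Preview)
Your outline follows the paper's approach essentially verbatim: diagonalize the good/bad dichotomies, prove the $\LE_{\vec y}$ sets are dense using almost-fullness with $\eps=\delta_{|\vec y|}/2$, then diagonalize again and let I play the witnessing $n$ from $\Delta^*$-worseness. The structure is correct.

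However, the crucial accounting step---which you yourself flag as the main obstacle---is muddled in a way that matters. You write that you will show $(\vec y\concat z,Z)$ is $\Delta(m)$-bad, and that ``$\Delta(m)$-bad at length $m+1$ is the same as $\Delta^*$-bad there.'' Neither assertion is quite right. At length $m+1$, $\Delta^*$-bad means $\Delta(m+1)$-bad, not $\Delta(m)$-bad; and while $\Delta(m)$-bad does \emph{imply} $\Delta(m+1)$-bad (since $\Delta(m+1)\leq\Delta(m)$ coordinatewise), your argument for $\Delta(m)$-bad does not go through: if $(\vec y\concat z,Z)$ were $\Delta(m)$-good, the $\delta_m/2$ shift to $v$ would only give a strategy playing into $\A_{\Delta(m)}$ expanded by $\delta_m/2$ at coordinate $m$, i.e.\ with tolerance $3\delta_m/2$ there---too large to contradict $v\in D$. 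Your parenthetical ``$\delta_m/2$ was already halved'' betrays that you are really thinking of $\Delta(m+1)$, whose $m$th entry is $\delta_m/2$.

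The fix (and this is exactly what the paper does) is to argue at the right level: suppose for contradiction that $(\vec y\concat z,Z)$ is $\Delta^*$-good, i.e.\ $\Delta(m+1)$-good. Then II has a strategy in $G[\vec y\concat z,Z]$ playing into $\A_{\Delta(m+1)}$. Treating this as a strategy in $G[\vec y\concat v,Z]$ (the only change is at coordinate $m$, by at most $\delta_m/2$), every outcome lands in $\A_{\Delta(m)}$, since $\delta_m/2+\delta_m/2=\delta_m$. Hence $(\vec y\concat v,Z)$ is $\Delta(m)$-good, contradicting $v\in D$. So $(\vec y\concat z,Z)$ is $\Delta^*$-bad for every such $z$, and $(\vec y,Z)$ is $\Delta^*$-worse. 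Once this step is corrected, the rest of your assembly is fine.
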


\begin{proof}
	Let $\LH$, $\A$, $X\in\LH$ and $\Delta>0$ be given. As in the proof of Lemma \ref{lem:good_worse}, for any $\vec{y}$ and $\Gamma>0$, the set
	\[
		\LD_{\vec{y}}^\Gamma=\{Y:(\vec{y},Y)\text{ is $\Gamma$-good or $\Gamma$-bad}\}
	\]	
	is $\preceq$-dense open in $\LH$, and if $(\vec{y},Y)$ is $\Gamma$-bad, then for every $V\in\LH\restr Y$, there is a $Z\preceq V$ such that for all $x\in S(\langle Z\rangle)$, $(\vec{y}\concat x,Y)$ is not $\Gamma$-good.
	
	\begin{claim}
		For any $\vec{y}\in \bb^{<\infty}_1(E)$, the set
		\[
			\LE_{\vec{y}}=\{Y:(\vec{y},Y)\text{ is $\Delta^*$-good or $\Delta^*$-worse}\}
		\]
		is $\preceq$-dense open in $\LH$.	
	\end{claim}
	
	\begin{proofclaim}
		Let $Y\in\LH$. By diagonalizing over the sets $\LD_{\vec{z}}^{\Delta(|\vec{z}|)}$, we may assume that for all $\vec{z}$, $(\vec{z},Y)$ is $\Delta^*$-good or $\Delta^*$-bad. Assume that $(\vec{y},Y)$ is $\Delta^*$-bad. Let 
		\[
			D=\{x\in S(E):(\vec{y}\concat x,Y)\text{ is not $\Delta(|\vec{y}|)$-good}\}.
		\]
		Take $\eps=\delta_{|\vec{y}|}/2$. By almost fullness, there is a $Z\in\LH\restr Y$ such that $S(\langle Z\rangle)\subseteq D_\eps$. Given $z\in S(\langle Z\rangle)$, pick $z'\in D$ with $\|z-z'\|<\eps$. If $(\vec{y}\concat z,Z)$ is $\Delta^*$-good, then there is a strategy $\alpha$ for II in $G[\vec{y}\concat z,Z]$ for playing into $\A_{\Delta(|\vec{y}|+1)}$. We may assume that all plays according to $\alpha$ are above $z$ and $z'$, so we can treat $\alpha$ as a strategy $\alpha'$ for II in $G[\vec{y}\concat z',Z]$. If $\vec{y}\concat {z'}\concat W$ is an outcome of $\alpha'$, then $\vec{y}\concat z\concat W$ is an outcome of $\alpha$, and thus in $\A_{\Delta(|\vec{y}|+1)}$. By our choice of $\eps$, it follows that $\vec{y}\concat {z'}\concat W$ is in $\A_{\Delta(|\vec{y}|)}$. Then, $(\vec{y}\concat z',Z)$ is $\Delta(|\vec{y}|)$-good, contradicting that $z'\in D$. Thus, $(\vec{y}\concat z,Z)$ is $\Delta^*$-bad, and $(\vec{y},Z)$ is $\Delta^*$-worse.
	\end{proofclaim}
	
	Returning to the proof of the lemma, assume that $\vec{x}=\emptyset$. By the claim, we can find $Y\in\LH\restr X$ such that for all $\vec{y}$, $(\vec{y},Y)$ is either $\Delta^*$-good or $\Delta^*$-worse. If $(\emptyset, Y)$ is $\Delta^*$-good, we're done, so assume that it is $\Delta^*$-worse. In this case, we define a strategy for I in $F[Y]$ for playing into $\{(z_n):\forall n(z_0,\ldots,z_n,Y) \text{ is $\Delta^*$-worse}\}$ exactly as in the proof of Lemma \ref{lem:good_worse}.
\end{proof}

\begin{lemma}[cf.~Lemma 2 in \cite{MR2604856}]\label{lem:local_Rosendal_open*}
	Let $\LH\subseteq\bb^\infty_1(E)$ be a $(p^*)$-family. Given $\A\subseteq\bb^\infty_1(E)$ open, $x\in\bb^{<\infty}_1(E)$, $X\in\LH$, and $\Delta>0$, there is a $Y\in\LH\restr Y$ such that either
	\begin{enumerate}[label=\rm{(\roman*)}]
		\item I has a strategy in $F[\vec{x},Y]$ for playing into $(\A_{\Delta/2})^c$, or
		\item II has a strategy in $G[\vec{x},Y]$ for playing into $\A_\Delta$.
	\end{enumerate}
\end{lemma}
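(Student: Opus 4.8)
The plan is to mirror the reduction of Lemma~\ref{lem:local_Rosendal_open} to Lemma~\ref{lem:good_worse}, now invoking Lemma~\ref{lem:good*_worse*} and keeping track of the shift between $\Delta$ and $\Delta/2$ forced by the $\Delta$-expansions. Given $\A$ open, $\vec x\in\bb^{<\infty}_1(E)$, $X\in\LH$ and $\Delta>0$, apply Lemma~\ref{lem:good*_worse*} to get $Y\in\LH\restr X$ so that either $(\vec x,Y)$ is $\Delta$-good or I has a strategy $\sigma$ in $F[\vec x,Y]$ for playing into $\{(z_n):\forall n\,((\vec x\concat(z_0,\ldots,z_n),Y)\text{ is }\Delta/2\text{-bad})\}$. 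In the first case II has, by the very definition of $\Delta$-good, a strategy in $G[\vec x,Y]$ for playing into $\A_\Delta$, which is conclusion (ii). So the work is entirely in the second case, where I would show that $\sigma$ is already a strategy for I in $F[\vec x,Y]$ for playing into $(\A_{\Delta/2})^c$, giving conclusion (i).

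The heart of the argument is the claim that if $W=\vec x\concat(z_n)$ is a block sequence such that $(\vec x\concat(z_0,\ldots,z_n),Y)$ is $\Delta/2$-bad for every $n$, then $W\notin\A_{\Delta/2}$. Suppose not, and fix $V\in\A$ with $d(W,V)\le\Delta/2$. Since $\A$ is open and $E$ is discrete, there is $N\ge|\vec x|+1$ such that every block sequence in $\bb^\infty_1(E)$ agreeing with $V$ on its first $N$ coordinates lies in $\A$; write $N=|\vec x|+M$ with $M\ge 1$ and pass to the position $\vec x\concat(z_0,\ldots,z_{M-1})$ of the game $G[\vec x\concat(z_0,\ldots,z_{M-1}),Y]$. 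Here II has a strategy for playing into $\A_{\Delta/2}$: when I first plays $X_M\preceq Y$, II picks $y_M\in\langle X_M\rangle$ whose support lies above the supports of both the last coordinate of $\vec x\concat(z_0,\ldots,z_{M-1})$ and of the $(N-1)$st coordinate of $V$ — possible since $X_M$ is an infinite block sequence — and II continues arbitrarily thereafter. For any resulting outcome $W'$, the sequence $V':=(v_0,\ldots,v_{N-1})\concat(y_M,y_{M+1},\ldots)$ is a legal element of $\bb^\infty_1(E)$ by the choice of $\supp(y_M)$, agrees with $V$ on its first $N$ coordinates and so lies in $\A$, and satisfies $d(W',V')\le\Delta/2$ because $W'$ agrees with $W$ (hence is within $\Delta/2$ of $V=V'$ coordinatewise) on the first $N$ coordinates and $W'$ equals $V'$ beyond coordinate $N-1$. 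Thus $W'\in\A_{\Delta/2}$, so $(\vec x\concat(z_0,\ldots,z_{M-1}),Y)$ is good for $\A_{\Delta/2}$ — contradicting $\Delta/2$-badness. Applying the claim to any outcome of a play in which I follows $\sigma$ yields conclusion (i).

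I expect the only genuine obstacle to be this claim, that is, extracting from the openness of $\A$ a II-strategy at a finite stage of the Gowers game for the \emph{expanded} target $\A_{\Delta/2}$. The maneuver that makes it work is that II can always force its first move to have support above any prescribed finite set, so the finite initial segment of $V$ witnessing membership in the open set $\A$ can be spliced onto II's (otherwise unconstrained) continuation to form a legal block sequence in $\A$ lying within $\Delta/2$ of the outcome. Everything else — the passage through Lemma~\ref{lem:good*_worse*} and the translation of ``$\sigma$ plays into all-$\Delta/2$-bad'' into ``$\sigma$ plays into $(\A_{\Delta/2})^c$'' — is the same bookkeeping as in \cite{MR2604856} and in the proof of Lemma~\ref{lem:local_Rosendal_open}.
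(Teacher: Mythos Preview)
Your argument is correct and is precisely the intended expansion of the paper's one-line proof sketch: the paper merely points to Lemma~\ref{lem:local_Rosendal_open} and Lemma~\ref{lem:good*_worse*}, and you have supplied the missing step, namely that an outcome all of whose initial segments are $\Delta/2$-bad cannot lie in $\A_{\Delta/2}$, by exhibiting a II-strategy that splices the relevant initial segment of a nearby $V\in\A$ onto an arbitrary continuation. This is the same approach as the paper's.
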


\begin{proof}
	The proof is similar to Lemma \ref{lem:local_Rosendal_open}, using Lemma \ref{lem:good*_worse*}.
\end{proof}

\begin{lemma}[cf.~Lemma 4 in \cite{MR2604856}]\label{lem:unions_Rosendal*}
	Let $\LH\subseteq\bb^\infty_1(E)$ be a $(p^*)$-family. Suppose that $\A=\bigcup_{n\in\N}\A_n$, each $\A_n\subseteq \bb^\infty_1(E)$. Let $\vec{x}$, $X\in\LH$, and $\Delta>0$ be given. Then, there is a $Y\in\LH\restr X$ such that either
	\begin{enumerate}[label=\rm{(\roman*)}]
		\item I has a strategy in $F[\vec{x},Y]$ for playing into $(\A_{\Delta/2})^c$, or
		\item II has a strategy in $G[Y]$ for playing into
		\begin{align*}
			\{(z_k):\exists n\forall V\in\LH\restr Y(&\text{I has no strategy in $F[\vec{x}\concat(z_0,\ldots,z_n),V]$}\\
			 &\text{for playing into $((\A_n)_\Delta)^c$})\}.
		\end{align*}
	\end{enumerate}
\end{lemma}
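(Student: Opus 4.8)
The plan is to follow the proof of Lemma~4 in \cite{MR2604856}, with two systematic changes: every block sequence produced is kept inside $\LH$ — exactly as in the proofs of Lemmas \ref{lem:good*_worse*} and \ref{lem:local_Rosendal_open*}, using the $(p)$-property and almost fullness — and every perturbation is handled through the $\Delta^*$-good/$\Delta^*$-worse machinery introduced with Lemma \ref{lem:good*_worse*}. Throughout, all finite block sequences are read as extending $\vec x$, so one may pretend $\vec x=\emptyset$; the role of $\vec x$ is purely notational.

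First I would carry out the fusion. For each $n$, apply the analysis internal to the proof of Lemma \ref{lem:good*_worse*} to the set $\A_n$ and the perturbation $\Delta$: this yields, $\preceq$-densely below $X$ in $\LH$, block sequences $W$ such that for every $\vec y\sqsupseteq\vec x$ the pair $(\vec y,W)$ is $\Delta^*$-good or $\Delta^*$-worse with respect to $\A_n$. Build a $\preceq^*$-decreasing sequence $W_0\succeq W_1\succeq\cdots$ in $\LH\restr X$ with $W_n$ having this property for $\A_n$, and use the $(p)$-property of $\LH$ to choose $Y\in\LH$ diagonalizing $(W_n)$. Since goodness, badness and worseness (for a fixed set and perturbation) are inherited by $\preceq^*$-smaller members of $\LH$, this single $Y$ has, for all $n$ and all $\vec y\sqsupseteq\vec x$, that $(\vec y,Y)$ is $\Delta^*$-good or $\Delta^*$-worse with respect to $\A_n$; fix a type in $\{\mathrm{good},\mathrm{worse}\}$ for each pair $(n,\vec y)$, breaking ties toward ``good''.

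The heart of the matter is the dichotomy at this $Y$. Say a finite block sequence $\vec x\concat(z_0,\dots,z_n)\preceq Y$ is \emph{$n$-stuck} if $(\vec x\concat(z_0,\dots,z_n),Y)$ is $\Delta$-good with respect to $\A_n$, i.e.\ II has a strategy in $G[\vec x\concat(z_0,\dots,z_n),Y]$ for playing into $(\A_n)_\Delta$. A short argument shows that if the length-$n$ prefix of an outcome $(z_k)$ is $n$-stuck then $(z_k)$ lies in the target set of (ii): such a $G$-strategy for II restricts to one below any $\preceq$-smaller member of $\LH$, and, when I is confined to tail block subsequences, it obstructs every strategy for I in the corresponding copy of $F$ for playing into $((\A_n)_\Delta)^c$. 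Next I would run I's ``interleaved worseness'' strategy in $F[\vec x,Y]$: at stage $m$ the player I needs only to additionally secure $\Delta^*$-worseness of the play with respect to $\A_m$, which I does by playing a natural number dominating the finitely many tail-witnesses supplied by the worseness conditions active so far (the move already used in the proof of Lemma \ref{lem:good*_worse*}). Either II can deviate so that for some $n$ the length-$n$ prefix of the play becomes $\Delta$-good with respect to $\A_n$ — and amalgamating these deviations into a single $G[Y]$-strategy yields (ii) — or I succeeds in keeping, for every $n$, the length-$n$ prefix $\Delta^*$-worse with respect to $\A_n$. In the latter case, reasoning one $\A_n$ at a time as in Lemma \ref{lem:local_Rosendal_open*}, combining the conclusions via Lemma \ref{lem:Delta_exp}, and using $\A_{\Delta/2}=\bigcup_n(\A_n)_{\Delta/2}$, the outcome lies in $(\A_{\Delta/2})^c$, which is (i).

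The step I expect to be the main obstacle is the perturbation bookkeeping. Each passage to a $\Delta^*$-version halves the perturbation past the current coordinate, so one must verify that along any play these halvings telescope to leave precisely the gap between the $\Delta$ occurring in the target set of (ii) and the $\Delta/2$ occurring in (i), and that the interleaving over $n$ does not force infinitely many halvings at a fixed coordinate. Relatedly, the passage in the last case from ``every finite prefix of the play is $\Delta^*$-worse with respect to $\A_n$'' to ``the outcome avoids $(\A_n)_{\Delta/2}$'' is the delicate perturbed analogue of the open-set step in Lemma \ref{lem:local_Rosendal_open*}, and will require the stability of block sequences under small perturbations rather than a crude committed-prefix argument. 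Coordinating the per-$\A_n$ dichotomies so that the index $n$ controlling a prefix matches that prefix's position in the play — which is exactly what the phrasing of (ii) demands — is the remaining point requiring attention. By contrast, incorporating the family $\LH$ introduces nothing beyond what Lemmas \ref{lem:good*_worse*} and \ref{lem:local_Rosendal_open*} already handle, since every block sequence used is produced either by one of those lemmas or by a $(p)$-diagonalization, both of which stay inside $\LH$.
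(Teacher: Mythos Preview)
Your approach has a genuine gap in the case you hope yields (i). You propose that I runs an interleaved strategy keeping, for every $n$, the length-$n$ prefix $\Delta^*$-worse with respect to $\A_n$, and then invoke ``reasoning as in Lemma \ref{lem:local_Rosendal_open*}'' to conclude the outcome lies in $(\A_{\Delta/2})^c$. But Lemma \ref{lem:local_Rosendal_open*} is for \emph{open} sets: the passage from ``every finite prefix is worse'' to ``the outcome avoids the set'' uses that membership in an open set is determined by a finite prefix. The $\A_n$ here are arbitrary, so worseness of prefixes says nothing about the outcome. More fundamentally, being worse only gives I a way to prevent II from having a $G$-strategy into $(\A_n)_\Delta$; it does \emph{not} hand I a concrete $F$-strategy into $((\A_n)_{\Delta/2})^c$, and it is precisely such strategies that must be interleaved over $n$ to produce the single strategy witnessing (i). (Your route to (ii) is also underspecified: that some play against I's $F$-strategy makes a prefix good does not by itself furnish a $G[Y]$-strategy for II.)

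The paper sidesteps this by working with the dual notion. It defines $(\vec y,n)$ \emph{$\Gamma$-accepts} $Y$ to mean I has a strategy in $F[\vec y,Y]$ into $((\A_n)_\Gamma)^c$, and \emph{$\Gamma$-rejects} to mean no $Z\in\LH\restr Y$ is accepted. After a $(p)$-diagonalization so that every $(\vec y,n)$ either $\Delta/2$-accepts or $\Delta/2$-rejects $Y$, rejection is packaged into the \emph{open} set $R=\{(z_k):\exists n\,((\vec x\concat(z_0,\dots,z_n),n)\text{ $\Delta/2$-rejects }Y)\}$, and Lemma \ref{lem:local_Rosendal_open*} is applied to $R$. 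If II has a $G$-strategy into $R_{\Delta/2}$, a perturbation argument (this is where the $\Delta/2$ versus $\Delta$ bookkeeping happens) gives (ii). If I has an $F$-strategy into $R^c$, then along every play and for every $n$ the pair $\Delta/2$-accepts, so I \emph{possesses} an $F$-strategy into $((\A_n)_{\Delta/2})^c$ from the length-$n$ prefix; these concrete strategies are then interleaved, as in Lemma 4 of \cite{MR2604856}, into a single strategy for (i). The key idea you are missing is this reduction to an auxiliary open set built from I's $F$-strategies rather than from the good/bad/worse calculus for II.
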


\begin{proof}
	For $Y\in\LH$, $\vec{y}\in\bb^{<\infty}(E)$, and $n\in\N$, we say $(\vec{y},n)$ \emph{$\Gamma$-accepts} $Y$ if I has a strategy in $F[\vec{y},Y]$ for playing into $((\A_n)_\Gamma)^c$ and $(\vec{y},n)$ \emph{$\Gamma$-rejects} $Y$ if for all $Z\in\LH\restr Y$, $(\vec{y},n)$ does not $\Gamma$-accept $Z$. Both acceptance and rejection are $\preceq^*$-hereditary in $\LH$, and the sets
	\[
		\LD^\Gamma_{\vec{y},n}=\{Y:(\vec{y},n)\text{ $\Gamma$-accepts or $\Gamma$-rejects } Y\}
	\]
	are clearly $\preceq$-dense open in $\LH$. By the $(p)$-property, we can find $Y\in\LH\restr X$ such that for all $\vec{y}$ and $n$, $(\vec{y},n)$ either $\Delta/2$-accepts or $\Delta/2$-rejects $Y$. Put
	\[
		R=\{(z_k):\exists n(\vec{x}\concat(z_0,\ldots,z_n),n)\text{ $\Delta/2$-rejects } Y\},
	\]
	and notice that $R$ is open in $\bb^\infty_1(E)$. By Lemma \ref{lem:local_Rosendal_open*}, there is $Y'\in\LH\restr Y$ such that either II has a strategy in $G[Y']$ for playing into $R_{\Delta/2}$, or I has a strategy in $F[Y']$ for playing into $(R_{\Delta/4})^c\subseteq R^c$. In the first case, suppose that $(z_k)$ is an outcome of II's strategy. Then, there is $(z_k')$ with $\|z_k-z_k'\|\leq\delta_k/2$ for all $k$, and an $n$ such that $(\vec{x}\concat(z_0',\ldots,z_n'),n)$ $\Delta/2$-rejects $Y$. We claim $(\vec{x}\concat(z_0,\ldots,z_n),n)$ $\Delta$-rejects $Y$. If not, then for some $Z\in\LH\restr Y$, I has a strategy in $F[\vec{x}\concat(z_0,\ldots,z_n),Z]$ for playing into $((\A_n)_{\Delta})^c$. This yields a strategy for I in $F[\vec{x}\concat(z_0',\ldots,z_n'),Z]$ for playing into $(((\A_n)_{\Delta})^c)_{\Delta/2}$. By Lemma \ref{lem:Delta_exp}(d), $(((\A_n)_{\Delta})^c)_{\Delta/2}\subseteq((\A_n)_{\Delta/2})^c$, and so $(\vec{x}\concat(z_0',\ldots,z_n'),n)$ fails to $\Delta/2$-reject $Y$, a contradiction. Thus, $(z_k)$ is as desired for (ii).
	
Suppose that I has a strategy $\sigma$ in $F[Y]$ for playing into $(R_{\Delta/4})^c\subseteq R^c$. In particular, I plays $(z_k)$ such that for all $n$, I has a strategy $\sigma_{(z_0,\ldots,z_n)}$ in $F[\vec{x}\concat(z_0,\ldots,z_n),Y]$ to play into $((\A_n)_{\Delta/2})^c$. As in the proof of Lemma 4 in \cite{MR2604856}, we successively put more strategies for I into play, and obtain a strategy for playing into $\bigcap_n((\A_n)_{\Delta/2})^c=(\A_{\Delta/2})^c$.
\end{proof}

\begin{thm}[cf.~Theorem 5 in \cite{MR2604856}]\label{thm:local_Rosendal*}
	Let $\LH\subseteq\bb^\infty_1(E)$ be a $(p^*)$-family. If $\A\subseteq\bb^\infty_1(E)$ is analytic, $\Delta>0$, $\vec{x}\in\bb^{<\infty}_1(E)$, and $X\in\LH$, then there is a $Y\in\LH\restr Y$ such that either
	\begin{enumerate}[label=\rm{(\roman*)}]
		\item I has a strategy in $F[\vec{x},Y]$ for playing into $(\A_{\Delta/2})^c$, or
		\item II has a strategy in $G[\vec{x},Y]$ for playing into $\A_\Delta$.
	\end{enumerate}
\end{thm}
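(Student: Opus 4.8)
The plan is to reproduce the combinatorial forcing argument behind Rosendal's treatment of analytic sets (Theorem 5 of \cite{MR2604856}), with two modifications maintained throughout: every block sequence appearing in a diagonalization is kept inside $\LH$ --- legitimate because $\LH$ has the $(p)$-property and is almost full, exactly as in the proofs of Lemmas \ref{lem:good*_worse*} and \ref{lem:local_Rosendal_open*} --- and all references to ``playing into a set'' are replaced by their $\Delta$-expanded analogues. I will take $\vec{x}=\emptyset$; the general case is identical. First I would fix a Souslin scheme $(\A_s)_{s\in\N^{<\infty}}$ of closed subsets of $\bb^{\infty}_1(E)$ with $\A_{s\concat i}\subseteq\A_s$ and $\A=\bigcup_{y\in\N^\N}\bigcap_n\A_{y\restr n}$. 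For the tolerances I would not use a single $\Delta$ shrinking by a fixed factor at each level --- that would not survive infinitely many levels --- but instead tie the recursion on the Souslin tree to the number of outcome-coordinates committed so far and use the $\Delta(m)$-device of Lemma \ref{lem:good*_worse*} (halving only the already-committed coordinates), so that each coordinate of the eventual outcome absorbs a perturbation at most once while the tails of all plays stay at essentially full tolerance. This is what lets the recursion close up.

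Next I would introduce, for $\vec{z}\in\bb^{<\infty}_1(E)$, $s\in\N^{<\infty}$, and a tolerance $\Gamma>0$, the notions ``$Y$ $\Gamma$-accepts $(\vec{z},s)$'' (meaning I has a strategy in $F[\vec{z},Y]$ for playing into $((\A_s)_\Gamma)^c$) and ``$Y$ $\Gamma$-rejects $(\vec{z},s)$'' (no $V\in\LH\restr Y$ $\Gamma$-accepts $(\vec{z},s)$), precisely as acceptance and rejection are used in the proof of Lemma \ref{lem:good*_worse*}. Both are $\preceq^*$-hereditary in $\LH$, and for each pair $(\vec{z},s)$ the set of $Y$ deciding it is $\preceq$-dense open in $\LH$; as there are only countably many pairs, the $(p)$-property lets me diagonalize below $X$ to a single $Y\in\LH\restr X$ deciding every $(\vec{z},s)$ at the tolerance assigned by the bookkeeping and carrying, at each node, the ``rejection propagates one level deeper'' conclusion extracted from Lemma \ref{lem:unions_Rosendal*} (the steps needing almost fullness being handled as in Lemma \ref{lem:good*_worse*}). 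I would then dichotomize: if $Y$ accepts $(\emptyset,\emptyset)$ at tolerance $\Delta/2$, then I has a strategy in $F[Y]$ for playing into $(\A_{\Delta/2})^c$, which is conclusion (i). Otherwise $Y$ rejects $(\emptyset,\emptyset)$ at the corresponding tolerance, and I would have II play $G[Y]$ maintaining the invariant that, after vectors $(z_0,\dots,z_k)$ have been played and an $s=y\restr n$ chosen, the pair $((z_0,\dots,z_k),s)$ is still rejected. Granting the invariant and $\A_s=\bigcup_m\A_{s\concat m}$, Lemma \ref{lem:unions_Rosendal*} applied at the current position --- its alternative (i) ruled out precisely by the rejection --- yields finitely many further vectors and an index $m=y(n)$ reinstating the invariant for $s\concat m$; iterating builds the outcome $(z_k)$ and a branch $y\in\N^\N$ at once. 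Since the position reached at level $n$ rejects $(\,\cdot\,,y\restr n)$ and $\A_{y\restr n}$ is closed, the open-case reasoning behind Lemma \ref{lem:local_Rosendal_open*} confines the outcome to an expansion of $\A_{y\restr n}$; letting $n\to\infty$, using that the $\A_{y\restr n}$ decrease to a subset of $\A$ and that each coordinate of the outcome was perturbed only once, places the outcome in $\A_{\Delta/2}\subseteq\A_\Delta$, which is conclusion (ii).

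The step I expect to be the main obstacle is exactly this bookkeeping of $\Delta$-expansions through the recursion: the countably many applications of Lemma \ref{lem:unions_Rosendal*} and the countably many strategy switches each cost a perturbation, and they must be organized so that on the ``II side'' the cumulative error stays within $\Delta$ while on the ``I side'' the loss is only from $\A$ to $\A_{\Delta/2}$. Synchronizing the tree-recursion with the commitment of outcome-coordinates and leaning on the $\Delta(m)$-device --- already built into Lemmas \ref{lem:good*_worse*} and \ref{lem:unions_Rosendal*} --- is what makes this go through; the only further routine point, as everywhere in this section, is that the diagonalizations can be performed inside $\LH$, which the $(p)$-property and almost fullness guarantee.
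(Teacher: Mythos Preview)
Your proposal is correct and follows essentially the same route as the paper: represent $\A$ via a Souslin scheme, diagonalize inside $\LH$ using Lemma~\ref{lem:unions_Rosendal*} at every node $(\vec{z},s)$, and if alternative~(i) fails at $(\emptyset,\emptyset)$ have II descend through the tree maintaining rejection. The one point worth noting is that the paper's tolerance bookkeeping is simpler than you anticipate: rather than a Souslin scheme of closed sets together with the $\Delta(m)$-device at the analytic level, the paper takes a continuous surjection $F:\N^\N\to\A$, sets $\A_s=F''(N_s)$, and applies Lemma~\ref{lem:unions_Rosendal*} with the \emph{same fixed} $\Delta$ at every node. No accumulation occurs because the conclusion at each finite stage is only that some extension $Z^k\sqsupseteq(z_0,\dots,z_{m_k})$ lies in $(\A_{(n_0,\dots,n_k)})_\Delta$; continuity of $F$ then gives $d(F(\alpha),Z)\leq\Delta$ for the branch $\alpha=(n_0,n_1,\dots)$, placing the outcome directly in $\A_\Delta$. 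The $\Delta(m)$-device is confined to the internals of Lemmas~\ref{lem:good*_worse*}--\ref{lem:unions_Rosendal*} and need not reappear here.
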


\begin{proof}
	We consider the case when $\vec{x}=\emptyset$. Let $F:\N^\N\to\A$ be a continuous surjection and for each $s\in\N^{<\N}$, let $\A_s=F''(N_s)$ where $N_s=\{\alpha\in\N^\N:s\subseteq\alpha\}$. Note that $\A_s=\bigcup_n\A_{s\concat n}$. 
	
	Let $R(s,\vec{x},Y)$ (for $Y\in\LH$) be the set of all $(z_k)$ for which there is an $n$ such that for all $Z\in\LH\restr Y$, I has no strategy in $F[\vec{x}\concat(z_0,\ldots,z_n),Z]$ for playing into $((\A_{s\concat n})_{\Delta})^c$. By Lemma \ref{lem:unions_Rosendal*} and the $(p)$-property, there is an $Y\in\LH\restr X$ such that for all $\vec{x}$ and $s\in\N^{<\N}$, either
	\begin{enumerate}[label=\rm{(\roman*)}]
		\item I has a strategy in $F[\vec{x},Y]$ for playing into $((\A_s)_{\Delta/2})^c$, or
		\item II has a strategy in $G[Y]$ for playing into $R(s,\vec{x},X)$.	
	\end{enumerate}
	
	Suppose I has no strategy in $F[Y]$ for playing into $(\A_{\Delta/2})^c=((\A_\emptyset)_{\Delta/2})^c$. We will describe a strategy for II in $G[Y]$ for playing into $\A_{\Delta}$: As II has a strategy in $G[Y]$ for playing into $R(\emptyset,\emptyset,Y)$, they follow this strategy until $(z_0,\ldots,z_{n_0})$ has been played such that I has no strategy in $F[(z_0,\ldots,z_{n_0}),Y]$ for playing into $((\A_{s\concat n_0})_{\Delta})^c$. By the assumption on $Y$, II must have a strategy in $G[Y]$ to play in $R((n_0),(z_0,\ldots,z_{n_0}),Y)$. II follows this until a further $(z_{n_0+1},\ldots,z_{n_0+n_1+1})$ has been played so that I has no strategy in $F[(z_0,\ldots,z_{n_0},\ldots,z_{n_0+n_1+1}),Y]$ for playing into $((\A_{{s}\concat {n_0}\concat n_1})_{\Delta})^c$. 
	
	We continue in this fashion, exactly as in the proof of Theorem 5 in \cite{MR2604856}, so that the outcome $Z=(z_n)$ satisfies that for all $k$, with $m_k=(\sum_{j\leq k}n_k)+k$, there is some $Z^k\sqsupseteq(z_0,\ldots,z_{m_k})$ in $(\A_{(n_0,\ldots,n_k)})_{\Delta}=(F''(N_{(n_0,\ldots,n_k)}))_{\Delta}$. Continuity of $F$ ensures that, for $\alpha=(n_0,n_1,\ldots)$, $d(F(\alpha),Z)\leq\Delta$.	
\end{proof}

The following result provides the link between strategically Ramsey sets and weakly Ramsey sets.

\begin{thm}[Rosendal \cite{MR2566964} \cite{MR2604856}]\label{thm:Rosendal_spread}
	Suppose that, for some $X\in\bb^\infty_1(E)$, I has a strategy in $F[X]$ to play into some set $\A\subseteq\bb^\infty_1(E)$. Then, for any $\Delta>0$, there is a sequence of finite intervals $I_0<I_1<\cdots$ in $\N$ such whenever $Y=(y_n)\preceq X$ and $\forall n\exists m(I_0<y_n<I_m<y_{n+1})$, we have that $Y\in\A_\Delta$.
\end{thm}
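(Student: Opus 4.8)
The plan is to read the intervals off directly from I's strategy $\sigma$ in $F[X]$, using a finite-dimensional compactness argument to make $\sigma$ depend, at each stage, on only finitely much data. Fix $\sigma$ and $\Delta=(\delta_n)>0$. For $v\in\langle X\rangle$, written $v=\sum_\ell c_\ell x_\ell$, put $\supp_X(v)=\{\ell:c_\ell\neq 0\}$. For each $n\in\N$ and each finite $F\subseteq\N$, fix once and for all a finite $\delta_n$-net $\mathcal N_{n,F}$ of $S(\linspan\{x_\ell:\ell\in F\})$; this exists because that unit sphere lives in a finite-dimensional normed space over a subfield of $\C$, hence is totally bounded. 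The point of approximating inside the block-span $\linspan\{x_\ell:\ell\in F\}$ is that for $v=\sum_{\ell\in G}c_\ell x_\ell$ with $G\neq\emptyset$ no cancellation occurs at the extreme coordinates, so $\min\supp(v)=\min\supp(x_{\min G})$ and $\max\supp(v)=\max\supp(x_{\max G})$; consequently, if $\supp_X(y)=F$ and $w\in\mathcal N_{n,F}$, then $\supp_X(w)\subseteq F$, and in particular $\min\supp(w)\ge\min\supp(y)$ and $\max\supp(w)\le\max\supp(y)$. Call a finite block sequence of such net points a \emph{net-play} if it is a legal partial play for II against $\sigma$, i.e.\ $v_i>\sigma(v_0,\dots,v_{i-1})$ for each $i$.

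Next I would build $I_0<I_1<\cdots$ by recursion. Choose $\max(I_0)\ge\sigma(\emptyset)$. Given $I_0<\cdots<I_{m-1}$, let $p_m=\max(I_{m-1})+1=\min(I_m)$ be the left endpoint of $I_m$, and observe that there are only finitely many net-plays all of whose vectors are supported in $[0,p_m)$: the vectors are drawn from the finitely many nets $\mathcal N_{n,F}$ with $\linspan\{x_\ell:\ell\in F\}$ supported in $[0,p_m)$, and such a play, being a block sequence supported in $[0,p_m)$, has length at most $p_m$. Choose $\max(I_m)>p_m$ large enough that $\max(I_m)\ge\sigma(\vec v)$ for every such net-play $\vec v$. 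The resulting sequence $(I_m)$ depends only on $\sigma$ and $\Delta$.

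To verify the conclusion, suppose $Y=(y_n)\preceq X$ satisfies the spreading hypothesis, so there are $1\le m(0)<m(1)<\cdots$ with $I_0<y_0$ and $I_{m(n-1)}<y_n$ for all $n\ge 1$. Recursively pick $w_n\in\mathcal N_{n,\supp_X(y_n)}$ with $\|y_n-w_n\|\le\delta_n$, and show by induction that $(w_0,\dots,w_n)$ is a net-play. For $i<n$ we have $\supp(w_i)\subseteq[0,\max\supp(y_i)]\subseteq[0,p_{m(i)})\subseteq[0,p_{m(n-1)})$, so $(w_0,\dots,w_{n-1})$ is a net-play supported in $[0,p_{m(n-1)})$, whence $\max(I_{m(n-1)})\ge\sigma(w_0,\dots,w_{n-1})$ by construction; and $\min\supp(w_n)\ge\min\supp(y_n)>\max(I_{m(n-1)})$ since $I_{m(n-1)}<y_n$, so $w_n$ is a legal response extending the play (the case $n=0$ uses $I_0<y_0$ and $\max(I_0)\ge\sigma(\emptyset)$). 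The shrinking of $X$-supports also forces $w_n<w_{n+1}$, so $W=(w_n)\in\bb^\infty_1(E)$ is a full play of $F[X]$ in which I follows $\sigma$; since $\sigma$ plays into $\A$ we get $W\in\A$, and $d(Y,W)\le\Delta$ gives $Y\in\A_\Delta$.

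The genuinely delicate part is the bookkeeping of the last two paragraphs: one must check that ``truncating'' $y_n$ to a net point preserves block structure and cannot spoil the inequality $w_n>\sigma(w_0,\dots,w_{n-1})$, and that the interval $I_{m(n-1)}$ fixed at stage $m(n-1)$ really dominates $\sigma$ on the specific partial play $(w_0,\dots,w_{n-1})$ that arises (this is why the nets must be chosen uniformly in advance, indexed by block-spans). All the conceptual weight rests on the finiteness observation — a legal partial play supported in a fixed initial segment of $\N$ has bounded length — which is exactly what allows the recursion to ``trap'' $\sigma$ with a single interval at each step.
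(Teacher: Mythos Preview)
The paper does not supply its own proof of this theorem; it is quoted as a result of Rosendal and simply cited. Your argument is correct and is essentially the standard one: use total boundedness of the unit spheres of the finite-dimensional block-spans to replace arbitrary block vectors by net points drawn from a fixed countable family, then recursively choose $I_m$ large enough to dominate $\sigma$ on every legal net-play supported below $\min(I_m)$, of which there are only finitely many.

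One point of exposition deserves tightening. In your definition of ``net-play'' you should make explicit that the $i$th vector of the play is required to lie in some $\mathcal N_{i,F}$ (index matching position), since otherwise the phrase ``the vectors are drawn from the finitely many nets $\mathcal N_{n,F}$ with $\linspan\{x_\ell:\ell\in F\}$ supported in $[0,p_m)$'' does not by itself bound the first index $n$, and the finiteness claim would fail. With that convention (which is clearly what you intend, and what you use in the verification step), the length bound $k\le p_m$ does the job: only the nets $\mathcal N_{i,F}$ with $i<p_m$ and $F$ supported below $p_m$ are relevant, and each is finite. The rest of the bookkeeping --- that passing to net points shrinks $X$-supports and hence preserves both the block order and the inequality $w_n>\sigma(w_0,\dots,w_{n-1})$ --- is handled carefully and correctly.
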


Inspired by this theorem, we define:

\begin{defn}
	A family $\LH\subseteq\bb^\infty_1(E)$ is \emph{spread} if whenever $X=(x_n)\in\LH$ and $I_0<I_1<\cdots$ is a sequence of intervals in $\N$, there is a $Y=(y_n)\in\LH\restr X$ such that $\forall n\exists m(I_0<y_n<I_m<y_{n+1})$.
\end{defn}

This property is analogous to the ``(q)-property'' (see Lemma 7.4 of \cite{MR2603812}) for coideals on $\N$: One can show that a coideal $\LH$ on $\N$ has the (q)-property if and only if for every $x\in\LH$ and sequence of finite intervals $I_0<I_1<\cdots$, there is a $y\in\LH\restr x$ such that $\forall n\exists m(I_0<y_n<I_m<y_{n+1})$.

By appropriately thinning down a block sequence, we see the following:

\begin{lemma}
	Given a sequence of intervals $I_0<I_1<\cdots$ in $\N$, the set
	\[
		\{(y_n):\forall n\exists m(I_0<y_n<I_m<y_{n+1})\}.
	\]
	is $\preceq$-dense open in $\bb^\infty_1(E)$.\qed
\end{lemma}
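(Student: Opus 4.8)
The plan is to verify directly that the set
\[
	\LD=\{(y_n):\forall n\exists m(I_0<y_n<I_m<y_{n+1})\}
\]
is dense open in $(\bb^\infty_1(E),\preceq)$. Openness is immediate: if $Y=(y_n)\in\LD$ and $Y'=(y'_k)\preceq Y$, then each $y'_k$ lies in the span of a block of consecutive $y_n$'s, so $\min(\supp(y'_k))\geq\min(\supp(y_{n(k)}))$ and $\max(\supp(y'_k))\leq\max(\supp(y_{n(k+1)-1}))$ for the appropriate indices; since the gaps between the supports of the $y_n$ each already straddle some interval $I_m$, and distinct $y'_k$, $y'_{k+1}$ use disjoint and increasing blocks of $y$'s, the separation between $\max(\supp(y'_k))$ and $\min(\supp(y'_{k+1}))$ is at least as large as between two consecutive $y_n$'s, hence still contains some $I_m$ (and lies above $I_0$). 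Thus $Y'\in\LD$.

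For density, fix an arbitrary $X=(x_n)\in\bb^\infty_1(E)$; I want to produce $Y\preceq X$ with $Y\in\LD$. The idea is to "thin down'' $X$ by skipping enough of its terms so that between consecutive chosen vectors an entire interval $I_m$ fits. Concretely, construct a strictly increasing sequence of indices $n_0<n_1<\cdots$ recursively: having chosen $n_0,\dots,n_k$, note that $\max(\supp(x_{n_k}))$ is some fixed natural number $N$; choose $m$ large enough that $\min(I_m)>N$ and then choose $n_{k+1}$ large enough that $\min(\supp(x_{n_{k+1}}))>\max(I_m)$ (possible since the supports of the $x_n$ are cofinal in $\N$). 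At the first step, additionally require $\min(\supp(x_{n_0}))>\max(I_0)$. Setting $y_k=x_{n_k}$ gives $Y=(y_k)\preceq X$, each $\|y_k\|=1$, and by construction for every $k$ there is an $m$ with $I_0<y_k<I_m<y_{k+1}$, so $Y\in\LD$. Hence $\LD$ is dense.

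This argument is entirely routine — there is no real obstacle. The only point requiring the slightest care is bookkeeping the indices in the openness direction, ensuring that passing to a block subsequence can only increase the gaps between consecutive support blocks, so that the "$\exists m(I_m$ between them$)$'' condition is preserved; and noting that the leftmost support only moves to the right, so the "$I_0<y_n$'' constraint persists as well. Both are immediate from the definition of block subsequence ($\langle Y'\rangle\subseteq\langle Y\rangle$ with the $y'_k$ in block position), so I would simply remark on this rather than belabor it. Since the statement is tagged with \qed in the excerpt, a one-paragraph proof along these lines suffices.
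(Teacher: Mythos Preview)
Your proposal is correct and matches the paper's approach: the paper omits the proof entirely, merely prefacing the lemma with ``By appropriately thinning down a block sequence, we see the following'' and tagging it with \qed. Your thinning argument for density and your observation that passing to a block subsequence only enlarges the gaps (and pushes the first support rightward) for openness are exactly what is intended.
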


Clearly, $\bb^\infty_1(E)$ itself is spread. As in \S\ref{sec:5}, one can build spread filters (which are full, almost full, strategic, etc) under additional set-theoretic hypotheses or by forcing. We note that the strong $(p)$-property suffices:

\begin{lemma}\label{lem:strong_p_spread}
	If $\LH\subseteq\bb^\infty_1(E)$ is a strong $(p)$-family, then it is spread. In particular, strategic families are spread.
\end{lemma}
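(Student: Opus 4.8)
The plan is to deduce spreading directly from the strong $(p)$-property. Fix $X=(x_n)\in\LH$ and a sequence of finite intervals $I_0<I_1<\cdots$ in $\N$. For each $\vec y\in\bb^{<\infty}_1(E)$ let $m(\vec y)$ be the least $m\in\N$ with $\min(I_m)>\max(\supp(\vec y))$, where we set $m(\emptyset)=0$; such an $m$ exists since the $I_j$ are cofinal in $\N$. Put $X_{\vec y}=X/\max(I_{m(\vec y)})$. Each $X_{\vec y}$ is a tail of $X$, hence satisfies $X\preceq^*X_{\vec y}$ and so lies in $\LH$ by $\preceq^*$-upward closure; moreover any finite collection of the $X_{\vec y}$ has a common $\preceq$-lower bound of the form $X/N\in\LH$, so $(X_{\vec y})_{\vec y\in\bb^{<\infty}_1(E)}$ generates a filter in $\LH$.

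Apply the strong $(p)$-property to obtain $Y=(y_n)\in\LH$ strongly diagonalizing $(X_{\vec y})$, i.e., $Y/\vec y\preceq X_{\vec y}$ whenever $\vec y\sqsubseteq Y$. Taking $\vec y=\emptyset$ gives $Y=Y/\emptyset\preceq X_\emptyset=X/\max(I_0)\preceq X$, so $Y\in\LH\restr X$ and every vector of $Y$ has support above $\max(I_0)$, i.e., $I_0<y_n$ for all $n$. Now fix $n$ and set $\vec y=(y_0,\dots,y_n)\sqsubseteq Y$: since $y_{n+1}>y_n$ we have $y_{n+1}\in\langle Y/\vec y\rangle\subseteq\langle X_{\vec y}\rangle=\langle X/\max(I_{m(\vec y)})\rangle$, whence $\min(\supp(y_{n+1}))>\max(I_{m(\vec y)})$ and so $I_{m(\vec y)}<y_{n+1}$; moreover $\max(\supp(y_n))\leq\max(\supp(\vec y))<\min(I_{m(\vec y)})$ gives $y_n<I_{m(\vec y)}$. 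Hence $\forall n\,\exists m\,(I_0<y_n<I_m<y_{n+1})$, which is exactly what is required, so $\LH$ is spread.

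For the final assertion, I would not route through the strong $(p)$-property but argue directly that any strategic family is spread. Given $X\in\LH$ and intervals $I_0<I_1<\cdots$ as above, define a strategy $\alpha$ for II in $G[X]$: if I has played $X_0,\dots,X_k\preceq X$ and $\alpha$ has already produced $y_0,\dots,y_{k-1}$, let $m_k$ be least with $\min(I_{m_k})>\max(\supp(y_{k-1}))$ (and $m_0=0$), and let $\alpha$ output any $y_k\in S(\langle X_k/\max(I_{m_k})\rangle)$; this set is nonempty because $X_k/\max(I_{m_k})$ is an infinite block sequence, and automatically $y_{k-1}<I_{m_k}<y_k$, so $y_k$ is a legal move. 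Every outcome $Y=(y_k)$ of $\alpha$ then satisfies $\forall k\,\exists m\,(I_0<y_k<I_m<y_{k+1})$, and since $\LH$ is strategic some such outcome lies in $\LH$; as it is a block sequence drawn from $\langle X\rangle$ it belongs to $\LH\restr X$, completing the proof.

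The only points needing care are the two bookkeeping facts used above — that tails of members of $\LH$ are again in $\LH$ and that the system $(X_{\vec y})$ genuinely generates a filter — together with the alignment of $m(\vec y)$ with the ``gap'' interval that strong diagonalization forces between consecutive vectors; none of this poses a genuine difficulty.
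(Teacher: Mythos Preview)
Your proof is correct, but the route differs from the paper's. For the main implication (strong $(p)$ $\Rightarrow$ spread), the paper does not apply the strong $(p)$-property directly; instead it defines a strategy $\sigma$ for I in $F[X]$ by $\sigma(\emptyset)=\max(I_0)$ and, after II plays $y_k$, $\sigma(y_0,\ldots,y_k)=\max(I_m)$ for the first $I_m$ entirely above $\supp(y_k)$, so that every outcome satisfies the spreading condition. It then invokes Theorem~\ref{thm:strong_p_strat_I} to conclude that some outcome lies in $\LH$. Your argument is essentially the proof of Theorem~\ref{thm:strong_p_strat_I} specialized to this particular $\sigma$: your $X_{\vec y}=X/\max(I_{m(\vec y)})$ are exactly the $Y_{\vec x}$ built in that proof from $\sigma$. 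The paper's version is more modular; yours is self-contained and avoids the detour through $F[X]$.

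For the ``in particular'' clause, the paper gives no separate argument, presumably intending it to follow from the first sentence via the already-established link between strategic and strong $(p)$. Your direct argument via a strategy for II in $G[X]$ is cleaner here and stands on its own, since the paper only proved ``strategic $\Rightarrow$ strong $(p)$'' for $(p^+)$-\emph{filters}, not arbitrary families.
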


\begin{proof}
	Fix $X\in\LH$, and let $I_0<I_1<\cdots$ be an increasing sequence of intervals in $\N$. Consider the following strategy $\sigma$ for I in $F[X]$: $\sigma(\emptyset)=\max(I_0)$. If II responds with some $y_0>\sigma(\emptyset)$, then let $\sigma(y_0)=\max(I_m)$, where $I_m$ is the first interval entirely above $\supp(y_0)$. Continue in this fashion. Any outcome $(y_n)$ will satisfy $\forall n\exists m(I_0<y_n<I_m<y_{n+1})$. Since $\LH$ is a strong $(p)$-family, Theorem \ref{thm:strong_p_strat_I} implies that some outcome is in $\LH$.
\end{proof}

\begin{thm}\label{thm:local_Gowers_normed}
	Let $\LH\subseteq\bb^\infty_1(E)$ be a spread $(p^*)$-family. Then, every analytic set is $\LH$-weakly Ramsey.	
\end{thm}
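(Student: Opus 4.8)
The plan is to deduce this directly from the local Rosendal dichotomy for $(p^*)$-families, Theorem~\ref{thm:local_Rosendal*}, together with Rosendal's spreading theorem, Theorem~\ref{thm:Rosendal_spread}, using the hypothesis that $\LH$ is spread only in the final step to bring the witness back into $\LH$. Fix an analytic $\A\subseteq\bb^\infty_1(E)$, a $\Delta>0$, and $X\in\LH$. First apply Theorem~\ref{thm:local_Rosendal*} with $\vec x=\emptyset$: there is $Y\in\LH\restr X$ such that either (i) I has a strategy $\sigma$ in $F[Y]$ for playing into $(\A_{\Delta/2})^c$, or (ii) II has a strategy in $G[Y]$ for playing into $\A_\Delta$. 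In case (ii), $Y$ itself witnesses that $\A$ is $\LH$-weakly Ramsey for this $\Delta$ and $X$, so there is nothing to do.

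So suppose we are in case (i). Apply Theorem~\ref{thm:Rosendal_spread} to the strategy $\sigma$ with gap sequence $\Delta/2$: there is a sequence of finite intervals $I_0<I_1<\cdots$ in $\N$ such that every $W=(w_n)\preceq Y$ with $\forall n\,\exists m\,(I_0<w_n<I_m<w_{n+1})$ lies in $((\A_{\Delta/2})^c)_{\Delta/2}$, hence in $\A^c$ by Lemma~\ref{lem:Delta_exp}(c). Put $D=\{(z_n)\in\bb^\infty_1(E):\forall n\,\exists m\,(I_0<z_n<I_m<z_{n+1})\}$, which is $\preceq$-dense open (the lemma just before Lemma~\ref{lem:strong_p_spread}). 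Since $\LH$ is spread, choose $Z\in\LH\restr Y\subseteq\LH\restr X$ with $Z\in D$. As $D$ is downward closed under $\preceq$, we get $[Z]\subseteq D$, and since every $W\in[Z]$ also satisfies $W\preceq Y$, the previous sentence gives $[Z]\subseteq\A^c$. Thus $Z$ witnesses alternative (i) in the definition of $\LH$-weakly Ramsey. As $\A$, $\Delta$, and $X$ were arbitrary, $\A$ is $\LH$-weakly Ramsey.

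The only bookkeeping is choosing the perturbation parameter in case (i) so that the nested expansion $((\A_{\Delta/2})^c)_{\Delta/2}$ collapses back inside $\A^c$, which is exactly what Lemma~\ref{lem:Delta_exp}(c) (or, with a smaller gap, (d)) is for; applying Theorem~\ref{thm:Rosendal_spread} with a parameter larger than $\Delta/2$ would merely force one to absorb the extra perturbation, which is routine. I do not foresee a genuine obstacle, since the substantive work---the combinatorial forcing behind Theorem~\ref{thm:local_Rosendal*} and the asymptotic-game analysis behind Theorem~\ref{thm:Rosendal_spread}---is already available; the role of the ``spread'' hypothesis is precisely to make the last step go through inside $\LH$, promoting a single spread block sequence in $\LH$ to the statement that all of its block subsequences lie in $\A^c$.
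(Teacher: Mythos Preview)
Your proposal is correct and follows essentially the same route as the paper's own proof: apply Theorem~\ref{thm:local_Rosendal*} with $\vec x=\emptyset$, handle alternative (ii) immediately, and in alternative (i) use Theorem~\ref{thm:Rosendal_spread} together with the spread hypothesis on $\LH$ to find $Z\in\LH\restr Y$ with $[Z]\subseteq((\A_{\Delta/2})^c)_{\Delta/2}\subseteq\A^c$. You have simply spelled out in more detail why $[Z]\subseteq\A^c$ (via $\preceq$-downward closure of the spread condition), which the paper leaves implicit.
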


\begin{proof}
	Let $\A\subseteq\bb_1^\infty(E)$ be analytic. Fix $X\in\LH$ and $\Delta>0$. By Theorem \ref{thm:local_Rosendal*}, there is $Y\in\LH\restr X$ such that either I has a strategy in $F[Y]$ for playing into $(\A_{\Delta/2})^c$, or II has a strategy in $G[Y]$ for playing into $\A_\Delta$. In the latter case, we're done, so assume the former. Theorem \ref{thm:Rosendal_spread} and $\LH$ being spread implies that there is some $Z\in\LH\restr Y$ with $[Z]\subseteq((\A_{\Delta/2})^c)_{\Delta/2}\subseteq\A^c$.
\end{proof}

In order to extend to sets in $\L(\R)$, we will use the following analogue of Lemma \ref{lem:uB_strat}.

\begin{lemma}\label{lem:uB_strat*}
	Let $\LF\subseteq\bb^\infty_1(E)$ be a $(p^*)$-filter. If $\A\subseteq\bb^\infty_1(E)$ is such that continuous images of $\A$ are universally Baire, then for any $X\in\LF$ and $\Delta>0$, there is a $Y\in\LF\restr X$ for which II has a strategy in $G[Y]$ for playing into one of $(\A_{\Delta/8})^c$ or $\A_\Delta$.
\end{lemma}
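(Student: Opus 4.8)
The plan is to follow the proof of Lemma~\ref{lem:uB_strat} almost verbatim, with two modifications forced by the normed setting. First, since $\A$ itself need not be universally Baire, we run the argument with the $\Delta$-expansion $\A_{\Delta/4}$ in place of $\A$; this set is universally Baire by the hypothesis on $\A$, being a continuous image of (a closed subset of) $\A\times\bb^\infty_1(E)$. Second, wherever the exact analytic dichotomy Theorem~\ref{thm:local_Rosendal} entered the arguments of \S\ref{sec:7}, we substitute its perturbed counterpart Theorem~\ref{thm:local_Rosendal*} and pay attention to the resulting $\eps$-losses. Throughout we use the evident normed analogues of the forcing $\P(\LF)$, the generic block sequence $\dot X_{\mathrm{gen}}$, and parts (a) and (b) of Lemma~\ref{lem:gen_blocks_Borel_strat}, whose proofs transfer unchanged since they never use fullness; only part (c), which invokes Theorem~\ref{thm:local_Rosendal}, is replaced by a direct appeal to Theorem~\ref{thm:local_Rosendal*}.

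Fix $X\in\LF$ and $\Delta>0$; as usual we may take the stem to be $\emptyset$. Apply Lemma~\ref{lem:good*_worse*} with parameter $\Delta$ to obtain $Y\in\LF\restr X$ such that either $(\emptyset,Y)$ is $\Delta$-good --- in which case II has a strategy in $G[Y]$ for playing into $\A_\Delta$ and we are done --- or there is a strategy $\sigma$ for I in $F[Y]$ for playing into $\{(z_n):\forall n\,((z_0,\dots,z_n),Y)\text{ is }\Delta/2\text{-bad}\}$. Assume the latter. As in the proof of Lemma~\ref{lem:uB_strat}, $\sigma$ then ensures that $((y_0,\dots,y_{k-1}),Z)$ is $\Delta/2$-bad whenever $(y_0,\dots,y_{k-1})$ is a position against $\sigma$ and $Z\in\LF\restr Y$. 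Put $\mathcal B=\A_{\Delta/4}$, let $\dot{\mathcal B}$ be a $\P(\LF)$-name for $\mathcal B$, and let $\LD$ be a countable family of dense open subsets of $\P(\LF)$ witnessing that $\mathcal B$ is universally Baire along $\LD$-generics for $\P(\LF)$.

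The central claim is that $(\emptyset,Y,\sigma)\forces_{\P(\LF)}\dot X_{\mathrm{gen}}\notin\dot{\mathcal B}$. Otherwise there is $(\vec y,Z,\tau)\le(\emptyset,Y,\sigma)$ with $Z\in\LF$ forcing $\dot X_{\mathrm{gen}}\in\dot{\mathcal B}$, whence $\G_{\LD,(\vec y,Z,\tau)}\subseteq\mathcal B=\A_{\Delta/4}$. By the normed form of Lemma~\ref{lem:gen_blocks_Borel_strat}(b), I has no strategy in $F[\vec y,Z']$ for playing into $(\G_{\LD,(\vec y,Z,\tau)})^c$ for any $Z'\in\LF\restr Z$; applying Theorem~\ref{thm:local_Rosendal*} to the $F_{\sigma\delta}$ set $\G_{\LD,(\vec y,Z,\tau)}$ with perturbation $\Delta/8$ therefore yields $W\in\LF\restr Z$ for which II has a strategy in $G[\vec y,W]$ for playing into $(\G_{\LD,(\vec y,Z,\tau)})_{\Delta/8}\subseteq(\A_{\Delta/4})_{\Delta/8}\subseteq\A_{3\Delta/8}\subseteq\A_{\Delta/2}$. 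Thus $(\vec y,W)$ is $\Delta/2$-good, contradicting that $(\vec y,W)$ is $\Delta/2$-bad. This proves the claim, so $\G_{\LD,(\emptyset,Y,\sigma)}\subseteq\mathcal B^c=(\A_{\Delta/4})^c$. Running the same step once more --- using that I has no strategy in $F[Y']$ for playing into $(\G_{\LD,(\emptyset,Y,\sigma)})^c$ for $Y'\in\LF\restr Y$, and Theorem~\ref{thm:local_Rosendal*} with perturbation $\Delta/8$ --- we get $Y'\in\LF\restr Y\subseteq\LF\restr X$ for which II has a strategy in $G[Y']$ for playing into $(\G_{\LD,(\emptyset,Y,\sigma)})_{\Delta/8}\subseteq((\A_{\Delta/4})^c)_{\Delta/8}\subseteq(\A_{\Delta/8})^c$, the last inclusion being Lemma~\ref{lem:Delta_exp}(d) (with $\Delta/8\le(\Delta/4)/2$). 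This $Y'$ is as required.

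The main obstacle is the bookkeeping of $\Delta$-expansions. In contrast to \S\ref{sec:7}, the perturbed dichotomy only allows II to play into an expansion of its target set rather than into the set itself, so each use must be budgeted; one must choose the universally Baire set $\mathcal B$ (here $\A_{\Delta/4}$) and the perturbation parameter (here $\Delta/8$) so that, simultaneously, every expansion appearing on the ``$\A$-side'' stays inside $\A_{\Delta/2}$ --- to contradict $\Delta/2$-badness --- while every expansion on the ``$\A^c$-side'' stays inside $(\A_{\Delta/8})^c$. A more routine task is to record the normed versions of the apparatus of \S\ref{sec:7}: the poset $\P(\LF)$ with normalized stems and vectors, the basic properties of Lemma~\ref{lem:P_basic_props}, and parts (a),(b) of Lemma~\ref{lem:gen_blocks_Borel_strat}, replacing fullness by almost fullness and Theorem~\ref{thm:local_Rosendal} by Theorem~\ref{thm:local_Rosendal*} exactly as in the proofs of Lemmas~\ref{lem:good*_worse*}--\ref{thm:local_Rosendal*}.
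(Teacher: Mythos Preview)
Your proof is correct and follows essentially the same route as the paper's own argument: apply Lemma~\ref{lem:good*_worse*}, work with the universally Baire set $\A_{\Delta/4}$ in the forcing $\P(\LF)$, derive a contradiction from any condition forcing $\dot X_{\mathrm{gen}}\in\dot\A_{\Delta/4}$ via Lemma~\ref{lem:gen_blocks_Borel_strat}(b) and Theorem~\ref{thm:local_Rosendal*}, and then repeat to land in $((\A_{\Delta/4})^c)_{\Delta/8}\subseteq(\A_{\Delta/8})^c$. The only difference is cosmetic: the paper uses perturbation $\Delta/4$ in the first application of Theorem~\ref{thm:local_Rosendal*} (yielding $(\G)_{\Delta/4}\subseteq\A_{\Delta/2}$ directly) while you use $\Delta/8$ and pass through $\A_{3\Delta/8}$; both give the needed contradiction with $\Delta/2$-badness.
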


\begin{proof}
	Let $X\in\LF$ and $\Delta>0$. By Lemma \ref{lem:good*_worse*}, there is a $Y\in\LF\restr X$ such that either $(\emptyset, Y)$ is $\Delta$-good or I has a strategy $\sigma$ in $F[Y]$ for playing into
	\[
		\{(z_n):\forall n(z_0,\ldots,z_n,Y) \text{ is $\Delta/2$-bad}\}.
	\]
	In the former case, we're done, so assume the latter.
	
	By hypothesis, $\A_{\Gamma}$ is universally Baire for all $\Gamma$. In particular, we may let $\dot{\A}_{\Delta/4}$ be a $\P(\LF)$-name for $\A_{\Delta/4}$ and $\LD$ a countable collection of dense open subsets of $\P(\LF)$ such that
	\begin{enumerate}[label=\rm{(\roman*)}]
		\item $\{q\in\P(\LF):q\text{ decides } \dot{X}_\text{gen}\in\dot{A}\}$ is in $\LD$, and
		\item whenever $G$ is $\LD$-generic in $\P(\LF)$, $\dot{X}_\text{gen}$ is in $\bb^\infty_1(E)$ and $\dot{X}_\text{gen}(G)$ is in $\A_{\Delta/4}$ if and only if there is a $q\in G$ such that $q\forces_{\P(\LF)}\dot{X}_\text{gen}\in\dot{\A}_{\Delta/4}$.
	\end{enumerate}

	We claim that $(\emptyset, Y,\sigma)\forces_{\P(\LF)}\dot{X}_{\text{gen}}\notin\dot{\A}_{\Delta/4}$.
	
	Suppose not, then there is a $(\vec{y},Z,\tau)\leq(\emptyset,Y,\sigma)$, with $Z\in\LF$, such that $(\dot{y},Z,\tau)\forces_{\P(\LF)}\dot{X}_{\text{gen}}\in\dot{\A}_{\Delta/4}$. Applying Lemma \ref{lem:gen_blocks_Borel_strat}(b) and Theorem \ref{thm:local_Rosendal*}, there is a $W\in\LF\restr Z$ such that II has a strategy $\alpha$ in $G[\vec{y},W]$ for playing into $(\G_{\LD,(\vec{y},Z,\tau)})_{\Delta/4}$. As in the proof of Lemma \ref{lem:uB_strat}, $\G_{\LD,(\vec{y},Z,\tau)}\subseteq\A_{\Delta/4}$, so $\alpha$ is a strategy for II in $G[\vec{y},W]$ for playing into $\A_{\Delta/2}$. This, however, contradicts the fact that $\sigma$ ensures $(\vec{y},Z)$ is $\Delta/2$-bad.
	
	Thus, $(\emptyset, Y,\sigma)\forces_{\P(\LF)}\dot{X}_{\text{gen}}\notin\dot{\A}_{\Delta/4}$. But then, exactly as in the preceding paragraph, we may find $W\in\LF\restr Y$ such that II has a strategy in $G[W]$ for playing into $(\G_{\LD,(\emptyset,Y,\sigma)})_{\Delta/8}$, and thus into $((\A_{\Delta/4})^c)_{\Delta/8}\subseteq(\A_{\Delta/8})^c$, where the last containment follows from Lemma \ref{lem:Delta_exp}(d).
\end{proof}

In what follows, we strengthen the hypotheses on the basis $(e_n)$, asserting that there is some $K>0$ such that for all $m\leq n$ and scalars $(a_k)$,
\[
	\|\sum_{k\leq m} a_ke_k\| \leq K \|\sum_{k\leq n} a_k e_k\|.
\]
This is equivalent to $(e_n)$ being a Schauder basis of the completion $\bar{E}$ of $E$, cf.~Proposition 1.1.9 \cite{MR2192298}. The infimum of all such $K$ as above is called the \emph{basis constant} of $(e_n)$. The following Lemma about perturbations of blocks sequences appears to be well-known.

\begin{lemma}\label{lem:perturb_blocks}
	For any $\Delta>0$, there is a $\Gamma>0$ such that whenever $X=(x_n),X'=(x'_n)\in\bb^\infty_1(E)$ satisfy $d(X',X)\leq\Gamma$, then $[X']\subseteq [X]_\Delta$. In fact, if $Y'\in[X']$, then $\tilde{Y}\in[X]$ and $d(Y',\tilde{Y})\leq\Delta$, where $\tilde{Y}$ is the normalization of the image of $Y'$ under the linear map extending $x'_n\mapsto x_n$.
\end{lemma}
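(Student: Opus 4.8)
The plan is to reduce everything to a quantitative statement about the linear map $T$ extending $x'_n\mapsto x_n$ and its effect on block vectors. First I would fix $\Delta=(\delta_n)>0$ and, working in the completion $\bar E$ with basis constant $K$, recall the standard ``small perturbation'' estimate: if $(x_n)$ is a normalized block basis with basis constant at most $2K$ (which it is, being a block basis of $(e_n)$), then for any sequence $(\gamma_n)>0$ decaying fast enough — concretely, $\sum_n \gamma_n < $ something like $(4K)^{-1}$ — any sequence $(x'_n)$ with $\|x'_n - x_n\|\le \gamma_n$ is again a basic sequence equivalent to $(x_n)$, and the linear isomorphism $T:\langle (x_n)\rangle \to \langle (x'_n)\rangle$ sending $x_n\mapsto x_n'$ (note: I will actually use its inverse, sending $x'_n\mapsto x_n$) satisfies $\|T - I\|$ small and $\|T\|, \|T^{-1}\|$ close to $1$. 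This is the content of, e.g., the principle of small perturbations (Proposition 1.3.9-type results in \cite{MR2192298}); the only thing to track is how small $\|T-I\|$ must be as a function of the target $\Delta$, and crucially that the estimate on $\|Tv - v\|$ for a block vector $v$ supported on indices $\ge n$ can be made to depend only on the tail $(\gamma_n,\gamma_{n+1},\dots)$, which is what will deliver control by $\delta_n$ rather than a uniform constant.

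Next I would make the coordinate-wise bookkeeping precise. Given $Y'=(y_n')\in[X']$, each $y_n'$ is a normalized block vector of $(x_k')$, say $y_n' = \sum_{k\in F_n} a_k^{(n)} x_k'$ with $F_0 < F_1 < \cdots$. Define $w_n = T(y_n') = \sum_{k\in F_n} a_k^{(n)} x_k$; then $(w_n)$ is automatically a block sequence of $(x_k)$, hence of $(e_n)$, though not normalized. Let $\tilde Y = (\tilde y_n)$ with $\tilde y_n = w_n / \|w_n\|$; this is a genuine element of $[X]$. The estimate then proceeds in two steps: (i) bound $\|w_n - y_n'\| = \|(T-I)y_n'\|$ — here I use that $y_n'$ is supported past index $\min(F_n)$, which grows with $n$, so by the tail-sensitive perturbation estimate this is at most, say, $\delta_n/4$ once $\Gamma = (\gamma_n)$ is chosen with $\gamma_k$ decaying suitably; (ii) convert the bound on $\|w_n - y_n'\|$ (with $\|y_n'\|=1$) into a bound on $\|\tilde y_n - y_n'\| = \big\| \tfrac{w_n}{\|w_n\|} - y_n'\big\|$ by the elementary normalization inequality $\|u/\|u\| - v\| \le 2\|u - v\|$ valid for $\|v\|=1$. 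Combining (i) and (ii) gives $\|\tilde y_n - y_n'\| \le \delta_n$, i.e. $d(Y',\tilde Y)\le \Delta$, so $Y'\in[X]_\Delta$, proving the ``in fact'' clause and hence the first clause.

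The main obstacle — really the only non-routine point — is arranging that the perturbation bound on $\|(T-I)v\|$ for $v$ a normalized block vector supported on $[n,\infty)$ is governed by $\delta_n$ (a \emph{tail} condition) rather than merely by $\sum_k\gamma_k$ (a uniform condition); without this, one would only get $d(Y',\tilde Y)\le (\epsilon,\epsilon,\dots)$ for a single small $\epsilon$, which is weaker than what is asserted. The fix is to choose $\Gamma=(\gamma_n)$ decreasing fast, e.g. $\gamma_n \le \delta_n \cdot 2^{-n-3} / K$, and to run the perturbation argument ``from stage $n$ onward'': writing $v = \sum_{k\ge n} b_k x_k$ with $\|v\|=1$, one has $|b_k| \le 2K$ for all $k$ (basis-constant bound), so $\|(T-I)v\| \le \sum_{k\ge n} |b_k|\,\|x_k' - x_k\| \le 2K\sum_{k\ge n}\gamma_k \le \delta_n/4$. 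This is crude but entirely sufficient. I would then remark that the quantifier structure is exactly as stated: $\Gamma$ depends only on $\Delta$ (and on $K$, which is fixed with the basis), and the conclusion $[X']\subseteq[X]_\Delta$ holds for \emph{all} pairs $X,X'$ with $d(X',X)\le\Gamma$ simultaneously.
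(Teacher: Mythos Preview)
Your proposal is correct and follows essentially the same approach as the paper: define the linear map $T$ interchanging $(x_n)$ and $(x_n')$, use the basis-constant bound $|b_k|\le 2K$ to get a tail-sensitive estimate $\|(T-I)v\|\le 2K\sum_{k\ge n}\gamma_k$ for $v$ supported past index $n$, then normalize. Your normalization step via $\|u/\|u\|-v\|\le 2\|u-v\|$ is slightly cleaner than the paper's two-line triangle argument, but the content is identical.

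One small bookkeeping point: your specific choice $\gamma_n\le \delta_n\,2^{-n-3}/K$ only yields $\sum_{k\ge n}\gamma_k\le \delta_n/(8K)$ when $(\delta_k)$ is nonincreasing, which is not assumed. The fix is trivial---either replace $\delta_n$ by $\min_{j\le n}\delta_j$ at the outset, or (as the paper does) simply impose the tail condition $\sum_{k\ge m}\gamma_k\le\min\{1/(6K),\,\delta_m/(8K)\}$ directly and note that such a $\Gamma$ exists. Also, in your final displayed estimate you write $v=\sum_{k\ge n}b_k x_k$ where you mean $x_k'$, since $v=y_n'\in\langle X'\rangle$; the bound $|b_k|\le 2K$ then uses that $(x_k')$, being a block basis of $(e_n)$, also has basis constant at most $K$.
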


\begin{proof}
	Let $\Delta>0$. If $K$ is the basis constant of $(e_n)$, then by Lemma 1.3.5 in \cite{MR2192298}, the basis constant of $X$ is $\leq K$. Pick $\Gamma>0$ with $\sum_{n\geq m}\gamma_n\leq\min\{1/6K,\delta_m/8K\}$. For $X'=(x_n')$ with $d(X',X)\leq\Gamma$, consider the map on the completions $T:\bar{\langle X\rangle}\to\bar{\langle X'\rangle}$ extending $x_n\mapsto x_n'$. $T$ is a bounded linear isomorphism, as whenever $v=\sum a_nx_n\in\bar{\langle X\rangle}$,
	\begin{align*}
		\|Tv\|-\|v\|\leq\|Tv-v\|&\leq\|\sum a_nx_n'-\sum a_nx_n\|\leq\sup_n|a_n|\sum\|x_n'-x_n\|\\
		&\leq2K\|v\|\sum\|x_n'-x_n\|\leq1/3\|v\|,
	\end{align*}
	and so $\|T\|\leq 4/3$. Using $1/\|T^{-1}\|=\inf_{\|v\|=1}\|Tv\|$, we have $\|T^{-1}\|\leq3/2$.
	
	As the basis constant for $X'$ is also $\leq K$, for $v'=\sum_{n\geq m} a_nx'_n\in\bar{\langle X'\rangle}$, we have that $\|T^{-1}v'-v'\|\leq\delta_m/4\|v'\|$ by a similar argument as above.

	If $v'$ is a unit vector, then we also have that
	\[
		|1-\frac{1}{\|T^{-1}v'\|}|\leq\|T\|\|T^{-1}v'-v'\|\leq (4/3)(\delta_m/4)\leq\delta_m/3.
	\]
	For $Y'=(y_m')\in[X']$, we claim $d(Y',\tilde{Y})\leq\Delta$, where $\tilde{Y}$ is the normalization of $Y=(y_m)=(T^{-1}(y_m'))$. Observe that
	\begin{align*}
		\|y_m-\frac{1}{\|y_m\|}y_m\|\leq\left|1-\frac{1}{\|T^{-1}(y'_m)\|}\right|\|T^{-1}(y'_m)\|\leq(\delta_m/3)(3/2)=\delta_m/2.
	\end{align*}
	Thus, for all $m$, 
	\begin{align*}
		\|y'_m-\frac{1}{\|y_m\|}y_m\|&=\|y_m'-y_m\|+\|y_m-\frac{1}{\|y_m\|}y_m\| \leq\delta_m.
	\end{align*}
\end{proof}

The following lemma expresses the uniform continuity of the games $F[X]$ and $G[X]$.

\begin{lemma}\label{lem:cont_games}
	Let $\A\subseteq\bb^\infty_1(E)$ and $\Delta>0$. There is a $\Gamma>0$ such that whenever $X\in\bb^\infty_1(E)$ is such that I (II, respectively) has a strategy in $F[X]$ ($G[X]$, respectively) for playing into $\A$ and $d(X,X')\leq\Gamma$, then I (II, respectively) has a strategy in $F[X']$ ($G[X']$, respectively) for playing into $\A_\Delta$.
\end{lemma}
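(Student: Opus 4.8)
The plan is to reduce the claim to the previous lemma, Lemma \ref{lem:perturb_blocks}, which already expresses that a sufficiently small perturbation of a normalized block sequence can be transported back, coordinatewise, to within a prescribed tolerance. Given $\Delta>0$, first fix an auxiliary tolerance $\Delta' > 0$ (say $\Delta' = \Delta/2$, or with a factor absorbing the basis constant if needed) and apply Lemma \ref{lem:perturb_blocks} to obtain a $\Gamma>0$ such that whenever $d(X',X)\leq\Gamma$ and $Y'\in[X']$, the normalization $\tilde{Y}$ of the image of $Y'$ under the linear map $T^{-1}$ extending $x'_n\mapsto x_n$ lies in $[X]$ with $d(Y',\tilde{Y})\leq\Delta'$. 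I will use this same $\Gamma$ (shrinking further if the argument demands it) as the $\Gamma$ in the statement.

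For the case of I in $F[X']$: suppose $\sigma$ is a strategy for I in $F[X]$ playing into $\A$, and $d(X,X')\leq\Gamma$. Since the moves of I in $F$ are natural numbers, there is no transport needed on I's side; the subtlety is that II, playing in $\langle X'\rangle$, produces vectors that need not be block vectors of $X$. I would instead have I in $F[X']$ simulate a play of $F[X]$ against $\sigma$: when II plays $y'_k\in\langle X'/n_k\rangle$, I records the corresponding $y_k = T^{-1}(y'_k)$ (or its normalization) which, by Lemma \ref{lem:perturb_blocks}, lies appropriately far down $X$; feed $y_k$ to $\sigma$; and have I play whatever number $\sigma$ returns, possibly increased to guarantee validity (the supports of $y'_k$ and $y_k$ roughly agree once $\Gamma$ is small, using that $T^{-1}$ is near the identity and respects the block structure — this is exactly the content behind Lemma \ref{lem:perturb_blocks}). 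The resulting simulated play $(y_k)$ is an outcome of $\sigma$, hence in $\A$; and the actual outcome $(y'_k)$ satisfies $d((y'_k),(y_k))\leq\Delta'$, so $(y'_k)\in\A_{\Delta'}\subseteq\A_\Delta$.

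The case of II in $G[X']$ is symmetric but requires transporting I's moves the other way: if $\alpha$ is a strategy for II in $G[X]$ for playing into $\A$, and I in $G[X']$ plays block sequences $X'_k\preceq X'$, then I would transport $X'_k$ into a block sequence of $X$ (via the map extending $x'_n\mapsto x_n$, thinning and renormalizing so the image is genuinely a normalized block sequence $\preceq X$), feed it to $\alpha$, obtain a vector $y_k\in\langle$ (transported $X'_k)\rangle$, and then II actually plays the preimage $y'_k\in\langle X'_k\rangle$; Lemma \ref{lem:perturb_blocks} again gives $\|y'_k - y_k\|$ small (coordinatewise within $\Delta'$), so the real outcome is within $\Delta'\leq\Delta$ of an outcome of $\alpha$, which lies in $\A$. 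I expect the main obstacle to be the bookkeeping that the transported objects are genuinely \emph{normalized block} sequences/vectors and valid moves in the respective games — i.e., that applying $T$ or $T^{-1}$ to a block vector of $X'$ (resp. $X$) and renormalizing yields a block vector of $X$ (resp. $X'$) with the right support location and with the perturbation estimate surviving the renormalization. All of this is governed by the estimates already packaged in Lemma \ref{lem:perturb_blocks}, together with the observation that $T$ and $T^{-1}$ are block-diagonal with respect to the decompositions $\langle X\rangle$, $\langle X'\rangle$, so no serious new inequality is needed; the proof is essentially a careful unwinding of strategies through these maps. I would state only the $F$-case in detail and note that $G$ is handled identically, mutatis mutandis.
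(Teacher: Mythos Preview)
Your proposal is correct and follows essentially the same approach as the paper: choose $\Gamma$ via Lemma \ref{lem:perturb_blocks}, then use the linear map $T$ (and its normalized inverse) to transport II's vectors in the $F$-game and I's block sequences in the $G$-game between $\langle X\rangle$ and $\langle X'\rangle$, simulating the original strategy and obtaining an outcome within $\Delta$ of one in $\A$. The only cosmetic difference is that the paper does not bother to ``possibly increase'' I's numbers or to thin before renormalizing, since $T$ is block-diagonal and hence preserves support positions exactly; your extra bookkeeping is harmless but unnecessary.
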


\begin{proof}
	Take $\Gamma>0$ as in Lemma \ref{lem:perturb_blocks}. Suppose I has a strategy $\sigma$ in $F[X]$ for playing into $\A$ and $d(X,X')\leq\Gamma$. We define a strategy $\sigma'$ for I in $F[X']$. Let $\sigma'(\emptyset)=\sigma(\emptyset)$. Inductively, suppose that $\sigma'(y'_0,\ldots,y'_k)$ has been defined and is equal to $\sigma(y_0,\ldots,y_k)$, where $y_0,\ldots,y_k$ is a valid play by II in $F[X]$ against $\sigma$, and $\|y'_i-y_i\|\leq\gamma_i$ for $0\leq i\leq k$. Suppose that $y'_{k+1}>\sigma'(y'_0,\ldots,y'_k)$ in $S(\langle X'\rangle)$. By our choice of $\Gamma$, there is a $y_{k+1}>\sigma'(y'_0,\ldots,y'_k)=\sigma(y_0,\ldots,y_k)$ in $S(\langle X\rangle)$ with $\|y'_{k+1}-y_{k+1}\|\leq\gamma_{k+1}$. Let $\sigma'(y'_0,\ldots,y'_k,y'_{k+1})=\sigma(y_0,\ldots,y_k,y_{k+1})$. It follows that $\sigma'$ is a strategy for playing into $\A_\Delta$.
	
Suppose that II has a strategy $\alpha$ in $G[X]$ for playing into $\A$, and $d(X,X')\leq\Gamma$. Let $T:\langle X\rangle\to\langle X'\rangle$ be as in the proof of Lemma \ref{lem:perturb_blocks}. We define a strategy $\alpha'$ for II in $G[X']$. Suppose that I begins by playing $Y'_0\in[X']$. Let $\alpha'(Y'_0)=\tilde{T}(\alpha(\tilde{T}^{-1}(Y'_0)))$, where $\tilde{T}$ and $\tilde{T^{-1}}$ indicate taking normalizations. Continue in this fashion. Then, $\alpha$ is a strategy for playing into $\A_\Delta$.
\end{proof}

\begin{thm}\label{thm:L(R)_filter*}
	Assume that there is a supercompact cardinal. Let $\LF\subseteq\bb^\infty_1(E)$ be a strategic $(p^*)$-filter. Then, every set $\A\subseteq\bb^\infty_1(E)$ in $\L(\R)$ is $\LF$-weakly Ramsey.
\end{thm}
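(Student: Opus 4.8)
The plan is to run the proof of Theorem~\ref{thm:local_Gowers_normed} essentially verbatim, feeding it the universally Baire machinery of Lemma~\ref{lem:uB_strat*} (and the lemmas behind it) in place of the analytic dichotomy Theorem~\ref{thm:local_Rosendal*}. Since there is a supercompact cardinal, Theorem~\ref{thm:FMW_UBsets} gives that every set of reals in $\L(\R)$ is universally Baire; because all of the sets $\A_\Gamma$, $\A^c$, and continuous images of $\A$ again lie in $\L(\R)$ whenever $\A$ does, the hypotheses of Lemma~\ref{lem:uB_strat*} and of Lemma~\ref{lem:gen_blocks_Borel_strat} are available for $\A$ throughout. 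Fix $X\in\LF$ and $\Delta>0$, and recall that $\LF$ is spread, since strategic families are spread by Lemma~\ref{lem:strong_p_spread}.

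First, by Lemma~\ref{lem:good*_worse*} applied with $\vec x=\emptyset$ and target $\A$ (this is how the proof of Lemma~\ref{lem:uB_strat*} begins), there is a $Y\in\LF\restr X$ such that either $(\emptyset,Y)$ is $\Delta$-good, in which case II has a strategy in $G[Y]$ for playing into $\A_\Delta$ and conclusion (ii) holds, or I has a strategy $\sigma$ in $F[Y]$ for playing into $S=\{(z_n):\forall n\,(z_0,\ldots,z_n,Y)\text{ is }\Delta/2\text{-bad}\}$. In the latter case I would reproduce the forcing analysis inside the proof of Lemma~\ref{lem:uB_strat*}: fix a $\P(\LF)$-name $\dot\A_{\Delta/4}$ for $\A_{\Delta/4}$ and a countable collection $\LD$ of dense open subsets of $\P(\LF)$ witnessing universal Baireness, and use the $\preceq^*$-hereditariness of $\Delta/2$-badness together with Lemma~\ref{lem:gen_blocks_Borel_strat} and Theorem~\ref{thm:local_Rosendal*} to conclude $(\emptyset,Y,\sigma)\forces_{\P(\LF)}\dot X_{\text{gen}}\notin\dot\A_{\Delta/4}$. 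Hence the set $\G:=\G_{\LD,(\emptyset,Y,\sigma)}$ is an $F_{\sigma\delta}$ subset of $\bb^\infty_1(E)$ with $\G\subseteq(\A_{\Delta/4})^c$, and by Lemma~\ref{lem:gen_blocks_Borel_strat}(b) no $W\in\LF\restr Y$ admits a strategy for I in $F[W]$ playing into $\G^c$.

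To obtain conclusion (i) I would then mimic the spread step of the proof of Theorem~\ref{thm:local_Gowers_normed}. The target is a $W\in\LF\restr Y$ together with a strategy for I in $F[W]$ playing into a set $\mathcal{B}$ with $\mathcal{B}_{\Delta/8}\subseteq\A^c$; since $\G\subseteq(\A_{\Delta/4})^c$ and $((\A_{\Delta/4})^c)_{\Delta/8}\subseteq\A^c$ by Lemma~\ref{lem:Delta_exp}(c), $\mathcal{B}$ should be built from $\G$. Given such $W$ and $\mathcal{B}$, Theorem~\ref{thm:Rosendal_spread} yields intervals $I_0<I_1<\cdots$ in $\N$ such that every $Z=(z_n)\preceq W$ with $\forall n\exists m(I_0<z_n<I_m<z_{n+1})$ lies in $\mathcal{B}_{\Delta/8}\subseteq\A^c$; since $\LF$ is spread there is such a $Z\in\LF\restr W$, and since this spread pattern is inherited by every normalized block subsequence of $Z$---exactly as in the proof of Theorem~\ref{thm:local_Gowers_normed}---we get $[Z]\subseteq\A^c$, which is conclusion (i).

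The main obstacle is producing the strategy for I demanded in the previous paragraph. Lemma~\ref{lem:uB_strat*}, used as a black box, only supplies a strategy for \emph{II} in the Gowers game $G[Y]$ for playing into $(\A_{\Delta/8})^c$, and possessing such a strategy is strictly weaker than that set containing a cone $[Z]$; it therefore cannot feed Theorem~\ref{thm:Rosendal_spread}, which consumes a strategy for \emph{I} in the asymptotic game $F$. The only point in the whole chain where such an $F$-strategy for I is ever produced is Lemma~\ref{lem:good*_worse*} itself (the strategy $\sigma$ into $S$), so the task reduces to showing that, after descending to a suitable $W\in\LF\restr Y$, $\sigma$ restricted to $W$ (or a modification of it) plays into a set whose $\Delta/8$-expansion is contained in $\A^c$: for an open target this is immediate, for an analytic one the Souslin-scheme argument of Theorem~\ref{thm:local_Rosendal*} does it, and for an $\L(\R)$ target one must leverage $\G\subseteq(\A_{\Delta/4})^c$ and the analyticity of $\G$ (Lemma~\ref{lem:gen_blocks_Borel_strat}(a)), applying the analytic dichotomy Theorem~\ref{thm:local_Rosendal*} to $\G^c$ and ruling out its II-sided horn (using that $\G^c$ is unreachable by I in $F$ below $\LF\restr Y$, Lemma~\ref{lem:gen_blocks_Borel_strat}(b)) so as to land on the I-sided one. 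Keeping the $\Delta/2,\Delta/4,\Delta/8$ losses in register through Lemma~\ref{lem:Delta_exp}, so that the nested expansions telescope back into $\A^c$, and accommodating the fact that $(p^*)$-families are only \emph{almost} full, is where the care concentrates. Once the $\LF$-weakly Ramsey conclusion is established for every strategic $(p^*)$-filter, Theorem~\ref{thm:local_Gowers_L(R)} will follow by forcing with $(\LH,\preceq^*)$, exactly as Theorem~\ref{thm:local_Rosendal_L(R)} follows from Lemma~\ref{lem:L(R)_filters_Rosendal}.
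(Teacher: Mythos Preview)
Your approach has a genuine gap at the step you yourself flag as the ``main obstacle.'' You propose to apply Theorem~\ref{thm:local_Rosendal*} to the analytic set $\G^c$ and rule out its II-sided horn via Lemma~\ref{lem:gen_blocks_Borel_strat}(b). But that lemma says that \emph{I has no strategy in $F[W]$ into $\G^c$}; it says nothing about II in the Gowers game. The horn you must eliminate is ``II has a strategy in $G[W]$ for playing into $(\G^c)_{\Delta'}$,'' and this is perfectly compatible with I lacking an $F$-strategy into $\G^c$: the two games are asymmetric, and II can simultaneously have strategies into both $\G_{\Delta''}$ and $(\G^c)_{\Delta'}$ once expansions are in play. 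So you never obtain the I-sided horn, and hence never produce the strategy for I needed to feed Theorem~\ref{thm:Rosendal_spread}.

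The paper avoids this trap entirely by a different reduction. It does not attempt to manufacture an $F$-strategy for I directly from the forcing analysis. Instead it invokes Theorem~\ref{thm:L(R)_strat_Ramsey} (every $\L(\R)$ set is strategically Ramsey, via $\MA(\aleph_1)$ and absoluteness under a supercompact) to conclude that the set $\D$ of $Y\preceq X$ for which one of the two desired strategies already exists is $\preceq$-dense open and lies in $\L(\R)$. Then Lemma~\ref{lem:uB_strat*} is applied to $\D$, not to $\A$: because $\D$ is dense open, Lemma~\ref{lem:no_strat_into_dense_comp}(b) gives I a strategy in $G[Y]$ into $\D$, which rules out II having one into $(\D_{\Gamma/8})^c\subseteq\D^c$, so II's strategy must be into $\D_\Gamma$. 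The strategic property then places some $Z\in\LF$ inside $\D_\Gamma$, and Lemma~\ref{lem:cont_games} (uniform continuity of the games under small perturbations, with $\Gamma$ chosen for $\Delta/4$) transfers the strategy that exists at the nearby $Z'\in\D$ over to $Z$ itself, at the cost of a controlled $\Delta$-loss. Only now, with an honest $F$-strategy for I on an element of $\LF$, does one invoke Theorem~\ref{thm:Rosendal_spread} and Lemma~\ref{lem:strong_p_spread}. The two ingredients you are missing are precisely Theorem~\ref{thm:L(R)_strat_Ramsey} (to make the witness set dense) and Lemma~\ref{lem:cont_games} (to pull strategies back from $\D$ to $\D_\Gamma$).
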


\begin{proof}
	Let $\A\subseteq\bb^\infty_1(E)$ be in $\L(\R)$, $X\in\LF$, and $\Delta>0$. By Theorem \ref{thm:L(R)_strat_Ramsey}, the set $\D$ of all $Y\preceq X$ such that either I has a strategy in $F[Y]$ for playing into $(\A_{\Delta/2})^c$, or II has a strategy in $G[Y]$ for playing into $\A_{\Delta/2}$, is $\preceq$-dense open, and is clearly in $\L(\R)$. By Lemmas \ref{lem:no_strat_into_dense_comp} and \ref{lem:uB_strat*}, there is a $Y\in\LF\restr X$ such that II has a strategy for playing into $\D_\Gamma$, where $\Gamma$ is as in Lemma \ref{lem:cont_games}, applied to $\Delta/4$. Since $\LF$ is strategic, there is a $Z\in\LF\restr Y$ which is in $\D_\Gamma$. By our choice of $\Gamma$, then either I has a strategy in $F[Z]$ for playing into $((\A_{\Delta/2})^c)_{\Delta/4}\subseteq(\A_{\Delta/2})^c$, or II has a strategy in $G[Z]$ for playing into $\A_\Delta$. In the latter case, we're done, and in the former case, we need only apply Theorem \ref{thm:Rosendal_spread} and Lemma \ref{lem:strong_p_spread}.
\end{proof}

We will use the following analogue of Lemma \ref{lem:forcing_family}, whose proof is similar and left to the reader.

\begin{lemma}\label{lem:forcing_family*}
	For $\LH\subseteq\bb_1^\infty(E)$ a $(p^*)$-family, forcing with $(\LH,\preceq^*)$ adds no new reals and if $\LG\subseteq\LH$ is $\L(\R)$-generic for $(\LH,\preceq^*)$, $\LG$ will be a $(p^*)$-filter. If $\LH$ is strategic (spread, respectively), then $\LG$ will also be strategic (spread, respectively).
\end{lemma}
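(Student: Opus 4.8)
The plan is to adapt the proof of Lemma~\ref{lem:forcing_family} almost verbatim, working throughout in $\bb^\infty_1(E)$ and reading all the relevant definitions in their $(p^*)$-versions. Since a $(p^*)$-family is in particular a $(p)$-family, the poset $(\LH,\preceq^*)$ is $\sigma$-closed, hence adds no new reals; consequently an $\L(\R)$-generic $\LG$ meets every $\preceq$-dense subset of $\LH$ that lies in $\L(\R)$ (a $\preceq$-dense set is also $\preceq^*$-dense, as $\preceq\ \subseteq\ \preceq^*$). That $\LG$ has the $(p)$-property, and is strategic whenever $\LH$ is, follows exactly as in Lemma~\ref{lem:forcing_family}: one observes that the $\preceq$-dense subsets of $\LH$ supplied by Lemmas~\ref{lem:dense_sets_diag_full} and~\ref{lem:outcomes_strat_dense} (now read in $\bb^\infty_1(E)$) are Borel, respectively analytic, hence belong to $\L(\R)$, and are therefore met by $\LG$.

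The one point requiring genuinely new work is almost fullness of $\LG$. Suppose $D\subseteq S(E)$ is $\LG$-dense below some $X\in\LG$, and fix $\eps>0$. Translating into the forcing language, there is an $X'\in\LG$, which we may assume is below $X$, such that
\[
	X'\forces_{\LH}\forall Y\in\dot{\LG}\restr X\,\exists Z\preceq Y\,(S(\langle Z\rangle)\subseteq\check{D}).
\]
Exactly as in Lemma~\ref{lem:forcing_family}, this forces $D$ to be $\LH$-dense below $X'$: were it not, some $Y\in\LH\restr X'$ would have no $Z\preceq Y$ with $S(\langle Z\rangle)\subseteq D$, and such a $Y$, lying below $X'$, could not force the displayed sentence. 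Since $D$ is then $\LH$-dense below every $Y\in\LH\restr X'$, almost fullness of $\LH$ shows that, for our fixed $\eps$, the set $\{Z:S(\langle Z\rangle)\subseteq D_\eps\}$ is $\preceq$-dense below $X'$ in $\LH$. This set is coded by $D$ and $\eps$, hence lies in $\L(\R)$, so $\LG$ meets it below $X'$, producing a $Z\in\LG$ with $Z\preceq X$ and $S(\langle Z\rangle)\subseteq D_\eps$, which is what almost fullness of $\LG$ demands.

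Finally, assume $\LH$ is spread and let $X\in\LG$ and a sequence of intervals $I_0<I_1<\cdots$ be given. The set $S=\{(y_n):\forall n\,\exists m\,(I_0<y_n<I_m<y_{n+1})\}$ is $\preceq$-dense open in $\bb^\infty_1(E)$ by the lemma preceding Lemma~\ref{lem:strong_p_spread}, and spreadness of $\LH$ says precisely that $S$ is $\preceq$-dense below $X$ in $\LH$. As $S$ is Borel (indeed coded by $(I_k)$), it lies in $\L(\R)$, whence $\LG$ meets $S$ below $X$; this is exactly the assertion that $\LG$ is spread.

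The main obstacle is the almost-fullness step: one must verify that the $\eps$-expansion guaranteed by almost fullness of $\LH$ yields a set that is $\preceq$-\emph{dense} below $X'$ in $\LH$ — not merely nonempty — so that genericity of $\LG$ can be invoked. This is exactly parallel to the fullness argument of Lemma~\ref{lem:forcing_family}, once one observes that $\LH$-density of $D$ below $X'$ is inherited by every $Y\in\LH\restr X'$, so that almost fullness of $\LH$ can be applied below each such $Y$.
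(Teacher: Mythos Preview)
Your proof is correct and follows exactly the route the paper indicates: the paper leaves this lemma to the reader, noting only that its proof is similar to that of Lemma~\ref{lem:forcing_family}, and you have carried out precisely that adaptation, with the appropriate modification at the almost-fullness step (applying almost fullness of $\LH$ below each $Y\in\LH\restr X'$ to get $\preceq$-density of $\{Z:S(\langle Z\rangle)\subseteq D_\eps\}$ in $\LH$ below $X'$) and adding the spread clause via the relevant $\preceq$-dense open set.
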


\begin{thm}\label{thm:local_Rosendal_L(R)*}
	Assume that there is a supercompact cardinal. Let $\LH\subseteq\bb^\infty_1(E)$ be a strategic $(p^*)$-family. Then, every set $\A\subseteq\bb^\infty_1(E)$ in $\L(\R)$ is $\LH$-weakly Ramsey.
\end{thm}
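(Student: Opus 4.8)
The plan is to mirror, essentially verbatim, the proof of Theorem~\ref{thm:local_Rosendal_L(R)}: reduce the family case to the filter case (Theorem~\ref{thm:L(R)_filter*}) by forcing with $(\LH,\preceq^*)$, then pull the resulting witness back to the ground model. So fix $\A\subseteq\bb^\infty_1(E)$ in $\L(\R)$, a block sequence $X\in\LH$, and $\Delta>0$; the goal is a $Y\in\LH\restr X$ with $[Y]\subseteq\A^c$ or with II having a strategy in $G[Y]$ for playing into $\A_\Delta$. Let $\LG$ be $\V$-generic for $(\LH,\preceq^*)$ with $X\in\LG$. By Lemma~\ref{lem:forcing_family*} this forcing adds no new reals and $\LG$ is a strategic $(p^*)$-filter in $\V[\LG]$. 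Since no reals are added, $\bb^\infty_1(E)$, the orderings $\preceq$ and $\preceq^*$, and $\L(\R)$ are computed the same way in $\V$ and $\V[\LG]$; in particular $\A$ is literally the same set and lies in $\L(\R)^{\V[\LG]}$. A supercompact cardinal $\kappa$ of $\V$ satisfies $2^{\aleph_0}<\kappa$, so $(\LH,\preceq^*)$ is small forcing relative to $\kappa$ and $\kappa$ stays supercompact in $\V[\LG]$ by L\'evy--Solovay (or, invoking the footnote to Theorem~\ref{thm:L(R)_strat_Ramsey}, a proper class of Woodin cardinals survives any set forcing).

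Now work in $\V[\LG]$. Theorem~\ref{thm:L(R)_filter*} applies to the strategic $(p^*)$-filter $\LG$ and to $\A\in\L(\R)$, for our $X\in\LG$ and $\Delta>0$: there is $Y\in\LG\restr X$ such that, in $\V[\LG]$, either (i) $[Y]\subseteq\A^c$, or (ii) II has a strategy in $G[Y]$ for playing into $\A_\Delta$. As elements of $\bb^\infty_1(E)$ are coded by reals and no new reals were added, $Y\in\V$; since $Y\in\LG\subseteq\LH$ and $Y\preceq X$, we get $Y\in\LH\restr X$. It remains to check that (i) or (ii) still holds as a statement of $\V$.

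Alternative (i) is immediate: $[Y]\subseteq\A^c$ quantifies only over elements of $\bb^\infty_1(E)$ and refers to the fixed set $\A$, both unchanged, so it transfers down. For (ii), let $\alpha\in\V[\LG]$ be a strategy for II in $G[Y]$ with $[\alpha,Y]\subseteq\A_\Delta$. Applying Lemma~\ref{lem:outcomes_strat_dense} inside $\V[\LG]$ gives a strategy $\alpha'$ for II in the finite-dimensional Gowers game $G_f[Y]$ with $[\alpha',Y]_f\subseteq[\alpha,Y]\subseteq\A_\Delta$. Such a strategy is coded by a real, hence $\alpha'\in\V$, and the inclusion $[\alpha',Y]_f\subseteq\A_\Delta$ --- a relation between the analytic set defined from $\alpha'$ and $Y$ and the fixed set $\A_\Delta$ --- is absolute, so it holds in $\V$. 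Finally, the converse direction of the equivalence between the finite and infinite Gowers games (Theorem 1.2 in \cite{MR1839387}, of which Lemma~\ref{lem:outcomes_strat_dense} supplies one half) turns $\alpha'$ into a strategy for II in $G[Y]$ for playing into $\A_\Delta$, this time in $\V$. Hence $Y$ witnesses the dichotomy in $\V$; as $\A$, $X$, $\Delta$ were arbitrary, $\A$ is $\LH$-weakly Ramsey.

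The one subtle step is the last: the strategy furnished by Theorem~\ref{thm:L(R)_filter*} in $\V[\LG]$ is a function on finitely many block subsequences, not a real, so it need not belong to $\V$ and cannot be pulled back directly. Replacing it by a strategy in the finite Gowers game, which \emph{is} real-coded, and then re-inflating via the game equivalence, is precisely the device that makes the absoluteness go through. Every other step is a routine transcription of the proof of Theorem~\ref{thm:local_Rosendal_L(R)}, with $\bb^\infty(E)$ replaced by $\bb^\infty_1(E)$, ``strategically Ramsey'' by ``weakly Ramsey'', and Lemma~\ref{lem:forcing_family}, Lemma~\ref{lem:L(R)_filters_Rosendal} replaced by Lemma~\ref{lem:forcing_family*}, Theorem~\ref{thm:L(R)_filter*}.
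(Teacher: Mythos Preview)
Your proof is correct and follows the same route as the paper's, which simply says the argument is analogous to that of Theorem~\ref{thm:local_Rosendal_L(R)} using Lemma~\ref{lem:forcing_family*} and Theorem~\ref{thm:L(R)_filter*}. Your additional care about pulling back II's strategy via the finite Gowers game is exactly the device the paper invokes (implicitly) in the proof of Proposition~\ref{prop:almost_full_full}, so you have correctly supplied a detail the paper leaves tacit here.
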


\begin{proof}
	The proof is similar to that of Theorem \ref{thm:local_Rosendal_L(R)}, using Lemma \ref{lem:forcing_family*} and Theorem \ref{thm:L(R)_filter*}.
\end{proof}

Some of the above can be simplified in the case when the family $\LH$ in question is \emph{invariant under small perturbations}, that is, there is some $\Delta>0$ so that $\LH_\Delta=\LH$. The reason lies in the following fact:

\begin{prop}\label{prop:almost_full_full}
	If $\LH$ is a strategic $(p^*)$-family which is invariant under small perturbations, then $\LH$ is a $(p^+)$-family as well.	
\end{prop}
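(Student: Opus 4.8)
The plan is to verify that $\LH$ is \emph{full}; it already has the $(p)$-property by hypothesis, and in the normed setting a $(p^+)$-family is exactly a full $(p)$-family. So fix $D\subseteq S(E)$ which is $\LH$-dense below some $X\in\LH$; we must produce a $Z\in\LH\restr X$ with $S(\langle Z\rangle)\subseteq D$. Fix $\Delta=(\delta_n)>0$ witnessing $\LH_\Delta=\LH$; since $\LH_\Gamma=\LH$ for every $\Gamma\le\Delta$ as well, we may shrink $\Delta$ and invoke Lemma~\ref{lem:perturb_blocks} to assume that $\Delta$ is small enough (relative to the basis constant) that any $\Delta$-perturbation of a block subsequence of $X$ is $\Delta$-close to a genuine block subsequence of $X$.

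Set $\A=\{Z\in\bb^\infty_1(E):S(\langle Z\rangle)\subseteq D\}$; this set is $\preceq$-downward closed and, by hypothesis, $\LH$-dense below $X$. The first step is the reduction that \emph{it suffices to find a $Z\in\LH\restr X$ with $Z\in\A_\Delta$}. Indeed, given such a $Z$, pick $Z'$ with $d(Z,Z')\le\Delta$ and $S(\langle Z'\rangle)\subseteq D$; then $Z'\in\LH_\Delta=\LH$, and using the perturbation estimates of Lemma~\ref{lem:perturb_blocks} together with the $\preceq$-downward closure of $\A$, one replaces $Z'$ by an honest block subsequence of $X$, still lying in $\LH$, whose span is contained in $D$, which is the desired witness to fullness.

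To find such a $Z$, apply the good/bad analysis of Lemma~\ref{lem:good*_worse*} to this $\A$ (legitimate since $\LH$ is a $(p^*)$-family), with $\vec x=\emptyset$ and tolerance $\Delta$: there is a $Y\in\LH\restr X$ such that either $(\emptyset,Y)$ is $\Delta$-good, or I has a strategy in $F[Y]$ forcing $((z_0,\dots,z_n),Y)$ to be $\Delta/2$-bad for every $n$; in the latter case $(\emptyset,Y)$ is itself $\Delta/2$-bad. That alternative is impossible: since $\LH$ is almost full, for any $\eps>0$ there is a $W\in\LH\restr Y$ with $S(\langle W\rangle)\subseteq D_\eps$, so that \emph{every} block subsequence of $W$ has span inside $D_\eps$; choosing $\eps$ small relative to $\Delta$ and arguing as in the $\Delta$-absorption step of Lemma~\ref{lem:good*_worse*} --- converting the coordinatewise approximation ``span inside $D_\eps$'' into ``$\Delta/2$-close to a block sequence with span inside $D$'' by a running, tolerance-bounded vector-by-vector correction and diagonalizing the bookkeeping via the $(p)$-property --- one concludes that $(\emptyset,W)$ is $\Delta/2$-good, contradicting the $\Delta/2$-badness of $(\emptyset,Y)$. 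Hence $(\emptyset,Y)$ is $\Delta$-good, so II has a strategy $\alpha$ in $G[Y]$ for playing into $\A_\Delta$. Since $\LH$ is strategic and $Y\in\LH$, some outcome $Z$ of $\alpha$ belongs to $\LH$; then $Z\in\LH\restr X$ and $Z\in\A_\Delta$, and the reduction above finishes the proof.

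The main obstacle is precisely this step that excludes the ``bad'' alternative, i.e., building II's strategy into $\A_\Delta$ from the information $S(\langle W\rangle)\subseteq D_\eps$: ``$\Delta$-close to a block sequence whose whole span lies in $D$'' is a global condition, while ``every vector of $\langle W\rangle$ lies within $\eps$ of $D$'' is merely coordinatewise, so the correction taking one to the other must be coordinated along the entire play. Here almost fullness (used repeatedly, relative to the partial data produced so far) and the small-perturbation hypothesis work in tandem, and this is also where the hypothesis ``strategic'' --- the mechanism that finally extracts a genuine member of $\LH$ from $\alpha$ --- is indispensable.
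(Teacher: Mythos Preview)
Your proposal has a genuine gap at the step you yourself flag as ``the main obstacle.'' From almost fullness you obtain $W\in\LH\restr Y$ with $S(\langle W\rangle)\subseteq D_\eps$, and you then assert that this makes $(\emptyset,W)$ $\Delta/2$-good, i.e., that II has a strategy in $G[W]$ for playing into $\A_{\Delta/2}=\{Z:\exists Z'(d(Z,Z')\le\Delta/2\text{ and }S(\langle Z'\rangle)\subseteq D)\}$. But the inclusion $S(\langle W\rangle)\subseteq D_\eps$ only gives that each individual unit vector of $\langle W\rangle$ is $\eps$-close to \emph{some} point of $D$; it does not produce, for a given block subsequence of $W$, a nearby block sequence whose \emph{entire span} lies in $D$. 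Your description of the fix (``running, tolerance-bounded vector-by-vector correction and diagonalizing the bookkeeping'') is not a proof and, as far as anyone knows, does not work: the corrections you would need to make at successive coordinates are not coherent with one another in general, and the $\Delta$-absorption argument in Lemma~\ref{lem:good*_worse*} only handles the perturbation of a \emph{single} coordinate at a time, not the simultaneous adjustment of an entire play together with its spanned subspace. Indeed, the paper explicitly notes (in a footnote on this proposition) that an elementary proof along these lines is not known.

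The paper's argument is structurally different and sidesteps this obstacle entirely. It observes that $\D=\{Y\preceq X:S(\langle Y\rangle)\subseteq D\}$ is closed, hence (with its continuous images) universally Baire, then passes to a $\V$-generic filter $\LG$ for $(\LH,\preceq^*)$ containing $X$. In $\V[\LG]$, $\LG$ is a strategic $(p^*)$-filter (Lemma~\ref{lem:forcing_family*}), so Lemma~\ref{lem:uB_strat*} applies to produce $Y\in\LG\restr X$ with II having a strategy into either $(\D_{\Delta/8})^c$ or $\D_\Delta$. The first option is ruled out because I can trivially play into $\D$ (just play any element of $\D$ repeatedly, as in Lemma~\ref{lem:no_strat_into_dense_comp}). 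Since forcing with $(\LH,\preceq^*)$ adds no reals, the strategy into $\D_\Delta$ lives in $\V$; then ``strategic'' plus $\LH_\Delta=\LH$ yields a point of $\LH\cap\D$. The crucial difference is that Lemma~\ref{lem:uB_strat*} uses the Mathias-type forcing $\P(\LF)$ and the universally Baire machinery to manufacture II's strategy, rather than trying to build it by hand from almost fullness.
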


\begin{proof}\hspace{-0.5em}\footnote{We suspect that an elementary proof of this result can be found and that ``strategic'' can be relaxed to ``spread''.}
	Let $D\subseteq S(E)$ be $\LH$-dense below some $X\in\LH$ and put $\D=\{Y\preceq X:S(\langle Y\rangle)\subseteq D\}$. Take $\Delta>0$ so that $\LH_\Delta=\LH$. Note that $\D$ is closed and thus it and its continuous images are universally Baire. Let $\LG$ be a $\V$-generic filter for $(\LH,\preceq^*)$ which contains $X$, so that by Lemma \ref{lem:forcing_family*}, $\LG$ is a strategic $(p^*)$-filter in $\V[\LG]$. By Lemma \ref{lem:uB_strat*} in $\V[\LG]$, there is a $Y\in\LG\restr X$ so that II has a strategy in $G[Y]$ for playing into one of $(\D_{\Delta/8})^c$ or $\D_\Delta$. However, as I has a strategy in $G[Y]$ for playing into $\D$, and $(\D_{\Delta/8})^c\subseteq\D^c$ by Lemma \ref{lem:Delta_exp}(c), II's strategy must be for playing into $\D_\Delta$. Since forcing with $(\LH,\preceq^*)$ added no new reals, such a strategy must exist in $\V$ (we are using Lemma \ref{lem:outcomes_strat_dense} implicitly here). As $\LH$ is strategic and $\LH_\Delta=\LH$, we have that $\LH\cap\D\neq\emptyset$, showing that $\LH$ is full.
\end{proof}

We now extend these principles to Banach spaces. In what follows, $B$ is a (separable) Banach space with normalized Schauder basis $(e_n)$. We say that a countable field $F$ is \emph{suitable} if the norm on $E_F$, the $F$-span of $E$, takes values in $F$. Let $\langle X\rangle_F$ the $F$-span of $X\in\bb^\infty(E_F)$. If $V$ is a subspace of $B$, let $S(V)=\{v\in V:\|v\|=1\}$.

Let $\bb^\infty_1(B)$ be the set of all infinite block sequences (with respect to $(e_n)$) in $B$, which we endow with the Polish topology inherited from $B^\N$. The relations $\preceq$ and $\preceq^*$ extend to $\bb^\infty_1(B)$. For $Y\in\bb^\infty_1(B)$, let $[Y]^*=\{Z\in\bb^\infty_1(B):Z\preceq Y\}$. We denote by $G^*[Y]$ the \emph{Gowers game} defined as before, except that the players may now play real (complex) block sequences and block vectors. The notions of \emph{family}, \emph{$(p)$-family}, \emph{spread}, and \emph{strategic} are defined as before, with appropriate modifications for real (complex) scalars.

Strategic families in $\bb^\infty_1(B)$ arise naturally from strategic families in $\bb^\infty_1(E_F)$: Given a strategic $\LH\subseteq\bb^\infty_1(E_F)$, if $\hat{\LH}$ is invariant under small perturbations and equal to the $\preceq$-upwards closure of $\LH_\Delta$ (taken in $\bb^\infty_1(B)$) for some small $\Delta>0$, then $\hat{\LH}$ is strategic. This follows from the fact that Lemma \ref{lem:perturb_blocks} and the proof of Lemma \ref{lem:cont_games} can be carried out in $B$.

\begin{defn}
	 We say that $\LH$ is \emph{almost full} if whenever $D\subseteq S(B)$ is closed and $\LH$-dense below some $X\in\LH$ (that is, for all $Y\in\LH\restr X$, there is a $Z\preceq Y$ with $S(\langle Z\rangle)\subseteq D$), then for any $\eps>0$, there is a $Y\in\LH\restr X$ with $S(\bar{\langle Y\rangle})\subseteq D_\eps$.
\end{defn}

\begin{defn}
	An almost full $(p)$-family in $\bb_1^\infty(B)$ is called a \emph{$(p^*)$-family}.
\end{defn}

While we have reused this terminology, the meaning should be clear from context. The following is the relativized version of Gowers weakly Ramsey property \cite{MR1954235}.

\begin{defn}
	Given a family $\LH\subseteq\bb^\infty_1(B)$, a set $\A\subseteq\bb^\infty_1(B)$ is \emph{$\LH$-weakly Ramsey} if for every $\Delta>0$ and $X\in\LH$, there is a $Y\in\LH\restr X$ such that either
	\begin{enumerate}[label=\rm{(\roman*)}]
		\item $[Y]^*\subseteq\A^c$, or
		\item II has a strategy in $G^*[Y]$ for playing into $\A_\Delta$.
	\end{enumerate}
\end{defn}

Proving Theorems \ref{thm:local_Gowers} and \ref{thm:local_Gowers_L(R)} amounts to showing that for spread (strategic) $(p^*)$-families $\LH\subseteq\bb_1^\infty(B)$ which are invariant under small perturbations, analytic ($\L(\R)$) sets are $\LH$-weakly Ramsey.

\begin{lemma}\label{lem:diag_pert_dense}
	Let $F$ be suitable. If $X_0\succeq X_1\succeq X_2\succeq\cdots$ is a $\preceq$-decreasing sequence in $\bb^\infty_1(E_F)$, $X\in\bb^\infty_1(B)$ is such that $X\preceq X_n$ for all $n$, and $\Delta>0$, then there is an $X'\in\bb^\infty_1(E_F)$ with $X'\in[X]_{\Delta}$, and $X'\preceq^* X_n$ for all $n$
\end{lemma}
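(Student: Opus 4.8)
The plan is to perturb $X=(x_n)$ vector-by-vector into $E_F$ without moving any $(e_k)$-support, so that $X'$ automatically inherits both the block structure of $X$ and its ``$X_m$-block structure'' for every $m$.

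First I would fix expansions. Since $X\preceq X_n$, each $x_n$ lies in the scalar linear span of a finite consecutive block of vectors of $X_n$; write $x_n=\sum_{j\in I_n}a^n_j\,(X_n)_j$ with $I_n$ a finite interval of indices and every $a^n_j\neq 0$. As $F$ is a subfield of $\C$ it contains $\Q$, hence is dense in the scalar field, so for each $n$ I can pick $b^n_j\in F\setminus\{0\}$ ($j\in I_n$) so close to the $a^n_j$ that $\tilde x_n:=\sum_{j\in I_n}b^n_j\,(X_n)_j$ satisfies $\|\tilde x_n-x_n\|\le\delta_n/3$. Then $\tilde x_n\in E_F$, and since the $(X_n)_j$ are in block position and no $b^n_j$ vanishes, $\supp(\tilde x_n)=\supp(x_n)$; also $\|\tilde x_n\|\ge 1-\delta_n/3>0$. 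Normalizing, set $x'_n:=\tilde x_n/\|\tilde x_n\|$. Because $F$ is suitable, $\|\tilde x_n\|\in F$, so $x'_n\in E_F$, while $\|x'_n\|=1$ and $\supp(x'_n)=\supp(\tilde x_n)=\supp(x_n)$. From $\|x_n\|=1$ one gets $\|x'_n-\tilde x_n\|=\bigl|1-\|\tilde x_n\|\bigr|\le\|\tilde x_n-x_n\|\le\delta_n/3$, hence $\|x'_n-x_n\|\le\delta_n$.

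Next I would check that $X':=(x'_n)$ works. Since $\supp(x'_n)=\supp(x_n)<\supp(x_{n+1})=\supp(x'_{n+1})$, the sequence $X'$ is a normalized block sequence all of whose terms lie in $E_F$, i.e.\ $X'\in\bb^\infty_1(E_F)$; and $d(X',X)\le\Delta$ by the estimate above, so $X'\in[X]_\Delta$ (using $X\preceq X$, so that $X\in[X]$). For $X'\preceq^*X_m$, fix $m$ and let $n\ge m$. The sequence $(X_k)$ is $\preceq$-decreasing, so $X_n\preceq X_m$, whence $x'_n\in\langle X_n\rangle_F\subseteq\langle X_m\rangle$; combining this with $\supp(x'_n)=\supp(x_n)$ and the fact that $X\preceq X_m$ makes $(x_k)_{k\ge m}$ a block sequence over $X_m$, we see that $(x'_n)_{n\ge m}$ is again a block sequence over $X_m$ (each $x'_n$, having the same $(e_k)$-support as $x_n$ and lying in $\langle X_m\rangle$, is a block vector over $X_m$ whose $X_m$-block is contained in that of $x_n$, and consecutive such blocks are strictly increasing). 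Hence $X'\preceq^* X_m$, as required.

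I expect no deep obstacle here; the argument is essentially bookkeeping about supports. The points that need genuine care are (i) that perturbing the coefficients of $x_n$ in its $X_n$-expansion, rather than in its $(e_k)$-expansion, leaves $\supp(x_n)$ unchanged and simultaneously keeps $x'_n$ a block vector over $X_m$ for every $m\le n$ — this uses the block position of the vectors of $X_n$ together with the non-vanishing of the chosen coefficients — and (ii) that normalizing stays inside $E_F$, which is exactly where suitability of $F$ enters. The appeal to density of $F$ is harmless, since $\Q\subseteq F$.
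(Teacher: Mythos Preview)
Your proof is correct and follows the same idea as the paper's: approximate vectors of $X$ by nearby unit vectors in the appropriate $\langle X_k\rangle_F$. Your execution is in fact a little cleaner than the paper's---by perturbing the $X_n$-coefficients of $x_n$ so that $\supp(x'_n)=\supp(x_n)$, you avoid the paper's passage to a subsequence $(x_{n_k})$ of $X$ (chosen so that successive perturbations stay in block position), and obtain $X'$ within $\Delta$ of $X$ itself rather than of a thinned subsequence.
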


\begin{proof}
	Let $(X_n)$, $X$ and $\Delta>0$ be as described, say with $X=(x_n)$. We construct $X'=(x_n')$ as follows: There is an $M_0\in\N$ so that $\bar{\langle X/M_0\rangle_F}\subseteq\bar{\langle X_0\rangle_F}$. Let $x_{n_0}$ be the first entry of $X/M_0$. Pick a unit vector $x_0'\in\langle X_0\rangle_F$ such that $d(x_{n_0},x_0')\leq\delta_0$. Continue inductively. At stage $k$, we have chosen $M_0<\cdots<M_k$ and $x_0'<\cdots<x_k'$ so that if $x_{n_i}$ is the first entry of $X/M_i$, then $x_i'\in\langle X_i\rangle_F$ and $d(x_{n_i},x_i')\leq\delta_i$, for $i\leq k$. By construction, $X'/n\preceq X_n$ for all $n$, and $X'\in[X]_{\Delta}$.
\end{proof}

\begin{lemma}\label{lem:p*_p+_Bspace}
	If $\LH\subseteq\bb^\infty_1(B)$ is a $(p^*)$-family which is invariant under small perturbations, then $\LH\cap\bb^\infty_1(E_F)$ is a $(p^*)$-family for any suitable subfield $F$ of $\R$ (or $\C$). If $\LH$ is spread (strategic, respectively), then so is $\LH\cap\bb^\infty_1(E_F)$.
\end{lemma}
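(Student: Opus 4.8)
The plan is to verify each clause by transporting the relevant witness from $\LH\subseteq\bb^\infty_1(B)$ down into $\bb^\infty_1(E_F)$ through a small perturbation, the perturbed object staying in $\LH$ because $\LH$ is invariant under small perturbations. Fix $\Delta_0=(\delta^0_k)>0$ with $\LH_{\Delta_0}=\LH$ and write $\LK=\LH\cap\bb^\infty_1(E_F)$. Two elementary facts do all the work. First: if $X\in\bb^\infty_1(E_F)$ and $v$ is a normalized block vector with $v\preceq^* X$ over the scalar field, then $v$ is a \emph{finite} combination of entries of $X$, so for any $\eta>0$ there is a normalized $v'\in\langle X\rangle_F$ with $\supp(v')\subseteq\supp(v)$ and $\|v'-v\|<\eta$ (approximate the coefficients, and the norm, in $F$, possible since $F$ is suitable and dense in its scalar field); applied entry by entry this is exactly the mechanism behind Lemma \ref{lem:diag_pert_dense}, and it yields an \emph{entrywise} perturbation with non-increasing supports in each situation below. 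Second: the $B$-version of Lemma \ref{lem:perturb_blocks} says a sufficiently small entrywise perturbation $X'$ of a block sequence $X$ induces a near-isometry $\overline{\langle X'\rangle}\to\overline{\langle X\rangle}$, hence moves unit spheres and block vectors only slightly. Granting these, $\LK$ is $\preceq^*$-upward closed and nonempty (any $X\in\LH$ can be perturbed within $\Delta_0$ into $\bb^\infty_1(E_F)$), so $\LK$ is a family.

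For the $(p)$-property: given a $\preceq^*$-decreasing sequence $(X_n)$ in $\LK$, apply the $(p)$-property of $\LH$ to get $Y=(y_k)\in\LH$ with $Y\preceq^* X_n$ for all $n$. Choosing $m_n$ nondecreasing with $Y/m_n\preceq X_n$ and setting $N(k)=\max\{n:m_n\le k\}$, each $y_k$ is a finite combination of entries of $X_{N(k)}$ and $N(k)\to\infty$. Perturb $y_k$ to a normalized $y'_k\in\langle X_{N(k)}\rangle_F$ with $\supp(y'_k)\subseteq\supp(y_k)$ and $\|y'_k-y_k\|\le\delta^0_k$; then $Y'=(y'_k)$ is a block sequence with $d(Y,Y')\le\Delta_0$, so $Y'\in\LH_{\Delta_0}=\LH$, while $y'_k\in\langle X_{N(k)}\rangle_F\subseteq\langle X_n\rangle_F$ once $N(k)\ge n$, so $Y'\preceq^* X_n$ for all $n$. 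Thus $Y'\in\LK$ diagonalizes $(X_n)$.

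For almost-fullness: let $D\subseteq S(E_F)$ be $\LK$-dense below some $X\in\LK$ and fix $\eps>0$; pick $\rho>0$ small (in particular $\rho<\eps$ and small enough for the perturbation bounds below) and put $C=\{v\in S(B):\mathrm{dist}(v,D)\le\rho\}$, a closed subset of $S(B)$. I first claim $C$ is $\LH$-dense below $X$: given $Y\in\LH\restr X$, perturb it entrywise into $Y'\in\LK\restr X$ with $d(Y,Y')$ small (using $X\in\bb^\infty_1(E_F)$); by $\LK$-density pick $Z'\preceq Y'$ in $\bb^\infty_1(E_F)$ with $S(\langle Z'\rangle_F)\subseteq D$; untwist $Z'$ along the near-isometry of Lemma \ref{lem:perturb_blocks} to a block sequence $Z\preceq Y$ with $d(Z,Z')$ small, so every unit vector of $\overline{\langle Z\rangle}$ lies within $\rho$ of one of $\langle Z'\rangle_F\subseteq D$, i.e.\ $S(\langle Z\rangle)\subseteq C$. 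Now the almost-fullness of $\LH$ in $\bb^\infty_1(B)$, applied to the closed set $C$, yields $Y^*\in\LH\restr X$ with $S(\overline{\langle Y^*\rangle})\subseteq C_{\eps/4}$; perturb $Y^*$ entrywise into $Y^{**}\in\LK\restr X$ with $d(Y^*,Y^{**})\le\eta$ for a small $\eta$, and by Lemma \ref{lem:perturb_blocks} the unit sphere of $\overline{\langle Y^{**}\rangle}$ sits within a small neighbourhood of that of $\overline{\langle Y^*\rangle}$, whence $S(\langle Y^{**}\rangle_F)\subseteq (C_{\eps/4})_{2\eta}\subseteq D_\eps$ for a small enough choice of $\rho$ and $\eta$. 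So $Y^{**}\in\LK\restr X$ witnesses almost-fullness, and combined with the previous paragraph $\LK$ is a $(p^*)$-family.

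For ``spread'' and ``strategic'' one again perturbs the $\LH$-witness entrywise into $\LK$ and checks the property survives. If $\LH$ is spread, $X\in\LK$, and $I_0<I_1<\cdots$ are intervals, take the spread $Y\in\LH\restr X$ and perturb entrywise to $Y'\in\LK\restr X$ with $\supp(y'_k)\subseteq\supp(y_k)$; the condition $\forall k\,\exists m\,(I_0<y_k<I_m<y_{k+1})$ only improves under shrinking supports, so $Y'$ is spread. If $\LH$ is strategic, $X\in\LK$, and $\alpha$ is a strategy for II in the $F$-Gowers game $G[X]$, build a strategy $\hat\alpha$ for II in $G^*[X]$: to a play $Z_0,\dots,Z_k$ of I (scalar block sequences $\preceq X$), II perturbs each $Z_j$ entrywise into $Z_j^\sharp\preceq X$ in $\bb^\infty_1(E_F)$ with $d(Z_j,Z_j^\sharp)$ as small as desired, computes $w_k=\alpha(Z_0^\sharp,\dots,Z_k^\sharp)\in\langle Z_k^\sharp\rangle_F$, and plays its normalized image $\hat w_k\in\overline{\langle Z_k\rangle}$ under the near-isometry of Lemma \ref{lem:perturb_blocks}; then any outcome of $\hat\alpha$ is within a prescribed summable $\Delta\le\Delta_0$ of the corresponding outcome of $\alpha$, an element of $\bb^\infty_1(E_F)$. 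Since $\LH$ is strategic, some outcome $Y$ of $\hat\alpha$ lies in $\LH$, so its matching $\alpha$-outcome $Y^\flat$ has $d(Y,Y^\flat)\le\Delta_0$ and hence $Y^\flat\in\LH_{\Delta_0}\cap\bb^\infty_1(E_F)=\LK$ is an outcome of $\alpha$. The main obstacle is not any single hard step but the steady bookkeeping: one must keep straight $\preceq$ versus $\preceq^*$ and the $F$-span $\langle\cdot\rangle_F$ versus the closed scalar span $\overline{\langle\cdot\rangle}$ while transferring density, spread, and game statements through the perturbations, and make each such transfer go through cleanly — which is exactly what the entrywise/non-increasing-support form of the perturbations, Lemma \ref{lem:perturb_blocks}, and the identity $\LH_{\Delta_0}=\LH$ are there to guarantee, with the $\eps$- and summable-$\Delta$-slack absorbing the accumulated errors.
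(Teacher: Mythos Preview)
Your argument is correct and follows essentially the same route as the paper: perturb $\LH$-witnesses entrywise into $E_F$ using density of $F$ and invariance of $\LH$ under small perturbations, invoking Lemma~\ref{lem:perturb_blocks} (and the mechanism of Lemma~\ref{lem:diag_pert_dense}) to control spheres and block vectors under these perturbations. The paper is terser---citing Lemma~\ref{lem:diag_pert_dense} directly for the $(p)$-property, using $\bar{D_{\eps/3}}$ in place of your $C$, and leaving ``spread'' to the reader---but the ideas and the order of steps are the same; your only slip is the bookkeeping line $N(k)=\max\{n:m_n\le k\}$, which conflates the index $k$ with a support value and should read $N(k)=\max\{n:m_n<\min\supp(y_k)\}$.
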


\begin{proof}
	Let $\LH$ and $F$ be as described and put $\tilde{\LH}=\LH\cap\bb^\infty_1(E_F)$. Lemma \ref{lem:diag_pert_dense} implies that $\tilde{\LH}$ is a $(p)$-family. To see that $\tilde{\LH}$ is almost full, let $D\subseteq S(E_F)$ be $\tilde{\LH}$-dense below $X\in\tilde{\LH}$, and take $\eps>0$. Consider $\bar{D_{\eps/3}}\subseteq S(B)$. For $\Delta=(\eps/3,\eps/3,\ldots)$, let $\Gamma$ be as in Lemma \ref{lem:perturb_blocks}. For any $Y\in\LH\restr X$, there is a $Y'\in\tilde{\LH}\restr X$, with $d(Y,Y')\leq\Gamma$ and $Z'\preceq Y'$ with $S(\langle Z\rangle)\subseteq D$. By our choice of $\Gamma$, there is a $Z\in[Y]^*$ with $S(\langle Z\rangle)\subseteq D_{\eps/3}$, and so $S(\bar{\langle Z\rangle})\subseteq\bar{D_{\eps/3}}$. Thus, $\bar{D_{\eps/3}}$ is $\LH$-dense below $X$. By almost fullness of $\LH$, there is a $W\in\LH\restr X$ with $S(\bar{\langle W\rangle})\subseteq(\bar{D_{\eps/3}})_{\eps/3}$. Then, one can find a $W'\in\tilde{\LH}\restr X$ with $S(\langle W'\rangle)\subseteq D_\eps$, showing that $\tilde{\LH}$ is almost full.
	
	To see that $\LH$ being strategic implies that $\hat{\LH}$ is strategic, let $\alpha$ be a strategy for II in $G[X]$, with $X\in\hat{\LH}$. Define a strategy $\alpha'$ in $G^*[X]$ which is equal to $\alpha$ on their shared domain, and otherwise plays so that the outcomes are sufficiently small (using Lemma \ref{lem:perturb_blocks} and our assumption about $\LH$) perturbations of outcomes of $\alpha$. Then, if any outcome of $\alpha'$ is in $\LH$, an outcome of $\alpha$ must be in $\hat{\LH}$. The proof for being spread is left to the reader.
\end{proof}

\begin{proof}[Proof of Theorem \ref{thm:local_Gowers}.]
	Suppose that $\A\subseteq\bb^\infty_1(B)$ is analytic, $\Delta>0$, and $X\in\LH$ is such that for no $Y\in\LH\restr X$ is $[Y]^*\subseteq\A^c$. Let $F$ be a suitable field for $(e_n)$. Let $\tilde{\LH}=\LH\cap\bb^\infty_1(E_F)$. If there was some $Y\in\tilde{\LH}\restr X$ with $[Y]\subseteq(\A_{\Delta/3})^c\cap\bb^\infty_1(E_F)$, then $[Y]^*\subseteq((\A_{\Delta/3})^c)_{\Delta/3}\subseteq\A^c$, contrary to our assumption. Thus, by Lemma \ref{lem:p*_p+_Bspace} and Theorem \ref{thm:local_Gowers_normed}, there is a $Y\in\tilde{\LH}\restr X$ such that II has a strategy in $G[Y]$ for playing into $\A_{\Delta/2}\cap\bb^\infty_1(E_F)$. Easy perturbation arguments show that II has a strategy in $G^*[Y]$ for playing into $\A_\Delta$.
\end{proof}

\begin{proof}[Proof of Theorem \ref{thm:local_Gowers_L(R)}.]
	The proof is similar to that of 	Theorem \ref{thm:local_Gowers}, using Theorem \ref{thm:local_Rosendal_L(R)*}, or alternatively, Proposition \ref{prop:almost_full_full} and Theorem \ref{thm:local_Rosendal_L(R)}.
\end{proof}

The following is an analytical example of a strategic $(p^*)$-family, which, though trivial in the sense that is $\preceq$-downwards closed, we hope suggests further applications:

\begin{example}\label{ex:blocks_equiv_c0_lp}
	Given $B$ as above, suppose that $B$ contains a normalized block sequence $X$ equivalent to the standard basis of $c_0$ or $\ell^p$ for $1\leq p<\infty$. Let $\LH$ be the set of all block 	sequences in $B$ which have a further block subsequence equivalent to $X$. Then, $\LH$ is a strategic $(p^*)$-family which is invariant under small perturbations. These facts follows from the block homogeneity characterization of the standard bases of $c_0$ and $\ell^p$, Lemma 2.1.1 in \cite{MR2192298}.
\end{example}

\section{Projections in the Calkin algebra}\label{sec:9}

Given a Banach space with a Schauder basis, one might wish to develop a notion of forcing with block sequences ``modulo small perturbation'' and then prove an analog of Theorem \ref{thm:gen_over_L(R)}, characterizing $\L(\R)$-generic filters.\footnote{There are obstacles to this being a meaningful endeavor in general, e.g., in a hereditarily indecomposable Banach space, the collection of \emph{all} infinite-dimensional subspaces modulo small perturbations forms a filter, cf.~(iii) on p.~820 of \cite{MR1954235}, and is thus trivial as a forcing notion.} We focus on a particular variant of this which is of significant interest.

Let $H$ be a complex infinite-dimensional separable Hilbert space with orthonormal basis $(e_n)$. Note that any normalized block sequence (with respect to $(e_n)$) is necessarily orthonormal. Throughout, $E$ will denote the $\bar{\Q}$-linear span of $(e_n)$ in $H$, $\bb^\infty_1(E)$ the space of infinite normalized block sequences in $E$, and for $X\in\bb^\infty_1(E)$, $\langle X\rangle$ is the $\bar{\Q}$-span of $X$.

For $X\in\bb^\infty_1(E)$, let $P_X$ be the orthogonal projection onto $\bar{\langle X\rangle}$. Note that, for $X,Y\in\bb^\infty_1(E)$, $X\preceq Y$ if and only if $P_X\leq P_Y$ in the usual ordering of projections (that is, $P\leq Q$ if $\ran(P)\subseteq\ran(Q)$, or equivalently $PQ=P$). We call such projections \emph{block projections}.

Let $\LB(H)$ be the C*-algebra of bounded operators on $H$ and $\LK(H)$ the ideal of compact operators on $H$. The quotient $\LC(H)=\LB(H)/\LK(H)$ is also a C*-algebra, called the \emph{Calkin algebra}. We write $\pi:\LB(H)\to\LC(H)$ for the quotient map. 

Denote by $\LP(H)$ ($\LP_\infty(H)$, respectively) the set (infinite-rank, respectively) projections in $\LB(H)$, and $\LP(\LC(H))$ ($\LP(\LC(H))^+$, respectively) the set of (non-zero, respectively) projections, i.e., self-adjoint idempotents, in $\LC(H)$. By Proposition 3.1 in \cite{MR2300900}, $\LP(\LC(H))=\pi(\LP(H))$. The ordering $\leq$ on $\LP(\LC(H))$ is inherited from the ordering on $\LP(H)$.

\begin{defn}
	\begin{enumerate}
		\item For projections $P,Q\in\LP(H)$, we write $P\leq_\ess Q$ if $\pi(P)\leq\pi(Q)$ in $\LP(\LC(H))$ and $P\equiv_\ess Q$ if $\pi(P)=\pi(Q)$.
		\item For $X,Y\in\bb^\infty_1(E)$, we write $X\leq_\ess Y$ if $P_X\leq_\ess P_Y$ and $X\equiv_\ess Y$ if $P_X\equiv_\ess P_Y$.
	\end{enumerate}
\end{defn}

The last sentence of the following lemma requires a slight modification of the original proof and is left to the reader.

\begin{lemma}[Proposition 3.3 in \cite{MR2300900}]\label{lem:Weaver}
	For $P$ and $Q$ projections on $H$, the following are equivalent:
	\begin{enumerate}[label=\rm{(\roman*)}]
		\item $P\leq_\ess Q$.
		\item For every $\eps>0$, there is a finite-codimensional subspace $V$ of $\ran(P)$ such that every unit vector $v\in V$ satisfies $d(v,\ran(Q))\leq\eps$.
	\end{enumerate}
	In the event that $P=P_X$ and $Q=P_Y$ for $X,Y\in\bb^\infty_1(E)$, one can replace ``finite-codimensional subspace'' in \textup{(ii)} with ``tail subspace''.
\end{lemma}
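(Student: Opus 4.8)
The plan is to reduce the entire statement to the compactness of a single operator and then extract both the finite-codimensional version (this is Weaver's argument) and the tail-subspace version from the standard fact that a compact operator has small norm on suitable finite-codimensional subspaces.

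First I would unwind $\leq_\ess$. For $P,Q\in\LP(H)$ one has $P\leq_\ess Q$ iff $\pi(P)\leq\pi(Q)$ iff $\pi(P)\pi(Q)=\pi(P)$ iff $\pi\bigl(P(I-Q)\bigr)=0$ iff $P(I-Q)\in\LK(H)$; since $P$ and $Q$ are self-adjoint, passing to adjoints shows this is equivalent to $(I-Q)P\in\LK(H)$. Also, as $Q$ is the orthogonal projection onto the closed subspace $\ran(Q)$, every vector $v$ satisfies $\|(I-Q)v\|=d(v,\ran(Q))$. So condition (ii) says precisely that for every $\eps>0$ there is a subspace $V$ of $\ran(P)$, of finite codimension in $\ran(P)$, with $\|(I-Q)v\|\leq\eps$ for every unit $v\in V$.

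Next I would invoke the elementary characterization: a bounded operator $T$ on $H$ is compact iff for every $\eps>0$ there is a closed finite-codimensional $W\subseteq H$ with $\sup\{\|Tw\|:w\in W,\ \|w\|\leq 1\}\leq\eps$. (Forward: if this failed for some $\eps$ one builds inductively an orthonormal sequence $(v_n)$ with $\|Tv_n\|>\eps$, contradicting $v_n\rightharpoonup 0$ together with compactness of $T$; converse: choosing $W_n$ with supremum at most $1/n$ and letting $P_{W_n}$ be the projection onto $W_n$, the operators $T(I-P_{W_n})$ have finite rank and converge to $T$ in norm.) Applying this with $T=(I-Q)P$, and transferring between finite-codimensional subspaces of $H$ and of $\ran(P)$ — from $W\subseteq H$ pass to $V=W\cap\ran(P)$, using $Pv=v$ for $v\in\ran(P)$; from $V\subseteq\ran(P)$ pass to $W=V\oplus\ran(P)^\perp$ — yields the equivalence (i)$\iff$(ii). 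This is exactly Proposition 3.3 of \cite{MR2300900}, which I would cite rather than reprove in full.

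Finally, for the last sentence, take $P=P_X$ and $Q=P_Y$ with $X=(x_n)\in\bb^\infty_1(E)$; since $(e_n)$ is orthonormal so is $(x_n)$, hence an orthonormal basis of $\ran(P_X)$, and the tail subspaces $\overline{\langle X/m\rangle}$ form a cofinal family of finite-codimensional subspaces of $\ran(P_X)$. The key point to prove is that $(I-P_Y)P_X$ is compact iff $\sup\{\|(I-P_Y)v\|:v\in\overline{\langle X/m\rangle},\ \|v\|=1\}\to 0$ as $m\to\infty$; granting this, the refinement follows by combining with the equivalence above. The ``$\Leftarrow$'' direction is immediate: writing $(I-P_Y)P_X=(I-P_Y)P_{X/m}+(I-P_Y)(P_X-P_{X/m})$ with $P_X-P_{X/m}$ of finite rank exhibits $(I-P_Y)P_X$ as a norm limit of finite-rank operators. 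The ``$\Rightarrow$'' direction — where the ``slight modification'' of Weaver's argument lives, and which I expect to be the main, though still routine, obstacle — is a weak-null argument specialized to tails: if the suprema do not tend to $0$, choose unit vectors $v_m\in\overline{\langle X/m\rangle}$ with $\|(I-P_Y)v_m\|>\eps$; then $v_m\rightharpoonup 0$, since for each $w\in H$ we have $|\langle v_m,w\rangle|\leq\|P_{X/m}w\|$, which tends to $0$ because the Fourier coefficients of $w$ with respect to the orthonormal system $(x_n)$ are square-summable; hence $(I-P_Y)P_Xv_m=(I-P_Y)v_m\to 0$ by compactness, contradicting $\|(I-P_Y)v_m\|>\eps$.
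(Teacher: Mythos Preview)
Your proof is correct. The paper does not actually prove this lemma: it cites Proposition~3.3 of \cite{MR2300900} for the equivalence (i)$\iff$(ii) and states that the tail-subspace refinement ``requires a slight modification of the original proof and is left to the reader.'' Your argument supplies exactly those omitted details, following Weaver's compactness approach for the main equivalence and then giving the intended modification via the weak-null argument on tail vectors.
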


The following lemma is well-known:

\begin{lemma}\label{lem:sumbl_pert}
	Suppose that $\Delta=(\delta_n)>0$ is summable and $P$ and $Q$ are projections on $H$ whose ranges have orthonormal bases $(x_n)$ and $(y_n)$ respectively. If for all $n$, $\|x_n-y_n\|\leq\delta_n$, then $P\equiv_\ess Q$.
\end{lemma}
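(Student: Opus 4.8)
The plan is to exhibit a partial isometry $W \in \LB(H)$ with initial projection $W^*W = P$ and final projection $WW^* = Q$, and then to show that $W - P$ is compact; applying the quotient map $\pi$ to the C*-identities $W^*W = P$ and $WW^* = Q$ immediately yields $\pi(Q) = \pi(P)$, that is, $P \equiv_\ess Q$. This is the standard device of gluing an isometry out of two orthonormal systems, and the hypothesis that $\Delta$ is summable is exactly what is needed to make the difference from a projection compact.

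Concretely, I would first define $W \in \LB(H)$ to be the operator $v \mapsto \sum_n \langle v, x_n\rangle\, y_n$. Bessel's inequality for the orthonormal system $(x_n)$ shows this series converges for every $v \in H$, and orthonormality of $(y_n)$ gives $\|Wv\|^2 = \sum_n |\langle v, x_n\rangle|^2 \le \|v\|^2$, so $W$ is a well-defined contraction, and a routine adjoint computation shows $W^*$ is the operator $w \mapsto \sum_n \langle w, y_n\rangle\, x_n$. Plugging these formulas together and using orthonormality of $(y_n)$, respectively of $(x_n)$, gives $W^*W = P$ and $WW^* = Q$. Next, since $Pv = \sum_n \langle v, x_n\rangle\, x_n$, the operator $W - P$ sends $v$ to $\sum_n \langle v, x_n\rangle\,(y_n - x_n)$; its partial sums (over $n \le N$) are finite-rank, and for a unit vector $v$ the tail has norm at most $\sum_{n>N}|\langle v,x_n\rangle|\,\|y_n - x_n\| \le \sum_{n>N}\delta_n$, which tends to $0$ because $\Delta$ is summable. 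Hence $W - P$ is an operator-norm limit of finite-rank operators, so $W - P \in \LK(H)$ and $\pi(W) = \pi(P)$. Finally, since $\pi$ is a $*$-homomorphism and $P = P^* = P^2$,
\[
\pi(Q) = \pi(WW^*) = \pi(W)\pi(W)^* = \pi(P)\pi(P)^* = \pi(P^2) = \pi(P),
\]
as required.

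I do not anticipate a genuine obstacle here: the only point requiring a little care is verifying that $W$ is bounded and that the series defining $W$ and $W^*$ behave as claimed, and this is precisely the elementary construction of a partial isometry intertwining two orthonormal systems. Compactness of $W - P$ is then an immediate consequence of summability, and the remainder is the C*-algebra bookkeeping displayed above. One could instead try to estimate $\|P - Q\|$-type quantities directly, but routing through $W$ keeps the computation clean.
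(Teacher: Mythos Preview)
Your argument is correct. You build the partial isometry $W$ sending $x_n$ to $y_n$, verify $W^*W=P$ and $WW^*=Q$, show $W-P$ is compact because its finite-rank truncations converge in norm (the tail is bounded by $\sum_{n>N}\delta_n$), and then finish with $\pi(Q)=\pi(W)\pi(W)^*=\pi(P)\pi(P)=\pi(P)$. All steps are sound.

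The paper argues differently: it invokes the geometric characterization of $\leq_\ess$ from Lemma~\ref{lem:Weaver} directly. Given $\eps>0$, one picks $N$ with $\sum_{n\geq N}\delta_n\leq\eps$, takes the tail subspace $V=\bar{\langle(x_n)_{n\geq N}\rangle}$, and checks that every unit vector $v=\sum_{n\geq N}a_nx_n$ in $V$ is within $\eps$ of $\ran(Q)$ via the obvious candidate $\sum_{n\geq N}a_ny_n$; this gives $P\leq_\ess Q$, and symmetry finishes. Both proofs rest on the same tail estimate, but yours is self-contained and operator-algebraic, bypassing Lemma~\ref{lem:Weaver} entirely, while the paper's is shorter given that lemma and stays closer to the geometric viewpoint used elsewhere in the section.
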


\begin{proof}
	Assuming that for all $n$, $\|x_n-y_n\|\leq\delta_n$, we will show that $P\leq_\ess Q$. The result follows by symmetry. Let $\eps>0$ and choose an $N$ such that $\sum_{n\geq N}\delta_n\leq\eps$. Let $V=\bar{\langle(x_n)_{n\geq N}\rangle}$, a finite-codimensional subspace of $\ran(P)$. If $v\in V$ is a unit vector, say with $v=\sum_{n\geq N}a_n x_n$, then for $y=\sum_{n\geq N}a_n y_n\in\ran(Q)$, we have
	\[
		\|v-y\|=\|\sum_{n\geq N}a_n(x_n-y_n)\|\leq\sum_{n\geq N}\|x_n-y_n\|\leq\eps.
	\]
	The claim follows by Lemma \ref{lem:Weaver}.
\end{proof}

In particular, $\equiv_\ess$-invariant families in $\bb^\infty_1(E)$ or $\bb^\infty_1(H)$ are invariant under small perturbations. The following observation can be proved using Lemma \ref{lem:sumbl_pert} and standard manipulations with basic sequences (cf.~Proposition 1.3.10 in \cite{MR2192298}).

\begin{lemma}\label{lem:blocks_ess_dense}
	The set of block projections is dense in $(\LP_\infty(H),\leq_\ess)$.
\end{lemma}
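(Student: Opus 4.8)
The plan is to show that every infinite-rank projection $P$ on $H$ lies above a block projection in the ordering $\leq_\ess$. Write $V=\ran(P)$, an infinite-dimensional closed subspace, and let $Q_k$ be the orthogonal projection onto $\linspan(e_0,\dots,e_k)$, so $Q_k\to I$ strongly. The engine is the standard gliding-hump observation: if $W\le H$ is infinite-dimensional, $N\in\N$, and $\eta>0$, then $W\cap\ran(I-Q_N)$ has codimension at most $N+1$ in $W$, hence is infinite-dimensional, so there is a unit vector $w\in W$ with $Q_Nw=0$; since $w=\lim_k Q_kw$, one can pick $M>N$ with $\|w-Q_Mw\|<\eta$, and then $Q_Mw/\|Q_Mw\|$ is a unit vector supported on the interval $(N,M]$ lying within $O(\eta)$ of $W$.

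Fix a summable $\Delta=(\delta_n)>0$. I would build recursively natural numbers $0\le k_{-1}<k_0<k_1<\cdots$, an orthonormal sequence $(v_n)$ in $V$, and a normalized block sequence $X=(x_n)\in\bb^\infty_1(E)$ with $x_{n-1}<x_n$ and $\|x_n-v_n\|\le\delta_n$ for all $n$. At stage $n$, the subspace $W_n=V\cap\{v_0,\dots,v_{n-1}\}^\perp$ has finite codimension in $V$, hence is infinite-dimensional; applying the observation with $W=W_n$, $N=k_{n-1}$, and $\eta$ small (depending on $\delta_n$) yields a unit vector $v_n\in W_n$ — so $v_n\perp v_0,\dots,v_{n-1}$ and $v_n$ is supported above $k_{n-1}$ — together with $k_n>k_{n-1}$ and a unit vector supported on $(k_{n-1},k_n]$ very close to $v_n$. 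Since $\bar\Q$ is algebraically closed it is closed under the Hilbert-space norm of finitely supported $\bar\Q$-vectors (and under dividing by it), so I may round that vector's finitely many coordinates to $\bar\Q$ and renormalize, obtaining $x_n\in\bb^{<\infty}_1(E)$ supported on $(k_{n-1},k_n]$ with $\|x_n-v_n\|\le\delta_n$. This keeps $X$ a genuine normalized block sequence in $E$.

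With the construction complete, let $P'$ be the orthogonal projection onto $\bar{\linspan}(v_n:n\in\N)$; as $(v_n)$ is orthonormal it is an orthonormal basis of $\ran(P')$, and $\ran(P')\subseteq V=\ran(P)$, so $P'\le P$ and hence $P'\le_\ess P$. The block projection $P_X$ has orthonormal basis $(x_n)$ of its range, and $\|x_n-v_n\|\le\delta_n$ with $\Delta$ summable, so Lemma \ref{lem:sumbl_pert} gives $P_X\equiv_\ess P'$. Applying $\pi$, we get $\pi(P_X)=\pi(P')\le\pi(P)$, i.e.\ $P_X\le_\ess P$, and $P_X$ is infinite-rank since $X$ is an infinite block sequence. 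As $P\in\LP_\infty(H)$ was arbitrary, the block projections are $\leq_\ess$-dense in $(\LP_\infty(H),\leq_\ess)$.

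The only step needing real care — the ``standard manipulations with basic sequences'' — is the recursive gliding-hump construction: one must simultaneously guarantee that each $v_n$ lies in $V$, is orthogonal to all earlier $v_i$, is essentially supported on a finite coordinate block strictly above all earlier blocks, and is approximable within $\delta_n$ by a $\bar\Q$-rational unit block vector, choosing every approximation parameter small enough that the resulting $(v_n)$ is orthonormal in $V$ and $(x_n)$ is a normalized block sequence in $E$. Everything after that is a direct application of Lemma \ref{lem:sumbl_pert} and transitivity of $\leq_\ess$.
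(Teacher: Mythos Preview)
Your proposal is correct and follows essentially the same approach the paper indicates: a gliding-hump (Bessaga--Pe\l{}czy\'nski type) construction to produce an orthonormal sequence in $\ran(P)$ together with a nearby normalized block sequence in $E$, followed by an application of Lemma~\ref{lem:sumbl_pert} to conclude $P_X\equiv_\ess P'\leq P$. The paper gives no details beyond citing Lemma~\ref{lem:sumbl_pert} and ``standard manipulations with basic sequences (cf.~Proposition~1.3.10 in \cite{MR2192298})'', which is precisely the selection principle you are carrying out; your care in verifying that normalization preserves $\bar\Q$-coefficients (using algebraic closedness) is a point the paper leaves entirely implicit.
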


It follows that $(\LP(\LC(H))^+,\leq)$, $(\LP_\infty(H),\leq_\ess)$, and $(\bb^\infty_1(E),\leq_\ess)$ are equivalent as notions of forcing. It is for this reason that we focus on $(\bb^\infty_1(E),\leq_\ess)$.

\begin{lemma}\label{lem:diag_ess_dense}
	If $X_0\succeq X_1\succeq X_2\succeq\cdots$ is a $\preceq$-decreasing sequence in $\bb^\infty_1(E)$ and $X\in\bb^\infty_1(E)$ is such that $X\leq_\ess X_n$ for all $n$, then there is an $X'\leq_\ess X$ such that $X'\preceq^* X_n$ for all $n$.	
\end{lemma}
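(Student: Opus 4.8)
The plan is to build $X'=(x'_j)$ as a summable perturbation of a genuine block subsequence $Z=(z_j)\preceq X$, while arranging that $x'_j\in\langle X_j\rangle$ for every $j$. The second property forces $X'\preceq^* X_n$ for every $n$: since $X_0\succeq X_1\succeq\cdots$, for $j\geq n$ one has $x'_j\in\langle X_j\rangle\subseteq\langle X_n\rangle$, and these vectors are in block position, so $(x'_j)_{j\geq n}$ is a block subsequence of $X_n$. The first property gives $X'\leq_\ess X$: by Lemma \ref{lem:sumbl_pert}, $P_{X'}\equiv_\ess P_Z$, and since $Z\preceq X$ we have $P_Z\leq P_X$, whence $\pi(P_{X'})=\pi(P_Z)\leq\pi(P_X)$.

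I would carry out the construction by recursion on $j$, maintaining an integer $M_{j-1}$ bounding the supports of $x'_0,z_0,\dots,x'_{j-1},z_{j-1}$, and fixing a summable sequence $(\eps_j)$ of positive reals. At stage $j$, write $X_j=(y^{(j)}_k)_k$. Since $X\leq_\ess X_j$, Lemma \ref{lem:Weaver} (in its ``tail subspace'' form, which applies because $P_X$ and $P_{X_j}$ are block projections) yields an $m$ — which may be taken arbitrarily large — such that every unit vector of $\overline{\langle X/m\rangle}$ lies within $\eps_j$ of $\overline{\langle X_j\rangle}$; I would further require $m>\max(\supp(y^{(j)}_{M_{j-1}}))$, so that any $y^{(j)}_k$ with $\max(\supp(y^{(j)}_k))>m$ has $k>M_{j-1}$, hence $M_{j-1}<y^{(j)}_k$. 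Let $z_j$ be the first vector of $X$ with support entirely above $m$; it is a unit vector of $E$ (block sequences here being orthonormal) with $M_{j-1}<z_j$, and it will be the $j$-th term of $Z$. Put $w_j=P_{X_j}z_j$, so $\|z_j-w_j\|\leq\eps_j$; as $w_j$ is a finite combination of those $y^{(j)}_k$ meeting $\supp(z_j)\subseteq(m,\infty)$, the choice of $m$ gives $M_{j-1}<w_j$. Finally, replace the finitely many complex coordinates of $w_j$ in the basis $X_j$ by sufficiently close elements of $\bar{\Q}$ (keeping the zero ones zero) and renormalize: the norm of the truncated vector is the square root of a finite sum $\sum|c|^2$ with $c\in\bar{\Q}$, hence lies in $\bar{\Q}$, so the renormalized vector $x'_j$ belongs to $\langle X_j\rangle$; it is a unit vector with $M_{j-1}<x'_j$ and $\|x'_j-z_j\|\leq 4\eps_j$. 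Choose $M_j$ above the supports of $x'_j$ and $z_j$ and continue.

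At the end, $X'=(x'_j)$ is a normalized block sequence in $E$ by the choice of the $M_j$, $Z=(z_j)$ is a subsequence of $X$ so $Z\preceq X$, and $\sum_j\|x'_j-z_j\|\leq 4\sum_j\eps_j<\infty$; the two conclusions recorded in the first paragraph then apply, giving $X'\leq_\ess X$ and $X'\preceq^* X_n$ for all $n$. The only step requiring genuine care rather than routine estimation is controlling $\supp(w_j)$: a priori the orthogonal projection onto $\overline{\langle X_j\rangle}$ of a vector with arbitrarily high support can still pick up a ``boundary'' vector of $X_j$ whose own support reaches down below $M_{j-1}$, destroying the block structure of $X'$; the extra enlargement of $m$, made relative to the fixed block sequence $X_j$, is precisely what rules this out. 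Passing to $\bar{\Q}$-coefficients, renormalizing, and the final assembly are all routine.
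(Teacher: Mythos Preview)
Your proof is correct and follows essentially the approach the paper indicates: it only states that the lemma ``can be proved using Lemmas \ref{lem:Weaver} and \ref{lem:sumbl_pert} in a way similar to Lemma \ref{lem:diag_pert_dense}'', and you have carried out precisely that plan, using the tail-subspace form of Lemma \ref{lem:Weaver} to approximate entries of $X$ by vectors in $\langle X_j\rangle$ and then invoking Lemma \ref{lem:sumbl_pert} on the resulting summable perturbation. Your explicit handling of the support of $w_j=P_{X_j}z_j$ (enlarging $m$ past $\max(\supp(y^{(j)}_{M_{j-1}}))$) is exactly the extra care needed beyond Lemma \ref{lem:diag_pert_dense}; note also that since $z_j\in E$ and $X_j\in\bb^\infty_1(E)$, the inner products $\langle z_j,y^{(j)}_k\rangle$ already lie in $\bar{\Q}$, so your $\bar{\Q}$-approximation step is in fact unnecessary and one may take $x'_j=w_j/\|w_j\|$ directly.
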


\begin{proof}
	This can be proved using Lemmas \ref{lem:Weaver} and \ref{lem:sumbl_pert} in a way similar to Lemma \ref{lem:diag_pert_dense}.
\end{proof}

Clearly, any $\preceq$-dense subset of $\bb^\infty_1(E)$ is also $\leq_\ess$-dense. The following lemma is a converse to this.

\begin{lemma}\label{lem:ess_dense_block_dense}
	If $\LD\subseteq\bb^\infty_1(E)$ is $\leq_\ess$-dense open, then it is $\preceq$-dense open.	
\end{lemma}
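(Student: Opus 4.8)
The plan is to verify the two requirements of ``$\preceq$-dense open'' separately, with the openness clause immediate and density requiring a perturbation argument. The openness clause: if $Y\preceq Z$ with $Z\in\LD$, then $P_Y\leq P_Z$, hence $\pi(P_Y)\leq\pi(P_Z)$, i.e.\ $Y\leq_\ess Z$; since $\LD$ is downward closed under $\leq_\ess$, we get $Y\in\LD$. So it remains to prove $\preceq$-density.

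Fix $X=(x_j)\in\bb^\infty_1(E)$ and, using $\leq_\ess$-density of $\LD$, pick $Y_0=(y_n)\in\LD$ with $Y_0\leq_\ess X$; recall $(x_j)$ and $(y_n)$ are orthonormal. Fix a summable $\Delta=(\delta_k)>0$ with each $\delta_k<1/10$. By Lemma~\ref{lem:Weaver}, for each $k$ there is a tail subspace of $\bar{\langle Y_0\rangle}$ every unit vector of which lies within $\delta_k$ of $\bar{\langle X\rangle}$. Using this together with the fact that the supports (with respect to $(e_m)$) of the $y_n$ and of the $x_j$ tend to infinity, I will thin $Y_0$ to a block subsequence $Y_1=(z_k)\preceq Y_0$ (so $Y_1\in\LD$ by the openness clause) while simultaneously choosing integers $-1=M_{-1}<M_0<M_1<\cdots$ so that for every $k$:
\begin{enumerate}[label=\rm{(\arabic*)}]
	\item $d(z_k,\bar{\langle X\rangle})\leq\delta_k$ (achieved by placing $z_k$ in the Weaver tail for $\delta_k$), and
	\item the support of $z_k$ with respect to $(e_m)$ lies strictly above $\supp(x_0)\cup\cdots\cup\supp(x_{M_{k-1}})$.
\end{enumerate}
Condition (2) forces $\langle z_k,x_j\rangle=0$ for all $j\leq M_{k-1}$, so the orthogonal projection $P_Xz_k$ is supported (with respect to $(x_j)$) on $(M_{k-1},\infty)$; choosing $M_k$ large enough, the truncation $\tilde w_k=\sum_{M_{k-1}<j\leq M_k}\langle z_k,x_j\rangle x_j$ satisfies $\|z_k-\tilde w_k\|\leq 2\delta_k$. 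Since the windows $(M_{k-1},M_k]$ are disjoint and increasing, the $\tilde w_k$ are in block position with respect to $X$.

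Now round the complex coefficients of each $\tilde w_k$ into $\bar{\Q}$ and renormalize to obtain unit vectors $\hat z_k\in\langle X\rangle$, still supported on the window $(M_{k-1},M_k]$, with $\|z_k-\hat z_k\|\leq 6\delta_k$. Then $Y:=(\hat z_k)$ is a normalized block sequence in $E$ with $Y\preceq X$, and $d(Y,Y_1)\leq(6\delta_k)_k$ is summable. Both $(z_k)$ and $(\hat z_k)$ are orthonormal (normalized block sequences in a Hilbert space are), so Lemma~\ref{lem:sumbl_pert} gives $P_Y\equiv_\ess P_{Y_1}$, i.e.\ $Y\equiv_\ess Y_1$; in particular $Y\leq_\ess Y_1\in\LD$, whence $Y\in\LD$ by the openness clause. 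As $Y\preceq X$, this witnesses $\preceq$-density, completing the proof. The one point requiring care is the bookkeeping that makes the finite $X$-blocks $\tilde w_k$ approximating successive $z_k$ land in disjoint, increasing index-windows; this is exactly what the interleaved choice in (1)--(2) accomplishes, and it is the only place where the ``tail subspace'' strengthening of Lemma~\ref{lem:Weaver} and the freedom (from openness of $\LD$) to pass to arbitrarily far tails of $Y_0$ are both used.
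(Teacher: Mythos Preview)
Your proof is correct and follows essentially the same approach as the paper. The only difference is packaging: the paper applies Lemma~\ref{lem:diag_ess_dense} to the constant sequence $X_n=X$ to obtain, from $Y\leq_\ess X$ with $Y\in\LD$, some $Y'\leq_\ess Y$ with $Y'\preceq X$ (and hence $Y'\in\LD$), whereas you carry out that perturbation-and-truncation construction by hand using Lemmas~\ref{lem:Weaver} and~\ref{lem:sumbl_pert}---which is precisely how the paper indicates Lemma~\ref{lem:diag_ess_dense} is proved.
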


\begin{proof}
	Suppose $\LD\subseteq\bb^\infty_1(E)$ is $\leq_\ess$-dense open. Given any $X\in\bb^\infty_1(E)$, there is a $Y\in\LD$ with $Y\leq_\ess X$. Applying Lemma \ref{lem:diag_ess_dense} (with $X_n=X$ for all $n$), there is a $Y'\leq_\ess Y$ with $Y'\preceq X$. Then, $Y'\in\LD$.
\end{proof}

We can now establish Theorem \ref{thm:Calkin_L(R)_gen}, an analog of Theorem \ref{thm:gen_over_L(R)} for projections in the Calkin algebra. We first prove a more general result.

\begin{thm}\label{thm:ess_L(R)_gen}
	\begin{enumerate}
		\item If $\LG$ is an $\L(\R)$-generic filter for $(\bb^\infty_1(E),\leq_\ess)$, then $\LG$ is a strategic $(p^+)$-family.
		\item Assume that there is a supercompact cardinal. If $\LG\subseteq\bb^\infty_1(E)$ is a strategic $(p^*)$-family which is also a $\leq_\ess$-filter, then $\LG$ is $\L(\R)$-generic for $(\bb_1^\infty(E),\leq_\ess)$.
	\end{enumerate}
\end{thm}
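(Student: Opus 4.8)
The plan is to prove (a) by adapting the proof of Lemma~\ref{lem:forcing_family} and (b) by adapting the proof of Theorem~\ref{thm:gen_over_L(R)}, the recurring new ingredient being Lemmas~\ref{lem:ess_dense_block_dense} and~\ref{lem:diag_ess_dense}, which let one pass between $\leq_\ess$-density and $\preceq$- (or $\preceq^*$-) density. First I would record that $(\bb^\infty_1(E),\leq_\ess)$ adds no new reals: if $\D\subseteq\bb^\infty_1(E)$ is $\leq_\ess$-dense open then by Lemma~\ref{lem:ess_dense_block_dense} it is $\preceq$-dense open and, being $\leq_\ess$-downward closed, it is also $\preceq^*$-open; since $(\bb^\infty_1(E),\preceq^*)$ is $\sigma$-closed, a countable intersection of such sets remains $\preceq^*$-dense, hence $\leq_\ess$-dense. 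Now let $\LG$ be $\L(\R)$-generic. As a $\leq_\ess$-filter, $\LG$ is upward closed under $\preceq^*$, so it is a family. For the $(p)$-property, given $X_0\succeq^*X_1\succeq^*\cdots$ in $\LG$ I would meet the set
\[
\LE=\{W:W\text{ is a diagonalization of }(X_n)\}\cup\{W:\exists n\,(W\perp_\ess X_n)\},
\]
which is in $\L(\R)$; its second alternative is impossible in the filter $\LG$, so it remains to see $\LE$ is $\leq_\ess$-dense. This is the $\leq_\ess$-analogue of Lemma~\ref{lem:dense_sets_diag_full}(a), proved by the method of Lemma~\ref{lem:diag_ess_dense}: if $W$ is $\leq_\ess$-compatible with each $X_n$, pass to tails so that $(X_n)$ is $\preceq$-decreasing, refine the compatibility witnesses (via Lemmas~\ref{lem:blocks_ess_dense} and~\ref{lem:diag_ess_dense}) to block sequences $Z_n\preceq W$ with $Z_n\leq_\ess X_n$, and build $W'\preceq W$ by choosing unit $w'_k\in\langle Z_k\rangle$ with $w'_k>w'_{k-1}$ and $d(w'_k,\bar{\langle X_j\rangle})\leq\eta_k$ for all $j\leq k$, where $\sum_k\eta_k^2<\infty$; Lemma~\ref{lem:Weaver} then gives $W'\leq_\ess X_j$ for all $j$, and Lemma~\ref{lem:diag_ess_dense} yields a diagonalization $W''\leq_\ess W'$. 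The strategic property is handled the same way, replacing $\LE$ by $[\alpha',X]_f\cup\{W:W\perp_\ess X\}$ for $\alpha'$ as in Lemma~\ref{lem:outcomes_strat_dense}.

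Fullness follows the proof of Lemma~\ref{lem:forcing_family} almost verbatim. Given $D\subseteq E$ that is $\LG$-dense below $X\in\LG$, take $X'\in\LG$ with $X'\preceq X$ forcing the corresponding statement about $\dot\LG$ (that one may take $X'\preceq X$ uses genericity and Lemma~\ref{lem:diag_ess_dense} as above). Since no new reals are added, the usual argument—if $\LD=\{Z:\langle Z\rangle\subseteq D\}$ failed to be $\preceq$-dense below $X'$ then, by fullness of $\bb^\infty_1(E)$, some block $W\preceq X'$ would witness $D$ not $\bb^\infty_1(E)$-dense below $X'$, while $W\leq_\ess X'$ forces the statement and $\check W\in\dot\LG$, a contradiction—shows $\LD$ is $\preceq$-dense below $X'$; as $\LD$ is also $\preceq$-downward closed, its $\leq_\ess$-downward closure is $\leq_\ess$-dense open below $X'$ and lies in $\L(\R)$, so $\LG$ meets it and, being $\leq_\ess$-upward closed, contains some $Z\preceq X'\preceq X$ with $\langle Z\rangle\subseteq D$. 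Together with the previous paragraph this gives (a); note no large cardinal is used here.

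For (b), let $\LG$ be a strategic $(p^*)$-family which is a $\leq_\ess$-filter. Being a $\leq_\ess$-filter it is $\equiv_\ess$-invariant, hence invariant under small perturbations, so Proposition~\ref{prop:almost_full_full} (where the supercompact hypothesis enters) upgrades it to a strategic $(p^+)$-family. Then, given a $\leq_\ess$-dense open $\D\subseteq\bb^\infty_1(E)$ lying in $\L(\R)$, Lemma~\ref{lem:ess_dense_block_dense} makes it $\preceq$-dense open and Theorem~\ref{thm:FMW_UBsets} makes it universally Baire, and I would run the argument of Theorem~\ref{thm:gen_over_L(R)}: by Lemma~\ref{lem:uB_strat} (valid for $(p^+)$-filters in $\bb^\infty_1(E)$ by \S\ref{sec:8}) there is $X\in\LG$ such that II has a strategy in $G[X]$ for playing into $\D$ or $\D^c$; the case $\D^c$ is excluded since I can play some $Y\preceq X$ in $\D$ repeatedly (Lemma~\ref{lem:no_strat_into_dense_comp}), so II has a strategy into $\D$, and as $\LG$ is strategic some outcome of it lies in $\D\cap\LG$. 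Hence $\LG$ meets every $\leq_\ess$-dense open set in $\L(\R)$, i.e.\ it is $\L(\R)$-generic.

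The hard part is the $(p)$-property in (a). Because $\leq_\ess$-incompatibility is strictly weaker than $\preceq$-incompatibility even for block sequences (e.g.\ $(e_{2n})$ and a small perturbation of it are $\equiv_\ess$ but span subspaces meeting trivially), one cannot simply invoke Lemma~\ref{lem:dense_sets_diag_full}(a): the escape clause of $\LE$ must use $\perp_\ess$, and verifying that $\LE$ is then still $\leq_\ess$-dense is exactly the perturbative construction above—a strengthening of Lemma~\ref{lem:diag_ess_dense} from ``common $\leq_\ess$-lower bound'' to ``pairwise $\leq_\ess$-compatibility'' for $\preceq^*$-decreasing sequences. The same mismatch (a $\leq_\ess$-refinement of $Z$ need not keep $\langle Z\rangle$ inside $D$) is what forces the detour through $\leq_\ess$-downward closures, and the prior no-new-reals observation, in the fullness argument.
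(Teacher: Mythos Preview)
Your approach to (a) is correct but works harder than necessary, particularly for fullness. The paper bypasses the forcing-language argument entirely: given $D$ that is $\LG$-dense below $X\in\LG$, it simply hits the set
\[
\LD_0=\{Z:\langle Z\rangle\subseteq D\text{ or }\forall V\preceq X\,(\langle V\rangle\subseteq D\to V\bot Z)\},
\]
which is $\preceq$-dense open below $X$ by Lemma~\ref{lem:dense_sets_diag_full}(b), hence (via Lemma~\ref{lem:diag_ess_dense}) $\leq_\ess$-dense below $X$, and then observes that the second alternative is killed by $\LG$-density of $D$. No no-new-reals argument is needed. Your treatment of the $(p)$-property matches the paper's: the paper phrases the ``hard part'' you emphasize as the one-line reduction from $\LD_1'=\{Y:\forall n(Y\leq_\ess X_n)\text{ or }\exists n(Y\bot_\ess X_n)\}$ (trivially $\leq_\ess$-dense open) to $\LD_1$ via Lemma~\ref{lem:diag_ess_dense}; your perturbative construction re-proves this.

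For (b) you take a genuinely different route, and there is a gap as written. You upgrade $\LG$ to a strategic $(p^+)$-family via Proposition~\ref{prop:almost_full_full} and then invoke Lemma~\ref{lem:uB_strat}. But Lemma~\ref{lem:uB_strat} is stated for $(p^+)$-\emph{filters}, and its proof relies on $\P(\LF)$ being $\sigma$-centered (Lemma~\ref{lem:P_basic_props}(c)), which uses the $\preceq$-filter property. Your $\LG$ is only a $\leq_\ess$-filter, and that does not imply it is a $\preceq$-filter: e.g.\ $(e_{2n})$ and $(e_{2n}+\epsilon_n e_{2n+1})$ with $\epsilon_n\to 0$ are $\equiv_\ess$ but $\preceq$-incompatible, so both could sit in $\LG$. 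The fix is easy---cite Theorem~\ref{thm:local_Rosendal_L(R)} (or its $(p^*)$ analogue Theorem~\ref{thm:local_Rosendal_L(R)*}), which is stated for \emph{families} and yields the strategy for II directly---but as written the appeal to Lemma~\ref{lem:uB_strat} does not go through.

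The paper's argument for (b) avoids this entirely and stays in the $(p^*)$ world: since $\LD$ is $\leq_\ess$-open, Lemma~\ref{lem:sumbl_pert} gives $\LD_\Delta=\LD$ for summable $\Delta$, and then the $\L(\R)$-weakly-Ramsey theorem for strategic $(p^*)$-families (Theorem~\ref{thm:local_Rosendal_L(R)*}; the paper's citation of Theorem~\ref{thm:local_Gowers_normed} appears to be a slip) yields $Y\in\LG$ with either $[Y]\subseteq\LD^c$ (impossible by $\preceq$-density) or a strategy for II into $\LD_\Delta=\LD$, after which the strategic property finishes. This is shorter and does not need Proposition~\ref{prop:almost_full_full}.
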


\begin{proof}
	(a) Let $\LG$ be as described. Clearly, it is a family. To see that it is full, suppose that $D\subseteq S(E)$ is $\LG$-dense below some $X\in\LG$. Let
	\[
		\LD_0=\{Z:\langle Z\rangle \subseteq D \text{ or } \forall V\preceq X(\langle V\rangle\subseteq D \rightarrow V\bot Z)\},
	\]
	where $\bot$ denotes incompatibility with respect to $\preceq$. $\LD_0$ is $\preceq$-dense open by Lemma \ref{lem:dense_sets_diag_full}, thus $\leq_\ess$-dense as well, and clearly in $\L(\R)$, so there is a $Z\in\LD_0\cap(\LG\restr X)$. Then, there is a $Z'\preceq Z\preceq X$ with $S(\langle Z'\rangle)\subseteq D$, so we have that $\langle Z\rangle\subseteq D$, showing that $\LG$ is full.
	
	To see that $\LG$ is a $(p)$-family, let $X_0\succeq X_1\succeq X_2\succeq\cdots$ in $\LG$. Let
	\[
		\LD_1 = \{Y : \forall n(Y\preceq^* X_n) \text{ or } \exists n(Y\bot_{\ess} X_n)\},
	\]
	where $\bot_{\ess}$ denotes incompatibility with respect to $\leq_\ess$. We want to show that $\LD_1$ is $\leq_\ess$-dense. The set
	\[
		\LD_1' = \{Y : \forall n(Y\leq_\ess X_n) \text{ or } \exists n(Y\bot_{\ess} X_n)\}
	\]
	is $\leq_\ess$-dense open. Then, given any $X$, we can find a $Y\in\LD_1'$ below $X$. If $Y\bot_\ess X_n$ for some $n$, we're done. Otherwise, $Y\leq_\ess X_n$ for all $n$, and we can apply Lemma \ref{lem:diag_ess_dense} to find a $Y'\leq_\ess Y$ with $Y\preceq^* X_n$ for all $n$. Such a $Y'$ is in $\LD_1$, verifying that this set is $\leq_\ess$-dense. As $\LD_1$ is in $\L(\R)$, $\LG\cap\LD_1\neq\emptyset$, and anything in this intersection must be a diagonalization of $(X_n)$. It is likewise easy to see that $\LG$ must be strategic.\\
	
	\noindent(b) Let $\LD\subseteq\bb^\infty_1(H)$ be $\leq_\ess$-dense open and in $\L(\R)$. By Lemma \ref{lem:ess_dense_block_dense}, $\LD$ is also $\preceq$-dense open. For $\Delta>0$ summable, $\LD_\Delta=\LD$ by Lemma \ref{lem:sumbl_pert}. Thus, by Theorem \ref{thm:local_Gowers_normed}, there is an $X\in\LH$ such II has a strategy for playing into $\LD$. Since $\LG$ is strategic, it follows that $\LG\cap\LD\neq\emptyset$.
\end{proof}

\begin{proof}[Proof of Theorem \ref{thm:Calkin_L(R)_gen}]
	The ($\Rightarrow$) direction is proved by a straightforward verification of the relevant sets being $\preceq$-dense open, thus $\leq_\ess$-dense by Lemma \ref{lem:ess_dense_block_dense}. The ($\Leftarrow$) direction follows from Theorem \ref{thm:ess_L(R)_gen}(b) or Theorem \ref{thm:local_Gowers_L(R)}.
\end{proof}

We conclude this section by describing a hoped-for application of our machinery and its limitations. A \emph{state} $\tau$ on $\LB(H)$ is a linear functional on $\LB(H)$ which is \emph{positive}, that is, $\tau(T^*T)\geq 0$ for all $T$, and satisfies $\tau(I)=1$, where $I$ is the identity operator. The set of states forms a weak*-compact convex subset of the dual of $\LB(H)$ and thus has extreme points, called \emph{pure states}. These definitions generalize to any unital C*-algebra, including $\LC(H)$.

A state on $\LB(H)$ is \emph{singular} if it vanishes on $\LK(H)$. Composing with the quotient map $\pi:\LB(H)\to\LC(H)$ yields a bijective correspondence between singular pure states on $\LB(H)$ and pure states on $\LC(H)$.

For any choice of orthonormal basis $(f_k)$ for $H$, and any ultrafilter $\LU$ on $\N$, the functional defined by $\tau_\LU(T)=\lim_{k\to\LU}\langle Tf_k,f_k\rangle$ is a pure state which is singular if and only if $\LU$ is non-principal (cf.~Theorem 4.21 and Example 6.1 in \cite{FarahAST}). Such pure states are said to be \emph{diagonalizable}. On an abelian C*-algebra, pure states coincide with characters, so the aforementioned $\tau_\LU$ restricts to a pure state on the atomic maximal abelian self-adjoint subalgebra (or \emph{masa}) generated by the rank-one projections corresponding to the $f_k$. The following problem asks to what extent this is true of all pure states:

\begin{problem}[Kadison--Singer \cite{MR0123922}]	Does every pure state on $\LB(H)$ restrict to a pure state on some (atomic or continuous) masa?
\end{problem}

Anderson conjectured that not only is the answer to this question ``yes'', but that every pure state is of the form $\tau_\LU$, for some choice of orthonormal basis $(f_k)$ and ultrafilter $\LU$:

\begin{conj}[Anderson \cite{MR672813}]
	Every pure state on $\LB(H)$ is diagonalizable.
\end{conj}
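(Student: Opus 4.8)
\medskip
\noindent\emph{Proof proposal.} Anderson's conjecture is \emph{false}; rather than proving it I sketch a scheme for refuting it, which is the content of Theorem~\ref{thm:pure_states}. The plan is to produce, from a spread $(p^*)$-family $\LH$ of block sequences in $H$ which is $\leq_\ess$-centered, a singular pure state $\rho$ on $\LB(H)$ that restricts to a pure state on \emph{no} atomic masa. Since a diagonalizable state $\tau_{\LU}$ is visibly pure on the atomic masa generated by its diagonalizing basis, such a $\rho$ is not diagonalizable and so refutes the conjecture. Under suitable cardinal-arithmetic hypotheses (e.g.\ $\CH$), or after a forcing, families $\LH$ as required do exist; by Corollary~\ref{cor:full_filters_ind} and the results of \S\ref{sec:5}--\S\ref{sec:6} they cannot be had in $\ZFC$ alone, so the counterexample remains outside $\ZFC$.

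I would first build $\rho$. Via the quantum filter theory of Farah and Weaver \cite{FarahAST}, the $\leq_\ess$-centeredness of $\LH$ makes $\{\pi(P_X):X\in\LH\}$ a quantum filter of projections in $\LC(H)$, hence the seed of a state $\rho$ on $\LB(H)$; this $\rho$ is singular because each $\pi(P_X)$ dominates proper subprojections, so $\rho$ annihilates a cofinal family of finite-rank projections, hence all of $\LK(H)$. Next I would verify that $\rho$ is \emph{pure} by checking the Farah--Weaver maximality (excision) criterion: for every self-adjoint $T\in\LB(H)$ and $\eps>0$ there is an $X\in\LH$ and a scalar $\lambda$ with $\|P_XTP_X-\lambda P_X\|<\eps$. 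This is where the local Gowers dichotomy enters: bisecting the spectrum of $T$ and, at each stage, applying Theorem~\ref{thm:local_Gowers} to the (perturbed, hence analytic) set of normalized block sequences on which $T$ acts essentially within the chosen half-interval, one is always driven onto the ``dense'' alternative inside $\LH$; a diagonalization of these choices within $\LH$, legitimate as $\LH$ is a $(p^*)$-family, produces the required $X$. For $\L(\R)$-target sets one would invoke Theorem~\ref{thm:local_Gowers_L(R)} under a supercompact cardinal, but the analytic case is all purity needs.

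The remaining step --- and the one I expect to be the crux --- is that $\rho$ is pure on no atomic masa. Fixing an orthonormal basis $(f_k)$ with associated masa $\LM$, the restriction $\rho\restr\LM$, regarded as a finitely additive probability measure $\mu$ on $\N$ (with $\mu(A)$ the value of $\rho$ on the spectral projection of $\LM$ cut out by $A$), is a character iff $\mu$ is $\{0,1\}$-valued; so it suffices to exhibit an $A$ with $0<\mu(A)<1$. The obstruction is a higher-dimensional shadow of the phenomenon, already visible in Example~\ref{ex:asym_sets} and in \S\ref{sec:6}, that prevents block filters from being ultrafilters: since an arbitrary block sequence with respect to $(e_n)$ is, generically, spread across the coordinates $(f_k)$, no $\leq_\ess$-cofinal family of block projections can decide every spectral projection of $\LM$, and one isolates a suitable $A$ witnessing this for $\LH$, using that $\LH$ is spread and almost full. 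Making this rigorous is exactly where the quantum-filter apparatus of \cite{FarahAST} (in essence the Akemann--Weaver argument) does its work and where the fine structure of $\LH$ is consumed; the new point is merely that that argument carries over verbatim to any $\LH$ meeting the hypotheses of Theorem~\ref{thm:local_Gowers}. Granting it, $\rho$ is a singular pure state pure on no atomic masa, hence not diagonalizable, and Anderson's conjecture fails.
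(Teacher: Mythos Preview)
The statement is a \emph{conjecture}, and the paper does not prove it; indeed, it records that the conjecture is refuted under $\CH$ by Akemann--Weaver and that its consistency with $\ZFC$ remains open. You correctly recognize this and instead sketch a refutation via Theorem~\ref{thm:pure_states}, so the relevant comparison is between your sketch and the paper's proof of that theorem.

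Your route to purity is more laborious than the paper's. You propose to verify the Farah--Weaver excision criterion directly by spectral bisection of each self-adjoint $T$, invoking the local Gowers dichotomy at each stage. The paper instead appeals to the Farah--Weaver bijection between singular pure states and \emph{maximal} centered subsets of $\LP(\LC(H))^+$, so that purity reduces to showing maximality of the upwards closure $\hat{\LH}$ of $\pi(\LH)$. Maximality is then a single application of the dichotomy: for each $P\in\LP_\infty(H)$ the co-analytic set $\LD_P=\{X:P_X\leq_\ess P\text{ or }P_X\bot_\ess P\}$ is $\leq_\ess$-dense open, hence $\preceq$-dense open by Lemma~\ref{lem:ess_dense_block_dense}, and Theorem~\ref{thm:local_Gowers_normed} puts some $Y\in\LH$ into $\LD_P$; centeredness forces $P_Y\leq_\ess P$. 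Your bisection scheme would work, but it re-proves (inside $\LH$) what the Farah--Weaver correspondence already packages.

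For the ``not pure on any atomic masa'' step you are deliberately vague, gesturing at an $A$ with $0<\mu(A)<1$ and deferring to the Akemann--Weaver mechanism. The paper is more specific: it isolates, for each orthonormal basis $(f_k)$ and each partition $\vec{J}=(J_n)$ of $\N$ into finite intervals, the Borel $\leq_\ess$-dense open set
\[
\LD_{\vec{J}}=\{X\in\bb^\infty_1(E):\forall n\,(\|P^{(f_k)}_{J_n\cup J_{n+1}}P_X\|<1/2)\},
\]
and observes that meeting all such sets with $\LH$ (again via Lemma~\ref{lem:ess_dense_block_dense} and Theorem~\ref{thm:local_Gowers_normed}) is exactly what the proof of Theorem~6.46 in \cite{FarahAST} requires. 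Your intuition that ``spread'' is what drives this is right, but the concrete target sets are these $\LD_{\vec{J}}$, not an ad hoc $A$.
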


Akemann and Weaver \cite{MR2403097} showed that the above problem of Kadison and Singer has a negative answer, and thus Anderson's conjecture is false, assuming $\CH$. It remains an open question whether Anderson's conjecture is consistent with $\ZFC$.

By the recent positive solution \cite{MR3374963} to the Kadison--Singer problem regarding \emph{extensions} of pure states (which differs from the above), Anderson's conjecture is equivalent to saying that every pure state on $\LB(H)$ restricts to a pure state on some atomic masa.

Following \cite{MR3043037}, we say that a subset $\LF\subseteq\LP(\LC(H))^+$ is \emph{centered}\footnote{These were called \emph{quantum filters} by Farah and Weaver \cite{FarahAST}.} if for every finite subset of $\LF$ has a lower bound in $\LP(\LC(H))^+$. $\LF$ is \emph{linked} if every pair of elements in $\LF$ has a lower bound in $\LP(\LC(H))^+$. \emph{Maximal centered} has the obvious meaning. Similarly, we define \emph{$\leq_\ess$-centered}, \emph{$\leq_\ess$-linked}, and \emph{maximal $\leq_\ess$-centered} in $\bb^\infty_1(E)$.

\begin{thm}[Farah--Weaver, Theorem 6.42 in \cite{FarahAST}]
	There is a bijective correspondence between singular pure states $\tau$ on $\LB(H)$ and maximal centered subsets of $\LP(\LC(H))^+$ via $\tau \mapsto \LF_\tau=\{p\in\LP(\LC(H))^+:\tau(p)=1\}$.
\end{thm}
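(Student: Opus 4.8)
The plan is to reduce everything to pure states on the Calkin algebra and then extract the combinatorics from the GNS representation. First I would note that composing with the quotient map $\pi\colon\LB(H)\to\LC(H)$ identifies singular pure states on $\LB(H)$ (the pure states vanishing on $\LK(H)$) with pure states on the unital C*-algebra $\LC(H)$, and that under this identification $\LF_\tau=\{p\in\LP(\LC(H))^+:\tau(p)=1\}$ (two lifts of a projection $p\in\LP(\LC(H))$ differ by a self-adjoint compact, on which $\tau$ vanishes). So it suffices to show that $\tau\mapsto\LF_\tau$ is a bijection from pure states of $\LC(H)$ onto maximal centered subsets of $\LP(\LC(H))^+$. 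Fix a pure state $\tau$ with GNS triple $(\pi_\tau,H_\tau,\xi_\tau)$; purity makes $\pi_\tau$ irreducible and simplicity of $\LC(H)$ makes it faithful. Two facts drive everything: (i) for a projection $p\in\LC(H)$, $\tau(p)=1$ iff $\pi_\tau(p)\xi_\tau=\xi_\tau$, i.e.\ $\xi_\tau\in\ran(\pi_\tau(p))$ (the forward direction is the equality case of Cauchy--Schwarz for $0\le\pi_\tau(p)\le 1$); and (ii) the projection form of Kadison's transitivity theorem: for any unit vector $\eta\in H_\tau$ there is a projection $q\in\LC(H)$ with $\pi_\tau(q)$ equal to the rank-one projection onto $\eta$ (transitivity yields a self-adjoint $a$ with $\pi_\tau(a)$ that projection, and $\pi_\tau(a^2-a)=0$ forces $a^2=a$).

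Centeredness and maximality of $\LF_\tau$ both then follow by applying (ii) to the cyclic vector. Given $p_1,\dots,p_n\in\LF_\tau$, set $r=\bigwedge_{i\le n}\pi_\tau(p_i)$, a projection of $\LB(H_\tau)$ with $\xi_\tau\in\ran(r)$ by (i); if $q\in\LC(H)$ has $\pi_\tau(q)$ the rank-one projection onto $\xi_\tau$, then $\pi_\tau(q)\le r\le\pi_\tau(p_i)$, so $q\le p_i$ for all $i$ by faithfulness, while $q\ne0$ and $\tau(q)=1$; thus $\LF_\tau$ is centered. For maximality, suppose $p\in\LP(\LC(H))^+$ is compatible with every element of $\LF_\tau$. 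If $\tau(p)<1$ then $\xi_\tau\notin\ran(\pi_\tau(p))$; taking $q\in\LF_\tau$ as above with $\pi_\tau(q)$ rank-one onto $\xi_\tau$, any projection $s$ with $s\le q$ and $s\le p$ has $\pi_\tau(s)\le\pi_\tau(q)$ and $\pi_\tau(s)\le\pi_\tau(p)$, forcing $\pi_\tau(s)=0$ and hence $s=0$; so $p$ and $q$ are incompatible, a contradiction. Therefore $\tau(p)=1$, i.e.\ $p\in\LF_\tau$.

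For surjectivity, let $\LF$ be a maximal centered subset of $\LP(\LC(H))^+$. For each finite $F\subseteq\LF$ the set $\mathcal{S}_F$ of states $\phi$ on $\LC(H)$ with $\phi(p)=1$ for all $p\in F$ is nonempty (a common lower bound of $F$ is a nonzero projection, some state sends it to $1$, and that state then sends each member of $F$ to $1$), weak*-closed, and convex; by centeredness the $\mathcal{S}_F$ have the finite intersection property, so by weak*-compactness there is a state $\tau$ with $\tau(p)=1$ for all $p\in\LF$. The set of all such states is a weak*-compact convex face of the state space of $\LC(H)$, so any of its extreme points is a pure state $\tau$, and then $\LF\subseteq\LF_\tau$; since $\LF_\tau$ is centered, maximality of $\LF$ gives $\LF=\LF_\tau$. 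For injectivity, if $\LF_\tau=\LF_{\tau'}$ then the left kernels $N_\tau,N_{\tau'}$ contain exactly the same projections, namely $\{1-p:p\in\LF_\tau\}$; by the left-ideal/hereditary-subalgebra correspondence, $N_\tau$ is recovered from the hereditary subalgebra $N_\tau\cap N_\tau^*$, which in the real rank zero algebra $\LC(H)$ is the closed linear span of its projections, so $N_\tau=N_{\tau'}$; and a pure state is determined by its (maximal) left kernel --- equivalently by its support projection in $\LC(H)^{**}$ --- hence $\tau=\tau'$.

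I expect the main points requiring care to be purely C*-algebraic bookkeeping rather than new ideas: verifying the projection form of transitivity inside $\LC(H)$ (where simplicity and the abundance of projections enter) and pinning down the classical recovery of a pure state from its left kernel via real rank zero for the injectivity clause. Once fact (ii) is in hand, centeredness, maximality, and surjectivity all reduce to the short arguments above, each amounting to transitivity played against the cyclic vector $\xi_\tau$.
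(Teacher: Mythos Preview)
The paper does not prove this theorem; it is quoted as Theorem~6.42 from the Farah--Wofsey notes and used as a black box. So there is no ``paper's own proof'' to compare against. That said, your proposal contains a genuine error that breaks the centeredness and maximality arguments.

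The problem is your fact~(ii). You claim that Kadison transitivity produces a projection $q\in\LC(H)$ with $\pi_\tau(q)$ equal to the rank-one projection onto $\xi_\tau$. But Kadison transitivity only lets you prescribe the action of $\pi_\tau(a)$ on \emph{finitely many vectors}; it does not let you realize a prescribed operator in $\LB(H_\tau)$ as $\pi_\tau(a)$. Your parenthetical justification (``transitivity yields a self-adjoint $a$ with $\pi_\tau(a)$ that projection'') simply asserts what needs to be proved. In fact no such $q$ can exist: since $\LC(H)$ is simple, $\pi_\tau$ is faithful, so if $\pi_\tau(q)$ were rank one then any nonzero subprojection $r\le q$ would satisfy $\pi_\tau(r)=\pi_\tau(q)$, hence $r=q$, making $q$ a minimal projection in $\LC(H)$. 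But $\LC(H)$ is purely infinite and has no minimal projections (any nonzero $p\in\LP(\LC(H))$ lifts to an infinite-rank $P\in\LP(H)$, and splitting $\ran(P)$ into two infinite-dimensional pieces yields a proper nonzero subprojection of $p$).

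Without (ii), your proofs that $\LF_\tau$ is centered and maximal collapse, and surjectivity then also fails (you invoke centeredness of $\LF_\tau$ at the end of that paragraph). The injectivity argument via left kernels and real rank zero is fine. To repair centeredness and maximality you need a different mechanism: one standard route is excision of pure states (Akemann--Anderson--Pedersen) combined with real rank zero to get projections $q$ with $\tau(q)=1$ that are ``small'' in the sense that $\|qaq-\tau(a)q\|$ is small, which is enough to push $q$ below any finite list of $p_i$ with $\tau(p_i)=1$; another is to work with lifts $P_i\in\LP(H)$ and spectral projections of products like $P_1P_2\cdots P_n\cdots P_2P_1$ in $\LB(H)$, using that $\tau$ of such products is $1$ to control the spectral projections near $1$.
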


If $\LF=\LF_\tau$ as above and $\tau$ fails to restrict to a pure state on any atomic masa, we say that $\LF$ \emph{yields a counterexample to Anderson's conjecture}.

\begin{thm}[essentially Farah--Weaver, cf.~Theorem 6.46 in \cite{FarahAST}]
	If $\LG$ is $\V$-generic for $\LP(\LC(H))^+$, then $\LG$ is a maximal centered set which yields a counterexample to Anderson's conjecture.
\end{thm}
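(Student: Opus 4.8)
The plan is to verify the two assertions separately, both by standard density arguments for the generic filter $\LG$. That $\LG$ is \emph{maximal centered} is soft: a $\V$-generic filter is a filter, hence centered in the sense of the paragraph preceding the theorem, and if $q\in\LP(\LC(H))^+\setminus\LG$ were such that $\LG\cup\{q\}$ is still centered, then every element of $\LG$ would be compatible with $q$; but the set $D_q=\{r:r\le q\}\cup\{r:r\perp q\}$ is dense (given $p$, either $p,q$ have a common lower bound, which lies below $q$ and hence in $D_q$, or $p\perp q$ and $p\in D_q$), so $\LG$ meets $D_q$, and the meeting point cannot be incompatible with $q$, forcing $q\in\LG$, a contradiction. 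By the Farah--Weaver correspondence quoted just above, $\LG=\LF_\tau$ for a unique singular pure state $\tau$ on $\LB(H)$, with $\pi(P)\in\LG$ iff $\tau(P)=1$ for a projection $P$ on $H$.

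For the second assertion, first note that $(\LP_\infty(H),\le_\ess)$ is $\sigma$-closed by Lemma \ref{lem:diag_ess_dense} and forcing-equivalent to $\LP(\LC(H))^+$ by Lemma \ref{lem:blocks_ess_dense}, so $\V[\LG]$ has no new reals; consequently $\LB(H)$, its projections, its states, and its atomic masas are the same in $\V$ and $\V[\LG]$, and every atomic masa $\LM$ is the algebra of operators diagonalized by some orthonormal basis $(f_k)_{k\in\N}$ lying in $\V$. Fix such an $\LM$, and for $A\subseteq\N$ write $P_A$ for the orthogonal projection onto the closed span of $\{f_k:k\in A\}$; then $P_A\in\LM$ and $P_{\N\setminus A}=I-P_A$. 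Since $\LM\cong\ell^\infty(\N)$, a state $\phi$ on $\LB(H)$ restricts to a pure state (i.e.\ a character) on $\LM$ iff $\phi(P_A)\in\{0,1\}$ for every $A$; so it is enough to find, for each such $\LM$, some $A$ with $\pi(P_A)\notin\LG$ and $\pi(P_{\N\setminus A})\notin\LG$, for then $\tau(P_A)+\tau(P_{\N\setminus A})=1$ with both summands $<1$, giving $0<\tau(P_A)<1$ and so $\tau|_\LM$ is not multiplicative, hence not pure. Such an $A$ will be produced generically once I show that
\[
D_\LM=\{q\in\LP(\LC(H))^+:\exists A\subseteq\N\ (q\perp\pi(P_A)\text{ and }q\perp\pi(P_{\N\setminus A}))\}
\]
is dense, since any $q\in\LG\cap D_\LM$ with witness $A$ forces both $\pi(P_A)$ and $\pi(P_{\N\setminus A})$ out of the filter $\LG$ (they are incompatible with $q\in\LG$).

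To prove $D_\LM$ dense, let $\pi(P)\in\LP(\LC(H))^+$ be given, with $V=\ran(P)$ infinite-dimensional. Using that $V$ meets every finite-codimensional subspace in an infinite-dimensional subspace, I would build inductively unit vectors $v_1^{(n)},v_2^{(n)}$ whose $(f_k)$-supports are finite sets $G_1^{(n)},G_2^{(n)}$, all pairwise disjoint and increasing in $n$, with $\mathrm{dist}(v_i^{(n)},V)$ summably small; set $w_n=\tfrac{1}{\sqrt2}(v_1^{(n)}+v_2^{(n)})$, an orthonormal sequence which is a summable perturbation of an orthonormal sequence in $V$, and let $Q$ be the orthogonal projection onto its closed span, so $Q\le_\ess P$ by Lemma \ref{lem:sumbl_pert}. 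Put $A=\bigcup_n G_1^{(n)}$. Since the $G_i^{(n)}$ are pairwise disjoint and $P_A$ is diagonal for $(f_k)$, any unit $w=\sum_n a_n w_n\in\ran(Q)$ has $P_A w=\tfrac1{\sqrt2}\sum_n a_n v_1^{(n)}$, an orthogonal sum, so $\|P_Aw\|=\|P_{\N\setminus A}w\|=\tfrac1{\sqrt2}$ for every such $w$. By Lemma \ref{lem:Weaver}, no infinite-rank projection can be both $\le Q$ and $\le_\ess P_A$ (it would contain unit vectors arbitrarily close to $\ran(P_A)$ yet inside $\ran(Q)$, impossible once the distance drops below $\tfrac1{\sqrt2}$), i.e.\ $\pi(Q)\perp\pi(P_A)$, and likewise $\pi(Q)\perp\pi(P_{\N\setminus A})$; since $A$ and $\N\setminus A$ are infinite, $\pi(Q),\pi(P_A),\pi(P_{\N\setminus A})$ are all nonzero. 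Thus $\pi(Q)\in D_\LM$ and $\pi(Q)\le\pi(P)$, establishing density. Running this for every atomic masa $\LM$ shows $\tau$ restricts to a pure state on none of them, so $\LG$ yields a counterexample to Anderson's conjecture.

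The main obstacle is precisely the density of $D_\LM$, that is, the ``general position'' construction: given an infinite-rank projection and an atomic masa, one must shrink the projection to a subprojection $Q$ together with a coordinate splitting $\N=A\sqcup(\N\setminus A)$ of the masa so that $Q$ is $\le_\ess$-orthogonal to both halves; the balanced two-block choice of the $w_n$ is what makes $\|P_Aw\|$ identically $\tfrac1{\sqrt2}$ and hence the orthogonality work uniformly. Everything else is bookkeeping with the Farah--Weaver dictionary and the observation that $\sigma$-closure keeps all reals, and hence all bases and atomic masas, in the ground model.
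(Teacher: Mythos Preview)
Your argument is correct, though it takes a different route from the Farah--Weaver proof that the paper cites (and sketches in the proof of Theorem~\ref{thm:pure_states}). The paper does not prove this theorem directly; it defers to \cite{FarahAST} and indicates that the relevant dense sets there are
\[
\LD_{\vec{J}}=\{X\in\bb^\infty_1(E):\forall n(\|P^{(f_k)}_{J_n\cup J_{n+1}} P_X\|<1/2)\},
\]
one for each orthonormal basis $(f_k)$ and each partition $\vec{J}=(J_n)$ of $\N$ into finite intervals. Meeting all of these is a paving-type condition which, via an additional combinatorial step, forces the associated state to be non-multiplicative on the atomic masa of $(f_k)$. Your approach instead uses a single dense set $D_\LM$ per masa and produces the witness $A$ explicitly: the balanced two-block construction $w_n=\tfrac{1}{\sqrt2}(v_1^{(n)}+v_2^{(n)})$ gives $\|P_Aw\|\equiv\tfrac{1}{\sqrt2}$ on $\ran(Q)$, which immediately yields $\pi(Q)\perp\pi(P_A)$ and $\pi(Q)\perp\pi(P_{\N\setminus A})$ via Lemma~\ref{lem:Weaver}. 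This is more elementary and self-contained; the Farah--Weaver dense sets have the advantage of being Borel and indexed by reals, which is what the paper needs for Theorem~\ref{thm:pure_states} where full genericity is unavailable.

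One small point: your appeal to Lemma~\ref{lem:diag_ess_dense} for $\sigma$-closure of $(\LP_\infty(H),\le_\ess)$ is not quite on target---that lemma assumes a $\preceq$-decreasing sequence, not merely a $\le_\ess$-decreasing one. The $\sigma$-closure of $(\LP(\LC(H))^+,\leq)$ is nonetheless true and straightforward (inductively pick nearly-orthonormal unit vectors $v_n$ with $v_n$ summably close to $\ran(P_k)$ for all $k\leq n$, then project onto their span), so the conclusion that no new reals are added stands.
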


In fact, this result uses much less than full genericity, or even genericity over $\L(\R)$. By considering the complexity of the dense sets involved in the proof, we obtain Theorem \ref{thm:pure_states}:

\begin{proof}[Proof of Theorem \ref{thm:pure_states}.]
	Let $\LH\subseteq\bb^\infty_1(E)$ be spread $(p^*)$-family which is $\leq_\ess$-centered and $\hat{\LH}$ the upwards closure of $\pi(\LH)$ in $\LP(\LC(H))^+$. First, we claim that $\hat{\LH}$ is a maximal centered set. Clearly, $\hat{\LH}$ is centered. For maximality, let $p\in\LP(\LC(H))^+$ be such that $p$ is compatible with every finite subset of $\hat{\LH}$. Let $P\in\LP(H)$ be such that $\pi(P)=p$, and define
	\[
		\LD_P = \{X:P_X\leq_\ess P \text{ or } P_X\bot_\ess P\},
	\]
	which is a co-analytic and $\leq_\ess$-dense open subset of $\bb^\infty_1(H)$. By Lemma \ref{lem:ess_dense_block_dense}, $\LD_P$ is $\preceq$-dense open, so by Theorem \ref{thm:local_Gowers_normed}, we can find a $Y\in\LH\restr X$ with $Y\in\LD_P$. It must then be the case that $P_Y\leq_\ess P$ and so $p\in\hat{\LH}$.
	
	To see that $\hat{\LH}$ yields a counterexample to Anderson's conjecture, we refer to the proof of Theorem 6.46 in \cite{FarahAST} and omit the details except to note that it suffices to show that $\LH$ meets the $\leq_\ess$-dense open sets 
	\[
		\LD_{\vec{J}}=\{X\in\bb^\infty_1(E):\forall n(\|P^{(f_k)}_{J_n\cup J_{n+1}} P_X\|<1/2)\},
	\]
	where $\vec{J}=(J_n)$ is a partition of $\N$ into finite intervals $J_n$ and $P^{(f_k)}_{J}$ denotes the orthogonal projection onto $\bar{\linspan}\{f_k:k\in J\}$, for $(f_k)$ an orthonormal basis of $H$. These sets are easily seen to be Borel, and meeting them with $\LH$ uses the combination of Lemma \ref{lem:ess_dense_block_dense} and Theorem \ref{thm:local_Gowers_normed} as before.
\end{proof}

For spread $(p^*)$-families, being $\leq_\ess$-linked implies being a $\leq_\ess$-filter:

\begin{lemma}\label{lem:linked_filter}
	Let $\LH\subseteq\bb^\infty_1(E)$ be a spread $(p^*)$-family which is, moreover, $\leq_\ess$-linked. Then, $\LH$ is a $\leq_\ess$-filter.
\end{lemma}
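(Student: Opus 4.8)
The plan is to reduce the statement to the fact that a spread $(p^*)$-family meets every $\leq_\ess$-dense open Borel set. Recall that $\LH$ being $\leq_\ess$-linked means that any two $X,Y\in\LH$ have a common $\leq_\ess$-lower bound in $\LP(\LC(H))^+$, equivalently (after applying Lemma \ref{lem:blocks_ess_dense} to pass to block projections) there is some $Z\in\bb^\infty_1(E)$ with $Z\leq_\ess X$ and $Z\leq_\ess Y$. To upgrade this to a \emph{filter} condition I need $Z$ to be found inside $\LH$ itself.

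First I would fix $X,Y\in\LH$ and consider the set
\[
	\LD_{X,Y}=\{Z\in\bb^\infty_1(E): (Z\leq_\ess X\text{ and }Z\leq_\ess Y)\text{ or }Z\bot_\ess X\text{ or }Z\bot_\ess Y\},
\]
which is $\leq_\ess$-dense open: given any $W$, if $W$ is $\leq_\ess$-compatible with both $X$ and $Y$, use $\leq_\ess$-linkedness twice (together with Lemma \ref{lem:diag_ess_dense} to realize the meets by genuine block subsequences) to find $Z\leq_\ess W$ below both; otherwise $W$ itself already witnesses one of the incompatibility disjuncts, and openness is clear since $\leq_\ess$-compatibility is downward absolute. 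Then $\LD_{X,Y}$ is plainly Borel (each of $\leq_\ess$, $\bot_\ess$ is Borel as a relation on $\bb^\infty_1(E)$, by Lemma \ref{lem:Weaver}), so by Lemma \ref{lem:ess_dense_block_dense} it is $\preceq$-dense open, hence analytic and $\preceq$-dense open. Now I invoke the consequence of Theorem \ref{thm:local_Gowers_normed} recorded after Lemma \ref{lem:strong_p_spread} (a spread $(p^*)$-family is in particular strategic-flavoured enough to meet analytic $\preceq$-dense open sets; more precisely, by Theorem \ref{thm:local_Gowers_normed} with $\A=\LD_{X,Y}$ one obtains some $X'\in\LH$ such that II has a strategy in $G[X']$ to play into a small expansion of $\LD_{X,Y}$, and since $\LD_{X,Y}$ is $\equiv_\ess$-invariant up to summable perturbations by Lemma \ref{lem:sumbl_pert}, that expansion is still inside $\LD_{X,Y}$; spreadness is what lets us conclude an actual outcome lands in $\LH$, exactly as in the proof of Theorem \ref{thm:ess_L(R)_gen}(b)). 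Either way we get $Z\in\LH\cap\LD_{X,Y}$.

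Finally I would rule out the incompatibility disjuncts: since $X,Y\in\LH$ and $\LH$ is $\leq_\ess$-linked, every member of $\LH$ is $\leq_\ess$-compatible with both $X$ and $Y$, so $Z\bot_\ess X$ and $Z\bot_\ess Y$ are both impossible. Hence $Z\leq_\ess X$ and $Z\leq_\ess Y$ with $Z\in\LH$, which is exactly the defining property of a $\leq_\ess$-filter. The main obstacle is the bookkeeping in the previous paragraph: making sure that the route through Theorem \ref{thm:local_Gowers_normed} (rather than a clean "meets analytic dense open sets" lemma, which is only literally stated for \emph{strategic} families) genuinely applies to an arbitrary spread $(p^*)$-family, which is why I route through the expansion-invariance afforded by Lemma \ref{lem:sumbl_pert} and the spreadness-plus-Theorem \ref{thm:Rosendal_spread} argument used in the proof of Theorem \ref{thm:ess_L(R)_gen}(b); everything else is routine.
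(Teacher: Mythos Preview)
Your overall plan matches the paper's: define the same set $\LD_{X,Y}$, show it is $\leq_\ess$-dense open and hence $\preceq$-dense open, use Theorem~\ref{thm:local_Gowers_normed} to get $\LH$ to meet it, and rule out the incompatibility disjuncts by linkedness. The gap is in how you invoke Theorem~\ref{thm:local_Gowers_normed}.

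You apply the theorem with $\A=\LD_{X,Y}$. Since $\LD_{X,Y}$ is $\preceq$-dense, alternative (i) ($[X']\subseteq\LD_{X,Y}^c$) fails, so you land in alternative (ii): II has a strategy in $G[X']$ for playing into $(\LD_{X,Y})_\Delta=\LD_{X,Y}$. You then assert that ``spreadness is what lets us conclude an actual outcome lands in $\LH$, exactly as in the proof of Theorem~\ref{thm:ess_L(R)_gen}(b)''. This is where the argument breaks. Spreadness concerns I's strategies in $F[X]$ (via Theorem~\ref{thm:Rosendal_spread}); it says nothing about realizing outcomes of II's strategies in $G[X]$ inside $\LH$. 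That is precisely the \emph{strategic} property, and Theorem~\ref{thm:ess_L(R)_gen}(b) explicitly assumes $\LG$ is strategic, not merely spread. There is no ``consequence recorded after Lemma~\ref{lem:strong_p_spread}'' asserting that spread $(p^*)$-families meet analytic dense open sets; the comments to that effect (after Lemma~\ref{lem:no_strat_into_dense_comp}) are about strategic families.

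The paper avoids this by applying Theorem~\ref{thm:local_Gowers_normed} to the \emph{complement}: $\A=\LD^c$, which is analytic (note $\bot_\ess$ is coanalytic, so $\LD$ is coanalytic rather than Borel as you claim). Alternative (ii) would give II a strategy into $(\LD^c)_\Delta=\LD^c$, impossible since $\LD$ is $\preceq$-dense open; so alternative (i) holds, yielding $Z\in\LH$ with $[Z]\subseteq\LD$, and in particular $Z\in\LH\cap\LD$. No appeal to strategic is needed---spreadness is only used inside the proof of Theorem~\ref{thm:local_Gowers_normed} itself. Flipping your application of the theorem to the complement repairs the argument completely.
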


\begin{proof}
	Let $X,Y\in\LH$, and consider the set
	\[
		\LD=\{Z:(Z\leq_\ess X \text{ and } Z\leq_\ess Y) \text{ or } (Z \bot_\ess X \text{ or } Z\bot_\ess Y)\}.
	\]
	It is easy to check that $\LD$ is co-analytic. Clearly $\LD$ is $\leq_\ess$-dense open, thus $\preceq$-dense open by Lemma \ref{lem:ess_dense_block_dense}. By Theorem \ref{thm:local_Gowers_normed} applied to the analytic set $\A=\LD^c$, there is a $Z\in\LH$ with $[Z]_1\subseteq\LD$. In particular, $Z\in\LD$. Since $\LD$ is $\leq_\ess$-linked, we must have that $Z\leq_\ess X$ and $Z\leq_\ess Y$.
\end{proof}

By Lemma \ref{lem:linked_filter}, the maximal centered sets in Theorem \ref{thm:pure_states} are also filters in $\LP(\LC(H))^+$. The following result of Bice, using Shelah's model without $p$-points (VI.~\S4 in \cite{MR1623206}), presents an obstacle to $\ZFC$ constructions.

\begin{thm}[Bice \cite{MR3043037}]
	It is consistent with $\ZFC$ that no maximal centered set in $\LP(\LC(H))^+$ is a filter.	
\end{thm}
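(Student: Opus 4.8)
This is a theorem of Bice \cite{MR3043037}; the plan is to carry out his argument, which works in Shelah's model (Chapter VI, \S4 of \cite{MR1623206}) in which there are no $p$-points. It suffices to show, in $\ZFC$, that a maximal centered set $\LF\subseteq\LP(\LC(H))^+$ which is also a filter gives rise to a $p$-point ultrafilter on $\N$; in the absence of $p$-points, no such $\LF$ can then exist. Note first that any maximal centered $\LF$ is automatically $\le$-upwards closed: if $p\in\LF$ and $p\le q$, then $q$ is compatible with every element of $\LF$, so $q\in\LF$ by maximality; in particular $1\in\LF$.

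Fix the orthonormal basis $(e_n)$ and, for $S\subseteq\N$, let $P_S$ be the projection onto $\bar{\linspan\{e_n:n\in S\}}$. These projections pairwise commute, so meets in $\LP(\LC(H))$ of their images behave classically: $\pi(P_S)\wedge\pi(P_T)=\pi(P_{S\cap T})$ and $\pi(P_S)+\pi(P_{\N\setminus S})=1$. Put $\LU=\{S\subseteq\N:\pi(P_S)\in\LF\}$. Since $\LF$ is a filter, a common lower bound $q\in\LF$ of $\pi(P_S),\pi(P_T)$ satisfies $q\le\pi(P_{S\cap T})$, so $\pi(P_{S\cap T})\in\LF$ by upwards closure; as $\pi(P_S)=0$ for $S$ finite and $\pi(P_S)=1\in\LF$ for $S$ cofinite, $\LU$ is a free filter on $\N$, and maximality of $\LF$ together with the filter property gives that it is an ultrafilter (the one subtlety being the passage through non-commuting lower bounds, handled by a perturbation argument of the type in Lemma \ref{lem:Weaver}). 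The crux is that $\LU$ is in fact a $p$-point: given a partition $\N=\bigsqcup_k A_k$ with each $A_k\notin\LU$, one has $\pi(P_{\N\setminus A_k})\in\LF$ for all $k$; exploiting the $\sigma$-closure of $(\LP_\infty(H),\le_\ess)$ together with the maximality of $\LF$, one shows $\LF$ contains a common lower bound $R$ of $\{\pi(P_{\N\setminus A_k}):k\in\N\}$, refines $R$ below a block projection (Lemmas \ref{lem:blocks_ess_dense} and \ref{lem:sumbl_pert}), and diagonalizes to obtain a block sequence $X=(x_j)$ with $\pi(P_X)\in\LF$ and $\supp(x_j)\cap A_k=\emptyset$ for all $j\ge k$; then $S=\bigcup_j\supp(x_j)\in\LU$ meets every $A_k$ in a finite set, which exhibits $\LU$ as a $p$-point.

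The main obstacle is exactly this last point: the directedness of $\LF$ is only finitary, so the fact that a maximal centered \emph{filter} must absorb common lower bounds of countable decreasing families — which is what a $p$-point requires — does not come for free and must be teased out of the order structure of the projections of the Calkin algebra; this is the combinatorial heart of Bice's argument. With it in hand, the existence of such an $\LF$ is incompatible with Shelah's model without $p$-points, which completes the proof.
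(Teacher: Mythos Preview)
The paper does not prove this theorem; it is quoted as a result of Bice \cite{MR3043037}, with only the remark that the argument takes place in Shelah's model without $p$-points. There is therefore no proof in the paper to compare your attempt against.

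As for your sketch itself, it has a genuine error, not merely a gap. Your claim that $\LU=\{S:\pi(P_S)\in\LF\}$ is an ultrafilter is false in general. Take $S$ to be the even integers and $X=(x_k)$ with $x_k=(e_{2k}+e_{2k+1})/\sqrt{2}$. A short computation with Lemma~\ref{lem:Weaver} shows that no unit vector can be simultaneously $\epsilon$-close to $\ran(P_X)$ and to $\ran(P_S)$ for small $\epsilon$, so $\pi(P_X)$ is incompatible with $\pi(P_S)$; by symmetry it is also incompatible with $\pi(P_{\N\setminus S})$. Hence any maximal centered filter containing $\pi(P_X)$ omits both diagonal halves, and $\LU$ fails to be an ultrafilter. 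The ``perturbation argument of the type in Lemma~\ref{lem:Weaver}'' you allude to cannot repair this: the phenomenon is exactly that a projection can sit at a fixed positive angle to both pieces of a commuting decomposition of the identity.

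Even setting this aside, the $p$-point step is not an argument but a deferral: you need $\LF$ to contain a lower bound for the countable family $\{\pi(P_{\N\setminus A_k})\}$, yet a filter is only finitely directed and maximality among centered sets gives no countable closure whatsoever. You flag this as ``the combinatorial heart of Bice's argument'' without supplying it, which leaves the proposal as a statement that Bice proved the theorem --- no more than what the paper already says.
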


Consequently, we have:

\begin{cor}
	It is consistent with $\ZFC$ that no spread $(p^*)$-family in $\bb^\infty_1(E)$ can be $\leq_\ess$-linked, and in particular, that there are no spread $(p^*)$-filters.
\end{cor}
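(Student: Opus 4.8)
The plan is to argue by contraposition inside Shelah's model in which no maximal centered subset of $\LP(\LC(H))^+$ is a filter; such a model exists by Bice's theorem. Working in that model, suppose toward a contradiction that $\LH\subseteq\bb^\infty_1(E)$ is a spread $(p^*)$-family which is $\leq_\ess$-linked. By Lemma \ref{lem:linked_filter}, $\LH$ is then a $\leq_\ess$-filter, and in particular $\leq_\ess$-centered, since a lower bound for a finite subset of $\LH$ is obtained by iterating the filter property.

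Next I would invoke the first half of the proof of Theorem \ref{thm:pure_states}: since $\LH$ is a spread $(p^*)$-family which is $\leq_\ess$-centered, the upward closure $\hat{\LH}$ of $\pi(\LH)$ in $\LP(\LC(H))^+$ is a maximal centered set. That argument used only spreadness, the $(p^*)$-property, $\leq_\ess$-centeredness, Lemma \ref{lem:ess_dense_block_dense}, and Theorem \ref{thm:local_Gowers_normed}, all of which are in force here, so it applies verbatim.

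It then remains to check that $\hat{\LH}$ is a filter in $\LP(\LC(H))^+$. It is upward closed by construction and omits the zero projection since $\LP(\LC(H))^+$ consists of nonzero projections, so I only need that $\hat{\LH}$ is downward directed. Given $p,q\in\hat{\LH}$, choose $X,Y\in\LH$ with $\pi(P_X)\leq p$ and $\pi(P_Y)\leq q$; as $\LH$ is a $\leq_\ess$-filter there is a $Z\in\LH$ with $Z\leq_\ess X$ and $Z\leq_\ess Y$, whence $\pi(P_Z)\leq p$, $\pi(P_Z)\leq q$, and $\pi(P_Z)\in\hat{\LH}$. Thus $\hat{\LH}$ is a maximal centered subset of $\LP(\LC(H))^+$ which is a filter, contradicting the choice of model. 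Hence in this model no spread $(p^*)$-family is $\leq_\ess$-linked. For the final clause, note that any $(p^*)$-filter $\LF$ is $\leq_\ess$-linked: for $X,Y\in\LF$ the filter property yields $Z\in\LF$ with $Z\preceq^* X$ and $Z\preceq^* Y$, and $Z\preceq^* X$ implies $Z\leq_\ess X$; so a spread $(p^*)$-filter would be a $\leq_\ess$-linked spread $(p^*)$-family, of which there are none.

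The deduction is short, and there is no genuinely new obstacle beyond the cited results; the only point requiring care is confirming that the maximal-centered-set construction from the proof of Theorem \ref{thm:pure_states} transfers without change and that ``$\LH$ is a $\leq_\ess$-filter in $\bb^\infty_1(E)$'' passes to ``$\hat{\LH}$ is downward directed in $\LP(\LC(H))^+$'', which is immediate from the order correspondence between block projections and $(\bb^\infty_1(E),\leq_\ess)$ recorded before Lemma \ref{lem:diag_ess_dense}.
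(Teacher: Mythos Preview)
Your argument is correct and follows exactly the route the paper intends: the text preceding the corollary records that, by Lemma \ref{lem:linked_filter}, the maximal centered sets produced in Theorem \ref{thm:pure_states} are filters in $\LP(\LC(H))^+$, and the corollary is then stated as an immediate consequence of Bice's theorem. You have simply made explicit the chain linked $\Rightarrow$ $\leq_\ess$-filter $\Rightarrow$ $\leq_\ess$-centered $\Rightarrow$ $\hat{\LH}$ maximal centered and a filter, which is precisely what the paper leaves implicit; the final clause about spread $(p^*)$-filters is likewise the intended trivial observation.
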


\section{Further questions}\label{sec:last}

Despite our constructions, under additional hypotheses, of $(p^+)$-filters, there remains a lack of examples of interesting, purely analytical, $(p^+)$ and $(p^*)$-families, Example \ref{ex:blocks_equiv_c0_lp} notwithstanding.

\begin{ques}
	Are there naturally occurring non-trivial ($\ZFC$) examples of $(p^+)$ or $(p^*)$ families of block sequences?
\end{ques}

While Theorem \ref{thm:Calkin_L(R)_gen} does give a criterion for $\L(\R)$-genericity for filters of projections in the Calkin algebra, it would be desirable to have a such criterion expressed in the language of C*-algebras.

\begin{ques}
	Can the (local) Ramsey theory of block sequences in a separable infinite-dimensional Hilbert space be described in C*-algebraic terms? Under large cardinals, is there a C*-algebraic characterization of $\L(\R)$-generic filters in the projections in the Calkin algebra?
\end{ques}

Lastly, as the sufficient conditions described in Theorem \ref{thm:pure_states} for producing a counterexample to Anderson's conjecture cannot be satisfied in Shelah's model without $p$-points, the status of Anderson's conjecture in that model appears to be a natural test question.

\begin{ques}
	Does Anderson's conjecture hold in Shelah's model without $p$-points?\footnote{Added in proof: This question has a negative answer. It follows from the fact that Anderson's conjecture fails whenever $\d$ is less than or equal to the analogue of $\t$ for $\LP(\LC(H))$ \cite{FarahAST}, which holds in Shelah's model.}
\end{ques}

\bibliographystyle{abbrv}

\end{document}